\renewcommand{\skull}{\vcenter{\hbox{\scalebox{.5}{$\bigskull$}}}}
\newenvironment{enumeratei}{\begin{enumerate}[\upshape (i)]}{\end{enumerate}}
  \crefname{theorem}{Theorem}{Theorems}
  \crefname{thm}{Theorem}{Theorems}
  \crefname{lemma}{Lemma}{Lemmas}
  \crefname{lem}{Lemma}{Lemmas}
  \crefname{rem}{Remark}{Remarks}
  \crefname{prop}{Proposition}{Propositions}
  \crefname{proposition}{Proposition}{Propositions}
  \crefname{problem}{Problem}{Problems}
\crefname{notation}{Notation}{Notations}
\crefname{claim}{Claim}{Claims}
  \crefname{defn}{Definition}{Definitions}
  \crefname{cor}{Corollary}{Corollaries}
  \crefname{section}{Section}{Sections}
  \crefname{figure}{Figure}{Figures}
  \crefname{exercise}{Exercise}{Exercises}
    \crefname{assumption}{Assumption}{Assumptions}
\newtheorem{thm}{Theorem}[section]
\newtheorem{claim}[thm]{Claim}
\newtheorem{lemma}[thm]{Lemma}
\newtheorem{cor}[thm]{Corollary}
\newtheorem{proposition}[thm]{Proposition}
\newtheorem{defn}[thm]{Definition}
\numberwithin{equation}{section}
\theoremstyle{definition}
\newtheorem{rem}[thm]{Remark}
\renewcommand{\leq}{\leqslant} 
\renewcommand{\geq}{\geqslant} 
\renewcommand{\le}{\leqslant} 
\renewcommand{\ge}{\geqslant} 
\newcommand{\ra}{\rangle}
\newcommand{\la}{\langle}
\newcommand{\eps}{\varepsilon}
\newcommand{\norm}[1]{\left\Vert#1\right\Vert}
\newcommand{\ie}{\emph{i.e.,}}
\newcommand{\equald}{\stackrel{\mathrm{d}}{=}}
\let\ga=\alpha \let\gb=\beta \let\gc=\gamma \let\gd=\delta 
    \let\gk=\kappa \let\gl=\lambda           \let\gs=\sigma  
 \let\gD=\Delta   
\let\gO=\Omega           
\newcommand{\cA}{\mathcal{A}}\newcommand{\cB}{\mathcal{B}}\newcommand{\cC}{\mathcal{C}}
\newcommand{\cD}{\mathcal{D}}
\newcommand{\cG}{\mathcal{G}}\newcommand{\cH}{\mathcal{H}}
\newcommand{\cJ}{\mathcal{J}}
\newcommand{\cP}{\mathcal{P}}
\newcommand{\cS}{\mathcal{S}}\newcommand{\cU}{\mathcal{U}}
\newcommand{\cV}{\mathcal{V}}
\newcommand{\cZ}{\mathcal{Z}}  
\newcommand{\vd}{\mathbf{d}}
\newcommand{\mv}[1]{\boldsymbol{#1}}
\newcommand{\mvk}{\boldsymbol{k}}
\newcommand{\mvo}{\boldsymbol{o}}
\newcommand{\mvu}{\boldsymbol{u}}
\newcommand{\mvx}{\boldsymbol{x}}\newcommand{\mvy}{\boldsymbol{y}}
\newcommand{\f}[1]{\mathfrak{#1}}
\newcommand{\fI}{\mathfrak{I}}
\newcommand{\fR}{\mathfrak{R}}
\newcommand{\ff}{\mathfrak{f}}
\newcommand{\fr}{\mathfrak{r}}\newcommand{\fs}{\mathfrak{s}}
\newcommand{\fg}{\mathfrak{g}}
\newcommand{\sbeta}{\text{\sout{$\beta$}}}
\newcommand{\bC}{\mathbb{C}}
\newcommand{\bE}{\mathbb{E}}
\newcommand{\bN}{\mathbb{N}}
\newcommand{\bP}{\mathbb{P}}\newcommand{\bR}{\mathbb{R}}
\newcommand{\bT}{\mathbb{T}}
\newcommand{\bZ}{\mathbb{Z}}        
\newcommand{\dC}{\mathds{C}}
\newcommand{\dP}{\mathds{P}}
\newcommand{\dT}{\mathds{T}}
\newcommand{\dZ}{\mathds{Z}} 
\DeclareMathOperator{\E}{\mathds{E}}
\DeclareMathOperator{\var}{Var}
\DeclareMathOperator{\argmin}{argmin}
\renewcommand{\i}{{i\mkern1mu}}
\newcommand{\ttwo}{\ensuremath{\ell^{2}(\mathbb{T}^{d}_{n})}}
\newcommand{\zp}{\ensuremath{\ell^{p}(\mathbb{Z}^{d})}}
\newcommand{\ztwo}{\ensuremath{\ell^{2}(\mathbb{Z}^{d})}}
\newcommand{\Psisph}{\Psi_{\mathsf{sph},N}}
\newcommand{\Psispheps}{\Psi_{\mathsf{sph},N,\varepsilon_N}}
\newcommand{\Phizero}{\Phi_{0{\text{-}}\mathsf{avg}}}
\newcommand{\Gzero}{\mathsf{G}_{0{\text{-}}\mathsf{avg}}}
\begin{document}

\title{Local limit of the focusing discrete NLS}
\author{Kesav Krishnan \thanks{Research supported in part by PIMS pdf fellowship, University of Victoria} \and Gourab Ray \thanks{Research supported by NSERC 50711-57400, University of Victoria }}

\maketitle

\begin{abstract}
    We examine the behavior of a function sampled from the invariant measure associated to the focusing discrete Non Linear Schr\"odinger equation, defined on a discrete torus of dimension $d \geq 3$, and nonlinearity parameter $p>4$, in the infinite volume limit. The Gibbs measure has two parameters, the inverse temperature and the strength of the non linearity, and the scaling is such that the non linear and linear parts of the Hamiltonian contribute on the same scale. It was shown by Dey, Kirkpatrick and the first author that this measure undergoes a phase transition, concerning concentration of mass of a typical function {at the level of partition functions}. We prove that the three regions of the phase diagram yield three distinct local limits, a massive Gaussian free field, a massless Gaussian free field plus a random constant, and finally a (possibly trivial) mixture of massive Gaussian free fields, where some mass is ``lost'' to the region of concentration. Our proof relies on the analysis of the spherical model of a ferromagnet. The measure under consideration is an exponential tilt of the spherical model, and the additional tilt can be seen to induce an additional phase transition. Although our motivations come from the NLS equation and soliton resolution conjecture, our proofs are completely probabilistic, and can be read without any knowledge of PDE theory.
\end{abstract}
\section{Introduction and main results}\label{sec:intro}
The nonlinear Schr\"odinger equation (or \textbf{NLS} in short) is a prototypical example of a nonlinear dispersive PDE and has attracted significant attention over the years (see \cite{TAO} and references therein for a broad overview on the study of nonlinear dispersive equations).  Constructing invariant Gibbs measures for this PDE and studying the typical functions sampled from these measures is an important problem in this field with several influential works that address this \cites{LRS88,B94,B96,TZE,R02,TW,BS96}. 
Our main concern in this article is to study the behavior of the invariant Gibbs measure associated with this PDE in the discrete setting of the $d$-dimensional torus.

Specifically, we work on the torus $\bT_n^d:=(\bZ/n\bZ)^{d}$, with $N:= n^d.$
Fix $p>4$ and $\nu, \theta>0$. 
Let us introduce the notation
\begin{equation}
    \nu_N := \frac2p\left(\frac{\nu}{N}\right)^{^{\frac{p-2}{2}}}.\label{eq:nuN}
\end{equation}
Our main protagonist is the random variable $\Psi_N^{\nu, \theta}: \bT_n^d \to \bC$ distributed according to the following law. For any bounded continuous function $\ff$ define
\begin{align}
\bE(\ff(\Psi^{\nu, \theta}_N)):=\frac{1}{\mathcal Z_{N}(\theta, \nu)}\int_{\bC^N} \ff(\psi)\exp\left[\theta \left(\nu_N\norm{\psi}_{p}^{p}-\norm{\nabla \psi}_{2}^{2} \right)\right]\mv1_{\norm{\psi}_{2}^{2}\leq  N} \vd \psi.\label{eq:psi}
\end{align}
where $\|\psi\|_p^p = \sum_{\mvx \in \bT^{d}_{n}} |\psi(\mvx)|^p $ and $\|\nabla \psi \|_2^2 = \sum_{\mvx\sim\mvy} |\psi(\mvx) - \psi(\mvy)|^2$, where $\mvx\sim \mvy$ denotes the vertices being adjacent.
We call the random vector $\Psi_N^{\nu, \theta}$ as defined in \Cref{eq:psi} a \textbf{focusing NLS field} with parameters $\nu, \theta $ and the measure induced by it the \textbf{focusing NLS invariant measure}, or simply the \textbf{focusing NLS measure}, denoted by $\mu_{N, \nu, \theta}$. The reasoning behind this nomenclature is that this measure is an invariant measure for the discrete focusing nonlinear Schr\"odinger equation, we shall explain this connection later in \Cref{sec:soliton_NLS}.
For now, readers familiar with the Gaussian free field (GFF)  (see \Cref{sec:gff} for an overview) will recognize \eqref{eq:psi} as an exponential tilt the complex GFF at temperature $\theta$  by the term $\exp(\theta \nu_N \|\psi\|_p^p)$. Furthermore, since $\nu_N$ is positive, the restriction of the $\ell^2$-norm is introduced to ensure the integral is finite (so we have a probability measure).

The reasoning for the form of the tilt coefficient $\nu_N $ is to match the order of the Gaussian tail with the tilt, the details will be clearer later.
Observe for now that we could have introduced an extra parameter $\gamma$ in the indicator above and restricted to $\norm{\psi}_{2}^{2}\leq  \gamma N$ instead of $\norm{\psi}_{2}^{2}\leq  N$. However, calling this random field $\Psi^{\nu, \theta, \gamma}$, and using the invariance

$$\Psi^{k \nu, k\theta,  k^{-1}\gamma}_N = \Psi^{\nu,  \theta, \gamma}_N \text{ in law,}$$ it is sufficient to concentrate on $\Psi_N^{\nu, \theta}$ as defined in \Cref{eq:psi}.  Recall that the local weak limit of a random field  $X_N : \bT_n^d \to \bC$ is $X:\bZ^d \to \bC$ if for every finite set $A$, $(X_N)_{x \in A} $ converges to $(X)_{x \in A}$.

Fix $d \ge 3$ and define 
\begin{equation}
    C_d = {\sf G}^{\bZ^d}(0,0) < \infty,\label{eq:C_d}
\end{equation}
where ${\sf G}^{\bZ^{d}}$ denotes the Green's function in $\bZ^d$. 
Our main result, censoring some of the details, is summarized as follows. We postpone the formal definition of the massive Gaussian free field to \Cref{def:GFF}, for now it suffices to recall that it is a Gaussian field on $\bZ^d$ whose correlations are given by the massive Green's function, which decay exponentially in the distance if the mass is strictly positive. If the mass is zero, this decay is polynomial.
\begin{thm}\label{thm:main_summary}
Fix $p>4$ and $d \ge 3$.
For every $\theta>0$ there exists a $\nu_c=\nu_c(\theta) \in [0,\infty)$ such that the following holds. All the GFFs mentioned below are complex GFFs with variance $\theta^{-1}$.
    \begin{enumerate}[a.]
        \item If $\theta<C_d$, $\nu<\nu_c$, then the local weak limit of $\Psi_N^{\nu, \theta}$ is $\theta^{-1/2}$ times a massive GFF with mass $m(\theta)>0$ given by \eqref{eq:mass}.
        \item If $\theta \ge  C_d$ and $\nu<\nu_c$, then the local weak limit if $\Psi_N^{\nu, \theta}$ is  $\theta^{-1/2}\cdot $GFF(with no mass) $+ U\mv1$ where $U \in \bC$  is uniform over a disc of radius $\sqrt{1-\frac{C_d}{\theta}}$ centered around $0$ and  and $\mv1$ is the function which is identically 1 on $\bZ^d$. In particular, if $\theta = C_d$ then $U=0$ almost surely.
        \item If $\nu>\nu_c$, then subsequential weak limit exists and each such limit is $\theta^{-1/2}$ times a massive GFF with a potentially random mass $M$.  Furthermore, $M>0$ almost surely, and if $\theta<C_d$ then $M<m(\theta)$ almost surely, where $m(\theta)$ is as in item a.
        %{\color{gray} \item If $\nu = \nu_c$ and $\theta \le C_d$ then \note{finish..}.}
    \end{enumerate}
\end{thm}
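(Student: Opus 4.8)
The plan is to exploit the structure, already noted after \eqref{eq:psi}, that $\mu_{N,\nu,\theta}$ is an exponential tilt of the complex Gaussian spherical model $\propto \exp(-\theta\norm{\nabla\psi}_2^2)\,\mv1_{\norm{\psi}_2^2\le N}$ by the factor $\exp(\theta\nu_N\norm{\psi}_p^p)$, and to analyze it by splitting each configuration into a \emph{bulk}, where $|\psi|$ is of order one, and a \emph{condensate} supported on $O(1)$ coordinates of size $\sqrt N$. The scaling fact that drives everything — immediate from \eqref{eq:nuN} together with $p>4$ — is that on a bulk configuration $\nu_N\norm{\psi}_p^p$ is $O(N^{(4-p)/2})=o(1)$, while a single spike of height $a\sqrt N$ contributes $\tfrac2p\nu^{(p-2)/2}a^pN+o(N)$ to $\theta^{-1}\log(\text{weight})$ at a gradient cost of $2d\,a^2N$. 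An elementary optimization over the spatial extent of a putative condensate — here $p>4$ enters, through the negative sign of the exponent $1+\tfrac1d-\tfrac p2$ — shows that only $O(1)$-site condensates carrying a macroscopic $\ell^2$-mass $a^2N$, $a\in[0,1)$, survive to leading exponential order, so that a single finite ``blob'' is the only relevant non-bulk structure; its optimal profile solves a finite-dimensional variational problem whose value I write as $\theta\,\mathsf g_\nu(a)N$, where $\mathsf g_\nu(a)\ge 0$ vanishes for $a$ below a threshold $a_0(\nu)\asymp \nu^{-1/2}$ (this positivity uses the discrete Sobolev-type constant $\inf\{\norm{\nabla\phi}_2^2/\norm{\phi}_p^p:\norm{\phi}_2=1\}>0$, valid for $p>4$) and is strictly increasing beyond it.

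First I would establish the local limit of the untilted spherical model. Passing to Fourier modes diagonalizes the gradient and makes the constraint couple only the total mass; the $k\neq 0$ modes become independent complex Gaussians of variance $(\theta(\lambda(k)+\mathsf m^2))^{-1}$, with $\lambda(\cdot)$ the symbol of $-\Delta$ on $\bZ^d$ and $\mathsf m^2\ge 0$ the Lagrange multiplier fixed by $\theta^{-1}\int(\lambda(k)+\mathsf m^2)^{-1}\mathrm{d}k=1$, while the $k=0$ mode is flat under the constraint. This equation has a solution $\mathsf m=m(\theta)>0$ exactly when $\theta<C_d$ (giving a massive GFF, item (a)); for $\theta\ge C_d$ the multiplier degenerates to $0$ and the surplus budget $(1-C_d/\theta)N$ is carried by the zero Fourier mode, which by flatness and rotation invariance is asymptotically uniform on the disc of radius $\sqrt{1-C_d/\theta}$, the disc collapsing at $\theta=C_d$ (item (b)); a standard local comparison between the Gaussian field conditioned on its quadratic statistic and the corresponding massive GFF upgrades this to convergence of all finite-dimensional marginals. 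Next comes a Laplace/large-deviation analysis of $\cZ_N(\theta,\nu)$, writing it as an integral over the condensate mass fraction $a^2$ of $\exp(N F_\theta(a;\nu)+o(N))$ with $F_\theta(a;\nu)=\phi_\theta(1-a^2)+\theta\,\mathsf g_\nu(a)$, where $\phi_\theta(\rho)$ is the normalized spherical free energy at $\ell^2$-budget $\rho$. The function $\phi_\theta$ is non-decreasing and concave, with $\phi_\theta'(\rho)=\theta m_\rho^2$ the mass at budget $\rho$ and $\phi_\theta'\equiv 0$ on $[C_d/\theta,1]$ when $\theta\ge C_d$; combined with the fact that $\mathsf g_\nu$ is flat on $[0,a_0(\nu)]$ and then strictly increasing, a monotone-comparative-statics argument in $\nu$ (using increasing differences of $F_\theta$) shows that the set of $\nu$ with a maximizer $a^\ast>0$ is an interval $(\nu_c(\theta),\infty)$, that $\nu_c(\theta)\in[0,\infty)$ (finite because a spike of height $\tfrac12\sqrt N$ already profits once $\nu$ is large), and — crucially — that whenever $a^\ast>0$ one automatically has $a^\ast>\sqrt{(1-C_d/\theta)_+}$, since on $\{a^2\le 1-C_d/\theta\}$ the function $F_\theta(\cdot;\nu)=\phi_\theta(1)+\theta\mathsf g_\nu(\cdot)$ is non-decreasing and its right-derivative at $\sqrt{1-C_d/\theta}$ equals $\theta\mathsf g_\nu'(\sqrt{1-C_d/\theta})\ge 0$ (the $\phi_\theta$ term contributes $-2a\,\phi_\theta'(C_d/\theta)=0$). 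Thus for $\theta\ge C_d$ a condensate, once present, already exhausts the zero-mode surplus and forces a positive bulk mass, so there is no intermediate phase and items (a),(b) hold verbatim for all $\nu<\nu_c$ (the tilt being a genuinely negligible perturbation there).

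For item (c) one works on the event that a condensate is present. The a priori free-energy bounds make the condensate's $\ell^2$-fraction tight in $(0,1)$; passing to a subsequence it converges in law to some $\mathsf a\in(0,1)$, and — conditionally on $\{\mathsf a=a\}$ and on the condensate lying outside a fixed neighbourhood of the observation window, an event of probability $\to 1$ by translation invariance of $\mu_{N,\nu,\theta}$ — the complementary field is, up to a vanishing boundary correction from the $O(1)$ edges meeting the condensate, a spherical-model field of $\ell^2$-budget $1-a^2$, hence converges locally to $\theta^{-1/2}$ times a massive GFF with mass $M$ solving $\theta^{-1}\int(\lambda(k)+M^2)^{-1}\mathrm{d}k=1-\mathsf a^2$. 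Since $1-\mathsf a^2\in(0,1)$ this yields $M>0$ almost surely, and when $\theta<C_d$ the strict ordering of $M$ and $m(\theta)$ asserted in (c) follows from strict monotonicity of $\rho\mapsto m_\rho$ together with $\mathsf a>0$. Because we cannot exclude that the maximizer of $F_\theta(\cdot;\nu)$ is non-unique — so that the condensate fraction, and hence $M$, need not concentrate — the mass is recorded only as a \emph{potentially random} $M$ obtained along subsequences, i.e.\ the limit is a mixture of massive GFFs.

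The step I expect to be the genuine obstacle is the decoupling in the previous paragraph: proving rigorously that, conditionally on the macroscopic condensate data, the residual field is asymptotically a bona fide reduced-budget spherical-model field with the stated local limit, \emph{uniformly} over the relevant range of condensate fractions, while simultaneously controlling the upper tail of $\norm{\psi_{\mathrm{bulk}}}_p^p$ for near-Gaussian bulk configurations sharply enough that ``the tilt is negligible on the bulk'' survives this conditioning. A secondary difficulty, needed for items (a)--(b), is the sharpness of the transition at $\nu_c$ — the monotonicity/concavity structure of $F_\theta(\cdot;\nu)$ that rules out a condensate eating only part of the zero-mode surplus — and, relatedly, checking that for $\nu<\nu_c$ the tilt leaves the $\theta\ge C_d$ zero-mode condensate, and hence the disc radius $\sqrt{1-C_d/\theta}$, exactly intact.
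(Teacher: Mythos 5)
Your outline follows the same architecture as the paper — view $\Psi_N^{\nu,\theta}$ as a tilt of the spherical model, identify the spherical local limits via Fourier modes, run a Laplace-type analysis of a free-energy functional in the condensate mass fraction, and in the supercritical phase condition on the condensate and show the residual field is a reduced-budget spherical field. But as a proof it has genuine gaps, and the two places you yourself flag as "obstacles" are precisely where essentially all of the work lies. The tempering/decoupling step (the paper's \Cref{prop:U}), the Gagliardo--Nirenberg--Sobolev argument needed to rule out intermediate-scale mass when $\theta>C_d$ and $\nu<\nu_c$ (the paper's \Cref{lem:linfty2}, using \eqref{eq:weibound}), and the dyadic bootstrap that upgrades the $o(\sqrt N)$ bound to $N^\beta$ so that the tilt is genuinely negligible (\Cref{lem:linfty3} and \Cref{prop:tilde_U}) are not sketched at all; without them, "the tilt is a negligible perturbation for $\nu<\nu_c$" and the conditional reduction in part (c) are assertions, not arguments. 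Likewise, the spherical local limit is not "a standard local comparison" in the cases actually needed: at $\theta=C_d$ the conditioning probability vanishes polynomially and must be beaten by anti-concentration estimates (the paper's \Cref{lem:critplb,lem:critplb3}), and for part (c) one needs the limit \emph{with boundary conditions} of size $o(\sqrt N)$ plus uniform Gaussian tails near the boundary (\Cref{lem:llimit1,lem:spherical_tail}), which require a local CLT with careful cancellation of the massive-harmonic-extension terms.

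There is also a concrete incorrect deduction in your part (c). From "$1-\mathsf a^2\in(0,1)$" you conclude $M>0$ almost surely, but the mass of the reduced spherical field is positive only when $\theta(1-\mathsf a^2)<C_d$; when $\theta\ge C_d$ this is exactly what must be proved, and your comparative-statics argument only shows the right-derivative of the free energy at the edge of the zero-mode-flat region is $\ge 0$, which does not exclude a maximizer sitting at that edge (where the limiting mass would be $0$). The paper closes this with \Cref{lem:super_to_sub}: at any interior minimizer, stationarity of $F$ together with $I'(a)\le I(a)/a<0$ forces $(W(\theta(1-a)))'>0$ strictly, hence $\theta(1-a)<C_d$; some such strict statement about the minimizers is indispensable for the claims $M>0$ and $M<m(\theta)$, and it is missing from your proposal.
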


\begin{figure}
    \centering
    \includegraphics[width=0.5\linewidth]{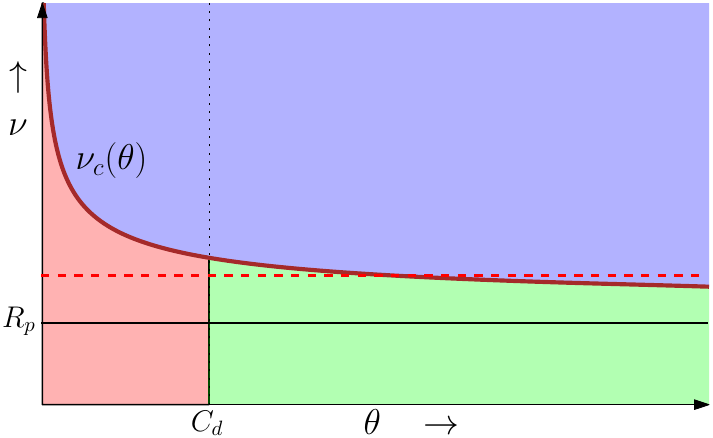}
    \caption{The phase diagram for NLS field. The purple region corresponds the massive GFF limit for supercritical $\nu$ (\Cref{thm:main_summary}, part c.), the red region corresponds to the massive GFF limit for subcritical $\nu$ (\Cref{thm:main_summary}, part a.), and the green region corresponds to the (non-massive) GFF with the random global shift limit (\Cref{thm:main_summary}, part b.) We do not know of the behaviour of the field at the critical curve (drawn in solid brown), see \Cref{sec:future} for further discussions.  Something curious happens if we fix $\nu$ to correspond to the horizontal dashed red line. As we vary $\theta$, the correlations are decaying exponentially at first (red part), then polynomially (green part), and then exponentially again (purple part).}
    \label{fig:phasediag}
\end{figure}
A more precise version of item c. above can be found in \Cref{thm:supercritical}.
We believe that the random variable $M$ above is actually deterministic and is a subject of future work; more details follow later.
To explain \Cref{thm:main_summary} in greater detail, we introduce an important supporting character in this article, the \textbf{spherical model}. We define the \textbf{spherical law} to be the law of the random field $\Psisph:\bT_n^d \to \mathbb C^N$ defined as follows. For every bounded continuous function $\ff:\bC^N \to \bR$ we have  
\begin{align}
\bE(\ff(\Psisph^{\theta})):=\frac{1}{\cZ_{\text{sph},N}(\theta)}\int_{\bC^N}\ff(\psi) \exp\left(-\theta \norm{\nabla \psi}_{2}^{2}\right)\mv1_{\norm{\psi}_{2}^{2}\leq N}\vd \psi.\label{eq:spherical}
\end{align}
Clearly, the NLS model is a tilt of the spherical model.
More precisely,
\begin{equation}
    \bE(\ff(\Psi^{\nu, \theta}_N))  = \frac{\cZ_{\text{sph},N}(\theta)}{\mathcal Z_{N}(\theta, \nu)}\bE(\exp(\theta \nu_N \|\Psisph^{\theta}\|_p^p )\ff(\Psisph^{\theta})),\label{eq:NLS_sph_relation}
\end{equation}
and the spherical model, in turn, is a GFF conditioned to have its total $\ell^2$-mass restricted. To compare the behavior of the spherical model with that of the NLS, we mention here that the spherical model also undergoes a phase transition.
\begin{thm}\label{thm:main_sph}
    The following holds.
    \begin{enumerate}[a.]
        \item If $\theta<C_d$, $\Psisph^{\theta}$ converges locally to $\theta^{-1/2}$ times a massive GFF with mass $m(\theta)$, which is the same as item a. in \Cref{thm:main_summary}.
        \item If $\theta \ge  C_d$, then the local weak limit if $\Psisph^{\theta}$ is  $\theta^{-1/2}$GFF(with no mass) $+ U\mv1$ where $U $  is uniform over a disc of radius $\sqrt{1-\frac{C_d}{\theta}}$ centered at $\mv0$ and  and $\mv1$ is the function which is identically 1 on $\bZ^d$. In particular, if $\theta = C_d$ then $U=0$ almost surely.
    \end{enumerate}
\end{thm}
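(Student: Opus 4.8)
The plan is to diagonalise the quadratic form in the discrete Fourier basis, where the $\ell^2$ constraint is the only coupling between the otherwise independent Gaussian modes, to treat the zero mode separately, and then to transfer to position space. Write $\widehat\psi_k$, $k\in(\bZ/n\bZ)^d$, for the Fourier coefficients, so that $\|\psi\|_2^2=\sum_k|\widehat\psi_k|^2$ and $\|\nabla\psi\|_2^2=\sum_k\lambda_k|\widehat\psi_k|^2$, with $\lambda_k=\lambda(k/n)$, $\lambda(\xi)=\sum_{j=1}^d 2\bigl(1-\cos 2\pi\xi_j\bigr)$ and $\lambda_0=0$. Under the spherical law the vector $(\widehat\psi_k)_k$ has density proportional to $\prod_{k\neq 0}e^{-\theta\lambda_k|\widehat\psi_k|^2}\,\mv1_{\{\|\psi\|_2^2\le N\}}$; hence, writing $\psi_\perp:=(\widehat\psi_k)_{k\neq 0}$, the zero mode $\widehat\psi_0$ is, conditionally on $\psi_\perp$, uniform on the complex disc $\{z:|z|^2\le(N-\|\psi_\perp\|_2^2)_+\}$. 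Decomposing $\psi=N^{-1/2}\widehat\psi_0\,\mv1+\psi_\perp$, the local limit is governed by the finite-dimensional law of $\psi_\perp$ together with the scalar $N^{-1}\|\psi_\perp\|_2^2$. I would then define the effective mass $a_N\ge 0$ as the root of $\sum_{k\neq 0}(\theta\lambda_k+a_N)^{-1}=N$; a Riemann-sum argument, valid because $d\ge 3$ makes $\int_{[0,1]^d}\lambda(\xi)^{-1}\,\vd\xi=C_d<\infty$ in the normalisation of \eqref{eq:C_d}, shows $a_N\to a_\star(\theta)$, where $a_\star(\theta)$ is the positive solution of $\tfrac1\theta\int_{[0,1]^d}(\lambda(\xi)+a/\theta)^{-1}\,\vd\xi=1$ if one exists and $a_\star(\theta)=0$ otherwise. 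Since the left-hand side at $a=0$ equals $C_d/\theta$, the dichotomy is exactly $a_\star(\theta)>0\iff\theta<C_d$, and $m(\theta)=\sqrt{a_\star(\theta)/\theta}$ is the mass of \eqref{eq:mass}.

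Subcritical case $\theta<C_d$ (so $a_\star>0$): I would compare $\psi_\perp$ with the torus massive Gaussian $\nu_{a_N}\propto\prod_{k\neq 0}e^{-(\theta\lambda_k+a_N)|\widehat\psi_k|^2}$, whose finite-dimensional distributions converge, by Riemann sums for the covariances, to those of $\theta^{-1/2}$ times the massive GFF with mass $m(\theta)$ (see \Cref{def:GFF}). The Radon--Nikodym derivative of the $\psi_\perp$-marginal of the spherical law with respect to $\nu_{a_N}$ is proportional to $e^{a_N\|\psi_\perp\|_2^2}(N-\|\psi_\perp\|_2^2)_+$, and the point is that, after integrating out all but finitely many coordinates, this factor is asymptotically constant. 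Under $\nu_{a_N}$, $\|\psi_\perp\|_2^2$ is a sum of independent exponentials with rates bounded below by $a_N\to a_\star>0$, has mean $N$ and variance of order $N$, and satisfies a local central limit theorem; against this density $(N-t)_+$ behaves like $e^{-a_N t}$ up to a prefactor varying slowly on unit scale, which cancels the explicit $e^{a_N\|\psi_\perp\|_2^2}$. (Equivalently, writing $(N-t)_+=\tfrac1{2\pi i}\int e^{u(N-t)}u^{-2}\,\vd u$ and evaluating by steepest descent, the contour localises at $u=a_N$ and one recovers $\nu_{a_N}$-expectations of local observables.) Thus $\psi_\perp$ converges locally to $\theta^{-1/2}$ times the massive GFF of mass $m(\theta)$; and since after the reweighting $N-\|\psi_\perp\|_2^2=O(1)$, one has $|N^{-1/2}\widehat\psi_0|\le\sqrt{(N-\|\psi_\perp\|_2^2)_+/N}=O(N^{-1/2})\to 0$, so the zero mode drops out. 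This is part a.

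Critical and supercritical case $\theta\ge C_d$: here $a_N\to 0$, and already the constraint-free product $\nu_0\propto\prod_{k\neq 0}e^{-\theta\lambda_k|\widehat\psi_k|^2}$ is normalisable, with $\bE_{\nu_0}\|\psi_\perp\|_2^2=\sum_{k\neq 0}(\theta\lambda_k)^{-1}=(C_d/\theta)N+o(N)$ and variance $o(N^2)$; so $N^{-1}\|\psi_\perp\|_2^2\to C_d/\theta\le 1$ in probability, and $\psi_\perp$ has local limit $\theta^{-1/2}$ times the massless GFF on $\bZ^d$ (which exists because $d\ge 3$). The $\psi_\perp$-marginal of the spherical law is $\nu_0$ reweighted by $(N-\|\psi_\perp\|_2^2)_+$, which equals $(1-C_d/\theta)N(1+o(1))$ and differs from its value at any fixed finite coordinate block by only $O(1)$; the normalised reweighting therefore tends to $1$ uniformly over such blocks, in probability and in $L^1$, leaving the local limit of $\psi_\perp$ unchanged. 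Conditionally on $\psi_\perp$, the rescaled zero mode $N^{-1/2}\widehat\psi_0$ is uniform on the disc of radius $\sqrt{(1-N^{-1}\|\psi_\perp\|_2^2)_+}\to\sqrt{1-C_d/\theta}$, a radius depending only on the macroscopic quantity $N^{-1}\|\psi_\perp\|_2^2$; so in the limit it becomes an independent $U$, uniform on that disc (and $U=0$ when $\theta=C_d$). Therefore $\Psisph^{\theta}\to\theta^{-1/2}\cdot\mathrm{GFF}+U\mv1$, which is part b.

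The Gaussian and Riemann-sum computations --- the asymptotics of $a_N$, the identification of the limiting covariances, and the embedding of a finite subset of $\bZ^d$ into $\bT^d_n$ --- are routine. The main obstacle is the decorrelation step in the subcritical regime: proving that conditioning and exponentially tilting by a function of the \emph{global} mass $\|\psi_\perp\|_2^2$ does not perturb the finite-dimensional marginals of $\psi_\perp$. This requires a sufficiently quantitative local central limit theorem for $\|\psi_\perp\|_2^2$ --- a sum of independent exponentials with widely spread rates --- with density slowly varying on unit scale, so that the cancellation between $e^{a_N\|\psi_\perp\|_2^2}$ and $(N-\cdot)_+$ goes through uniformly over finite coordinate blocks; equivalently, a steepest-descent analysis of the partition function with the saddle at $a_\star(\theta)$, which is most delicate precisely when $\theta\uparrow C_d$ and $a_\star(\theta)$ is small.
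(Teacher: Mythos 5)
Your treatment of part a and of the strictly supercritical part of b follows essentially the same route as the paper: diagonalise in Fourier, treat the zero mode as conditionally uniform on a disc, and, for $\theta<C_d$, tilt by $e^{a_N\|\psi_\perp\|_2^2}$ with $a_N$ chosen so that the tilted mean mass is exactly $N$ (this is the paper's choice of $m_N$ in \eqref{eq:mass_finite_def}, with $a_N=\theta m_N$), then control the hard constraint with a local CLT for a sum of independent exponentials; the cancellation you describe between the tilt and the constraint is exactly \Cref{claim:int_approx}, proved via the Gnedenko--Kolmogorov local limit theorem \Cref{lem:lclt}. For $\theta>C_d$ your argument --- $N^{-1}\|\psi_\perp\|_2^2\to C_d/\theta$ in probability, so the bounded weight $W_N:=(N-\|\psi_\perp\|_2^2)_+/N$ converges to the positive constant $1-C_d/\theta$ and washes out in $L^1$ against bounded local observables, while the zero mode becomes uniform on the disc of radius $\sqrt{1-C_d/\theta}$ --- is correct, and working with bounded test functions rather than moments lets you bypass the $\eta$-corner bookkeeping of \Cref{lem:corner_estimate}. (One small slip: with your normalisation the limiting mass is $m(\theta)=a_\star(\theta)/\theta$, the quantity added to $-\Delta$ in \eqref{eq:mass}; the square root should not be there.)

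The genuine gap is the critical case $\theta=C_d$, which is part of item b. There $\bE[W_N]\to 0$: by \Cref{lem:gfconv} the deterministic offset $N-\bE\|\psi_\perp\|_2^2$ is only $O(N^{2/d})$, while $\|\psi_\perp\|_2^2$ fluctuates on scale $\sqrt N$, $\sqrt{N\log N}$, $N^{2/3}$ in $d\ge 5$, $d=4$, $d=3$ respectively (in $d=3$ the low modes have means of order $N^{2/3}$, so the sum is not even asymptotically Gaussian). Hence the weight fluctuates on the same scale as its mean, the normalised reweighting $W_N/\bE[W_N]$ does \emph{not} tend to $1$ in probability or in $L^1$, and your ratio $\bE[\ff\,W_N]/\bE[W_N]$ is a genuine $0/0$ situation that concentration of $W_N$ cannot resolve. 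What is needed instead is (i) a quantitative anti-concentration lower bound on $\bP(\|\psi\|_2^2\le N)$, which the paper proves in \Cref{lem:critplb} (a CLT argument for $d\ge4$) and \Cref{lem:critplb3} (a bespoke argument in $d=3$ where the Lyapunov condition fails), and (ii) a decoupling step showing that conditioning on the finitely many modes entering a local observable perturbs the constraint probability by an amount negligible relative to that lower bound; the paper achieves this with dimension-dependent choices of the corner cutoff $\eta$ and H\"older estimates. Without an input of this type your argument establishes b only for $\theta>C_d$, and misses precisely the critical statement that the limit is the massless GFF with $U=0$ almost surely.
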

The spherical model defined with the $\ell^2$-norm being equal to $N$, as opposed  $\ell^2$-norm less than or equal to $N$ considered in this article has a rich history. It was originally introduced by Berlin and Kac \cite{KAC} as a model of ferromagnetism more amenable to analysis by the method of steepest descent. In particular, the limiting free energy was shown to match with a that of a corresponding Gaussian Free Field.  A detailed picture of the low temperature regime for the case of complex valued spins was established by Lukkarinen in \cite{LUK}, with a specific emphasis on the nature of the ``condensate'' (The Fourier modes corresponding to the small eigenvalues) as the dispersion relation varies, and the convergence of the ``normal fluid'' (the Fourier modes corresponding to the orthogonal complement of the condensate) to a critical Gaussian field. During the preparation of our article, Aru and Korzhenkova put forth an article \cite{ARU} which characterized the local limit of the spherical model for real valued spins, as well as the spin $O(N)$ model with simultaneous limit of large spin dimensionality, the major advancement being the rigorous proof of the local limit in the critical case $\theta=C_{d}$. 

{Our main focus is the local limit of the NLS field, of which \Cref{thm:main_sph} is an important ingredient. There is a significant overlap between Theorem \ref{thm:main_sph} and the main theorem of \cite{ARU} as well as a subcase of the main result of \cite{LUK}. In particular, the $\theta>C_{d}$ case can be deduced from the methods of \cite{ARU}, and our technique here is an alternative. However, there are notable differences which arise from the differences in the constraint. The ``$\leq$" constraint allows for significantly simplified proofs in the $\theta= C_{d}$ case. Additionally, in the $\theta<C_{d}$ case, we also require the local limit when boundary conditions are imposed, which is not covered by \cite{ARU} or \cite{LUK}.  Our proof techniques follow somewhat diverging paths, proceeding via only analyzing the Fourier modes, more details on this in \Cref{sec:spherical_proofs}. We remark however that we do use the sharpened Green's function estimates verified in \cite{ARU}. However, we only need a weak version of this estimate for $d \ge 4$. Furthermore, we do not need the precise sign of the difference of the finite and infinite volume Green's functions in $d=3$.    
%While the main results in \cite{LUK} are for complex valued spins and also proceed via the analysis of the Fourier modes, the dispersion relation under consideration here as well as the ``$\leq $" make for a significantly simplified proof.   

}
With the aid of \Cref{thm:main_sph}, the effect of the tilt is apparent. If $\nu>\nu_c$, then item c. of \Cref{thm:main_summary} tells us that some mass from the local limit of the spherical model is `lost'. In  contrast if $\nu<\nu_c$, then no mass is `lost' and the behavior is similar to that of the spherical model, and in fact identical in the thermodynamic limit. In fact, the tilt induces the following ``double'' phase transition on the spherical model. 
\begin{cor}\label{cor:doublephase}
There exists a $\nu>0$ such that $\Psi_{N}^{\nu, \theta}$ undergoes two phase transitions as $\theta$ is varied, the first at $\theta=C_{d}$ and the second at $\nu_{c}^{-1}(\theta)>C_{d}$, where the local limit is first massive, then non massive, then finally massive again (see \Cref{fig:phasediag}) 
\end{cor}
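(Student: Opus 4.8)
The plan is to deduce \Cref{cor:doublephase} from \Cref{thm:main_summary} once the rough shape of the threshold curve $\theta\mapsto\nu_c(\theta)$ is understood. Concretely, I would look for a single height $\nu_0>0$ so that the horizontal line $\{(\theta,\nu_0):\theta>0\}$ in \Cref{fig:phasediag} meets all three phases in the required order: region a (a $\theta^{-1/2}$ massive GFF of mass $m(\theta)>0$, hence exponentially decaying correlations) for $\theta<C_d$; region b (a $\theta^{-1/2}$ massless GFF plus the shift $U\mv1$, with $U$ uniform on a disc of radius $\sqrt{1-C_d/\theta}$, hence polynomially decaying correlations together with a non-degenerate global constant since $\theta>C_d$) for $\theta$ in an interval $[C_d,\theta_1]$ with $\theta_1>C_d$; and region c (a $\theta^{-1/2}$ massive GFF of mass $M>0$) for all large $\theta$. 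The region-b limit carries a non-degenerate additive constant $U\mv1$ and has polynomially (not exponentially) decaying fluctuations, whereas the region-a and region-c limits have neither such constant, so the three local limits are pairwise distinct; hence along $\{\nu=\nu_0\}$ the local limit changes type at least twice, and the first change is necessarily at $\theta=C_d$ by parts a and b of \Cref{thm:main_summary}. This reduces the corollary to two facts about $\nu_c$: (i) there is $\theta_1>C_d$ with $\inf_{0<\theta\le\theta_1}\nu_c(\theta)>0$; and (ii) $\liminf_{\theta\to\infty}\nu_c(\theta)<\inf_{0<\theta\le\theta_1}\nu_c(\theta)$. Any $\nu_0$ strictly between the two sides of (ii) then works, the second transition occurring at $\theta_\star:=\sup\{\theta:\nu_c(\theta)\ge\nu_0\}\in(\theta_1,\infty)$; if $\nu_c$ is strictly decreasing on $(C_d,\infty)$, as I expect, $\theta_\star$ is the unique solution of $\nu_c(\theta_\star)=\nu_0$, matching the statement.

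For (i), I would show that for small $\nu$ the exponential tilt linking the NLS and spherical fields in \eqref{eq:NLS_sph_relation} is sub-perturbative, uniformly in $\theta$ over compacts. Since $p>4$, the typical value of $\theta\nu_N\|\Psisph^\theta\|_p^p$ is only of order $N^{(4-p)/2}\to 0$, so the sole mechanism for it to reach order $N$ is a condensate depositing an order-$N$ amount of $\ell^2$-mass on $O(1)$ sites. Such a condensate carrying $aN$ units of $\ell^2$-mass costs, under the spherical law, of order $aN/\sigma^2(\theta)$ (with $\sigma^2(\theta)$ the single-site variance of $\Psisph^\theta$, bounded and continuous in $\theta$ by \Cref{thm:main_sph}), while it gains $\theta\nu_N(aN)^{p/2}\asymp\theta\nu^{(p-2)/2}a^{p/2}N$; once $\nu$ lies below a constant depending only on $p,d$ and continuously on $\theta$, the cost beats the gain for every $a\in(0,1]$, so $\bE[\exp(\theta\nu_N\|\Psisph^\theta\|_p^p)]=e^{o(N)}$, the partition functions $\mathcal{Z}_N(\theta,\nu)$ and $\mathcal{Z}_{\mathrm{sph},N}(\theta)$ agree to exponential order, and we are subcritical; hence $\nu_c(\theta)$ exceeds that constant. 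These single-site estimates are essentially the partition-function analysis of Dey, Kirkpatrick and the first author; the only addition is uniformity over compact $\theta$-intervals, obtained by tracking the continuous $\theta$-dependence of $\sigma^2(\theta)$.

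For (ii), I would use a near-optimal test configuration. When $\theta>C_d$ the spherical field typically carries only $\approx(C_d/\theta)N<N$ units of $\ell^2$-mass, so a condensate may absorb up to $\approx(1-C_d/\theta)N$ units at no cost from the constraint; taking a single bump of $\ell^2$-mass $bN$ with $b=1-C_d/\theta$ and weighing its gain $\asymp\theta\nu^{(p-2)/2}b^{p/2}N$ against its large-deviation cost $\asymp\theta bN/C_d$ shows that condensation becomes favorable --- so $\nu>\nu_c(\theta)$ --- once $\nu\gtrsim(1-C_d/\theta)^{-1}$, i.e. $\nu_c(\theta)\lesssim\theta/(\theta-C_d)$; this stays bounded as $\theta\to\infty$ while forcing $\nu_c(\theta)\to+\infty$ as $\theta\downarrow C_d$. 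For $\theta<C_d$ the constraint is already saturated, so a condensate must displace bulk mass, paying both a strictly positive free-energy price and a penalty from the resulting rise of the bulk mass parameter towards $m(\theta)$; one therefore expects $\nu_c(\theta)$ to remain strictly above $\liminf_{\theta'\to\infty}\nu_c(\theta')$ on all of $(0,C_d)$ (and to blow up again as $\theta\downarrow 0$). Granting these behaviours, (i) and (ii) hold for any $\theta_1$ slightly above $C_d$, and the precise estimates for $\theta<C_d$ are exactly where the value $m(\theta)$ from \eqref{eq:mass} enters.

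The main obstacle is (ii): turning these test-configuration heuristics into matching bounds on $\nu_c(\theta)$ requires ruling out that a spatially extended condensate --- a genuine ``soliton'' spread over many sites, in the spirit of the low-temperature analysis of \cite{LUK} --- outperforms a single bump, which demands an entropy estimate for such configurations. If the partition-function analysis of Dey, Kirkpatrick and the first author (see also \Cref{thm:supercritical}) already supplies a variational formula for $\nu_c(\theta)$, then (ii) reduces to reading off the two limiting behaviours above, which I expect to be routine. Either way, once (i) and (ii) are secured, \Cref{cor:doublephase} follows immediately from \Cref{thm:main_summary}.
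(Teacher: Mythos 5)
Your high-level reduction --- choose a single $\nu_0>0$ whose horizontal line in the $(\theta,\nu)$ plane crosses the subcritical/massive, subcritical/massless, and supercritical regions in that order, then read off the three distinct local limits from \Cref{thm:main_summary} --- is the same strategy the paper uses. Where you diverge is in justifying the required shape of the transition curve. The paper simply invokes \Cref{thm:phasecurve} (cited from \cite{DKK}): $\nu_c:(0,\infty)\to(R_p,\infty)$ is continuous and strictly decreasing with $\nu_c(\theta)\to R_p$ as $\theta\to\infty$, and $R_p>0$ since $p>4>2+4/d$. This immediately gives $\nu_c(C_d)>R_p$, so any $\nu\in(R_p,\nu_c(C_d))$ works, and the second transition occurs at the unique point $\theta_c(\nu)=\nu_c^{-1}(\nu)>C_d$. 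You instead attempt to rebuild these facts from single-site condensate heuristics, and you acknowledge the gaps yourself: your step (ii) does not exclude a spatially extended condensate outperforming a single bump, and pinning down the location and uniqueness of the second transition requires $\nu_c$ to be continuous and strictly decreasing, which you only say you ``expect.'' Both gaps are precisely the content of \Cref{thm:phasecurve}, which the paper already cites and uses throughout \Cref{sec:landscape}; once you have that theorem your conditions (i) and (ii) become trivial (one has $\inf_{\theta\le\theta_1}\nu_c=\nu_c(\theta_1)>R_p=\lim_{\theta\to\infty}\nu_c$), and the corollary closes in a few lines. As written, your proposal re-derives (heuristically, and incompletely) a result the paper is allowed to cite, and without completing that re-derivation the proof does not stand on its own.
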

In particular, what this means is that the decay of correlations for $\Psi^{\theta,\nu}_{N}$ as $\theta$ is increased is first exponential, then polynomial, and finally exponential again. 
\subsection{Heuristics behind the phase transition}\label{sec:heuristics}
Let us now explain the heuristics behind \Cref{thm:main_summary}, particularly, the reason behind the appearance of mass in the GFF, and its subsequent `loss' for large $\nu$. This program was initiated by the first author along with Dey and Kirkpatrick in \cite{DKK} which forms the basis of this work. The key idea is that large values of $\nu$ forces the focusing NLS field to concentrate $O(N)$ amount of  $\ell^2$ mass in a small region (the exact connection with the NLS equation and soliton formation is explained later in \Cref{sec:soliton_NLS}). To that end, let us consider the (random) set $U$ where  absolute value of the NLS field is at least $ \sqrt{\eps N}$. If we assume the total $\ell^2$-mass concentrated on $U$ is $aN$, then the NLS field on $U$ should concentrate around the minimal energy field on $U$ constrained to have mass $aN$. This minimal energy is captured by the function $I$ which we define below in \eqref{def:Ifn}. 

Note that because of the $\ell^2$-constraint, $U$ is not too large, only of order roughly $\eps^{-1}$ in size. It is not hard to see  (\Cref{lem:lp_largest}) that the largest $\ell^p$-norm of a function with constraints in both $\ell^2$ and $\ell^\infty$ norm is obtained by assigning the largest possible absolute values to as many vertices as possible. Obviously, the largest number of vertices where we can assign absolute value $ \sqrt{\eps N}$, if the $\ell^2$ norm is bounded by $N$, is given by $\eps^{-1}$. Thus the maximum $\ell^p$-norm with the given constraints is $\eps^{-1}(\eps N)^{p/2}$. Overall the tilt term is
$$
\exp\left(\frac2p (\nu \eps)^{\frac{p-2}{2}} N\right)
$$
Thus if $\eps>0$ is small enough, the tilt term is insignificant in exponential scale, and the dominant term is the partition function of the spherical model with mass at most $(1-a)N$. This contribution (which we call the contribution of the entropy) is dictated by the log partition function of a GFF whose $\ell^2$-mass is constrained. This is captured by the function $W$ which we define below. The main result of \cite{DKK} is to show that the log partition function of the NLS field concentrates on a value of $a$ which minimizes the sum of the contribution  from $I$ and $W$.

Now let us define the terms $I$ and $W$ more concretely, the above heuristics should demystify the motivation behind these definitions.
Let us define energy of a function $\psi_N : \bT_n^d \to \bC^N$, denoted $\cH_N(\psi_N)$ as
\begin{equation}
  \cH_N(\psi_N):=\norm{\nabla \psi_N}_{2}^{2} - \nu_N\norm{\psi_N}_{p}^{p}. \label{eq:soliton_energy}
 \end{equation}
 The reasoning behind calling it `energy' should be clear by recalling the Hamiltonian in the NLS measure \eqref{eq:psi}.
The minimum energy at (rescaled) $\ell^{2}$ mass $a$ is denoted
\begin{align}
    I_N(aN):=\inf\left\{ \cH_N(\psi_N):\psi_N\in \ell^{2}(\bT_n^d), \norm{\psi_N}^{2}_{\ztwo}=aN\right\}.
\end{align}
It is known (see \cite{DKK} and \cite{C14}) that $$\lim_{N \to \infty} \frac1N I_N(aN) = I(a),$$ where $I(a)$ can be characterized as the minimum energy with  $\ell^{2}$ mass $a$ in infinite volume as follows:
	\begin{align}\label{def:Ifn}
		I(a):=\inf\left\{ \cH(\psi):\psi\in \ell^{2}(\dZ^{d}), \norm{\psi}^{2}_{\ztwo}=a\right\}.
	\end{align}
   where $$\cH(\psi) = \norm{\nabla \psi}_{2}^{2}-\frac{2}{p}\nu^{\frac{p-2}{2}} \norm{\psi}_{p}^{p}.$$
Note the (appriximate) scaling relation $$``\cH(\psi_N/\sqrt{N}) = \frac1N\cH_N(\psi_N)''$$
where the approximation comes from the fact that the periodic boundary conditions are somewhat incompatible near the boundary for the energy computation. We don't need the exact scaling relation in what follows.

As for the function $W$, let $\gD$ denote the graph Laplacian on the discrete torus, defined as 
\begin{equation}
    (\Delta \psi)(\mvx) = \sum_{\mvy \sim \mvx} \psi(\mvy) - \deg(\mvx)\psi(\mvx), \qquad \text{ for all }\mvx \in \bT_n^d.\label{eq:laplacian}
\end{equation}
where $\deg(\mvx)$ is the degree of $\mvx$, and $\mvx\sim \mvy$ denotes the $\mvx$ and $\mvy$ being adjacent.
We point out to the astute reader that sometimes the graph Laplacian is defined as the above quantity divided by $\deg(x)$, which might be a source of confusion with normalization in other references.
Now define 
\begin{equation}
    W_N(b) := \frac1N \log \int_{\bC^N} \exp(-\|\nabla \psi_N \|^2_2 )1_{\|\psi_N\|^2_2 \le bN} d \psi_N \label{eq:Wn}
\end{equation}
and
\begin{equation}
    W(b) := \lim_{N \to \infty} W_N \label{eq:W}
\end{equation}
The fact that the above limit exists is proved in \cite{DKK}. Let
\begin{equation}
    F_{\nu, \theta}(a) = F(a):= W(\theta(1-a)) + \frac\theta{\nu} I(\nu a); \qquad {a \in [0,1]}.\label{eq:F}
\end{equation}
and define $F_{\min} = \min\{F(a):a \in [0,1]\}$. It is proved in \cite{DKK}*{Theorem 2.3} that
\[
\lim_{N\to \infty}\frac{1}{N}\log \cZ_{N}(\theta,\nu)=\log(\frac{\pi}{\theta})-F_{\min}.
\]
We will later need a more quantitative version  of the above convergence in \Cref{sec:tempering}.
It was proved in \cite{DKK}*{Lemmas 4.7, 5.9} that $W$ and $I$ are continuous functions in $[0,\infty)$. Define
\begin{align}\label{def:minset}
	\mathscr{M}(\theta,\nu):= \argmin_{0\le a\le 1} \left(F(a)\right)
\end{align}
as the set of minimizers for the variational formula. This set is compact by the continuity of $W$ and $I$. 

Going back to the heuristics with the set $U$, the splitting of mass on $U$ and $U^c$ is dictated by \Cref{def:minset}. It turns out that $\mathscr{M}(\theta,\nu) \ni 0$ if $\nu<\nu_c$ and $\mathscr{M}(\theta,\nu)$ is bounded away from 0 if $\nu>\nu_c$. The former is called the \emph{dispersive phase} and the latter the \emph{solitonic phase} (more on this in \Cref{sec:soliton_NLS}). Furthermore, if $\nu>\nu_c$, $\theta(1-a)<C_d$ for every $a \in \mathscr{M}(\theta, \nu)$ which explains the positive mass of the local limit GFF in \Cref{thm:main_summary}, item c.

Note however that the analysis in \cite{DKK} is mostly at the level of partition functions, and not at the level of the distribution of the NLS field itself. Therefore, several technical hurdles need to be crossed to make the above heuristic precise, which is what we accomplish in this manuscript.

\subsection{Motivations: Solitions  and NLS equation}\label{sec:soliton_NLS}

Although we do not directly deal with the associated PDE in this article, it is worthwhile explaining the PDE perspective. To draw analogues with the continuum, we need to introduce a parameter $h$ which is equal to the \emph{lattice spacing}, or the distance between nearest neighbours of $\bT_n^d$ in $\bR^d$. The focusing discrete NLS now with nonlinearity parameter $p$ is given by
\begin{equation}\label{eq:DNLS}
    {\mathsf i}\frac{\partial}{\partial t}\psi(\mvx,t) = -\frac1{h^2}\Delta \psi(\mvx,t) -|\psi(\mvx,t)|^{p-2}\psi(\mvx,t), \qquad \mvx \in \bT_n^d, \quad t>0
\end{equation}
where $\mathsf i = \sqrt{-1}$.
Without the nonlinear term $|\psi(\mvx,t)|^{p-2}\psi(\mvx,t)$, \eqref{eq:DNLS} is known as the free Schr\"odinger equation which can be solved explicitly, by diagonalizing the Laplacian using the Fourier basis. The free Schr\"odinger equation is \emph{dispersive}, and one of the consequences is that the $\ell^q$ norm of the solution locally tends to 0 as $t \to \infty$, for any $q\in (2,\infty]$. 

The term \emph{focusing} refers to the negative sign in front of the nonlinear term, which in a very precise sense combats the dispersive effect of the linear evolution. A unique feature of the focusing DNLS is the fact that it admits \textbf{soliton} solutions, which arise explicitly as a consequence of the competition between the dispersion and non linearity. These are stable, time periodic, spatially preserved solutions to \eqref{eq:DNLS} Changing this sign leads to the \emph{defocusing} variant of this equation, where the non linearity in a sense aids the dispersion. 

The DNLS equation \eqref{eq:DNLS} admits a Hamiltonian of the form
\begin{equation}\label{eq:DNLS_hamiltonian}
\cH(\psi)  = \frac{1}{Nh^{2}} \|\nabla \psi\|_2^2 - \frac{2}{p\cdot N}  \|\psi\|_p^p,
\end{equation}
where $\psi$ and $\bar{\psi}$ are the canonical conjugate variables. By N\"oether's theorem (see \cite{TAO}*{Section 1.5}), both the Hamiltonian $\cH$ and the  mass $\|\psi\|_2^2$ are conserved in time under the DNLS flow. By Liouville's theorem, \cite{TAO}*{Exercise 1.33}, the Lebesgue measure on $\ell^{2}(\bT^{d}_{n})$ is also invariant with respect to the flow. Consequently, the following Gibbs measure
\begin{equation}
   \mu_{\beta, B}(\psi)  \propto \exp(-\beta \cH(\psi))1_{\|\psi\|^2_2 \le B N}\label{gibbs_DNLS}
\end{equation}
is invariant with respect to the DNLS flow (more precisely, if we consider $(\Psi(\mvx,0))_{\mvx \in \bT_n^d} \sim \mu_{\beta, B}$ and let it evolve using \eqref{gibbs_DNLS}, then $(\Psi(\mvx,t))_{\mvx \in \bT_n^d} \sim \mu_{\beta,B}$ as well for any $t>0$.)

The Gibbs measure in \eqref{gibbs_DNLS} was first analyzed by Chatterjee and Kirpatrick in \cite{CK12} for $p=4$ and $d\geq 3$. Chatterjee extended this result to a parameter regime where the gradient term is non-negligible but only for $p<2+4/d$ which yields a non-trivial scaling limit \cite{C14}. It is also worthwhile to note that the result in \cite{C14} is not for a Gibbs measure of the form \cref{eq:DNLS}, but from a `uniformly chosen' random field with given mass and energy properly chosen. The scaling regime considered in this article (and first introduced in \cite{DKK}) is distinct from either of these two cases.  As observed in \cite{CK12}, in the limit $N\to \infty$ and $h\to 0$ slowly such that the ``side length'' $N^{\frac{1}{d}}h$ remains bounded away from $0$ or diverges, the non linear term dominates the dispersion. This can be seen by evaluating the Hamiltonian at a function $\phi$ which takes value $\sqrt{BN}$ on a single lattice site. Observe that 
\[
\frac{2}{pN}\norm{\phi}_{p}^{p}=\frac{2N^{\frac{p-2}{2}}B^{\frac{p}{2}}}{p}\text{ and }\frac{1}{Nh^{2}}\norm{\nabla \phi}_{2}^{2}=\frac{2dB}{h^{2}}.
\]
When $p>4$, the non linearity not only beats the dispersion, but in fact dominates the entropy associated to the uniform measure on the $\ell^{2}$ ball, and there is no phase transition. The scaling of $N^{-\frac{p-2}{2}}$ introduced in front of the non linearity in this article and \cite{DKK} is meant to combat this, such that the linear and non linear parts of the Hamiltonian always contribute on the same scale. The scaling in \cite{C14} is also such that the two parts contribute on the same scale, however the condition that $p<2+\frac{4}{d}$ is crucial. When $d\geq 3$ and $p>4$, we are in a regime of scaling where the continuum limits of discrete solitons diverge in the appropriate norms, which removes the possibility of considering the analogous continuum scaling in \cite{C14}. 

We can relate the parameters $\nu, \theta,N$ of the NLS field \eqref{eq:psi} to the parameters $\beta, B,h,N$ that appear in \eqref{gibbs_DNLS}. This is simply given by 
\begin{equation}
    \theta  = \frac{\gb B}{Nh^{2}}\text{ and }\nu_N = h^{2}B^{\frac{p-2}{2}}.
\end{equation}
%which, when expanded, leads to 
%\begin{equation}
 %    \nu^{\frac{p-2}{2}} = B^{2/d}N^{\frac12(p-2-\frac4d )}.
%\end{equation}
%To be in the local limit regime, we must set $h=1,N \to \infty$. Then from the equations above, we see that $B=N$ and hence $\nu^{\frac{p-2}{2}}$ tends to $0$  if $p<2$ and tends to $\infty$ if $p>2$. We are interested in the case $p>2$, which we see corresponds to $\nu \to \infty$ in \eqref{psi}. In this case, the tilt always dominates the $l^2$-norm of the gradient, and hence the measure \eqref{gibbs_DNLS} always produces a trivial outcome. This is why we add the extra parameter $\nu$ which counteracts the effect of the nonlinearity, and produces a non trivial phase transition. This was the insight behind the introduction of this model in \cite{DKK}.
%\note{check with Patha Da.}

%On the other hand in the scaling limit regime, we must have $h \to 0$. In  \cite{CK12}, the authors looked at \eqref{eq:DNLS} with $p=4$, $h=N^{-\alpha}$ for some $\alpha \in (0,1/2)$.  It is easy to see that in this setup, we have $\theta   \asymp h^{d-2} \to 0$ and $\nu \asymp h^2N \to \infty$. \note{Kesav can you check?} and therefore, the effect of the gradient term is negligible. 

Our result is thus a natural addition to the results in \cites{DKK,C14,CK12}, we work with a parameter setup which yields a non-trivial discrete limit.

The study of the discrete NLS stems in part to understand qualitative aspects of the continuum equation. The continuum focusing NLS displays subtle behavior in terms of well-posedness and depends crucially on $p$ and the regularity of the initial data, see Chapter 14 of \cite{TES} and \cite{TAO} for a broader overview, and there remain many regimes where the long term behavior of solutions is poorly understood. In particular, there is a drastic difference in the requirements for well posedness when $p<2+\frac{4}{d}$ and $p\geq 2+\frac{4}{d}$, and the finite time blow up phenomenon for large $p$ has an analogue in discrete regime, in the form of a threshold for soliton formation \cite{WEI99}. 

There is a rich collection of literature on the construction of continuum invariant measures for the NLS, as well as their uses in addressing questions such as well-posedness. The typical approach here is to construct exponential tilts of an appropriate Gaussian Free field, and showing that the tilt is integrable. For the defocusing NLS, invariant measures were used by Bourgain to prove global well-posedness for the cubic NLS in $d=1$ \cites{B94} and $d=2$ \cite{B96} on tori, and later by Tzvetkov for the NLS defined on the disk in $\bR^{2}$ \cite{TZE}. Recently, Bourgain's work in $d=2$ was significantly extended beyond cubic nonlinearity by Deng, Nahmod and Yue \cite{DNY}. We also point out a remarkable phenomenon, the use of randomly sampled initial data to construct solutions in regularity regimes that are deterministically ill-posed, identified by Tzvetkov and Burq \cites{TZEB1,TZEB2}. It is worth noting that the non linearity terms are Wick ordered in $d>1$, since the underlying Gaussian free field is supported on distributions which cannot be evaluated at points. 

The focusing case has also been examined, particularly in the regime $p<2+\frac{4}{d}$, where $H^{1}$ initial data is adequate to ensure a global solution. This was initiated by Lebowitz, Rose and Speer on the 1-dimensional torus \cite{LRS88}. This measure was proved to be invariant for the focusing NLS by McKean and Vaninsky \cite{MV94}, as well as Bourgain \cite{B94}. The cases of $p=2+\frac{4}{d}$ are more subtle, and normalizability (finiteness of the partition function) holds in $d=1$ upto and including a critical mass threshold as proved by Oh, Sosoe and Tolomeo \cite{OST}, but not in $d=2$ \cite{BS96} for any mass cut off as proved by Brydges and Slade. The approach of tilting the Gaussian Free Field fails even with Wick ordering in $d\geq 3$, as the field is too rough, which is why the approach of discretization has been adopted in this article, as well as in \cite{CK12} and \cite{DKK}. 

Understanding the behavior of a typical function from the continuum focusing NLS measure as the volume tends to infinity is a problem that is quite involved, and has attracted a great deal of interest. Early results on the triviality of the limit when $p=4$ and $d=1$ are due to Rider \cite{R02}, particularly that there is a strong concentration of the measure about a continuum soliton of appropriate mass, which can be freely translated. This was significantly expanded upon by Tolomeo and Weber in \cite{TW}, again for $p=4$ and $d=1$ by tuning the strength of the non linearity and examining the corresponding regimes. In particular, there is a critical regime where a phase transition similar to the one considered here and in \cite{DKK} emerges, and the subcritical limiting measure is Gaussian. Corresponding to the strongly non linear case for the same parameters, Seong and Sosoe recently verified a central limit theorem for fluctuations of a the typical function around the soliton, namely that they converge to white noise \cite{SS24}. Our result here in a precise sense is the higher dimensional analogue of \cite{TW} and \cite{SS24} for the regime of critical strength of non linearity and $p>4$. It would be interesting to see whether our results can be extended to study the fluctuations in the vicinity of the discrete soliton like \cite{SS24}, but difficulties such as verifying local convexity of the Hamiltonian remain. From a rather different perspective, understanding the infinite volume limit of an invariant measure for the focusing NLS led Chatterjee to a proof of a version of the \textbf{soliton resolution conjecture} \cite{C14} for general $p<2+4/d$, which roughly states that for generic initial data, after sufficient time has elapsed a solution should be composed of a soliton and a portion that disperses away, and invariant measures are a natural means of characterizing the term ``generic''. Although still in finite volume, we also direct attention towards the work of Greco, Oh, Tolomeo and L. Tao \cite{GOTT} for the case of ``weak'' non-linearity in $d=2$, where the strength of the non linearity term is sent to zero, which also results in a phase transition.

\subsection{Outline of the proof}\label{sec:outline}
The proof of Theorem \ref{thm:main_summary} is split into two parts, corresponding to subcritical and supercritical $\nu$ for a given $\theta$. Common to both is the characterization of the phases of the measure, as put forth in \cite{DKK}. Namely, we deterministically define a (possibly empty) set $$U(\psi,\eps):= \{\mvx: |\psi(\mvx)| > \sqrt{\eps N}\}$$ for some threshold $\eps$ (to be carefully chosen later). Because of the $\ell^2$-bound on $\psi$, this set is of size $O(\eps^{-1})$, again deterministically. As mentioned before, the mass of the NLS field on $U$ concentrates on a minimizer of the energy functional $F$ \eqref{eq:F}, and on the rest the $\ell^{\infty}$ norm is $o(\sqrt{N})$.
We need $U$ to satisfy some more technical properties, which is recapped again in Section \ref{sec:tempering}, but let us ignore those for now. This statement is fleshed out in \Cref{prop:U}, which we call \emph{tempering the NLS field.} 
One of the main results of \cite{DKK} is that $\frac{1}{N}\norm{\psi|_{U}}_{2}^{2}$ jumps discontinuously from $0$ to $>0$ as $\nu$ is changed from subcritical to supercritical, {however this was only done through the lens of partition functions, which is not enough for the results in this article.} The key to the actual behaviour of $U$ is an analysis of the energy functional $F$, which is done in \Cref{sec:landscape}. {There are certain gaps in our knowledge of the behaviour of $F$, and indeed once those are filled, the techniques of our paper can be pushed through to give finer versions of \Cref{thm:main_summary}, particularly in the case $\nu \ge \nu_c$.}

In the subcritical case ($\nu<\nu_c$), the goal is to show that the local limit of $\Psi^{\theta,\nu}_{N}$ is the same as $\Psisph$. Broadly, this is accomplished in two steps:
\begin{enumerate}
\item We modify the proof of convergence of free energy in \cite{DKK} to obtain large deviation estimates for  $\norm{\Psi|_{U}}_{2}^{2}$, namely Proposition \ref{prop:U}. These estimates enable us to deduce an effective $\ell^{\infty}$ bound for $\Psi^{\nu, \theta}_{N}$, namely that with high probability $\norm{\Psi}_{\infty}$ is $o(\sqrt{N})$.  The verification for the $\ell^{\infty}$ bounds are distinct for $\theta\leq C_{d}$ and $\theta>C_{d}$, the latter requires an additional application of a discrete version of the GNS inequality. 
\item Verify a gaussian tail estimate for $\Psisph$, and use it to upgrade the previous $\ell^{\infty}$ bound to $O(N^{\gb})$, where $0<\gb<1/2$, and is small enough such that any function with this $\ell^{\infty}$ norm has $\ell^{p}$ which is $o(N^{\frac{p-2}{2}})$. This allows us to disregard the exponential tilt. 
\end{enumerate}
All of this is accomplished in \Cref{sec:subcritical}.
\bigskip

The supercritical case $(\nu>\nu_c)$ is more involved, and essentially involves the removal of the region of concentration, followed by an application of a version of the subcritical argument. The order of events are as follows.  
\begin{enumerate}
\item Separate out the set of concentration, yielding an $\ell^{\infty}$ bound outside $U$. This also removes a (possibly random) but strictly non zero fraction of the mass. This involves the large deviation estimates for $\norm{\Psi|_{U}}_{2}^{2}$ mentioned earlier, namely Proposition \ref{prop:U}. 
\item Verify a tail bound for the spherical model with reduced mass, and boundary condition from the gradient estimates of the separation. The tail bound is for the spherical law with a natural harmonic extension (with correctly chosen mass) removed. The subtlety here is that the boundary condition can be large, but not $O(N)$ and we want the tail bound even near the boundary. This is done in \Cref{lem:spherical_tail} and is of independent interest.
\item Use the tail bound to upgrade the $\ell^{\infty}$ bound to be $o(N^{\gb})$ on the outside of an expanded set $\tilde{U}$, which none the less has poly-logarithmic size, and moreover, the mass fraction on $\tilde{U}$ is still close to the mass fraction on $U$. Note here that even a polylog size set outside $U$ can have potentially a lot of mass as the $\ell^{\infty}$-bound is only $\sqrt{\eps N}$.  We bypass this by finding set containing $U$ using a delicate choice of parameters from the tempering \Cref{prop:U}. This is done in \Cref{prop:tilde_U}.
\item Use the subcritical argument, namely that the non linear exponential tilt is now $o(1)$ on $\tilde{U}^{c}$, to show that the local limit on $\tilde{U}^{c}$ is the same as the spherical model with small boundary condition, which is separately proven to locally converge to a massive GFF.  Technicalities one needs to overcome involve preserving some aspects of domain Markov property, despite the global $\ell^{\infty}$-bound imposed by conditioning on $U$.
\end{enumerate}
All of the above ideas  are executed in \Cref{sec:supercritical}.

\medskip

We now turn to Theorem \ref{thm:main_sph}. 
\begin{enumerate}
\item Re-express the spherical law in the Fourier Basis. This makes it clear that we can regard the spherical law as the sum of a zero average GFF and a random constant that is uniform on the disk of radius $\sqrt N$ in $\bC$, conditioned to have $\ell^{2}$ norm appropriately bounded above.
\item The Fourier basis is also useful, as we may represent  $\norm{\Phi_{m}}_{2}^{2}$ and $\norm{\Phizero}_{2}^{2}$ as the sum of independent exponential random variables. 
\item For $\fg:\bT^{d}_{n}\to \bC$, an arbitrary function supported on a fixed and finite set $A\subseteq \bT^{d}_{n}$, we express $\E|\la\Psisph,\fg\ra|^{2r}$ for some $r>0$ in terms of expectations of the random variables in the previous point, obtaining a sum of several ratios of expectations, involving monomials of fixed finite degrees. 
\item In the case $\theta\geq C_{d}$, the conditioning is not too atypical, and is actually typical when $\theta>C_{d}$. We use the convergence in probability of $N^{-1}\cdot \norm{\Phizero}_{2}^{2}$, and the fact that the limit is the same when a fixed finite number of exponential random variables constituting the sum $\norm{\Phizero}_{2}^{2}$ are removed. For the case $\theta=C_{d}$, an anti-concentration estimate is additionally used to bound the denominator term from below. The subtlety here is that some of the exponential random variables constituting $\norm{\Phizero}_{2}^{2}$ have diverging mean as $N\to \infty$, this requires some care, especially when $d=3$. 
\item For the case $\theta<C_{d}$ introduce an exponential tilt by $\norm{\Psisph}_{2}^{2}$ to make the conditioning typical. Controlling the ratio of expectations requires a refined version of the Lindeberg-Feller central limit theorem. The subtlety here arises in considering the case with boundary conditions, and the contribution of a massive harmonic function needs to be carefully canceled out. 

\end{enumerate}
\subsection{Summary of notations}

The following notation is standardized and appears throughout the article.
\begin{enumerate}
\item We use standard big $O$ and little $o$ notation. To recall, let $a_{N}$ and $b_{N}$ be sequences, such that $a_{N}\to \infty$ as $N\to \infty$. Then
\begin{enumerate}
    \item $b_{N}$ is $O(a_{N})$ if there exists a constant $C>0$ such that $b_{N}\leq Ca_{N}$. 
    \item $b_{N}$ is $o(a_{N})$ if $b_{N}/a_{N}\to 0$ as $N\to \infty$.
    \item $b_{N}\asymp a_{N}$ if there exist $0<c<C$ such that $cb_{N}\leq a_{N}\leq Cb_{N}$. We also denote this as $b_{N}=\gO(a_{N})$. 
\end{enumerate}
\item We will denote by $d(\mvx, \mvy)$ the graph distance between $\mvx,\mvy \in \bT^{d}_{n}$. This makes its first appearance in Section \ref{sec:mgff}. Also, for $B\subset \bT^{d}_{n}$, 
\[
d(\mvx,B):=\min \{d(\mvx, \mvy):\mvy\in B\}.
\]
\item Let $B\subset \bT^{d}_{n}$. Then $\partial B$ denotes the inner boundary of $B$, that is 
\[
\partial B:=\{\mvx:\mvx\in B\text{ and } d(\mvx,B^{c})=1\}.
\] 
and $\bar B = B \cup \partial B^c$. (defined in \Cref{sec:mgff})
\item Let $f:\bT^{d}_{n}\to \bC$ be a function, and $B\subset\bT^{d}_{n}$. Then we denote the restriction of $f$ to $B$ as $f|_{B}$, that is 
\[
f|_{B}(\mvx):=\begin{cases}
f(\mvx) \text{ when }x\in B, \\ 0 \text{ when } \mvx \in B^{c}. 
\end{cases}
\]
This makes its first appearance in Section \ref{sec:mgff}. 
\item All norms, unless explicitly mentioned are for functions $f:\bT^{d}_{n}\to \bC$, and the relevant ones are 
\[
\norm{f}_{p}^{p}:=\sum_{\mvx\in \bT^{d}_{n}}|f(\mvx)|^{p},\text{ }\norm{\nabla f}_{2}^{2}:=\sum_{\mvx\sim \mvy}|f(\mvx)-f(\mvy)|^{2}. 
\]
\item The following random fields are always denoted as indicated:
\begin{enumerate}
\item $\Psi^{\theta, \nu}_{N}$ denotes the focusing NLS field, at temperature $\theta^{-1}$ and strength of non linearity $\nu$. (defined in \Cref{sec:intro})
\item $\Phi_{m}$, $\Phi^{U}_{m}$ and $\Phizero$ denote the various avatars of the Gaussian Free Field; the massive, massive with Dirichlet boundary conditions and zero average respectively.   These are defined in \Cref{sec:gff}
\item $\Psisph^{\theta}$  denotes the usual spherical law at inverse temperature $\theta$ defined in \Cref{sec:intro}, and $\Psisph^{f, \theta, U^{c}}$ denotes the spherical law on $U^c$ with boundary condition $f$ on $U$, defined in \cref{sec:spherical_details}.

\end{enumerate}
\item The following constants are important and are always denoted as such:
\begin{enumerate}
\item $C_{d}={\sf G}^{\bZ^{d}}(\mv0,\mv0)$, the diagonal of the Green's function on $\bZ^{d}$, closely related to the expected number of returns of the simple symmetric random walk. 
 \end{enumerate}
\end{enumerate}
\section{GFF preliminaries}\label{sec:gff}

The overarching reference for this section is Chapter 8 of \cite{FL}, and specific proofs are provided for results that are not entirely standard but are required for this article. Let $G = (V,E)$ be a finite, connected graph and recall the Laplacian operator as defined in \eqref{eq:laplacian} acting on $\bC^V$. It is a negative semidefinite matrix acting on $\dC^{V}$, and has kernel of dimension 1 which is spanned by the constant functions on $G$. Let $B\subset V$. We also need to work with the Dirichlet Laplacian \emph{restricted on $B^{c}$} as follows. For any function $f:V \to \bC$, recall that $f|_{B^{c}} \equiv f$ on $B^{c}$ and 0 on $B$. Then $\Delta_{B^{c}} f=g$ where $g(\mvx) = \Delta_{B^{c}}f|_{B^{c}}(\mvx)$  for all $\mvx \in B^{c}$ and $g(\mvx)=0$ for all $\mvx \in B$. This definition may seem backward, but it is important from our perspective, as we want to think of the set $B$ as relatively small, and where we may impose boundary conditions.  
% We define the Dirichlet Laplacian acting on functions supported on $B$ (i.e. on functions which is identically 0 outside $B$) to be simply the restriction of the Laplacian on $B$, and denote it by $\Delta_{B}$. We sometimes also let $\Delta_{B}$ act on functions with domain $B$ by extending the function to all of $V$ and letting it take value 0 on $B^c:=V \setminus B$.
The inverse and resolvents of $\gD_{B^{c}}$ play a prominent role in this work, and we provide a probabilistic description of them now. Let $\sf D_{B^{c}}$ denote the diagonal matrix with entries given by the degrees of the vertices of $B^{c}$. Note that for a given $B\subset V$ and $m\geq 0$, $(-\gD_{B^{c}}+m)$ is positive definite when either $B^c\neq \emptyset$ or $m>0$. It is also useful to recall that
\begin{equation}
    ({\sf D}_{B^{c}})^{-1}(-\Delta_{B^{c}}+m) = -P_{B^{c}} + I+m{\sf D}_{B^{c}}^{-1},
\end{equation}
where $P_{B^{c}}$ is the sub-probability kernel for the simple random walk on $B^{c}$, killed when it enters $B$ (sub-probability as the walk has positive probability of entering $B$). This gives us a relation for the inverse of $(-\gD _{B^{c}}+m)$ in terms of the kernel of a killed random walk. Specifically, when we rearrange, 
\[
(-\gD_{B^{c}}+m)={\sf D}_{B^{c}}(I+m{\sf D}_{B^{c}}^{-1})(I-(1+m{\sf D}_{B^{c}}^{-1})^{-1}P_{B^{c}}). 
\]
Consider a random walk on $G$, denoted $X_{t}^{m,\mvx}$, started at $\mvx\in B^{c}$ which is killed with probability $\frac{m}{m+\deg(\mvy)}$ when it is at $\mvy\in B^{c}$, and is also killed when it enters $B$. It is not hard to verify that 
\[
(I-(1+m{\sf D}_{B^{c}}^{-1})^{-1}P_{B^{c}})^{-1}_{\mvx,\mvy}=\sum_{t=0}^{\infty}\bP(X_{t}^{m,\mvx}=\mvy),
\]
which is the expected number of times that $X_{t}$ visits $y$ before being killed. All assembled, we obtain
\begin{align}\label{eq:gfrw}
{\sf G}^{m,B^{c}}(\mvx,\mvy):=(-\gD_{B^{c}}+m)^{-1}_{\mvx,\mvy} =   \sum_{t=0}^{\infty}\bP(X_{t}^{m,\mvx}=\mvy)\cdot ({\sf D}_{B^{c}}+m)^{-1},\end{align}
where ${\sf G}^{m,B^{c}}(\mvx,\mvy) $ is called the \textbf{massive Green's function} with mass $m$ and Dirichlet boundary condition at $B$. In our case, where $B,B^{c}\subset \bT^{d}_{n}$, ${\sf D}_{B^{c}}$ is replaced by $2d I$.

\subsection{Massive GFF with Dirichlet boundary condition}\label{sec:mgff}

A \textbf{complex Gaussian} with variance $\sigma^2$ is a random variable $X+iY$ where $X, Y \sim $i.i.d.\ N$(0,\sigma^2/2)$. A standard complex Gaussian is a complex Gaussian with variance 1. Complex Gaussians have a useful property, if $X$ is complex gaussian with variance $\gs^{2}$, then $|X|^{2}$ is exponential with mean $\gs^{2}$.  We assume $G = (V,E)$ is a finite, connected, simple graph in this subsection.
\begin{defn}\label{def:GFF}
Let $B\subseteq V$ and $m \ge 0$. Assume $m=0$ only if $B \neq \emptyset$.
The complex valued \textbf{massive Gaussian Free Field on $G$} = $(V,E)$ with \textbf{Dirichlet boundary condition on $B$ and mass $m$} is a centered Gaussian process $\{\Phi^{B^{c}}_{m}(\mv{x})\}_{\mvx\in B^{c}}$ on the vertices of $V$ which is identically 0 on $B$ and 
\[
\bE(\overline{\Phi^{B^{c}}_{m}(\mvx)}\Phi^{B^{c}}_{m}(\mvy))={\sf G}^{m,B^{c}}(\mvx,\mvy), \qquad \mvx,\mvy \in B. 
\]
where ${\sf G}^{m,B^{c}}(\mvx,\mvy)$ is as defined in \eqref{eq:gfrw}.

Equivalently, if $\{\lambda_{B^{c},k}\}_{k=1}^{|B^{c}|}$ and $\{\phi_{B^{c},k}\}_{k=1}^{|B^{c}|}$ respectively denote the eigenvalues and orthonormal basis of  eigenvectors of $\gD_{B^{c}}$, then we may define the massive Gaussian free field as 
\[
\Phi^{B^{c}}_{m}(\mv{x})=\sum_{k=1}^{|B^{c}|} \frac{X_{k}}{\sqrt{\lambda_{B,k}+m}} \phi_{B,k}(\mv{x})
\]
where $\{X_{k}\}_{k=1}^{|V|}$ is a family of i.i.d. standard complex Gaussian random variables. 
\end{defn}
The relation to this work stems from the form of the density of $\Phi^{B^{c}}_{m}$ as a random vector taking values in $\bC^{B^{c}}$, it can easily be verified that for any $\ff:\bC^{B^{c}}\to \bC$,
\[
\bE(\ff(\Phi^{B^{c}}_{m}))=\frac{\det(-\gD_{B^{c}}+m)}{\pi^{|B^{c}|}}\int\ff(\psi)\cdot\exp(-\norm{\nabla\psi}_{2}^{2}-m\norm{\psi}_{2}^{2} )\vd \psi_{B^{c}}\mv1_{\psi|_{\partial B}=0}. 
\]
where $\partial B$ denotes the inner boundary, that is vertices in $B$ that neighbour vertices outside $B$.
In the specific context when $B=\emptyset$ and $m>0$, we replace $\Phi^{\bT_n^d}_m$  by $\Phi^{N}_m$ for brevity. Moreover, we have explicit expressions for the eigenvalues and eigenvectors. We  label vertices in $\dT^{d}_{n}$ as $\mvx=(x_{1},x_{2},\ldots ,x_{d})$ , where $x_{i}\in\{0,1,2,\ldots,n-1\}$. The eigenvalues and eigenvectors are indexed by a copy of the same torus (corresponding to the Pontryagin dual). We will index them by $\mvk=(k_{1},k_{2},\ldots k_{d})$. We have that 
\begin{align}\label{eq:evectors}
\phi_{\mvk}(\mvx)=\frac{1}{\sqrt{n^{d}}} \exp\left(\i \frac{2\pi}{n}\mvk\cdot \mvx\right),
\end{align}
and 
\begin{align}\label{eq:evalues}
\lambda_{\mvk}=4\sum_{i=1}^{d} \sin^{2}\left(\frac{\pi k_{i}}{n}\right). 
\end{align}

We now record a formula for $\bE(\|\Phi^N_m\|_2^2)$ which is a simple consequence of \Cref{def:GFF}, Parseval's identity, the explicit formulas \eqref{eq:evectors}, \eqref{eq:evalues} and the Riemann sum formula:
\begin{multline}\label{lem:mass_torus}
    \frac1N \bE(\|\Phi^N_m\|^2_2) = \frac1N\sum_{\mvx \in \bT_n^d} {\sf G}^m(\mvx, \mvx) = {\sf G}^m(\mv0,\mv0)=\frac1N \sum_{\mvk \in \bT_n^d} \frac1{\lambda_{\mvk}+m}\\=\int_{[0,1]^{d}}\frac{d\mv\gk}{4\sum_{i=1}^{d}\sin^{2} (\pi \gk_{i})+m} +o_N(1).
\end{multline}
where $o_N(1)$ is a function which tends to 0 as $N \to \infty$.
We now move towards describing the GFF with prescribed boundary conditions, as well as the extremely useful domain Markov property. An important ingredient here is the notion of harmonic functions. Given $B \subset V$, let recall that $\partial B$ denotes the inner boundary, that is vertices in $B$ that neighbour vertices outside $B$.
\begin{defn}
In particular, given $B \subset V$ and $f:\partial B\to \bC$, we say a complex valued function $h^{B^{c},f}$ is massive harmonic on $B^{c}$ with boundary condition $f$ if 
$$
((-\gD_{B^{c}}+m)h^{B^{c},f})(\mvx)=0 \text{ for all }\mvx \in B^{c}\text{ and }h^{B^{c},f}(\mvx)=f(\mvx)\text{ for all }\mvx\in \partial B. 
$$
\end{defn}
Observe that if $h^{B^{c},f}$ is complex valued and massive harmonic then both its real and imaginary parts are separately massive harmonic with the same mass. It is a standard fact that the massive harmonic function exists and is unique. We now provide the extremely convenient and standard description in terms of the killed random walk, which also makes an appearance in the expression for the Green's function. Let $\{X_{t}^{m,\mvx}\}_{t\in \bN\cup \{0\}}$ denote a simple symmetric random walk started at a point $\mvx\in B^{c}$ that is killed with probability $\frac{m}{m+2d}$ at each step, and is also killed when it enters $B$, which will clearly be at a boundary vertex in $\partial B$. Let $\skull$ denote the cemetery state and we imagine that when it is killed without hitting the boundary, it's state becomes $\skull$. With this convention,
\[
T_{m,B}:=\inf\{t\geq 0, X_{t}^{m,\mvx}\in B\cup \{\skull\}\}. 
\]
The massive harmonic function with boundary values $f$ may be represented as
\begin{align}\label{eq:massharmdef}
h^{B,f}_{m}(\mvx)=\bE(f({X^{m,\mvx}_{T_{m,B}}})). 
\end{align}
The convention here is that the boundary value when killed is taken to be $0$, that is $f({\skull})=0$. Expanded, 
\begin{align}\label{eq:massharmdef2}
h^{B^{c},f}_{m}({\mvx})=\sum_{\mvy \in \partial B}f(\mvy)\cdot \dP (X^{m,\mvx}_{T_{m,B}}=\mvy ).
\end{align}

\begin{defn}
We define the massive GFF on $B^{c}$ with prescribed boundary values given by $f:\partial B\to \dC$, denoted $\Phi_{m}^{B^{c},f}$ as
\begin{align}
\Phi^{B^{c},f}_{m}=\Phi^{B^{c}}_{m}+h^{B^{c},f}_{m}
\end{align}
\end{defn}
In terms of density, it can easily be verified using summation by parts that 
\[
\bE\ff(\Phi_{m}^{B^{c},f})=\frac{\det(-\gD_{B^{c}}+m)}{\pi^{|B^{c}|}}\int \ff(\psi)\cdot \exp(-\norm{\nabla\psi}_{2}^{2}-m\norm{\psi}_{2}^{2} )\vd \psi_{B^{c}}\mv1_{\psi|_{\partial B}=f}. 
\]
We now state the well known \textbf{Domain Markov Property} of the massive Gaussian free field. For any graph $G = (V,E)$ and $B \subset V$, we can decompose
\begin{align}\label{eq:DMP}
\Phi_m = \Phi^{B^{c}}_m + h^{B^{c}}
\end{align}
where  $\Phi^{B}_m$ is a massive GFF on with Dirichlet boundary condition on $B$ and $h^{B^{c}}$ is massive harmonic on $B^{c}$, and equal to $\Phi_{m}$ on $B$, in particular with boundary values $\Phi_{m}|_{\partial B}$.  Furthermore, $\Phi^{B^{c}}_m$ and $h^{B^{c}}$ are independent random fields (where we think of $\Phi^{B^{c}}_m$ as a field on $G$ with values 0 on $B^c$).

The killing rate  of the random walk in terms of which the massive harmonic function is expressed leads to some important consequences. 
% we have exponential decay far from $B$, since
% \begin{align*}
% \dP(X_{T_{m,B}}^{m,x}=\mvy)=\sum_{t=0}^{\infty} \dP(X^{m,x}_{t}=\mvy)&=\sum_{t=d(\mvx,\mvy)}^{\infty} (\frac{2d}{m+2d})^{t}\dP(X_{t}=\mvy) \\ &\leq (\frac{2d}{m+2d})^{d(\mvx,\mvy)}\sum_{t=d(\mvx,\mvy)}^{\infty} (\frac{2d}{m+2d})^{t-d(\mvx,\mvy)}
% \\
% &\leq (\frac{2d}{m+2d})^{d(\mvx,\mvy)}\cdot \frac{m+2d}{m}.
% \end{align*}
\begin{lemma}\label{lem:l_p_h}

Let $B \subset \bT_n^d$ be fixed, and $f:\partial B\to \dC$. Let $h$ be the massive harmonic extension of $f$ onto $B^{c}$ with mass $m>0$. Then there exists a constant $c$ depending only on $m$ such that for every $\mvx$,
$$
|h(\mvx)| \le \|f|_{\partial B}\|_1 e^{-cd(\mvx, \partial B)}.
$$
There also exists a constant $C>0$ depending only on $m$ such that 
\[
\norm{h|_{B}}^p_{p}\leq C \norm{f|_{\partial B}}^p_{p}
\]
for all $p>1$
\end{lemma}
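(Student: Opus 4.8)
The plan is to exploit the probabilistic representation \eqref{eq:massharmdef2} of the massive harmonic extension together with the geometric killing of the random walk. For the pointwise bound, fix $\mvx \in B^c$ and write $h(\mvx) = \sum_{\mvy \in \partial B} f(\mvy)\,\dP(X^{m,\mvx}_{T_{m,B}} = \mvy)$. Since the walk $X^{m,\cdot}_t$ is killed with probability $\frac{m}{m+2d}$ at each step, the probability that it survives $t$ steps is $(1-\frac{m}{m+2d})^t = (\frac{2d}{m+2d})^t$. In order to reach $\partial B$ the walk must take at least $d(\mvx,\partial B)$ steps, so $\dP(X^{m,\mvx}_{T_{m,B}} \in \partial B) \le (\frac{2d}{m+2d})^{d(\mvx,\partial B)} = e^{-c\, d(\mvx,\partial B)}$ with $c := \log\frac{m+2d}{2d} > 0$, depending only on $m$ (and $d$). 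Bounding $|f(\mvy)| \le \|f|_{\partial B}\|_1$ inside the sum and summing the hitting probabilities (which total at most $1$) gives $|h(\mvx)| \le \|f|_{\partial B}\|_1\, e^{-c\,d(\mvx,\partial B)}$, which is the first claim.

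For the $\ell^p$ bound I would combine the exponential decay with the fact that, on the lattice $\bT_n^d$, the number of vertices at graph distance exactly $k$ from a fixed finite set $\partial B$ grows only polynomially in $k$ (at most $|\partial B|\cdot C_d k^{d-1}$ for a dimensional constant $C_d$). Starting from the cruder pointwise estimate $|h(\mvx)| \le \|f|_{\partial B}\|_\infty\, e^{-c\,d(\mvx,\partial B)}$ — obtained exactly as above but bounding $|f(\mvy)|\le\|f|_{\partial B}\|_\infty$ — one gets
\[
\|h|_B\|_p^p \le \|f|_{\partial B}\|_\infty^p \sum_{k \ge 0} \#\{\mvx : d(\mvx,\partial B) = k\}\, e^{-cpk} \le \|f|_{\partial B}\|_\infty^p \,|\partial B|\, C_d \sum_{k\ge 0} k^{d-1} e^{-cpk}.
\]
The last series converges (for every $p>1$, indeed every $p>0$) to a constant $C'$ depending only on $m$ and $d$; since also $\|f|_{\partial B}\|_\infty^p \le \|f|_{\partial B}\|_p^p$ and $|\partial B| \le$ (a constant times) well, here one must be slightly careful — $|\partial B|$ is not bounded a priori. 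However $\|f|_{\partial B}\|_\infty^p\,|\partial B| \le \|f|_{\partial B}\|_p^p$ is false in general, so instead I would keep the $\ell^1$-form of the pointwise bound from the first part together with a direct term-by-term estimate: $|h(\mvx)|^p \le \big(\sum_{\mvy\in\partial B}|f(\mvy)|\,e^{-c\,d(\mvx,\mvy)}\big)^p$, and by convexity (Jensen applied to the measure with weights $e^{-c\,d(\mvx,\mvy)}$, whose total mass over $\mvy\in\bT_n^d$ is a constant $Z_c$ depending only on $c$, hence only on $m$) bound this by $Z_c^{p-1}\sum_{\mvy\in\partial B}|f(\mvy)|^p\,e^{-c\,d(\mvx,\mvy)}$. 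Summing over $\mvx\in B$ and swapping the order of summation, $\sum_{\mvx} e^{-c\,d(\mvx,\mvy)} \le Z_c$ for each $\mvy$, giving $\|h|_B\|_p^p \le Z_c^p\,\|f|_{\partial B}\|_p^p$, which is the desired bound with $C = Z_c^p$.

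The only subtle point, and the one I would be most careful about, is precisely this handling of the second inequality: a naive $\ell^\infty$ estimate loses a factor of $|\partial B|$ which cannot be absorbed since $|\partial B|$ is unbounded as $n\to\infty$, so the Jensen/convexity step applied to the weighted sum over boundary points is essential, and it is what makes the constant depend only on $m$. Everything else — the geometric survival bound for the killed walk and the polynomial volume growth of balls in $\bT_n^d$ — is standard, and the convergence of $\sum_k k^{d-1} e^{-cpk}$ is immediate. I would also note that the implicit use of $Z_c = \sum_{\mvy\in\bT_n^d} e^{-c\,d(\vzero,\mvy)} < \infty$ uniformly in $n$ follows again from polynomial volume growth, so all constants are genuinely independent of $n$ (equivalently of $N$).
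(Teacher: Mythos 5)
Your proof is correct and takes essentially the same route as the paper: for the pointwise bound, the exponential decay from the geometric survival of the killed walk; for the $\ell^p$ bound, Jensen's inequality followed by Fubini to swap the sums over $\mvx$ and $\mvy$ (you normalize Jensen by $Z_c=\sum_{\mvz}e^{-c\,d(\mvx,\mvz)}$ rather than by the sub-probability mass $\dP(X^{m,\mvx}_{T_{m,B}}\in\partial B)\le 1$ as the paper does, but that is a cosmetic difference). A minor bonus: you correctly use the survival factor $\tfrac{2d}{m+2d}$, whereas the paper's displayed bound $\dP(X_{T_{m,B}}^{m,\mvx}=\mvy)\le(\tfrac{m}{m+2d})^{d(\mvx,\mvy)}$ has the wrong base (it should be $\tfrac{2d}{m+2d}$, which is what makes $c=\log\tfrac{m+2d}{2d}>0$).
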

\begin{proof}
We will use the killed random walk description of massive harmonic function \eqref{eq:massharmdef}.
Since the killing probability in every step is $\frac{m}{m+2d}$, we must have 
$$
\dP(X_{T_{m,B}}^{m,\mvx}=\mvy) \le \left(\frac{m}{m+2d}\right)^{d(\mvx,\mvy)}.
$$
The first item is now immediate.
% where $d(\mvx,\mvy)$ is the graph distance between $\mvx$ and $\mvy$. For our purposes the degrees will be uniformly equal to $2d$.
% We go back to the description of the harmonic function in terms of the killed random walk. Let $(X^{m,\mvx})_{t \ge 0}$ be a simple random walk started at $\mvx$ and let $T_{m,B}$ be the first time such a walk hits $\partial B$ or is kill, as defined in \eqref{}, which we refer to as $T$ here for less cumbersome notation. Note that 
% \[
% h({\mvx})=\sum_{\mvy\in \partial B} f(\mvy)\bP(X_{T}^{m,\mvx}=\mvy).
% \]
% As seen prior, $\bP(X_{T}^{m,\mvx}=\mvy) \le \exp(-cd(\mvx, B))$ (as the simple random walk has $m/(2d+m)$ probability of being killed in every step), the first item is immediate. 
For the second item, note that
\begin{align*}
\norm{h}_{p}^{p} &\leq \sum_{\mvx}\left(\sum_{\mvy \in \partial B}\bP(X^{m,\mvx}_{T_{m,B}}=\mvy)f(\mvy)\right)^{p}
\\
&\leq \sum_{\mvx}\sum_{\mvy \in \partial B}\bP(X^{m,\mvx}_{T_{m.B}}=\mvy)|f(\mvy)|^{p}. 
\end{align*}
where the last inequality follows from an application of Jensen's inequality. Next, we may swap the order of summation to obtain 
\begin{align*}
\norm{h}_{p}^{p}\leq \sum_{\mvy \in \partial B} \sum_{\mvx \in B}\bP(X^{m,\mvx}_{T_{m.B}}=\mvy) |f(\mvy)|^p\leq \sum_{\mvy \in \partial B}\sum_{\mvx \in B^c}  e^{-cd(\mvx,\mvy)} |f(\mvy)|^{p} \le C\sum_{\mvy \in \partial B} |f(\mvy)|^p.
\end{align*}
where $c = c(m)$ and $C =\sum_{\mvx \in \bZ^d  }  e^{-cd(\mvx,\mvy)} $. 
\end{proof}    
Now we prove a lemma stating that slightly perturbing the torus does not change the formula \eqref{lem:mass_torus}. Essentially this is a consequence of the fact that far enough away from the perturbation, the Dirichlet and full torus massive GFFs are close in distribution, thanks to the domain Markov property and decay of harmonic functions.

    \begin{lemma}\label{mass_torus_holed}
        Let $m>0$ and $U\subset \dT^{d}_{n}$ be such that $|U|= O((\log N)^{10d})$. Let $\{\lambda_{U^{c},k}\}_{k=1}^{|U^{c}|}$ be as in \Cref{def:GFF}. Then
        \begin{equation*}
            \frac1N \bE(\|\Phi^{U^{c}}_m\|^2_2) = \frac1N\sum_{\mvx \in \bT_n^d} {\sf G}^{m,U^{c}}(\mvx, \mvx) = \frac1N \sum_{k=1}^{|U^{c}|} \frac1{\lambda_{U, k}+m}=\int_{[0,1]^{d}}\frac{d\mv\gk}{4\sum_{i=1}^{d}\sin^{2} (\pi \gk_{i})+m} +o_N(1).
        \end{equation*}
    \end{lemma}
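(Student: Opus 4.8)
The plan is the following. The first two equalities are routine: by \Cref{def:GFF} one has $\bE|\Phi^{U^c}_m(\mvx)|^2={\sf G}^{m,U^c}(\mvx,\mvx)$ (both sides being $0$ for $\mvx\in U$), which gives the first equality after summing over $\mvx$; and writing ${\sf G}^{m,U^c}=(-\gD_{U^c}+m)^{-1}$ as in \eqref{eq:gfrw} and diagonalizing with the orthonormal eigenbasis of $\gD_{U^c}$ gives $\sum_{\mvx}{\sf G}^{m,U^c}(\mvx,\mvx)=\tr\big((-\gD_{U^c}+m)^{-1}\big)=\sum_{k=1}^{|U^c|}(\lambda_{U^c,k}+m)^{-1}$. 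So all the content lies in the last equality, and by \eqref{lem:mass_torus} it suffices to prove
\[
0\le \tfrac1N\bE(\|\Phi^N_m\|_2^2)-\tfrac1N\bE(\|\Phi^{U^c}_m\|_2^2)=o_N(1).
\]

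First I would invoke the domain Markov property \eqref{eq:DMP} with $B=U$, writing $\Phi^N_m=\Phi^{U^c}_m+h^{U^c}$, where $h^{U^c}$ equals $\Phi^N_m$ on $U$ and is massive harmonic on $U^c$ with boundary values $\Phi^N_m|_{\partial U}$, and $\Phi^{U^c}_m\perp h^{U^c}$ with $\Phi^{U^c}_m$ centered. Expanding $\|\Phi^N_m\|_2^2$ and taking expectations kills the cross term, so $\bE\|\Phi^N_m\|_2^2=\bE\|\Phi^{U^c}_m\|_2^2+\bE\|h^{U^c}\|_2^2$; in particular the displayed difference is exactly $\tfrac1N\bE\|h^{U^c}\|_2^2\ge 0$, and it remains to show $\bE\|h^{U^c}\|_2^2=O\big((\log N)^{Cd}\big)$ for some constant $C$, which is $o(N)$.

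To bound $\bE\|h^{U^c}\|_2^2$ I would split it as $\bE\|h^{U^c}|_U\|_2^2+\bE\|h^{U^c}|_{U^c}\|_2^2$. On $U$, $h^{U^c}=\Phi^N_m$, so $\bE\|h^{U^c}|_U\|_2^2=\sum_{\mvy\in U}{\sf G}^m(\mvy,\mvy)\le |U|\,C(m)$, where $C(m):=\sup_N {\sf G}^m(\mv0,\mv0)<\infty$ is finite and uniform in $N$ precisely because ${\sf G}^m(\mv0,\mv0)$ converges to the finite Riemann integral by \eqref{lem:mass_torus}. On $U^c$, the first estimate of \Cref{lem:l_p_h} gives $|h^{U^c}(\mvx)|\le \|\Phi^N_m|_{\partial U}\|_1\,e^{-c\,d(\mvx,\partial U)}$ with $c=c(m)>0$, hence
\[
\|h^{U^c}|_{U^c}\|_2^2\le \|\Phi^N_m|_{\partial U}\|_1^2\sum_{\mvx\in\bT^d_n}e^{-2c\,d(\mvx,\partial U)}\le C'\,|\partial U|\,\|\Phi^N_m|_{\partial U}\|_1^2,
\]
where $C'=\sum_{\mvz\in\bZ^d}e^{-2c|\mvz|}<\infty$ bounds $\sum_{\mvx}e^{-2c\,d(\mvx,\mvy)}$ uniformly in $n$ (a standard lattice sum, using that the number of torus points at distance $r$ from $\mvy$ is $O(r^{d-1})$). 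By Cauchy--Schwarz $\|\Phi^N_m|_{\partial U}\|_1^2\le |\partial U|\,\|\Phi^N_m|_{\partial U}\|_2^2$ and $\bE\|\Phi^N_m|_{\partial U}\|_2^2=\sum_{\mvy\in\partial U}{\sf G}^m(\mvy,\mvy)\le |\partial U|\,C(m)$. Combining, $\bE\|h^{U^c}\|_2^2=O(|U|)+O(|U|^3)=O\big((\log N)^{30d}\big)$, and dividing by $N$ and using \eqref{lem:mass_torus} finishes the proof.

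There is essentially no serious obstacle here; the only points requiring a little care are that the bound ${\sf G}^m(\mvy,\mvy)\le C(m)$ must be uniform in $N$ (supplied by \eqref{lem:mass_torus} itself) and that the crude polynomial-in-$\log N$ bound on $|U|$ is harmless against the prefactor $1/N$. I would also remark that one can bypass the domain Markov decomposition altogether: using the killed-random-walk representation \eqref{eq:gfrw}, ${\sf G}^m(\mvx,\mvx)-{\sf G}^{m,U^c}(\mvx,\mvx)=\frac{1}{2d+m}\sum_{t\ge0}\bP_{\mvx}(X^{m}_t=\mvx,\ \tau_U\le t)$, which after a strong Markov decomposition at the first hit of $U$ is at most $C(m)\,\bP_{\mvx}(\tau_U<\infty)\le C(m)\sum_{\mvy\in U}\big(\tfrac{2d}{2d+m}\big)^{d(\mvx,\mvy)}$ exactly as in the proof of \Cref{lem:l_p_h}; summing over $\mvx$ again yields a factor $O(|U|)$ against $1/N$.
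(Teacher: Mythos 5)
Your proposal is correct and follows essentially the same route as the paper: decompose via the domain Markov property \eqref{eq:DMP}, kill the cross term by independence and centering of $\Phi^{U^c}_m$, and control $\bE\|h^{U^c}\|_2^2$ using translation invariance of ${\sf G}^m$ together with \Cref{lem:l_p_h}. The only cosmetic difference is that the paper invokes the second (integrated, $p=2$) estimate of \Cref{lem:l_p_h} to get $\bE\|h^{U^c}|_{U^c}\|_2^2=O(|U|)$ directly, whereas you use the first (pointwise) estimate plus Cauchy--Schwarz, yielding the cruder $O(|U|^3)$ bound; since $|U|$ is polylogarithmic either way, this is harmless.
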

\begin{proof}

    There are many methods of proof for this statement, we find the proof using the Domain Markov Property of the massive Gaussian Free field to be the easiest. Starting with $\Phi_{m}$, Let $\Phi_{m}^{U^{c}}$ be an independent Dirichlet MGFF which is identically $0$ on $U$, and let $h_{m}^{U^{c}}$ be equal to $\Phi_{m}$ on $U$, and massive harmonic on $U^{c}$, and they are all related as per \eqref{eq:DMP}.  Now, note that by independence,
    \begin{align}\label{eq:GF1}
    \sum_{\mvx\in U^{c}} {\sf G}^{m,U^{c}}(\mvx,\mvx)=\E\norm{\Phi_{m}^{U^{c}}}_{2}^{2}=\E \norm{\Phi_{m}|_{U^{c}}}_{2}^{2}+\E\norm{h|_{U^{c}}}_{2}^{2}.
    \end{align}
    By definition,
    \begin{align}\label{eq:GF2}
    \sum_{\mvx\in \bT^{d}_{n}} {\sf G}^{m}(\mvx,\mvx)=\E \norm{\Phi_{m}}_{2}^{2}=\E\norm{\Phi_{m}|_{U^{c}}}^{2}_{2}+\E\norm{\Phi_{m}|_{U}}_{2}^{2}. 
    \end{align}
    By translation invariance, for any $A\subset \bT^{d}_{n}$
    \begin{align}\label{eq:GF3}
    \bE \norm{\Phi_{m}|_{A}}_{2}^{2}=\sum_{\mvx \in A}|\Phi_{m}(\mvx)|^{2}=C(m)|A|.
    \end{align}
    By combining Lemma \ref{lem:l_p_h} and \eqref{eq:GF3}, 
    \begin{align}\label{eq:GF4}
    \norm{h|_{U^{c}}}_{2}^{2}\leq C|U|.
    \end{align}
    Finally, subtracting \eqref{eq:GF1} from \eqref{eq:GF2}, and bringing in the bounds from \eqref{eq:GF3} and \eqref{eq:GF4} yields
    \begin{equation*}
        |\frac1N\sum_{\mvx \in U^c} {\sf G}^{m,U^c}(\mvx, \mvx) -{\sf G}^m(\mv0,\mv0)| = O(\log^{11d}(N)/N) = o_N(1).
    \end{equation*}
    
\end{proof}
\begin{lemma}\label{lem:torus_holed_fourth}

    Let $U\subset \dT^{d}_{n}$ be such that $|U|= O((\log N)^{10d})$. $\{\lambda_{U^c,k}\}_{k=1}^{|U^c|}$ be as in \Cref{def:GFF}. Then
    \begin{equation*}
        \frac1N \sum_{k=1}^{|U^c|} \frac1{(\lambda_{U^c, k}+m)^2} =\Omega(1)
    \end{equation*}
\end{lemma}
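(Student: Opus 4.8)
The plan is to bound the sum $\frac1N \sum_{k=1}^{|U^c|} (\lambda_{U^c,k}+m)^{-2}$ from below by a constant that does not depend on $N$ (for $N$ large). The natural first move is to observe that, just as in \Cref{lem:mass_torus} and \Cref{mass_torus_holed}, this quantity has a probabilistic interpretation: it equals $\frac1N \bE(\|\Phi^{U^c}_m\|_2^2)$ computed with mass $m$ replaced appropriately, or more directly $\frac1N \sum_{k} (\lambda_{U^c,k}+m)^{-2} = \frac1N \sum_{\mvx\in U^c} ({\sf G}^{m,U^c})^{(2)}(\mvx,\mvx)$ where $({\sf G}^{m,U^c})^{(2)} = (-\gD_{U^c}+m)^{-2}$ is the iterated Green's function. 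But the cleanest route avoids the geometry of $U^c$ entirely: since all eigenvalues $\lambda_{U^c,k}$ of $-\gD_{U^c}$ are nonnegative and are bounded above by $\|\gD\|_{\mathrm{op}} \le 4d$ (the Dirichlet Laplacian on $U^c$ has eigenvalues interlacing, or at least dominated by, those of the full-torus Laplacian, whose maximum is $4d$ by \eqref{eq:evalues}), we have
\[
\frac1{(\lambda_{U^c,k}+m)^2} \ge \frac1{(4d+m)^2} \quad \text{for every } k.
\]
Summing over the $|U^c| = N - |U|$ indices gives
\[
\frac1N \sum_{k=1}^{|U^c|} \frac1{(\lambda_{U^c,k}+m)^2} \ge \frac{N-|U|}{N}\cdot\frac1{(4d+m)^2} = \left(1 - O\!\left(\frac{(\log N)^{10d}}{N}\right)\right)\frac1{(4d+m)^2},
\]
which is $\gO(1)$ since the right-hand side tends to $(4d+m)^{-2} > 0$. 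So in fact a matching upper bound holds trivially too: $\lambda_{U^c,k}+m \ge m > 0$ forces the sum to be at most $\frac{N-|U|}{N}\cdot m^{-2} \le m^{-2}$, hence the whole expression is $\asymp 1$.

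The only point requiring a word of justification is the uniform upper bound $\lambda_{U^c,k}\le 4d$ on the Dirichlet eigenvalues. This follows because $-\gD_{U^c}$, viewed as the compression of $-\gD$ (the full-torus Laplacian) to the coordinate subspace $\bC^{U^c}\subset\bC^{\bT_n^d}$, satisfies $\la (-\gD_{U^c})\psi,\psi\ra = \la (-\gD)(\psi|_{U^c}),\psi|_{U^c}\ra \le \|{-\gD}\|_{\mathrm{op}}\,\|\psi\|_2^2$ for $\psi$ supported on $U^c$, and $\|{-\gD}\|_{\mathrm{op}} = \max_{\mvk}\lambda_{\mvk} = 4d$ by \eqref{eq:evalues}; equivalently one can invoke eigenvalue interlacing for principal submatrices. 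I do not expect any genuine obstacle here — this lemma is a soft spectral estimate, and the constant $(4d+m)^{-2}$ (or any bound of the form $c(d,m)>0$) is all that is needed downstream. If one instead wanted the sharp constant $\int_{[0,1]^d} (4\sum_i \sin^2(\pi\gk_i)+m)^{-2}\,d\mv\gk$, the same Domain Markov / harmonic-decay argument as in \Cref{mass_torus_holed} would deliver it, but for the stated conclusion $\gO(1)$ the crude two-sided bound above suffices.
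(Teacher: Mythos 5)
Your proof is correct, and it takes a genuinely different (and more elementary) route than the paper. The paper deduces the bound from \Cref{mass_torus_holed}: the convergence of $\frac1N\sum_k(\lambda_{U^c,k}+m)^{-1}$ identifies the limit of the Stieltjes transform of the empirical spectral measure of $-\gD_{U^c}$, which gives weak convergence of that measure to a limit $\rho_*$, and then integrating the bounded continuous function $(x+m)^{-2}$ yields convergence of $\frac1N\sum_k(\lambda_{U^c,k}+m)^{-2}$ to the strictly positive constant $\int(x+m)^{-2}d\rho_*$. You instead bypass all of this with the crude two-sided spectral bound $0\le\lambda_{U^c,k}\le 4d$ (justified correctly via the quadratic form of the zero-extension, or interlacing for principal submatrices, together with \eqref{eq:evalues}), so every summand is at least $(4d+m)^{-2}$ and the $|U^c|=N-O((\log N)^{10d})$ terms give the lower bound $(1-o(1))(4d+m)^{-2}$. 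Since the lemma only asserts $\gO(1)$ — and its downstream uses (e.g.\ showing $B_N^2=\gO(N)$ in \eqref{eq:var_bound} and \eqref{eq:BNS_estimate}) only need a positive constant lower bound — your argument fully suffices and is shorter and self-contained; what the paper's route buys in exchange is the sharp limiting constant (the analogue of the Riemann-sum formula in \Cref{lem:mass_torus}), which, as you note, your method does not produce but which is not needed for the stated conclusion.
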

\begin{proof}
    By Lemma \ref{mass_torus_holed}, we know that 
    \[
    \frac{1}{N}\sum_{\mvx\in U^{c}} {\sf G}^{m, U^{c}}(\mvx,\mvx)=\frac{1}{N}\sum_{k=1}^{|U^{c}|}\frac{1}{\gl_{U^{c},k}+m}\to \int_{[0,1]^{d}}\frac{d\mv\gk}{4\sum_{i=1}^{d}\sin^{2} (\pi \gk_{i})+m}.
    \]
    We may interpret the sum as the integral of the function $(x+m)^{-1}$ against the empirical spectral measure of the Dirichlet Laplacian, given by 
    \[
    \rho_{\gD,U^{c}}=\frac{1}{|U^{c}|}\sum_{k=1}^{|U^{c}|}\gd_{\gl_{U^{c},k}}. 
    \]
    With $\rho_{\gD,U^{c}}$ defined as above, 
    \[
    \frac{1}{N}{\sf G}^{m,U^{c}}(\mvx,\mvx)=\frac{|U^{c}|}{N}\int \frac{1}{x+m}d\rho_{\gD, m}(x). 
    \]
    The intergal above is the Stieltjes transform of $\rho_{\gD,m}$, and the convergence of the Stieltjes transform to a limit guarantees the weak convergence of $\rho_{\gD,m}$ to a limiting probability measure $\rho_{*}$, since our measures are compactly supported which guarantees tightness (see Chapter 2 of \cite{Bern}). The convergence of the Stieltjes transforms holds since $|U^{c}|/N\to 1$. Next, note that the function $(x+m)^{-2}$ is bounded and continuous on the interval $[0,4d]$. Thus, 
    \[
    \frac{1}{|U^{c}|}\sum_{k=1}^{|U|^{c}}\frac{1}{(\gl_{U^{c},k}+m)^{2}}=\int\frac{d\rho_{\gD,U^{c}}}{(x+m)^{2}}\to \int\frac{d\rho_{*}}{(x+m)^{2}}\text{ as }N\to \infty. 
    \]
    which completes the proof.
\end{proof}

\subsection{Zero average GFF on the torus}\label{sec:zero_avg_gff}
In this subsection, we consider the case $G = \bT_n^d$ and $m=0$. Note that in this case $(-\Delta)$ is not positive definite and has an eigenvalue 0 corresponding the eigenvector which is constant on the torus. Thus to define the Green's function, we need to project onto the orthogonal complement of the constant vector.

\begin{defn}\label{def:zero_avg_gff}
    Let $(\phi_{\mvk})_{\mvk \in \bT_n^d}$ and $(\lambda_{\mvk})_{\mvk \in \bT_n^d}$ be the orthonormal basis of eigenvectors and their eigenvalues as specified in \eqref{eq:evectors} and \eqref{eq:evalues}. Then the \textbf{zero-average} GFF on the torus is defined as 
    \begin{equation*}
        \Phizero^{N}:=\sum_{\mvk \in \bT_n^d \setminus \{\mv0\}} \frac{X_{\mvk}}{\sqrt{\lambda_{\mvk}}}\phi_{\mvk}
    \end{equation*}
    The zero-average Green's function is defined to be the covariance of the zero-average GFF:
    $$
    \Gzero^{N}(\mvx, \mvy) = \bE(\Phizero^{N}(\mvx)\overline{\Phizero^{N}(\mvy)}).
    $$
    
\end{defn}
It is not too hard to see that the density of a zero-average GFF is simply a GFF conditioned to have sum 0. Or in other words, for any bounded continuous function $\ff:\bC^N \to \bR$,
\begin{equation}
    \bE(\ff(\Phizero^N)) = \frac1{\cZ_{0\text{-}\mathsf{avg}}}\int_{\bC^N}\ff(\psi)\exp(-\|\nabla \psi\|_2^2 1_{\la \psi, \mv1 \ra = 0})
\end{equation}
where $\mv1$ is the function which is identically equal to 1 on the torus. One of the first investigations into the properties of the zero average free field was carried out by Ab\"acherli in \cite{A19}, who established an extremely useful coupling between $\Phizero^{N}$ and its infinite volume counterpart to be introduced in the next subsection. 
\subsection{Infinite volume GFF}
We assume $d \ge 2, m \ge 0$ and furthermore, we need to assume $m>0$ only if $d = 2$. The massive Green's function, as defined in \eqref{eq:gfrw}, is still a well defined object when the killed random walk on $\bZ^{d}$ is used instead. In fact, when $d\geq 3$, we may use the simple random walk with no killing, since it is transient and thus the sum in \eqref{eq:gfrw} is finite almost surely. Thus, ${\sf G}^{\bZ^{d}}_{m}$ is well defined as a positive definite kernel, as is ${\sf G}^{\bZ^{d}}$. The centered Gaussian processes with covariance given by these kernels, denoted $\Phi_{m}^{\bZ^{d}}$ and $\Phi^{\bZ^{d}}$ are called the infinite volume massive GFF and infinite volume GFF respectively. To summarize, they are defined by  
$$
\bE(\Phi^{\bZ^d}_m(\mvx) \overline{\Phi^{\bZ^d}_m(\mvy)}) = {\sf G}^{\bZ^d}_m(\mvx, \mvy).
$$ 
Moreover, $\Phi_{m}^{\bZ^{d}}$ is the local limit of its torus counterpart. It is well known that the limit as $N \to \infty$ of ${\sf G}^{N}_{m}(\mvx, \mvy)$ is ${\sf G}_{m}^{\bZ^{d}}$, which by Wick's theorem and the method of moments is sufficient to verify local convergence.  The massless GFF also arises as a local limit, but as discussed earlier, we cannot immediately impose the condition that $m=0$, we need to work with the zero average Green's function. 
\begin{lemma}[Aru, Korzhenkova Proposition 2.13 \cite{ARU}]\label{lem:gfconv}
 As $N\to \infty$, there exists $C>0$ such that for any chosen lattice point $\mvx\in \bT^{d}_{n}$,
   \[
   |\Gzero^{N}(\mv0,\mvx)-{\sf G}^{\bZ^{d}}(\mv0,\mvx)|\leq CN^{\frac{2}{d}-1}.
   \]
\end{lemma}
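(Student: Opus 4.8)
The plan is to compare the two Green's functions via their Fourier/spectral representations and reduce the claim to a Riemann-sum error estimate. We have the exact formula
\[
\Gzero^{N}(\mv0,\mvx) = \frac1N\sum_{\mvk \in \bT_n^d \setminus \{\mv0\}} \frac{\phi_{\mvk}(\mv0)\overline{\phi_{\mvk}(\mvx)}}{\lambda_{\mvk}} = \frac1N\sum_{\mvk \in \bT_n^d \setminus \{\mv0\}} \frac{e^{-\i \frac{2\pi}{n}\mvk\cdot\mvx}}{4\sum_{i=1}^d \sin^2(\pi k_i/n)},
\]
using \eqref{eq:evectors} and \eqref{eq:evalues}, while
\[
{\sf G}^{\bZ^d}(\mv0,\mvx) = \int_{[0,1]^d}\frac{e^{-2\pi \i \,\mvk\cdot\mvx}}{4\sum_{i=1}^d \sin^2(\pi k_i)}\,d\mvk .
\]
So the difference is exactly the error in approximating a $d$-dimensional integral by its Riemann sum over the lattice $\frac1n\bZ^d\cap[0,1)^d$, with the one frequency $\mvk=\mv0$ omitted. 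First I would isolate the contribution of the cell around the origin: the integrand behaves like $\asymp |\mvk|^{-2}$ near $0$, which is integrable in $d\ge 3$, and the integral of $|\mvk|^{-2}$ over a cube of side $1/n$ around the origin is $O(n^{-d}\cdot n^{2}) = O(n^{2-d}) = O(N^{\frac2d-1})$; this already produces the stated order of the bound, so the omission of the zero mode is harmless at this scale.

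Second, on the complement of that central cell I would use a standard Riemann-sum error bound: the function $g(\mvk) = e^{-2\pi\i\,\mvk\cdot\mvx}/(4\sum\sin^2(\pi k_i))$ is smooth away from $\mv0$, and on a cell at distance $\rho$ from the origin one has $|g| \lesssim \rho^{-2}$ and $|\nabla g|\lesssim \rho^{-3}$ (the factor $\mvx$ in the exponential derivative is a fixed constant here since $\mvx$ is a fixed lattice point). Hence the per-cell error is $O(n^{-d}\cdot n^{-1}\cdot\rho^{-3}) = O(n^{-d-1}\rho^{-3})$, and summing over the $O(n^d)$ cells, grouped by dyadic distance $\rho$ from the origin, gives $\sum_{\rho \gtrsim 1/n} n^{-d-1}\rho^{-3}\cdot(\text{number of cells at distance }\sim\rho)$. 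Since the number of cells at distance $\sim\rho$ is $O(n^d\rho^{d-1}\cdot \rho)= O(n^d \rho^d)$ (volume of a shell), wait — more carefully, the number of lattice cells in a shell of radius $\rho$ and thickness $d\rho$ is $O(n^d \rho^{d-1} d\rho)$, so the dyadic sum is $\sum_{2^{-j}\gtrsim 1/n} n^{-d-1}\cdot 2^{3j}\cdot n^d 2^{-j(d-1)} \cdot 2^{-j} = n^{-1}\sum_j 2^{j(3-d)}$ which for $d\ge 4$ is $O(n^{-1})$, far better than $N^{\frac2d-1}$, and for $d=3$ is $O(n^{-1}\log n)$, again within $O(N^{\frac2d-1}) = O(n^{-1})$ up to a harmless logarithm — and one gets the clean power by doing one more order of Taylor expansion (midpoint rule), exploiting that the first-order term cancels. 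The cleanest route is therefore to use the midpoint/centered Riemann sum error, which is second order, giving per-cell error $O(n^{-d-2}\rho^{-4})$ and a convergent dyadic sum for all $d\ge 3$.

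The main obstacle is bookkeeping the singularity at $\mvk=\mv0$ correctly: one must simultaneously handle (i) the missing $\mv0$ term, (ii) the non-smoothness of the integrand near $\mv0$ which breaks the naive smooth Riemann-sum estimate in the central region, and (iii) the mismatch that $\lambda_{\mvk}$ is only comparable to, not equal to, $|\mvk|^2$ near the origin. I would handle this by splitting $[0,1]^d$ into an inner ball $B(0,R/n)$ for a large fixed constant $R$ and its complement, bounding the inner contribution of \emph{both} objects directly by the integrable-singularity estimate above (each is $O(N^{\frac2d-1})$), and applying the centered Riemann-sum bound only on the outer region where $g$ is smooth with the stated derivative bounds. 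Since $\mvx$ is a single fixed lattice point the phase factor contributes only fixed constants, and everything else is the routine multivariable Taylor estimate; I would also note that this is precisely Proposition 2.13 of \cite{ARU}, so one may alternatively just cite it, but the self-contained argument is short enough to include.
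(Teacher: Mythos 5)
The paper itself gives no proof of this lemma; it is quoted verbatim from Aru--Korzhenkova \cite{ARU}*{Proposition 2.13}, so your fallback option of simply citing it is exactly what the paper does. Your self-contained sketch, however, has a genuine gap, and it is precisely at the point the paper flags as crucial: the constant $C$ must be independent of $\mvx$. In your Riemann-sum error analysis you treat the phase $e^{-2\pi\i\,\mvk\cdot\mvx}$ as contributing ``only fixed constants'' to the derivative bounds, writing $|\nabla g|=O(\rho^{-3})$. But differentiating the phase in $\mvk$ produces a factor $|\mvx|$, which ranges up to order $n$; the correct bounds are $|\nabla g| = O(|\mvx|\rho^{-2}+\rho^{-3})$ and $|\nabla^2 g| = O(|\mvx|^2\rho^{-2}+|\mvx|\rho^{-3}+\rho^{-4})$. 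When $|\mvx|$ is comparable to $n$ the phase completes a full oscillation across a single mesh cell of side $1/n$, so neither the first-order nor the midpoint (second-order) Taylor gain survives: carrying your dyadic summation through with the correct derivative bounds gives a total outer-region error of order $n^{-1}|\mvx|$ (resp.\ $n^{-2}|\mvx|^2$), i.e.\ $O(1)$ rather than $O(N^{\frac2d-1})=O(n^{2-d})$. Since at such $\mvx$ the Green's functions themselves are only of order $n^{2-d}$, the claimed uniform rate cannot come from cell-by-cell absolute-value estimates at all; it requires exploiting cancellation in the oscillatory sum.

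Your treatment of the central cell and the omitted zero mode is fine (there the phase has modulus one, so that part is genuinely uniform in $\mvx$), and your argument would prove the lemma for $\mvx$ ranging over a fixed finite set, which is in fact enough for some of the paper's applications (local convergence of $\Phizero$, and the diagonal estimate used for $\bE\|Z'\|_2^2$). But it does not prove the statement as written. To get uniformity one needs a different mechanism — e.g.\ Poisson summation / unfolding of the torus heat kernel onto $\bZ^d$ together with the zero-mode (zero-average) renormalization, which is essentially what \cite{ARU} does; note the naive periodization $\sum_{\mvz\in n\bZ^d}{\sf G}^{\bZ^d}(\mv0,\mvx+\mvz)$ diverges in the massless case, so the zero-average subtraction has to be woven into that argument rather than added afterwards. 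Either repair the sketch along those lines or simply cite \cite{ARU} as the paper does.
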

We emphasize the crucial point in \Cref{lem:gfconv}, the constant $C$ is independent of $\mvx$.
The convergence of the finite volume Green's function to its infinite volume counterpart has been known for some time, but the rate of convergence in Lemma \ref{lem:gfconv} was put forth by \cite{ARU}. Let $\fg:A\to \bC$. Note that for all $N$, $\la \Phizero^{N} ,\fg\ra$ is a complex gaussian random variable with variance given by $\la \fg,\Gzero^{N}\fg\ra$. Since all moments of a gaussian are determined by the variance, proving that $\Gzero^{N}$ converges to ${\sf G}^{\bZ^{d}}$ verifies the convergence in distribution, and in particular that 
\begin{align}
\left|\bE \bigl(\cP(\la \fg , \Phizero^{N} \ra)\bigr)-\bE \bigl(\cP(\la \fg , \Phi^{\bZ^{d}} \ra)\bigr) \right|\to 0\text{ as }N\to \infty,     
\end{align}
for all $\fg:A\to \bC$ and $\cP(z)$ of the form $(\fR z)^{q}\cdot (\fI z)^{q'}$, with $q,q'\in \bN$. With the exception of the sharpened Green's function estimates of Lemma \ref{lem:gfconv}, the other results mentioned in this section are well known in the literature and we again direct the reader to Chapter 8 of \cite{FL} for reference.

\section{Further estimates on the spherical model}\label{sec:spherical_details}
In this section, we gather some results on the spherical model which are needed to prove \Cref{thm:main_summary}. These results are proven in \Cref{sec:spherical_proofs}.  The reader should recall the definition of $\Psisph^\theta$ from \eqref{eq:spherical}.
First, the $\theta \ge C_d$ case. Along with the second item of \Cref{thm:main_sph} we also need the following uniform tail bound.

\begin{lemma}\label{lem:sphericaltail2}
Let $\theta\geq C_{d}$ and $b>\log N$. Then for $N$ sufficiently large, there exists a constant $c$ such that 
\[
\bP(\norm{\Psi_{{\sf sph},N}^{\theta}}_{\infty}>b)\leq e^{-cb^{2}}.
\]
\end{lemma}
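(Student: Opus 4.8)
The plan is to bound the tail of $\|\Psi^\theta_{{\sf sph},N}\|_\infty$ by relating the spherical model back to the spherical law conditioned on its $\ell^2$-mass and, ultimately, to a massive or zero-average GFF on the torus, for which the analogous tail bound is classical. First I would fix a vertex $\mvx_0\in\bT^d_n$ (by translation invariance the choice does not matter) and estimate $\bP(|\Psi^\theta_{{\sf sph},N}(\mvx_0)|>b)$, then take a union bound over the $N$ vertices; since $b>\log N$ we have $N e^{-cb^2}\le e^{-c'b^2}$ for a slightly smaller constant, so the union bound costs us nothing. To estimate $\bP(|\Psi^\theta_{{\sf sph},N}(\mvx_0)|>b)$, recall from \eqref{eq:spherical} that the spherical law is the complex GFF at temperature $\theta$ (i.e. density $\propto\exp(-\theta\|\nabla\psi\|_2^2)$) conditioned to the event $\{\|\psi\|_2^2\le N\}$. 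Writing $\Phi$ for the (massless) complex GFF at temperature $\theta$ on the torus — more precisely, working in the Fourier basis, $\Phi$ is the zero-average GFF plus an independent uniform/Gaussian constant mode — we have for any Borel set $A$
\[
\bP(\Psi^\theta_{{\sf sph},N}\in A) = \frac{\bP(\Phi\in A,\ \|\Phi\|_2^2\le N)}{\bP(\|\Phi\|_2^2\le N)} \le \frac{\bP(\Phi\in A)}{\bP(\|\Phi\|_2^2\le N)}.
\]
So it suffices to (i) lower bound the denominator $\bP(\|\Phi\|_2^2\le N)$ by a constant (or at worst by something subexponential), and (ii) upper bound $\bP(|\Phi(\mvx_0)|>b)$ by $e^{-c b^2}$.

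For step (ii): $\Phi(\mvx_0)$ is a centered complex Gaussian with variance $\sigma_N^2 = \theta^{-1}{\sf G}^N_{0\text{-avg}}(\mvx_0,\mvx_0)$ (plus the contribution of the uniform constant mode, which has absolute value at most $\sqrt N\ge$ anything relevant only in a trivial way — actually the constant mode is bounded, so it only shifts things by an $O(1)$ amount and can be absorbed). By \Cref{lem:gfconv}, ${\sf G}^N_{0\text{-avg}}(\mvx_0,\mvx_0)\to{\sf G}^{\bZ^d}(\mv0,\mv0)=C_d<\infty$, so $\sigma_N^2$ is bounded uniformly in $N$; hence $\bP(|\Phi(\mvx_0)|>b)\le \exp(-b^2/(2\sigma_N^2))\le e^{-c b^2}$ for a constant $c>0$ and $N$ large. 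For step (i): here is where the hypothesis $\theta\ge C_d$ is used. We have $\bE\|\Phi\|_2^2 = \theta^{-1}\sum_{\mvk\ne\mv0}\lambda_{\mvk}^{-1} + (\text{constant mode contribution})$, and by \eqref{lem:mass_torus}-type Riemann sum asymptotics this is $\asymp (C_d/\theta)\,N \le N$ asymptotically when $\theta\ge C_d$ (with the constant mode, which lives on a disc of radius $\sqrt N$ in $\bC$, contributing an extra bounded-by-$N$ amount that I need to handle carefully — this is exactly the $\theta=C_d$ borderline subtlety). Since $\|\Phi\|_2^2$ is, in the Fourier representation, a sum of independent (scaled) exponentials plus the squared-modulus of the uniform constant mode, one gets that $N^{-1}\|\Phi\|_2^2$ concentrates, and when $\theta>C_d$ strictly the event $\{\|\Phi\|_2^2\le N\}$ has probability tending to $1$, giving the denominator lower bound trivially. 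When $\theta=C_d$ the mean $N^{-1}\bE\|\Phi\|_2^2\to 1$ from below (using that the finite-volume Green's function diagonal is below its infinite-volume limit, or the weak version of the estimates from \cite{ARU}), and an anti-concentration/CLT argument for the sum of exponentials — exactly the one flagged in the outline for the $\theta=C_d$ case — shows $\bP(\|\Phi\|_2^2\le N)\ge c>0$, or at worst $\ge N^{-O(1)}$, which is still harmless against $e^{-cb^2}$ when $b>\log N$.

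Putting these together: $\bP(|\Psi^\theta_{{\sf sph},N}(\mvx_0)|>b)\le \bP(|\Phi(\mvx_0)|>b)/\bP(\|\Phi\|_2^2\le N) \le e^{-cb^2}/c' \le e^{-c''b^2}$ for $b>\log N$ and $N$ large, and a union bound over $\mvx_0\in\bT^d_n$ finishes it. I expect the main obstacle to be step (i) in the critical case $\theta=C_d$: controlling $\bP(\|\Phi\|_2^2\le N)$ from below when the mean is asymptotically equal to $N$ requires quantitative control on the fluctuations of $\sum_{\mvk\ne\mv0}\lambda_{\mvk}^{-1}|X_{\mvk}|^2$ — in particular the low modes $\lambda_{\mvk}^{-1}$ are large (of order $N^{2/d}$), so the summands are not uniformly bounded and one needs a Lindeberg-type argument or a direct Laplace-transform computation to show the sum does not overshoot $N$ with non-vanishing probability; this is precisely the delicate point (especially for $d=3$) referenced in the proof outline, and I would lean on the same machinery developed there for \Cref{thm:main_sph}, part b. A secondary nuisance is bookkeeping the uniform constant Fourier mode (the disc of radius $\sqrt N$), but since it is bounded in modulus by $\sqrt N$ and, more to the point, contributes to $\Phi(\mvx_0)$ only an $O(1)$-variance term — because it is the constant divided by $\sqrt N$ times $\sqrt N$... — one checks it does not affect the Gaussian tail of $\Phi(\mvx_0)$; the cleanest route is to condition on the constant mode first and run the argument for the zero-average field conditionally.
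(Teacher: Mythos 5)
Your proposal follows essentially the same route as the paper: decompose the spherical law as zero-average GFF plus a uniform constant mode, bound the ratio by $\bP(\text{pointwise tail})/\bP(\|\cdot\|_2^2\le N)$, lower bound the denominator by $N^{-O(1)}$ using the anti-concentration estimates developed for \Cref{thm:main_sph} part b (Lemmas \ref{lem:critplb}, \ref{lem:critplb3}), apply the Gaussian tail of $\Phizero^N(\mvx)$ to the numerator, and absorb both the polynomial factor and the union bound over $N$ vertices into $e^{-cb^2}$ using $b>\log N$. The one minor imprecision is your characterization of the constant mode's contribution to $\Phi(\mvx_0)$ as an ``$O(1)$-variance term'': since $Z_{\mv0}$ is uniform on the unit disc, its contribution $Z_{\mv0}\cdot\mv1$ is deterministically bounded by $1$ in modulus, and the paper simply uses $\{|\theta^{-1/2}\Phizero^N(\mvx)+Z_{\mv0}|>b\}\subset\{|\theta^{-1/2}\Phizero^N(\mvx)|>b-1\}$, which is cleaner than conditioning on the mode.
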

The key point in \Cref{lem:sphericaltail2} is that $b$ might diverge with $N$, and hence not captured by the local limit identification of \Cref{thm:main_sph}. The proof of \Cref{lem:sphericaltail2} is postponed to \Cref{sec:spherical_proofs}.

Next, the $\theta<C_d$ case. In the proof of \Cref{thm:main_summary}, we need the following generalization of the first item of \Cref{thm:main_sph} for the spherical law with boundary conditions.  Let $U$ be a subset of the set of vertices of $\bT_n^d$, $f: U \to \bC$ and $\gamma \in (0,1)$.
The spherical law with general boundary condition and general mass cutoff is the field $\Psisph^{f, \theta,\gamma, U^{c}}$  whose law is dictated by the following equality for all bounded continous functions $\ff: \bC^{U^{c}} \to \bR$.
\begin{equation}
\bE(\ff(\Psisph^{f, \theta,\gamma, U^{c}})):=\frac{1}{\cZ_{\text{sph},N} (f, \theta,\gamma, U^{c})}\int_{\bC^{U^{c}}} \ff(\psi) \exp\left(-\theta \norm{\nabla \psi|_{U^{c}}}_{2}^{2}\right)\mv1_{\norm{\psi|_{U^{c}}}_{2}^{2}\leq \gamma N}1_{\psi|_{ \partial U}\equiv f_{|\partial U}}\vd \psi.\label{eq:density_spherical_boundary_general_mass}
\end{equation}
The gradient above is calculated in the graph $G_U$ and $\cZ_{\text{sph},N} (f, \theta,\gamma, U^{c})$ is the appropriate partition function. If $\gamma=1$, we write $\Psisph^{f, \theta,1, U^{c}} = \Psisph^{f, \theta, U^{c}}$. Also note the scaling relation
\begin{equation}
    \Psisph^{f,\theta, \gamma,U^c} = \Psisph^{f,\theta\gamma,U^{c}}.\label{eq:scaling_spherical}
\end{equation}

Recall the definition of local weak convergence from \Cref{sec:intro} and the definition of massive GFF in $\bZ^d$ from \Cref{sec:gff}

\begin{thm}\label{lem:llimit1}
Suppose $\theta<C_d$. Let $U\subset \dT^{d}_{n}$ be such that $d(\mv0,U)\ge N^{0.1}$, $|U|= O((\log N)^{10d})$, and $\|f|_{\partial U}\|_2^2 =o(N)$. Then  $$\Psisph^{f, \theta, U^{c}} \xrightarrow[N \to \infty]{(d)}\theta^{-\frac{1}{2}}\cdot \Phi^{\bZ^d}_{m}$$ where the convergence above is in the local weak sense and $m$ solves 
\begin{equation}
\int_{[0,1]^{d}}\frac{d\mv\gk}{4\sum_{i=1}^{d}\sin^{2}(\pi \gk_{i})+m} = \theta.\label{eq:mass}
\end{equation}
\end{thm}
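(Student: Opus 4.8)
The strategy is to reduce the spherical law with boundary conditions to a massive GFF with the same boundary conditions via an exponential tilt that makes the $\ell^2$-constraint typical, following the Fourier-mode philosophy outlined in \Cref{sec:outline}. Write $\Psisph^{f,\theta,U^c} = \Phi_0^{U^c,f}$-type field conditioned on $\|\psi|_{U^c}\|_2^2 \le N$; the point is that after the correct tilt this conditioning is satisfied with overwhelming probability, so it can be removed up to $o(1)$ errors in expectations of local polynomial test functions. Concretely, for $\lambda \ge 0$ observe that
\begin{equation*}
\bE(\ff(\Psisph^{f,\theta,U^c})) = \frac{\bE\big(\ff(\Phi^{U^c,f}_{m(\lambda)}) e^{\lambda \theta \|\Phi^{U^c,f}_{m(\lambda)}\|_2^2} 1_{\|\Phi^{U^c,f}_{m(\lambda)}\|_2^2 \le N}\big)}{\bE\big(e^{\lambda \theta \|\Phi^{U^c,f}_{m(\lambda)}\|_2^2} 1_{\|\Phi^{U^c,f}_{m(\lambda)}\|_2^2 \le N}\big)},
\end{equation*}
where $m=m(\lambda)>0$ is chosen (using the explicit density of the massive GFF with boundary values from \Cref{sec:mgff}) so that the tilted law is exactly $\Psisph^{f,\theta,U^c}$ restricted to the $\ell^2$-ball; the free parameter $\lambda$ will be fixed by matching means. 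The harmonic extension $h^{U^c,f}_m$ decays exponentially away from $\partial U$ by \Cref{lem:l_p_h}, and since $d(\mv0,U)\ge N^{0.1}$ it and all its derivatives are superpolynomially small near $\mv0$; combined with $\|f|_{\partial U}\|_2^2 = o(N)$ and $\|h^{U^c,f}_m|_{U^c}\|_2^2 \le C\|f|_{\partial U}\|_2^2 = o(N)$ from \Cref{lem:l_p_h}, the harmonic part contributes negligibly both to the local field at $\mv0$ and to the total $\ell^2$-mass. So effectively we are tilting a centered Dirichlet massive GFF $\Phi^{U^c}_m$.

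\textbf{Choosing the mass.} The centered field satisfies $\|\Phi^{U^c}_m\|_2^2 = \sum_{k=1}^{|U^c|} |X_k|^2/(\lambda_{U^c,k}+m)$, a sum of independent exponentials with means $(\lambda_{U^c,k}+m)^{-1}$. By \Cref{mass_torus_holed},
\begin{equation*}
\frac1N \bE(\|\Phi^{U^c}_m\|_2^2) = \int_{[0,1]^d} \frac{d\mv\gk}{4\sum_i \sin^2(\pi\gk_i) + m} + o_N(1),
\end{equation*}
which at $m = m(\theta)$ solving \eqref{eq:mass} equals $\theta + o_N(1)$. After the tilt by $e^{\lambda\theta\|\cdot\|_2^2}$ the variance of mode $k$ becomes $(\lambda_{U^c,k}+m - \lambda\theta)^{-1}$ modulo normalization, i.e. the tilt just shifts the effective mass; so it is cleanest to \emph{not} pre-tilt and instead directly choose $m$ so that $\frac1N\bE\|\Phi^{U^c}_m\|_2^2 \to \theta^{-} $ slightly below $1$... but the honest route is: the target mass is exactly $m(\theta)$ as in \eqref{eq:mass}, because $\int d\mv\gk/(4\sum\sin^2 + m(\theta)) = \theta$, so $\frac1N\bE\|\theta^{-1/2}\Phi^{U^c}_{m(\theta)}\|_2^2 \to 1$. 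Hence the constraint $\|\psi\|_2^2\le N$ sits exactly at the mean of the relevant field, which is why a tilt (equivalently an infinitesimal mass perturbation) and a careful CLT are needed rather than a crude large deviation bound.

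\textbf{The CLT step and removing the constraint.} Fix $m=m(\theta)$. The random variable $S_N := \|\Phi^{U^c}_{m}\|_2^2 = \sum_k |X_k|^2/(\lambda_{U^c,k}+m)$ has mean $\mu_N = \theta N + o(N)$ and variance $\sigma_N^2 = \sum_k (\lambda_{U^c,k}+m)^{-2} = \Omega(N)$ by \Cref{lem:torus_holed_fourth}. I would apply a Lindeberg–Feller CLT (the summands are independent but not identically distributed, and some have diverging mean when $d=3$, which is the delicate point requiring a Lyapunov-type bound with care on the large-variance modes near $\mv0$ in Fourier space) to conclude $(S_N-\mu_N)/\sigma_N \Rightarrow \mathcal N(0,1/2)$ or the appropriate Gaussian, and also a local/anti-concentration estimate so that $\bP(S_N \le N) \to$ a constant in $(0,1)$ and, crucially, that this probability is \emph{insensitive} to perturbing finitely many coordinates $X_k$ (the ones near $\mv0$ that determine $\langle \Phi^{U^c}_m, \fg\rangle$). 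Precisely, for the finite set $A \supseteq \mathrm{supp}(\fg)$, split $S_N = S_N^A + S_N^{A^c}$ where $S_N^A$ involves only the finitely many low-frequency modes relevant to the local field; then
\begin{equation*}
\bE(\cP(\langle \Psisph^{f,\theta,U^c},\fg\rangle)) = \frac{\bE(\cP(\langle \Phi^{U^c}_m + h, \fg\rangle)\, 1_{S_N \le N})}{\bP(S_N \le N)},
\end{equation*}
and since $h$ near $\mv0$ is negligible and $S_N^{A^c}$ is asymptotically independent of $S_N^A$ with $\bP(S_N^{A^c} \le N - s) / \bP(S_N^{A^c}\le N) \to 1$ uniformly for $s$ in compact sets (this is where the CLT plus a local limit theorem for $S_N^{A^c}$ enters), the ratio converges to $\bE(\cP(\langle \Phi^{U^c}_m, \fg\rangle))$, and then to $\bE(\cP(\langle \theta^{-1/2}\Phi^{\bZ^d}_{m},\fg\rangle))$ after rescaling, using that $\mathsf G^{m,U^c} \to \mathsf G^{\bZ^d}_m$ locally (the hole $U$ is far from $\mv0$, so by the domain Markov property and exponential decay of the massive harmonic correction its effect on correlations near $\mv0$ is superpolynomially small — exactly the mechanism of \Cref{mass_torus_holed}). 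Since $\langle\Phi^{\bZ^d}_m,\fg\rangle$ is complex Gaussian, convergence of the covariance gives convergence of all the mixed moments $\cP$, hence local weak convergence.

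\textbf{Main obstacle.} The crux is the third paragraph: establishing that the conditioning event $\{S_N \le N\}$ can be removed with a negligible effect on the \emph{conditional} law of the low-frequency modes. This requires not just a CLT for $S_N$ but effectively a local CLT (or at least a ratio-limit / Stone-type estimate) for the high-frequency part $S_N^{A^c}$, uniform enough to handle the shift by $S_N^A$ and by the (deterministic but possibly $N$-growing, though $o(N)$) contribution $\|h|_{U^c}\|_2^2$ of the boundary data. The genuinely annoying subcase is $d = 3$: the modes $\lambda_{U^c,k}$ near zero give summands with variance blowing up like (distance to $\mv0$ in frequency)$^{-4}$, which is borderline for Lindeberg and forces one to truncate those modes into $S_N^A$ and argue their joint contribution separately — this is presumably why the theorem restricts to the far-away hole $d(\mv0,U)\ge N^{0.1}$ and to controlling the harmonic part so carefully. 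I expect the proof in \Cref{sec:spherical_proofs} to carry out exactly this truncation-plus-refined-CLT argument, cancelling the massive harmonic contribution against the corresponding shift in the constraint level as flagged in the outline.
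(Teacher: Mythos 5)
Your overall blueprint — write the spherical law with boundary data as an exponentially tilted massive Dirichlet GFF in Fourier coordinates, choose the mass so that the $\ell^2$-constraint sits at the mean of the mode-mass, and remove the constraint via a refined CLT — is indeed the paper's route (\Cref{lem:fourier_law}, \eqref{eq:mass_finite_def1}, \Cref{lem:enough}, \Cref{lem:lclt}). However, as written your argument has a genuine gap at its central step. In your ``CLT step'' the displayed identity drops the tilt: $\bE(\cP(\la \Psisph^{f,\theta,U^c},\fg\ra))$ is \emph{not} equal to $\bE(\cP(\la \Phi^{U^c}_m+h,\fg\ra)\,1_{S_N\le N})/\bP(S_N\le N)$, because the spherical density differs from the mass-$m$ GFF density by the factor $e^{\theta m\|\psi\|_2^2}$, which varies by $e^{\Theta(\sqrt N)}$ across the $O(\sqrt N)$ fluctuation range of $S_N$ and cannot be discarded. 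With the tilt present, the conditioning is not a probability-$\Theta(1)$ event to be removed by a CLT plus the ratio estimate $\bP(S_N^{A^c}\le N-s)/\bP(S_N^{A^c}\le N)\to 1$ on compacts: the integral $\int_0^{N-\zeta}e^{\theta m u}p_N(u)\,du$ is boundary-dominated, so effectively only an $O(1)$-width window at the edge $N$ matters. One then needs density-level (local CLT) control of $p_N$ at that endpoint, together with the exact cancellation of the tilt $e^{\theta m\,\zeta(Z^\beta)}$ carried by the finitely many conditioned modes against the shift $-\zeta(Z^\beta)$ of the constraint level; this is exactly what \Cref{claim:int_approx} accomplishes, and it is absent (only vaguely gestured at) in your write-up.

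A second, related defect is that you fix $m=m(\theta)$. Then $A_N-N$ contains the harmonic contribution $\theta\|h|_{U^c}\|_2^2$, which is only $o(N)$ and may be far larger than $B_N\asymp\sqrt N$; the endpoint of integration then lies many standard deviations from the mean, where a uniform \emph{additive} local-CLT error does not yield the needed multiplicative $1+o(1)$. The paper instead calibrates an $N$-dependent mass $m_N$ by $\sum_i(\theta(\lambda_i+m_N))^{-1}+\|\hat h\|_2^2=N$ as in \eqref{eq:mass_finite_def1}, absorbing the boundary mass into the mass parameter; this finite-$N$ recalibration is essential and missing from your plan. Two smaller corrections: the hypothesis $d(\mv0,U)\ge N^{0.1}$ serves only to make the massive harmonic extension (exponentially decaying by \Cref{lem:l_p_h}) negligible near $\mv0$, not to tame low-frequency modes in $d=3$; and since $m>0$ here every summand has variance at most $m^{-2}$, so Lindeberg is immediate in all $d\ge 3$ — the $d=3$ delicacy you anticipate belongs to the critical case $\theta=C_d$, not to this theorem.
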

As mentioned in the introduction, this result generalizes the results in Aru and Korzhenkova \cite{ARU} for the massive phase. In terms of the boundary condition, we believe this result is sharp as we can expect a loss of mass in the local limit if, for example, $U$ is a singleton with boundary value $\sqrt{N}$.

We also need the following uniform bound analogous to the $\theta \ge C_d$ case.  For the application, we need a bound with more general boundary conditions, which might diverge with $N$. Recall that $\Delta_U$ is the restriction of the Laplacian to $U$ as defined in \Cref{sec:gff}.

\begin{lemma}\label{lem:spherical_tail}
Fix $\theta<C_d$.  Let $ U\subset \bT_n^d$ and let $f:U \to \bC$ be such that $\|f|_{\partial U}\|_2^2 =o(N)$ and $|U| = O((\log N)^{100d})$. Let $\lambda_1, \ldots, \lambda_{|U^{c}|}$ denote the eigenvalues corresponding to an orthonormal basis of eigenvectors of $-\Delta_{U^{c}}$. Let $h$ denote the massive harmonic extension of $f$ onto $U^{c}$ with mass $m$ given by \eqref{eq:mass}. Let $m$ be defined as 
\begin{equation}
     \sum_{i=1}^k \frac1{\lambda_i+m} + \theta\|h|_{U^{c}}\|_2^2 = \theta N\label{eq:mass_finite_def}
\end{equation}
Let $h^{m_N,f}$ denote the massive harmonic extension of $f$ with mass given by \eqref{eq:mass_finite_def}.
Then there exists a constant $c,C>0$ such that for all $\mvy \in U^{c}$ and all $b\ge 2|h(\mvy)|$,  
$$
\bP(|\Psisph^{f, \theta, U^{c}}(\mvy) -h^{m,N,f}(\mvy)|  > b) \le \exp(-cb^2).
$$
\end{lemma}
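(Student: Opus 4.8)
The plan is to diagonalize the quadratic form in the Fourier-type basis adapted to $U^c$, so that the constrained Gaussian integral \eqref{eq:density_spherical_boundary_general_mass} becomes a statement about a sum of independent exponential random variables conditioned on a linear constraint, exactly as in the outline for \Cref{thm:main_sph}. Write $\Psisph^{f,\theta,U^c} = h + \Xi$, where $h = h^{m,f}$ is the (finite-$m$) massive harmonic extension of $f$ onto $U^c$ with $m$ given by \eqref{eq:mass_finite_def}, and $\Xi$ vanishes on $\partial U$. A summation-by-parts identity gives $\|\nabla(\psi)|_{U^c}\|_2^2 = \langle \psi, -\Delta_{U^c}\psi\rangle + (\text{boundary terms involving }f)$; completing the square in $\psi$ around $h$ turns $\theta\|\nabla\psi|_{U^c}\|_2^2$ into $\theta\langle \Xi, (-\Delta_{U^c})\Xi\rangle$ plus $\theta\langle h,(-\Delta_{U^c}+\text{?})h\rangle$-type constants and a cross term that controls the $\ell^2$-constraint. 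More precisely, using \eqref{eq:mass_finite_def} the constraint $\|\psi|_{U^c}\|_2^2\le \theta N/\theta = N$ becomes, after expanding $\psi = h+\Xi$, the event $\|\Xi\|_2^2 + 2\Re\langle h,\Xi\rangle \le N - \|h\|_2^2$. Expanding $\Xi = \sum_k \xi_k \phi_{U^c,k}$ in the orthonormal eigenbasis of $-\Delta_{U^c}$, the $\xi_k$ become independent complex Gaussians with variances $\propto (\lambda_k + s)^{-1}$ for a Lagrange-multiplier mass $s$ (the tilting needed to make the constraint typical), and $|\xi_k|^2$ are independent exponentials. So the target probability becomes
\[
\bP\Big(\big|\xi_{\mvy}\big| > b - |h(\mvy)|\Big)\ \text{under the law }\ \bP(\cdot)\propto \bP_{\text{free}}(\cdot)\,\mathbf{1}\{\textstyle\sum_k |\xi_k|^2 + 2\Re\langle h,\Xi\rangle \le N-\|h\|_2^2\},
\]
where $\xi_{\mvy} = \sum_k \xi_k\phi_{U^c,k}(\mvy)$ is the value of $\Xi$ at $\mvy$, i.e.\ $\Psisph^{f,\theta,U^c}(\mvy) - h(\mvy)$; note $h$ here must be the finite-$m$ extension $h^{m,N,f}$ appearing in the statement, and $|h(\mvy)| \le b/2$ by hypothesis, so $b - |h(\mvy)| \ge b/2$.

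The next step is to remove the conditioning. The standard device is: for any event $A$ and the constraint event $\cC$, $\bP(A\mid \cC) \le \bP_{\text{tilted}}(A)/\bP_{\text{tilted}}(\cC)$, where ``tilted'' refers to the exponentially tilted measure with multiplier $s$ chosen so that $\cC$ is typical — this is exactly \eqref{eq:mass_finite_def}, which was engineered so that $\E_{\text{tilted}}[\sum_k|\xi_k|^2 + 2\Re\langle h,\Xi\rangle] = N - \|h\|_2^2$ up to $o(N)$ errors coming from $\|f|_{\partial U}\|_2^2 = o(N)$ and $|U| = O((\log N)^{100d})$. Under the tilted (still product-of-exponentials) law, the numerator $\bP_{\text{tilted}}(|\xi_{\mvy}|>b-|h(\mvy)|)$ is a sub-Gaussian tail: $\xi_{\mvy}$ is a centered complex Gaussian with variance $\sum_k (\lambda_k+s)^{-1}|\phi_{U^c,k}(\mvy)|^2 \le \sum_k(\lambda_k+m_\infty)^{-1}|\phi_{U^c,k}(\mvy)|^2 \to {\sf G}^{\bZ^d}_{m_\infty}(\mvy,\mvy) = O(1)$ uniformly in $\mvy$ (here $m_\infty$ is the solution of \eqref{eq:mass}, and one checks $s$ stays bounded below by a positive constant; the relevant $\ell^2$-diagonal bound follows from \Cref{mass_torus_holed} and its proof, which only needs $|U|$ polylogarithmic). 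Hence $\bP_{\text{tilted}}(|\xi_{\mvy}|>b') \le \exp(-c b'^2)$ for $b' = b - |h(\mvy)| \ge b/2$, giving the desired $\exp(-cb^2)$ up to adjusting $c$. For the denominator, one needs $\bP_{\text{tilted}}(\cC) \ge$ a subexponential (ideally polynomial, or at worst $e^{o(b^2)}$) lower bound; since $\cC$ is centered at its mean under the tilted law, a second-moment / Paley–Zygmund argument (or a local CLT for the sum $\sum_k|\xi_k|^2$, whose variance is $\sum_k(\lambda_k+s)^{-2} = \Omega(N)$ by \Cref{lem:torus_holed_fourth}) gives $\bP_{\text{tilted}}(\cC)\ge c' > 0$ — actually the constraint is one-sided ($\le$), so by symmetry of the sum around its mean it has probability at least $1/2 - o(1)$ once we verify the cross term $2\Re\langle h,\Xi\rangle$ is negligible (it has standard deviation $\sqrt{\langle h, {\sf G}^{s,U^c} h\rangle} = O(\|f|_{\partial U}\|_2) = o(\sqrt N)$ by \Cref{lem:l_p_h}, which is $o(\sqrt N)$, smaller than the $\Omega(\sqrt N)$ fluctuations of $\sum_k|\xi_k|^2$). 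Combining, $\bP(A\mid\cC) \le 2\exp(-cb^2)$, which is the claim after renaming constants.

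The main obstacle, I expect, is controlling the Lagrange multiplier $s$ (equivalently, verifying \eqref{eq:mass_finite_def} has a solution with $s$ bounded away from $0$ and $\infty$ uniformly over admissible $U$ and $f$) and, relatedly, handling the smallest eigenvalues $\lambda_k$ of $-\Delta_{U^c}$, which are $O(N^{-2/d})$ and whose associated exponentials have means diverging like $N^{2/d}$. These few large-mean terms could in principle wreck both the sub-Gaussian numerator bound (if the variance of $\xi_{\mvy}$ picked up an unbounded contribution) and the denominator lower bound. The resolution is that (i) the contribution of any \emph{fixed finite} set of small modes to the pointwise variance $\sum_k(\lambda_k+s)^{-1}|\phi_{U^c,k}(\mvy)|^2$ is $O(1/N)$ since $|\phi_{U^c,k}(\mvy)|^2 = O(1/N)$, so after removing them the rest converges to ${\sf G}^{\bZ^d}_{m_\infty}(\mvy,\mvy)$; and (ii) the conditioning is what pins $s$: because $\theta < C_d$, the unconstrained mass $\frac1N\sum_k \lambda_k^{-1}$ exceeds $\theta$, so a genuinely positive $s$ is forced, and \eqref{eq:mass} identifies its limit. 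This is precisely the step flagged in the outline as requiring care for $d=3$, and it is where the hypothesis $\theta<C_d$ is essential. The rest — the summation-by-parts bookkeeping, the tilting inequality, the sub-Gaussian tail of a complex Gaussian, and the Paley–Zygmund bound — is routine given the lemmas already established (\Cref{lem:l_p_h}, \Cref{mass_torus_holed}, \Cref{lem:torus_holed_fourth}).
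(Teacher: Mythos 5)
Your overall plan is closely aligned with the paper's: Fourier-diagonalize on $U^c$, decompose $\Psisph^{f,\theta,U^c} = h + \Xi$ with $h$ massive harmonic, choose the mass $m$ from \eqref{eq:mass_finite_def} so that the $\ell^2$-constraint is typical, and invoke a local CLT. Your remarks on the small eigenvalues of $-\Delta_{U^c}$ and the role of $\theta < C_d$ are also apt.

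However, the step you call the ``standard device'' --- $\bP(A \mid \cC) \le \bP_{\text{tilted}}(A)/\bP_{\text{tilted}}(\cC)$ --- is not a valid inequality here, and this is a genuine gap. The spherical law is $\bP_0(\cdot\mid\cC)$ with $\cC = \{\|\psi\|_2^2 \le N\}$, and the tilted measure $\bP_m$ satisfies $d\bP_0/d\bP_m \propto \exp(\theta m\|\psi\|_2^2)$. Because $\cC$ is a ball rather than a sphere, this density ratio varies by a factor of $e^{\theta m N}$ across $\cC$, so $\bP_0(\cdot\mid\cC) \ne \bP_m(\cdot\mid\cC)$ and the claimed comparison fails. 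Concretely, bounding the numerator $\bE_m[e^{\theta m\|\psi\|_2^2}\mathbf 1_A\mathbf 1_\cC]$ by $e^{\theta m N}\bP_m(A)$ and using the local CLT for the denominator (which, as you note, is $\asymp e^{\theta m N}/B_N$ with $B_N=\Omega(\sqrt N)$) leaves a stray factor of $B_N\sim\sqrt N$, so at best you get $\bP\lesssim\sqrt N\,e^{-cb^2/4}$ --- inadequate for the constant-order $b$ the lemma permits (e.g.\ $h(\mvy)=0$ for $\mvy$ far from $U$). The paper's proof avoids this by writing the tail probability as an integral over $z$ of the ratio $\cZ_{\text{sph},N}(f_{h(\mvy)+z},\theta,1-|h(\mvy)+z|^2/N,U^c\setminus\{\mvy\})/\cZ_{\text{sph},N}(f,\theta,U^c)$ (a marginal-density identity; see the lines after \eqref{eq:density_spherical_boundary_general_mass2}), and then applying the local CLT of \Cref{claim:int_approx} to \emph{both} partition functions so that the $e^{\theta m N}/B_N$ prefactors cancel. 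The residual Gaussian decay $e^{-c_1|z|^2}$ is then extracted from a careful bookkeeping of how the massive harmonic extension changes when $\mvy$ is removed from the domain (the cancellation involving $z\overline{\Delta g(\mvy)}$ reducing to $-|z|^2(\deg(\mvy)+m)/{\sf G}^{m,U^c}(\mvy,\mvy)$), together with an eigenvalue-interlacing bound on $B_N/B_N'$ and the determinant ratio. That conditioning-on-$\psi(\mvy)$ local CLT in the numerator and the harmonic-function cancellation are the real content of the proof and are absent from your proposal.
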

The proof if postponed of there results is postponed to Section \ref{sec:spherical_proofs}, along with that of \Cref{thm:main_sph}.
\section{Local limit of focusing NLS}

\subsection{Tempering the NLS field}\label{sec:tempering}
The goal of this section is to prove \Cref{prop:U} which roughly states that the NLS field cannot take $O(\sqrt{N})$ values outside a small set of polylogarithmic order. This result is general and works for all $p>2$ and all $\nu, \theta>0$. The amount of mass that is concentrated in this small set depends on whether $\nu<\nu_c(\theta)$ or $\nu>\nu_c(\theta)$.

We start two quick lemmas which we shall use multiple times in this article.
\begin{lemma}\label{lem:lp_largest}
Fix $p\ge2$.
    Let $f:A \to \bC$ be such that $\|f\|_2^2 \le a$ and $\|f\|_\infty <b$. Then $$\|f\|_p^p \le b^{p-2}a.$$
\end{lemma}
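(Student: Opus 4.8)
The statement is the elementary interpolation bound: if $\|f\|_2^2 \le a$ and $\|f\|_\infty < b$, then $\|f\|_p^p \le b^{p-2}a$ for $p \ge 2$. The plan is a one-line pointwise domination argument. For each $\mvx \in A$ write $|f(\mvx)|^p = |f(\mvx)|^{p-2}\,|f(\mvx)|^2$, and since $p-2 \ge 0$ and $|f(\mvx)| \le \|f\|_\infty < b$, we get $|f(\mvx)|^{p-2} \le b^{p-2}$. Hence $|f(\mvx)|^p \le b^{p-2}|f(\mvx)|^2$.

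Summing over $\mvx \in A$ yields
\[
\|f\|_p^p = \sum_{\mvx \in A} |f(\mvx)|^p \le b^{p-2}\sum_{\mvx \in A} |f(\mvx)|^2 = b^{p-2}\|f\|_2^2 \le b^{p-2}a,
\]
which is the claim. The only place the hypothesis $p \ge 2$ is used is to ensure the exponent $p-2$ is nonnegative, so that the pointwise bound $|f(\mvx)| < b$ can be raised to that power without reversing the inequality.

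There is no real obstacle here; the ``hard part'' is merely making sure the trivial case $A = \emptyset$ (or $f \equiv 0$) is not an issue, which it is not since both sides are then $0$. One could also note that the inequality is sharp, attained in the limit when $f$ is supported on $\lceil a/b^2 \rceil$ points each of modulus approaching $b$, but this is not needed for the statement.
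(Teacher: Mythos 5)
Your proof is correct, and it takes a different (and simpler) route than the paper. You argue by pointwise domination: writing $|f(\mvx)|^p = |f(\mvx)|^{p-2}|f(\mvx)|^2 \le b^{p-2}|f(\mvx)|^2$ and summing, which immediately gives $\|f\|_p^p \le b^{p-2}\|f\|_2^2 \le b^{p-2}a$. The paper instead runs an extremal (mass-rearrangement) argument: it identifies the maximizer of the $\ell^p$-norm under the $\ell^2$- and $\ell^\infty$-constraints as the configuration with $|f|=b$ on roughly $a/b^2$ sites and $0$ elsewhere, using the inequality $s^{p/2}+t^{p/2}\le (s+\alpha)^{p/2}+(t-\alpha)^{p/2}$ for $s>t>0$, $\alpha\in[0,t]$, to show that piling mass up only increases the $\ell^p$-norm, and then evaluates the norm of that extremal configuration. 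Your one-line argument is shorter, fully rigorous (the paper leaves the rearrangement details to the reader), and suffices for the stated bound; what the paper's rearrangement viewpoint buys is that the same piling-up mechanism, run in reverse, is exactly the template used for the companion lower bound in \Cref{lem:lp_smallest}, where a pointwise domination trick is not available. Either way the conclusion and the role of $p\ge 2$ (nonnegativity of the exponent $p-2$) are the same.
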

\begin{proof}
The largest possible $\ell^{p}$ norm is achieved when the function $f$ takes its largest value on as many lattice sites as possible. In other words, $|f|=b$ on $a/b^{2}$ many lattice sites and $0$ everwhere else. This is a consequence of the following inequality, for all $s>t>0$ and $p>2$ we have that 
\[
s^{p/2}+t^{p/2} \le (s+\alpha)^{p/2}+(t-\alpha)^{p/2}
\]
for all $\alpha\in [0,t]$ (which can be checked by taking the derivative with respect to $\alpha$.)
To see how to use this inequality, start from a configuration with equal mass at every site (satisfying the $\ell^2$-constraint.) Then try to pile up as much mass onto one site from other sites given the $\ell^\infty$-constraint. The $\ell^p$-norm only increases for this operation. The details are left to the reader.
\end{proof}
\begin{lemma}\label{lem:lp_smallest}
Fix $p \ge 2$.
Let $f:A\to \bC$ such that $\norm{f}^{2}_{2}=a$ $\norm{f}_{\infty}=b$, and $b\leq \sqrt{a}$. Then 
\[
\norm{f}_{p}^{p}\geq b^{p}+(a-b^{2})^{p/2}\cdot (|A|-1)^{-(p-2)/2}. 
\]
\end{lemma}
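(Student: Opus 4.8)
The plan is to do the reverse of Lemma \ref{lem:lp_largest}: there the $\ell^p$-norm was maximized by piling mass onto as few sites as possible, and here we want to minimize it, which corresponds to \emph{spreading} the remaining mass as evenly as possible among the available sites. Since $\norm{f}_\infty = b$, some site carries absolute value exactly $b$, contributing $b^p$ to $\norm{f}_p^p$; this accounts for the first summand. The remaining mass is $a - b^2$, distributed over at most $|A|-1$ other sites, each with absolute value at most $b \le \sqrt a$, so none of them saturates the $\ell^\infty$-constraint once we spread evenly (one should check $(a-b^2)/(|A|-1) \le b^2$, which needs $a \le b^2|A|$; this holds because $\norm f_2^2 = a \le b^2 |A| = \norm f_\infty^2 |A|$ always).

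The key analytic input is the convexity inequality in the opposite direction to the one used in Lemma \ref{lem:lp_largest}: for $p \ge 2$ the function $t \mapsto t^{p/2}$ is convex on $[0,\infty)$, so for $s > t > 0$ and $\alpha \in [0, (s-t)/2]$ we have
\[
s^{p/2} + t^{p/2} \ge (s-\alpha)^{p/2} + (t+\alpha)^{p/2},
\]
i.e.\ \emph{equalizing} two masses does not increase the sum of $p/2$-th powers (check by differentiating in $\alpha$). Applying this repeatedly to the masses $|f(\mvx)|^2$ on the sites other than the maximizing one, while respecting the cap $|f(\mvx)|^2 \le b^2$, we push the configuration toward the one where all of $a - b^2$ is spread uniformly over exactly $|A|-1$ sites. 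By Jensen (or the power mean inequality), a total mass $a - b^2$ spread over $k := |A|-1$ sites gives $\sum |f(\mvx)|^p = \sum (|f(\mvx)|^2)^{p/2} \ge k \cdot \big(\tfrac{a-b^2}{k}\big)^{p/2} = (a-b^2)^{p/2} k^{-(p-2)/2}$. Combining with the $b^p$ term from the maximizing site yields the claimed bound.

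One technical point to be careful about: the "equalize two masses" move must stay inside the feasible set, in particular it must not push any single mass above $b^2$. If at some point a mass would exceed $b^2$ we instead set it to exactly $b^2$ and equalize the rest; but then that site contributes $b^p \ge (\text{what the uniform configuration would give there})$ only if $b^2 \ge (a-b^2)/k$, and in the regime where this fails the uniform configuration is itself feasible, so no capping is needed. Either way the uniform-spread configuration is a lower bound for $\norm{f}_p^p$, which is all we need — we are not claiming it is achievable, only that it minorizes. I expect the main (minor) obstacle to be writing this case analysis cleanly; as in Lemma \ref{lem:lp_largest}, the cleanest route is probably to argue directly via Jensen's inequality applied to the $k = |A|-1$ non-maximal values together with the trivial bound $|f(\mvx_0)|^p = b^p$ at the maximizing site $\mvx_0$, rather than belabouring the exchange argument. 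Concretely:
\[
\norm{f}_p^p = b^p + \sum_{\mvx \neq \mvx_0} |f(\mvx)|^p \ge b^p + (|A|-1)\left(\frac{1}{|A|-1}\sum_{\mvx\neq\mvx_0}|f(\mvx)|^2\right)^{p/2} \ge b^p + \frac{(a-b^2)^{p/2}}{(|A|-1)^{(p-2)/2}},
\]
where the middle step is Jensen for the convex function $u\mapsto u^{p/2}$ applied to the values $|f(\mvx)|^2$, and the last step uses $\sum_{\mvx\neq\mvx_0}|f(\mvx)|^2 = a - b^2$.
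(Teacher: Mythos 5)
Your proof is correct and follows essentially the same route as the paper: isolate a site attaining $|f(\mvx_0)|=b$, then show that evenly spreading the remaining mass $a-b^2$ over the other $|A|-1$ sites minimizes the $\ell^p$-contribution, which is exactly the convexity fact the paper invokes by analogy with \Cref{lem:lp_largest}. Your closing display via Jensen applied to $u\mapsto u^{p/2}$ is in fact the cleanest way to make that step rigorous (and, as you note, it makes the worries about the $\ell^\infty$ cap and the exchange moves unnecessary), so no gap remains.
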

\begin{proof}
There has to be atleast one $\mvx\in A$ such that $|f_{\mvx}|=b$, which contributes $b^{p}$ to $\norm{f}_{p}^{p
}$. We have mass $a-b^{2}$ which remains to be distributed across the remainder of the lattice sites. The configuration that yields the lowest $\ell^{p}$ norm is thus realised by evenly distributing the remainder of the mass across the other lattice sites. This fact has a proof analogous to  \Cref{lem:lp_largest}.
\end{proof}
The following corollary will be useful later in controlling the exponential tilt by the $\ell^{p}$ norm: 
\begin{cor}\label{cor:betachoice}
Let $p>4$ and $0<\gb<\frac{(p-2)-2}{2(p-2)}$. Then for all $\psi\in \ttwo$ satisfying $\norm{\psi}_{2}^{2}\leq N$ and $\norm{\psi}_{\infty}<N^{\gb}$,
\[
\frac{1}{N^\frac{p-2}{2}}\norm{\psi}_{p}^{p}\to 0,
\]
as $N\to \infty$. 
\end{cor}
\begin{proof}
By Lemma \ref{lem:lp_largest}, we know that for such $\psi$, 
\[
\frac{1}{N^{\frac{p-2}{2}}}\norm{\psi}_{p}^{p}\leq \frac{1}{N^{\frac{p-2}{2}}}\cdot N^{\gb(p-2)+1}=N^{(p-2)\gb+(1-\frac{p-2}{2})}.
\]
For our choice of $\beta,p$, the exponent is negative. This completes the proof. 
\end{proof}
Controlling the tilt requires the following notion introduced in \cite{DKK}. Given a function $\psi\in \ttwo$ with $\norm{\psi}_{2}^2< N$ and $\eps>0$, we define the \textbf{separating set} $U(\psi,\eps)\subset \bT^{d}_{n}$ to be the set obtained via the following procedure: Let $U_0=\{\mvx: |\psi_{\mvx}|^2\ge \eps N\}$. Clearly $|U_0|\le 1/\eps$. We inductively define $U_{i}$ by the successive addition of the 2-step outer boundary, \ie\ $U_{i}=\{  \mvx \in V: d(\mvx,U_{i-1})\leq 2 \}$ and $B_i=U_i\setminus U_{i-1}$ for all $i\ge 1$. Take $B_0=U_0$. Note for any $k \ge 1$,
	\[
		\sum_{i=1}^k\sum_{\substack{ \mvx \in B_{i}}} |\psi_{\mvx}|^2
		\le \sum_{\mvx} |\psi_{\mvx}|^2\le N.
	\]
    By pigeonhole principle, there must be an $i \in \{1,\ldots, \lceil\frac{10}{\eps} \rceil\}$ such that $\|\psi|_{B_i}\|^2_2\le \frac{\eps}{10} N$. Let $i_0$ be the smallest such $i$ and define  $U=U(\psi,\eps)=U_{i_0-1} \cup \{\mvx: d(\mvx, U_{i_0-1}) = 1\}$.

	Note that $$|U|\le (2\lceil\frac{10}{\eps} \rceil+1)^d |U_0|\le(21)^d\eps^{-d-1}\leq \eps^{-d-2}$$ 
    if $\eps<1/(21)^d$ (which we assume from now on). 
    We compile some important properties of the separating set below, the proofs are almost immediate consequences of the definitions.
    \begin{lemma}\label{def:separating_set}
       A  separating set  set $U(\psi, \eps) \subset \bT_n^d$ satisfies the following properties:
\begin{enumerate}[i.]
    \item  $|U|\le \eps^{-d-2},$
    \item  $|\psi_{\mvx}|^2< \eps N \text{ for all } \mvx\notin U,$ 
    \item  $\sum_{\mvx\in B_{i_{0}}}|\psi_{\mvx}|^2 = \sum_{\mvx \in U_{\text{int}} \cup U_{\text{out}}}|\psi_{\mvx}|^2 < \frac{\eps N}{10},$ where $U_{\text{int}}$ and $U_{\text{out}}$ are the inner and outer shells of $B_{i_0}$. To be precise, $U_{\text{int}} = \{\mvx \not \in U_{i_0-1}: d(\mvx, U_{i_0-1}) = 1\}$ and $U_{\text{out}} = B_{i_0} \setminus U_{\text{int}}$. 
\end{enumerate} 
    \end{lemma}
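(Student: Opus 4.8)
The plan is to extract all three assertions directly from the construction of the separating set $U=U(\psi,\eps)$ carried out immediately above; no new argument is needed, only careful bookkeeping of the nested sets $U_0\subseteq U_1\subseteq\cdots$ and the shells $B_i=U_i\setminus U_{i-1}$.

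I would dispatch item (ii) first, as it is the most immediate: by construction $U\supseteq U_{i_0-1}\supseteq U_0=\{\mvx:|\psi_{\mvx}|^2\ge\eps N\}$, so every $\mvx\notin U$ lies outside $U_0$, which is exactly the claim $|\psi_{\mvx}|^2<\eps N$. For item (iii), the equality of the two sums is a tautology once one observes that $U_{\text{int}}=\{\mvx\notin U_{i_0-1}:d(\mvx,U_{i_0-1})=1\}$ and $U_{\text{out}}=B_{i_0}\setminus U_{\text{int}}$ are precisely the distance-$1$ and distance-$2$ shells of $B_{i_0}=U_{i_0}\setminus U_{i_0-1}=\{\mvx:1\le d(\mvx,U_{i_0-1})\le 2\}$. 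The bound $\sum_{\mvx\in B_{i_0}}|\psi_{\mvx}|^2\le\tfrac{\eps}{10}N$ is just the defining property of $i_0$: the sets $B_1,\dots,B_{\lceil 10/\eps\rceil}$ are pairwise disjoint and $\sum_{i\ge1}\|\psi|_{B_i}\|_2^2\le\|\psi\|_2^2\le N$, so by pigeonhole some index $i\le\lceil 10/\eps\rceil$ satisfies $\|\psi|_{B_i}\|_2^2\le N/\lceil 10/\eps\rceil\le\tfrac{\eps}{10}N$, and $i_0$ is the least such index.

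Item (i) is the only place a quantitative (but elementary) estimate enters, and it is the step I would write out most carefully. Since $\|\psi\|_2^2\le N$ we have $|U_0|\le\eps^{-1}$. Now $U$ arises from $U_0$ by at most $i_0-1\le\lceil 10/\eps\rceil-1$ successive adjunctions of the $2$-step outer boundary, followed by one final $1$-step boundary, so every vertex of $U$ lies at graph distance at most $2(i_0-1)+1\le 2\lceil 10/\eps\rceil$ from $U_0$. A graph ball of radius $r$ in $\bT_n^d$ contains at most $(2r+1)^d$ vertices (being contained in an $\ell^\infty$-ball of radius $r$), hence
\[
|U|\le|U_0|\cdot\bigl(4\lceil 10/\eps\rceil+1\bigr)^d\le\eps^{-1}(C\eps^{-1})^d\le\eps^{-d-2}
\]
for an absolute dimensional constant $C$ and all $\eps$ below a dimensional threshold (the role of the running assumption $\eps<1/(21)^d$; the precise constant is immaterial). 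There is no substantive obstacle here: the lemma is a direct corollary of the construction, and the only mild point of care is the volume inflation in item (i), where one must note that the number of boundary rings is $O(\eps^{-1})$, so that $|U|$ stays polynomial in $\eps^{-1}$.
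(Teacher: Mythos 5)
Your proof is correct and follows essentially the same route as the paper, which treats the lemma as a direct readout of the construction paragraph preceding it. Your bookkeeping in item (i) is in fact slightly more careful than the paper's displayed inequality $|U|\le (2\lceil 10/\eps\rceil+1)^d|U_0|$, which undercounts the radius after $2$-step expansions, but both versions yield $|U|\le\eps^{-d-2}$ once $\eps$ is below a dimensional threshold.
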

The discussion before the definition guarantees the existence of the separating set. 
The last point in the \Cref{def:separating_set} is important because it implies that the gradient of $\psi$ on the boundary of $U$ can be controlled.

Now, recall our protagonist $\Psi^{\nu,\theta}_N$, the focusing NLS field with parameters $\nu$ and $\theta$ from \eqref{eq:psi}. It is worthwhile to spend some time describing the conditional law of the NLS field given the separating set and the field values on it.
For some $A \subset \bT_n^d$ and $f:A \to \bC$, let $\cU = \cU(A,f, \eps)$ be the following event:
\begin{itemize}
    \item $U(\Psi_N^{\nu, \theta}, \eps_N)=A$
    \item $\Psi_N^{\nu, \theta}|_A \equiv f$.
\end{itemize}
Let $\alpha = \frac1N\|f|_A\|_2^2$.  Let $\Psi_{N, \eps}^{\nu,\theta, 1-\alpha_N}$ denote the random field distributed as $\Psi^{\nu, \theta}_N$ in $A^c$ conditioned on the event $\cU(A,f, \eps)$.

\begin{lemma}\label{lem:conditional_density_separating_set}
    The random field $\Psi_{N, \eps}^{\nu,\theta, 1-\alpha,f,A}$, has a density given by
\begin{equation}
    \frac1{\cZ_N'}\exp\left[\theta \left(\nu_N\norm{\psi}_{p}^{p}-\norm{\nabla \psi}_{2}^{2} \right)\right]\mv1_{\norm{\psi|_{A^c}}_{2}^{2}\leq (1-\alpha) N}\mv1_{\|\psi|_{A^c}\|_{\infty} \le \sqrt{\eps N}}\mv1_{\psi|_{A} \equiv f|_{A}  }, \qquad \psi \in \bC^{\bar A^c} \label{eq:cond_density_linfty1}
\end{equation}
for an appropriate partition function $\cZ_N'= \cZ_N'(\nu,\theta, 1-\alpha,f,A, \eps)  $. 
\end{lemma}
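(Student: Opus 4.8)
The plan is to read off \eqref{eq:cond_density_linfty1} by disintegrating the unconditional law \eqref{eq:psi} of $\Psi_N^{\nu,\theta}$ on $\bC^N$, processing the two constraints that make up the event $\cU(A,f,\eps)$ — the frozen restriction $\Psi_N^{\nu,\theta}|_A\equiv f$ and the separating-set event $U(\Psi_N^{\nu,\theta},\eps)=A$ — one after the other. Since $\bC^N$ is finite dimensional and the joint law has an explicit density, both steps are elementary once the second event is understood combinatorially.

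First condition on $\{\Psi_N^{\nu,\theta}|_A\equiv f\}$. Writing $\bC^N=\bC^A\times\bC^{A^c}$, the density-conditioning (Bayes) formula shows that the conditional law of $(\psi_{\mvx})_{\mvx\notin A}$ has density on $\bC^{A^c}$ proportional to $\exp[\theta(\nu_N\|\psi\|_p^p-\|\nabla\psi\|_2^2)]\mathbf 1_{\|\psi\|_2^2\le N}$, where $\psi$ denotes the full-torus function equal to $f$ on $A$ and to the integration variable on $A^c$. Since $\|f\|_2^2=\alpha N$ by the definition of $\alpha$, the indicator $\mathbf 1_{\|\psi\|_2^2\le N}$ becomes $\mathbf 1_{\|\psi|_{A^c}\|_2^2\le(1-\alpha)N}$, which is the first mass constraint in \eqref{eq:cond_density_linfty1}.

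It remains to intersect with $\{U(\Psi_N^{\nu,\theta},\eps)=A\}$ and to show that, with $\psi|_A=f$ already frozen, this event agrees up to a Lebesgue-null set with $\{\|\psi|_{A^c}\|_\infty<\sqrt{\eps N}\}$. One inclusion is immediate from the construction of the separating set: $U_0(\psi)=\{\mvx:|\psi_{\mvx}|^2\ge\eps N\}\subseteq U(\psi,\eps)$, so $U(\psi,\eps)=A$ forces $|\psi_{\mvx}|^2<\eps N$ on all of $A^c$ — this is property (ii) of \Cref{def:separating_set}. For the converse, once $\psi|_A=f$ is fixed and $\|\psi|_{A^c}\|_\infty<\sqrt{\eps N}$ one has $U_0(\psi)=\{\mvx\in A:|f_{\mvx}|^2\ge\eps N\}$, a deterministic set, so the dilations $U_i$, the shells $B_i$, and the masses $\|\psi|_{B_i}\|_2^2$ of every shell interior to $A$ are all determined by $f$ alone; hence for an $(A,f)$ making $\cU$ non-empty the construction returns $A$. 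The three indicators then multiply the Gibbs weight from the previous step, and absorbing all constants into $\cZ_N'=\cZ_N'(\nu,\theta,1-\alpha,f,A,\eps)$ yields \eqref{eq:cond_density_linfty1}.

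I expect the combinatorial identification in the last step to be the only genuine obstacle: the separating-set construction depends a priori on $\psi|_{A^c}$ through the masses of the dilation shells, and one has to verify that freezing $\psi|_A=f$ makes the stopping index $i_0$ — and therefore $U(\psi,\eps)$ itself — insensitive to $\psi|_{A^c}$ beyond the $\ell^\infty$ threshold. The delicate part concerns the single shell $B_{i_0}$ that straddles the boundary of $A$, whose inner layer is the outermost shell of $A$ and whose outer layer is $\partial A^c$: its mass-budget constraint from property (iii) of \Cref{def:separating_set} is, with $f$ frozen, a constraint on $\psi|_{\partial A^c}$ only, which is precisely why the stated density lives on $\bC^{\bar A^c}$ rather than on $\bC^{A^c}$, the residual $f$-determined portion of that budget being folded into $\cZ_N'$. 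Everything else — the disintegration, the rewriting of the $\ell^2$ constraint, and the bookkeeping of the normalising constant — is routine.
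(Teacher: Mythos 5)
Your disintegration framework is the right one, and the first step — conditioning on $\Psi_N^{\nu,\theta}|_A\equiv f$ and rewriting $\mathbf 1_{\|\psi\|_2^2\le N}$ as $\mathbf 1_{\|\psi|_{A^c}\|_2^2\le(1-\alpha)N}$ — is fine. The gap is in the combinatorial identification of $\{U(\psi,\eps)=A\}$, which you yourself flag as "the only genuine obstacle" but then resolve incorrectly. Once $\psi|_A\equiv f$ is frozen and $\|\psi|_{A^c}\|_\infty<\sqrt{\eps N}$, the sets $U_0,U_1,\dots$, the shells $B_j$, and the masses $\|\psi|_{B_j}\|_2^2$ for $j<i_0$ are indeed all determined by $f$. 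But the stopping criterion at $i_0$ itself reads $\|\psi|_{B_{i_0}}\|_2^2\le\tfrac{\eps}{10}N$, and the outer layer $U_{\text{out}}$ of $B_{i_0}$ is precisely $\partial(A^c)\subseteq A^c$. With $f$ frozen this becomes
\[
\|\psi|_{\partial(A^c)}\|_2^2\;\le\;\tfrac{\eps}{10}N-\|f|_{B_{i_0}\cap A}\|_2^2,
\]
a nontrivial restriction on the \emph{integration variable} $\psi|_{\partial(A^c)}$. Such a variable-dependent indicator cannot be "folded into $\cZ_N'$" as you claim — absorbing it into the normalising constant would change the density. Dropping it leaves positive Lebesgue mass on configurations $\psi$ that satisfy the three indicators in \eqref{eq:cond_density_linfty1} yet for which the separating-set algorithm would stop at some $i_0'>i_0$ and output a strictly larger set than $A$; so the two events $\{U(\psi,\eps)=A,\ \psi|_A=f\}$ and $\{\psi|_A=f,\ \|\psi|_{A^c}\|_\infty<\sqrt{\eps N},\ \|\psi|_{A^c}\|_2^2\le(1-\alpha)N\}$ are \emph{not} equal up to null sets whenever $\partial(A^c)\ne\eset$.

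A secondary confusion: you claim this boundary constraint is "why the stated density lives on $\bC^{\bar A^c}$ rather than $\bC^{A^c}$." That is not so — $\partial(A^c)\subseteq A^c$, so a constraint there does not enlarge the domain; the enlargement to $\bar A^c=A^c\cup\partial A$ exists only to host the delta $\mathbf 1_{\psi|_{\partial A}\equiv f|_{\partial A}}$. Note also that the paper itself, in the proof of \Cref{prop:U}, does carry the extra boundary indicator $\|\zeta|_{\partial(A^c)}\|_2^2\le 3\eps_N N$ inside the definition of $\tilde\cB_{j,A^c}$, which is precisely the constraint your argument discards. To make your proof go through you would either have to include this indicator in the stated density (i.e.\ correct the statement of the lemma), or explain why dropping it is harmless for the downstream use — neither is done.
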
 
\begin{proof}
    This simply follows from the fact that we chose the first $i_0$ while defining the separating set, which preserves the Markov property. The remaining details are straightforward consequence of the density of the NLS measure.
\end{proof}
\begin{proposition}\label{prop:U}
Fix $\nu, \theta>0$ and $p>2$.
 Let $\eps_N \in (e^{-\sqrt{\log N}},1)$  with $\eps_N \to 0$ as $N \to \infty$, and let $U(\Psi^{\nu,\theta}_N, \eps_N)$ be the separating set as described above.   
     Then there exists $\delta_N \to 0$ as $N \to \infty$ such that  
    $$
    \bP\left(\frac1N\|\Psi^{\nu, \theta}_N |_U \|^2_2 \in (a - \delta_N, a+\delta_N) \text{ for some }a \in \mathscr{M}(\theta, \nu)\right) \ge 1-e^{-N\sqrt{\eps_{N}}/2}
    $$
    In particular, if $\mathscr{M}(\theta, \nu) = \{0\}$ then 
    $$
    \bP\left(\|\Psi^{\nu, \theta}_N  \|_\infty < \sqrt{\max\{\eps_N, \delta_N\} N}\right) \ge 1-e^{-N\sqrt{\eps_{N}}/2}.
    $$
    \end{proposition}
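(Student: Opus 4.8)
The plan is to establish the large deviation bound on $\|\Psi^{\nu,\theta}_N|_U\|_2^2$ by adapting the free energy asymptotics of \cite{DKK} from the level of partition functions to a tail bound, and then to extract the $\ell^\infty$ control as an immediate corollary. First I would decompose the probability space according to the separating set. For a subset $A \subset \bT_n^d$ with $|A| \le \eps_N^{-d-2}$ and a boundary datum $f : A \to \bC$, recall from \Cref{lem:conditional_density_separating_set} that on the event $\cU(A,f,\eps_N)$ the field restricted to $A^c$ has an explicit conditional density; integrating out, the probability that $U(\Psi^{\nu,\theta}_N,\eps_N)=A$, that $\Psi|_A \in \vd f$, and that $\frac1N\|\Psi|_A\|_2^2 \approx \alpha$ is a ratio of two integrals. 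The numerator is bounded above by the spherical partition function on $A^c$ with mass cut-off $(1-\alpha)N$ times the maximal value of the tilt $\exp(\theta\nu_N\|\psi\|_p^p)$ on the relevant set, plus the contribution $\exp(\theta\nu_N\|f|_A\|_p^p)$ from $A$ itself. Using \Cref{lem:lp_largest} together with the $\ell^\infty$ bound $\sqrt{\eps_N N}$ valid off $A$, and the observation from the heuristics in \Cref{sec:heuristics} that the largest possible $\ell^p$ mass on $A$ with $\|\cdot\|_\infty \le \sqrt N$ and $\|\cdot\|_2^2 \le N$ is $\eps_N^{-1}(\eps_N N)^{p/2} = \eps_N^{(p-2)/2}N^{p/2}$ wait — more precisely $O(N \cdot (\eps_N)^{(p-2)/2})$ after recalling $\nu_N = \tfrac2p(\nu/N)^{(p-2)/2}$ — one sees the total log-tilt is $O(N \sqrt{\eps_N})$ up to constants depending on $\nu,p$. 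This is exactly the slack in the exponent that we will exploit.

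Next I would combine this with the quantitative version of the free energy convergence alluded to after \eqref{eq:F}: $\frac1N\log\cZ_N(\theta,\nu) = \log(\pi/\theta) - F_{\min} + o(1)$, and similarly $\frac1N\log\cZ_{\mathrm{sph},N}(\text{mass }(1-\alpha)N, \theta) = \log(\pi/\theta) + W(\theta(1-\alpha)) + o(1)$ (this is \eqref{eq:Wn}--\eqref{eq:W} up to the normalizing constant and temperature rescaling). Putting the pieces together, the log-probability of $\{\tfrac1N\|\Psi|_U\|_2^2 \approx \alpha\}$ is bounded above by $N\big[ W(\theta(1-\alpha)) + \tfrac\theta\nu I(\nu\alpha) - F_{\min} + o(1) + O(\sqrt{\eps_N})\big]$, where the $I$-term enters because the minimal-energy configuration on $A$ carrying mass $\alpha N$ contributes $\exp(\tfrac{\theta}{\nu} I_N(\nu\alpha N) + \ldots)$ by the definition of $I_N$ in \eqref{def:Ifn} and its convergence — here one must be a little careful that the genuine minimum of $\cH_N$ restricted to mass $\alpha N$ functions supported on $A$ is the relevant quantity, but since the tilt is maximized by concentrating mass and the gradient only helps the upper bound, $I$ is the right object. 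Since $F(\alpha) = W(\theta(1-\alpha)) + \tfrac\theta\nu I(\nu\alpha) \ge F_{\min}$ with equality only on $\mathscr{M}(\theta,\nu)$, the bracketed quantity is $\le -\eta(\delta_N) + o(1) + O(\sqrt{\eps_N})$ whenever $\alpha$ is at distance $\ge \delta_N$ from $\mathscr{M}(\theta,\nu)$, where $\eta(\delta)>0$ by continuity and compactness of $\mathscr{M}$. Choosing $\delta_N \to 0$ slowly enough that $\eta(\delta_N)$ dominates $\sqrt{\eps_N}$ (possible since $\eps_N \ge e^{-\sqrt{\log N}}$, so $\sqrt{\eps_N}$ decays sub-polynomially but we only need $\eta(\delta_N) \gg \sqrt{\eps_N}$, which holds because $F$ has some fixed modulus of continuity near its minimum set, giving $\eta(\delta) \gtrsim \delta$ or at worst a power of $\delta$), a union bound over the at most $2^N$ choices of $A$ and a discretization of the $\alpha$-range and the $f$-values (the $f$ range being of bounded dimension $|A| = \mathrm{polylog}$, the entropy cost is negligible on exponential scale) yields $\bP(\mathrm{dist}(\tfrac1N\|\Psi|_U\|_2^2, \mathscr{M}) \ge \delta_N) \le e^{-N\sqrt{\eps_N}/2}$ for $N$ large.

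For the ``in particular'' statement, suppose $\mathscr{M}(\theta,\nu)=\{0\}$. On the complement of the bad event above we have $\tfrac1N\|\Psi^{\nu,\theta}_N|_U\|_2^2 < \delta_N$. By \Cref{def:separating_set}(ii), every vertex $\mvx \notin U$ satisfies $|\Psi_N(\mvx)|^2 < \eps_N N$, and every vertex $\mvx \in U$ satisfies $|\Psi_N(\mvx)|^2 \le \|\Psi_N|_U\|_2^2 < \delta_N N$; hence $\|\Psi_N\|_\infty^2 < \max\{\eps_N,\delta_N\} N$, which is the claimed bound, again with probability at least $1 - e^{-N\sqrt{\eps_N}/2}$.

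The main obstacle I anticipate is the bookkeeping in the upper bound that turns the partition-function asymptotics of \cite{DKK} into a genuine large deviation estimate uniform over $A$ and $f$: one must show that (a) the entropy of choosing $A$ (a factor $2^N$, i.e.\ $N\log 2$ in the exponent) is genuinely negligible — this is \emph{not} obvious and is the reason the construction forces $|U| \le \eps_N^{-d-2} = e^{o(N)}$ rather than just polylog here, so in fact the naive $2^N$ bound is too lossy and one instead bounds the number of relevant $A$ by $\binom{N}{\eps_N^{-d-2}} = e^{o(N)}$; wait, that is still only $e^{o(N)}$ if $\eps_N^{-d-2}\log N = o(N)$, which holds since $\eps_N \ge e^{-\sqrt{\log N}}$ gives $\eps_N^{-d-2} \le e^{(d+2)\sqrt{\log N}} = N^{o(1)}$ — good, so the entropy is $N^{o(1)}\log N = o(N)$, negligible; (b) the mass $\alpha$ and boundary values $f$ must be discretized at scale $e^{-N}$ or so, contributing another $e^{o(N)}$, also negligible; and (c) verifying that the relevant lower bound $F(\alpha) - F_{\min} \ge \eta(\delta_N)$ has a usable modulus — here I would invoke continuity of $W,I$ and compactness of $[0,1]$ to get that $\inf\{F(\alpha) - F_{\min} : \mathrm{dist}(\alpha,\mathscr{M}) \ge \delta\}$ is a strictly positive, nondecreasing function of $\delta$, and then simply \emph{define} $\delta_N$ to be, say, the largest $\delta \le 1/\log N$ with $\eta(\delta) \ge \sqrt{\eps_N}$ — this exists and tends to $0$, closing the argument. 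The precise matching of the tilt contribution on $A$ with $\tfrac\theta\nu I(\nu\alpha)$ (rather than a larger quantity) also needs the observation from \Cref{lem:lp_largest}/the heuristics that piling mass up is optimal for the tilt, so that the worst case on $A$ is captured by the energy minimizer, which is exactly what $I$ computes.
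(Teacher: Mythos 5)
Your overall strategy matches the paper's: decompose by the separating set $A$, split the Hamiltonian into $A$, $A^c$, and a boundary piece, bound each integral by comparison to the spherical model, and close with the continuity/compactness of $F$ to choose $\delta_N$. The final $\ell^\infty$ deduction is exactly right. But there is one genuine gap and two minor imprecisions worth flagging.

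The genuine gap is in the tilt bookkeeping off $A$. You compute correctly that the worst-case $\ell^p$ mass on $A^c$ subject to $\|\psi\|_\infty\le\sqrt{\eps_N N}$, $\|\psi\|_2^2\le N$ is $\eps_N^{(p-2)/2}N^{p/2}$, hence after multiplying by $\nu_N$ the pointwise tilt bound is $O(N\eps_N^{(p-2)/2})$ --- and then you assert this is ``$O(N\sqrt{\eps_N})$''. That step is only valid when $p\ge 3$; for $p\in(2,3)$ one has $\eps_N^{(p-2)/2}\gg\sqrt{\eps_N}$, and even at $p=3$ the constant matters because the target rate is $N\sqrt{\eps_N}/2$ with an explicit factor $1/2$. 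The proposition is stated for all $p>2$, and the paper's proof does \emph{not} rely on the worst-case pointwise tilt: it delegates the bound of $\int_{\tilde\cB_{j,A^c}}\exp(-\theta\cH_N(\psi|_{A^c}))$ to Lemma~6.5 of \cite{DKK}, where the NLS integral is compared to the spherical one by exploiting that typical spherical configurations have far smaller $\ell^p$ norm than the $\ell^\infty/\ell^2$ worst case, so the tilt error comes out $O(N\eps_N)$ rather than $O(N\eps_N^{(p-2)/2})$. Your argument is sound for $p>4$ (where $\eps_N^{(p-2)/2}<\eps_N$), but for the full range $p>2$ you need either to reproduce the concentration argument underlying \cite{DKK}*{Lemma~6.5} or to cite it, as the paper does.

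Two minor points. First, you argue the entropy of choosing $A$ is $N^{o(1)}\log N = o(N)$, ``negligible on exponential scale''; the correct comparison target is $o(N\sqrt{\eps_N})$, not $o(N)$, since the claimed rate is $N\sqrt{\eps_N}/2$. The conclusion is unaffected because $N^{o(1)}\log N \ll N^{1-o(1)}\le N\sqrt{\eps_N}$ under the assumption $\eps_N\ge e^{-\sqrt{\log N}}$, but the stated inequality is the wrong one. Second, the prescription ``define $\delta_N$ to be the largest $\delta\le 1/\log N$ with $\eta(\delta)\ge\sqrt{\eps_N}$'' is off: if $\eta(1/\log N)<\sqrt{\eps_N}$ for some $N$ there may be no such $\delta$, and if there is one then ``largest'' trivially picks $1/\log N$. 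What you want is the \emph{smallest} $\delta$ with $\eta(\delta)\ge\sqrt{\eps_N}$; it exists since $\eta(\delta)>0$ for every fixed $\delta>0$ while $\sqrt{\eps_N}\to 0$, and it tends to $0$ because for any fixed $\delta_0>0$ eventually $\sqrt{\eps_N}<\eta(\delta_0)$. The paper's subsequence/compactness argument does the same thing in slightly different language, and either route closes the proof.
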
  

    \begin{proof}
    The second assertion immediately follows from the first and the definition of the separating set, so we concentrate on the first assertion.
In this proof we drop superscript $\nu, \theta$ from $\Psi^{\nu,\theta}$ for the sake of brevity. Recall the notation  of the energy functional $F$ from \eqref{eq:F}. 
    Let $$\mathscr{M}(\eps_N)  := \{x \in [0,1), F(x)-F_{\min} < \sqrt{\eps_N} \}.$$  
    We claim that
    \begin{equation}
        \bP\left(\frac{1}{N}\norm{\Psi_{N}|_{U}}_{2}^{2}\notin \mathscr{M}(\eps_{N})\right)\leq e^{-N\sqrt{\eps_{N}}/2}\label{eq:l2_atypical}
    \end{equation}
    for $N$ sufficiently large. This bound follows from the upper and lower bounds of the partition function from \cite{DKK}*{Section 6}. We now provide a sketch of the proof of these bounds \eqref{eq:l2_atypical} for the sake of completeness, before completing the proof. Let us define
    \[
    \cG(\eps_{N}):=\{\psi:\frac{1}{N}\norm{\psi|_{U(\psi,\eps_{N})}}_{2}^{2}\notin \mathscr{M}(\eps_{N}) \}. 
    \]
    Note, 
    \[
    \bP(\Psi_{N}\in \cG(\eps_{N})):=\frac{1}{\cZ_{N}}\int_{\cG(\eps_{N})\cap \{\norm{\psi}_{2}^{2}\leq N\}}e^{-\theta \cH_{N}(\psi)}\vd\psi.
    \]
    For a given $A\subset \bT^{d}_{n}$, whose size is bounded above by $\eps^{-d-2}_{N}$, let $\cC(A,\eps_{N})$ denote the collection of $\psi$ such that $A = U(\psi, \eps_N)$. We can now split $\cH_N(\psi)$ as  
    \begin{equation*}
        \cH_N(\psi) = \cH_N(\psi|_A) + \cH_{N}(\psi|_{A^{c}})+ \sum_{\mvx\sim\mvy, \mvx\in A, \mvy\in A^{c}}|\psi_{\mvx}-\psi_{\mvy}|^{2}
    \end{equation*}
    Using property  iii. of $U$ above, the inequality $|a-b|^2 \le 2(|a|^2+|b|^2)$ and employing an union bound over all possible separating sets of size at most $\eps_N^{-d-2}$, we obtain the following upper bound
    \[
    \bP(\Psi_{N}\in \cG(\eps_{N}))\leq \frac{e^{\theta 3N\eps_{N}}}{\cZ_{N}}\sum_{A:|A|\leq \eps_{N}^{-d-2}}\int_{\cG(\eps_{N})\cap \{\norm{\psi}_{2}^{2}\leq N\}\cap \cC(A,\eps_{N})}e^{-\theta\cH_{N}(\psi|_{A})-\theta\cH_{N}(\psi|_{A^{c}})}\vd\psi
    \]
    With $\xi:=1-\frac{1}{2d}$ (this choice is governed by the error bounds from the entropy terms and follows \cite[Section 6]{DKK}), define 
    \[
    \cB_{i,A} =\left\{\zeta \in  \bC^A: \norm{\zeta}_{2}^{2}\in \bigl[iN^{\xi}, (i+1)N^{\xi} \bigr)\right\},
    \]
    and 
    \[
    	\tilde{\cB}_{j,A^{c}}:=\left\{\zeta \in \bC^{A^c}  : \norm{\zeta}_{2}^{2}\in (jN^{\xi},(j+1)N^{\xi}),\text{ }\norm{\zeta}_{\infty} \leq \sqrt{\eps_{N} N} \text{ and } \norm{\zeta|_{\partial (A^c)}}_{2}^{2}\leq 3\eps_{N} N\right\}. 
    \]
    By definition of $\cC(A,\eps_{N})$ and $\cG(\eps_{N})$, the following containment holds:
    $$
    \{\|\psi\|_2^2\le N\} \cap \cC(A,\eps_{N})\cap\cG(\eps_{N}) \subseteq \bigcup_{\stackrel{i+j\leq N^{1-\xi}}{ iN^{\xi-1} \not \in \mathscr{M}(\eps_N)}} \cB_{i,A}\times \tilde{\cB}_{j,A^{c}}.
    $$
    % It is easy to  observe that the number of subsets $A$ fo size at most $\eps_N^{-d-2}$ is $o(e^{N\eps_N})$ for our choice of $\eps_N$. 
    %N%ow using \Cref{lem:lp_largest},  given that $\|\psi|_{A^c}\|_\infty \leq \sqrt{\eps_{N}N}$, we have 
    %$$
    %\nu_{N}\norm{\psi|_{A^{c}}}_{p}^{p} \leq \nu^{\frac{p-2}%{2}}\cdot N \eps_N^{\frac{p-2}{2}} = o(\eps_N N)
    %$$
    %for all $N$ sufficiently large. Thus for  all $N$ sufficiently large, $\bP(\Psi_{N}\in \cG(\eps_{N}))$ can be further bounded above by 
    This yields
    \begin{align}\label{eq:5.2UB1}
    \bP(\Psi_{N}\in \cG(\eps_{N}))\leq\frac{e^{3N\eps_{N}\theta}}{\cZ_{N}}\sum_{A:|A|\leq \eps^{-d-2}}\sum_{\stackrel{i+j\leq N^{1-\xi}}{ iN^{\xi-1} \not \in \mathscr{M}(\eps_N)}}\int_{\cB_{i,A}}e^{-\theta \cH_{N}(\psi|_{A})}\vd \psi|_{A}\cdot \int_{\tilde{\cB}_{j,A^{c}}}e^{-\theta\cH_{N}(\psi|_{A^{c}})}\vd \psi|_{A^{c}}
    \end{align}
    It is a consequence of Lemmas 6.4, 6.5 and the proof of Lemma 6.2 of \cite{DKK} that we can further bound \eqref{eq:5.2UB1} above by 
    \begin{align} \label{eq:5.2UB2}
  \frac{1}{\cZ_{N}}\cdot e^{10N\eps_{N}+o(N\eps_{N})}\cdot \left(\frac{\pi}{\theta}\right)^{N}\sum_{\stackrel{i=0}{ iN^{\xi-1} \not \in \mathscr{M}(\eps_N)}}^{i^*}e^{-NF(i\cdot N^{\xi})}
  \end{align}
  where $i^{*}=\lceil  N^{1-\xi}\rceil$. Specifically, Lemma 6.4 addresses the intergral of $\psi|_{A}$, Lemma 6.5 addresses the integral over $\psi|_{A^{c}}$, and the proof of Lemma 6.2 demonstrates how to combine the respective bounds so as to make the saddle point method applicable. For ease of reference, we mention the differences in notation now, $\eps_{N}$ here is $s_{N}$ in \cite{DKK}, and $N^{\xi}$ here is replaced by a general sequence $\gk_{N}$ in \cite{DKK}, where an exact rate is specified later. 
 By definition, we have that
  \[
  F(a)>F_{\min}+\sqrt{\eps_{N}} \text{  for all  }a\notin \mathscr{M}(\eps_{N}).  
  \]
  Thus, a further upper bound of \eqref{eq:5.2UB2} is obtained as 
  \[
  \frac{1}{\cZ_{N}} e^{10N\eps_{N}+o(N\eps_{N})}\cdot N^{1-\xi}\left(\frac{\pi}{\theta}\right)^{N}\cdot e^{-NF_{\min}}\cdot e^{-N\sqrt{\eps_{N}}}
  \]
   To extract the required upper bound on $\bP(\Psi_{N}\in \cG(\eps_{N}))$, the last part required is the lower bound on $\cZ_{N}$, which by Lemma 6.6 of \cite{DKK}, for $N$ sufficiently large is given by
   \[
   \cZ_{N}\geq e^{o(N\eps_{N})}\cdot \left(\frac{\pi}{\theta}\right)^{N}\cdot e^{-NF_{\min}}
   \]
   Note that the maximal order of the errors is $e^{11N\eps_{N}}$ for all large enough $N$, which is beaten by $e^{-N\sqrt{\eps_{N}}}$ as $N\to \infty$ (since $\eps_N \to 0$ by our assumption). Combining our bounds we note that for $N$ sufficiently large,
   \[
   \bP(\Psi_{N}\in \cG(\eps_{N}))\leq e^{-N\sqrt{\eps_{N}}/2}.
   \]
   Next, we need to verify the existence of a $\delta_N \to 0$ such that 
    \[
    \mathscr{M}(\eps_{N})\subseteq \{x:\exists a\in \mathscr{M}(\theta,\nu)\text{ s.t. }|x-a|<\gd_{N}\}.
    \]
    Suppose there exists a sequence $N_{k}\in \bN$, an $x_{N_{k}}$ and a fixed $\gd>0$ such that $x_{{N_{k}}}\in \mathscr{M}(\eps_{N_{k}})$ and $d(x_{N_{k}},\mathscr{M(\theta,\nu)})>\gd$. The sequence $x_{N_{k}}$ is bounded, and every limit point $x_{*}$ is at least distance $\gd$ away from $\mathscr{M}(\theta,\nu)$. By definition, $x_{*}\notin \mathscr{M}(\theta,\nu)$ since $d(x_{*},\mathscr{M}(\theta,\nu))>\gd$. On the other hand $F(x_{k_{N}})-F_{\min}=|F(x_{k_{N}})-F_{\min}|<\eps_{k_{N}}$, and by continuity $G(x_{*})=F_{\min}$, which then implies that $x_{*}\in \mathscr{M}(\theta,\nu)$, a contradiction.
\end{proof}
    \subsection{Free energy landscape}\label{sec:landscape}
    In this section we provide required details about the behavior of the functions $I, W$ and $F$ from \Cref{sec:intro}. Many of these results are lifted from \cites{DKK,WEI99} and forms the core input in the phase transition behavior of the NLS field. We emphasize that results that are due to others are explicitly attributed to them, and the those that are not attributed are proved by us. Recall that 
    \[
    \lim _{N\to \infty}\frac{1}{N}\log \cZ_{N}(\theta, \nu)=\frac{\pi}{\theta}-\inf_{a\in (0,1)}\{W(\theta(1-a))-\nu^{-1}I(\nu a)\}
    \]
    \[
    I(a):=\inf\{\norm{\nabla\psi}_{2}^{2}-\frac{2}{p}\norm{\psi}_{p}^{p}:\psi\in \ell^{2}(\bZ^{d}),\text{ }\norm{\psi}_{2}^{2}=a\},
    \]
and 
\[
W(b):=\frac{1}{N}\log \int_{\norm{\psi}_{2}^{2}\leq Nb}\exp(-\norm{\nabla \psi}_{2}^{2})\vd\psi.
\]
The free energy functional, which defines the free energy landscape is 
\[
F_{\theta, \nu}(a)=W(\theta(1-a))+\theta \nu^{-1}I(\nu a).
\]
We start with the function $W(b)$ because the required properties are relatively short. The fact that the defining limit exists and is continuous in $b$ is proved in Section 4 of \cite{DKK}. As a broad summary of Section 4, $W$ is connected to the discrete Gaussian Free Field and is identified as  free energy of the field conditioned to have given mass. As such, it can be expressed as a Legendre transform. For the purposes of this article, the required results are as follows. 
\begin{lemma}[Dey, Kirkpatrick, K. Lemma 4.7, \cite{DKK}]
The function $W:(0,\infty)\to (0,\infty)$ satisfies the following properties:
\begin{enumerate}
    \item Convex, continuously differentiable and non increasing. 
    \item Constant for all $b \in [C_{d},\infty)$, where $C_{d}={\sf G}^{\bZ^{d}}(\mv0,\mv0)$. 
    \item Diverges as $b\to 0$, that is $\lim _{b\to 0}W(b)=\infty$. 
\end{enumerate}
\end{lemma}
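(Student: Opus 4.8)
The plan is to strip away the global $\ell^2$--constraint by a Laplace--transform (tilting) argument, identify $W$ as a Legendre transform of an explicit Gaussian free energy, and then read all three properties off that representation. For $\lambda\ge 0$ introduce the tilted Gaussian partition function
\[
Z_N(\lambda):=\int_{\bC^N}\exp\bigl(-\norm{\nabla\psi}_2^2-\lambda\norm{\psi}_2^2\bigr)\,\vd\psi=\frac{\pi^N}{\det(-\Delta+\lambda)},
\]
which is finite for $\lambda>0$; by the eigenvalue formula \eqref{eq:evalues} and a Riemann--sum argument, $\tfrac1N\log Z_N(\lambda)\to\log\pi-\Lambda(\lambda)$ with $\Lambda(\lambda):=\int_{[0,1]^{d}}\log\bigl(4\sum_{i=1}^{d}\sin^2(\pi\gk_i)+\lambda\bigr)\,d\mv\gk$. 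The function $\Lambda$ is finite and $C^{\infty}$ on all of $[0,\infty)$ (the logarithmic singularity at $\mv\gk=\mv0$ is integrable since $d\ge 1$), strictly concave, and strictly increasing, with $\Lambda'(\lambda)=g(\lambda):=\int_{[0,1]^{d}}\bigl(4\sum_i\sin^2(\pi\gk_i)+\lambda\bigr)^{-1}d\mv\gk$; moreover $g$ is a strictly decreasing bijection of $(0,\infty)$ onto $(0,C_d)$, where $C_d=g(0^+)={\sf G}^{\bZ^{d}}(\mv0,\mv0)<\infty$ --- and the finiteness here is exactly the place $d\ge 3$ enters (cf.\ \eqref{eq:C_d}).

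Next I would sandwich the free energy. The trivial direction uses $\mv1_{\norm{\psi}_2^2\le bN}\le e^{\lambda(bN-\norm{\psi}_2^2)}$ to compare the constrained integral with $e^{\lambda bN}Z_N(\lambda)$ for each $\lambda>0$; the reverse direction restricts the tilted integral to the slab $\{(b-\delta)N\le\norm{\psi}_2^2\le bN\}$, on which $e^{-\norm{\nabla\psi}_2^2}\ge e^{-\norm{\nabla\psi}_2^2-\lambda\norm{\psi}_2^2}e^{\lambda(b-\delta)N}$, and shows that the slab carries $\bP_{\lambda}$--probability $\to 1$ (resp.\ $\Omega(1)$, which is all that is needed on the exponential scale). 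Here $\bP_{\lambda}$ is the law $Z_N(\lambda)^{-1}e^{-\norm{\nabla\psi}_2^2-\lambda\norm{\psi}_2^2}\vd\psi$, under which $\tfrac1N\norm{\psi}_2^2=\tfrac1N\sum_{\mv k}|X_{\mv k}|^2/(\lambda_{\mv k}+\lambda)$ is an average of independent exponentials concentrating at its mean $\to g(\lambda)$; for $b<C_d$ one takes $\lambda=g^{-1}(b-\tfrac\delta2)>0$ (all summand variances are then uniformly bounded, so a Bernstein estimate applies), and for $b\ge C_d$ one lets $\lambda\downarrow0$. Optimising over $\lambda$ and then sending $\delta\downarrow0$ proves that the limit $W(b)=\lim_N W_N(b)$ exists and equals, up to an explicit additive constant (and in the free--energy sign convention under which the asserted monotonicity holds), the Legendre transform $\sup_{\lambda\ge0}\{\Lambda(\lambda)-\lambda b\}$.

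From this representation the three items follow at once. As a supremum of functions affine in $b$ with nonpositive slopes $-\lambda$, $W$ is convex and non--increasing; it is $C^{1}$ because $\Lambda$ is strictly concave, so $\lambda\mapsto\Lambda(\lambda)-\lambda b$ has the unique maximiser $\lambda^{*}(b)=g^{-1}(b)$ for $b<C_d$ and $\lambda^{*}(b)=0$ for $b\ge C_d$, and the envelope theorem gives the continuous derivative $W'(b)=-\lambda^{*}(b)$ (with $\lambda^{*}(C_d^{-})=0$, so no corner at $b=C_d$). For $b\ge C_d$ one has $g(\lambda)<b$ for every $\lambda>0$, so $\lambda\mapsto\Lambda(\lambda)-\lambda b$ is non--increasing on $[0,\infty)$ and $W(b)=\Lambda(0)=\int_{[0,1]^{d}}\log\bigl(4\sum_i\sin^2(\pi\gk_i)\bigr)d\mv\gk$, a constant. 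Finally, $W(b)\ge\Lambda(\lambda)-\lambda b$ for every $\lambda$; taking $\lambda=b^{-1/2}$ and using $\Lambda(\lambda)\ge\log\lambda$ gives $W(b)\ge\tfrac12\log(1/b)-O(1)\to\infty$ as $b\downarrow0$. The one step that needs genuine care is the lower bound in the sandwich --- showing there is no loss on the exponential scale --- and in particular its behaviour \emph{at} $b=C_d$, where $\lambda\downarrow0$ and the Fourier modes with $\lambda_{\mv k}\asymp N^{-2/d}$ carry most of the variance of $\norm{\psi}_2^2$, so the Gaussian concentration is only polynomially sharp; as noted this still suffices, but it is the same delicacy that makes the critical case $\theta=C_d$ subtle elsewhere in the paper. (If one prefers a softer route to convexity: after integrating out the zero Fourier mode, $b\mapsto\int(bN-\norm{\psi_0}_2^2)^{+}e^{-\norm{\nabla\psi_0}_2^2}\vd\psi_0$ is log--concave by Pr\'ekopa--Leindler, so its negative logarithm is convex.)
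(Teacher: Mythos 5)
The paper itself gives no proof of this lemma; it is imported from \cite{DKK}*{Lemma 4.7}, with only the remark (in \Cref{sec:landscape}) that $W$ is the free energy of the GFF conditioned on its mass and ``can be expressed as a Legendre transform.'' Your proposal reconstructs exactly that route — Gaussian tilting, Riemann-sum identification of $\Lambda(\lambda)$, a Cramér-type sandwich with concentration of $\frac1N\norm{\psi}_2^2$ under the tilted law, and then reading convexity, $C^1$-smoothness, monotonicity, constancy on $[C_d,\infty)$ and divergence at $0$ off the Legendre transform $\sup_{\lambda\ge 0}\{\Lambda(\lambda)-\lambda b\}$ — and it is essentially correct.

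Two remarks. First, the sign issue you flag is genuine: with the literal definition \eqref{eq:Wn}, $b\mapsto W_N(b)$ is non-decreasing (the constraint set grows with $b$) and $W(b)\le \log(\pi b)+1\to-\infty$ as $b\downarrow 0$, so the monotonicity and divergence asserted in the lemma can only refer to the normalization of \cite{DKK}, namely $\log\pi-\lim_N W_N(b)=\sup_{\lambda\ge0}\{\Lambda(\lambda)-\lambda b\}$, which is precisely your Legendre transform and is also the normalization under which $\lim_N\frac1N\log\cZ_N(\theta,\nu)=\log(\pi/\theta)-F_{\min}$ is consistent; so your parenthetical convention is the right reading, not a gap. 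Second, in the lower bound for $b\ge C_d$, the slab $\{(b-\delta)N\le\norm{\psi}_2^2\le bN\}$ as written carries $\bP_{\lambda}$-probability tending to $0$ for fixed small $\lambda$, since the mean is $g(\lambda)N<(b-\delta)N$; the fix is immediate — either centre the slab at $g(\lambda)$, or simply use $e^{\lambda\norm{\psi}_2^2}\ge 1$ together with the event $\{\norm{\psi}_2^2\le bN\}$, and send $\lambda\downarrow0$ only after $N\to\infty$. With that cosmetic adjustment the case $b=C_d$ requires no special care (one only needs the constraint event to be sub-exponentially likely), so your cautionary remark about the critical point is unnecessary, and the proof is complete along the same lines as the cited source.
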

For the second ingredient of the free energy landscape, we summarize several properties of the function $I$, which is the energy of a discrete soliton with given mass. The main result of \cite{WEI99} was to verify that there is a threshold for soliton formation for certain values of $p$, that is minimizers of $\norm{\nabla \psi}_{2}^{2}-2p^{-1}\norm{\psi}_{p}^{p}$ in $\ztwo$ exist only when the mass is larger than a threshold. Moreover, that this threshold is defined in terms of a functional inequality on the lattice. It is not hard to show that $I(a)\leq 0$ for all $a>0$, all one needs to do is consider functions that are increasingly spread out. Define

\begin{align}\label{eq:Rp1}
			R_{p}:=\inf\{a>0:I(a)<0\}. 		\end{align}

   \begin{thm}[Weinstein~\cite{WEI99}]\label{thm:wei}
	The following hold.
	\begin{enumeratei}
		\item An explicit minimizer of \eqref{def:Ifn}  in $\ell^{2}(\dZ^{d})$ exists when $I(a)\in (-\infty,0)$.
		\item The threshold $R_{p}$ defined above can be expressed as
  		\begin{align}\label{eq:Rp}
			\frac2{p} R_{p}^{(p-2)/2} = \inf_{\psi } \left\{ \frac{ \norm{\psi}_{\ztwo}^{p-2}\cdot \norm{\nabla \psi}_{\ztwo}^2 }{ \norm{\psi}_{\zp}^{p} } \right\}.
		\end{align}
        \item For $2<p<2+4/d$, we have $R_{p}=0$, \ie\ $I(a)<0$ for all $a>0$.
		\item For $p > 2+4/d$,  $R_{p}>0$, \ie\  $I(a)<0$ if $a>R_{p}$ and $I(a)=0$ if $a<R_{p}$. 
	\end{enumeratei}
\end{thm}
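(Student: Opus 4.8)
\emph{Proof proposal.} The plan is to reduce items (ii)--(iv) to a sharp discrete Gagliardo--Nirenberg--Sobolev (GNS) inequality on $\bZ^{d}$ and to obtain item (i) by concentration--compactness. Write
\[
J:=\inf\left\{\frac{\norm{\psi}_{\ztwo}^{p-2}\,\norm{\nabla\psi}_{\ztwo}^{2}}{\norm{\psi}_{\zp}^{p}}:\psi\in\ell^{2}(\bZ^{d}),\ \psi\neq 0\right\},
\]
the quantity appearing on the right of \eqref{eq:Rp}, so that the inequality $\norm{\psi}_{\zp}^{p}\le J^{-1}\norm{\psi}_{\ztwo}^{p-2}\norm{\nabla\psi}_{\ztwo}^{2}$ holds with a finite constant if and only if $J>0$. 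Since $\norm{\psi}_{\zp}^{p}\le\norm{\psi}_{\ztwo}^{p}$ on the lattice, $\cH(\psi)\ge-\tfrac2p\norm{\psi}_{\ztwo}^{p}$ and $I(a)>-\infty$ for all $a$, so the hypothesis $I(a)\in(-\infty,0)$ in (i) just means $I(a)<0$.

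\emph{Steps for (ii)--(iv).} First establish the dichotomy $J=0$ for $2<p<2+4/d$ and $J>0$ for $p\ge 2+4/d$. For the former, fix a nonzero Lipschitz compactly supported $\phi:\bR^{d}\to\bR$ and test with $\psi^{(L)}(\mvx):=\phi(\mvx/L)$; Riemann-sum estimates give $\norm{\psi^{(L)}}_{\ztwo}^{2}\asymp L^{d}$, $\norm{\psi^{(L)}}_{\zp}^{p}\asymp L^{d}$ and $\norm{\nabla\psi^{(L)}}_{\ztwo}^{2}\asymp L^{d-2}$, whence the ratio is $\asymp L^{d(p-2)/2-2}\to 0$ precisely when $p<2+4/d$. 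For $p\ge 2+4/d$ with $d\ge 3$ (our setting), combine the discrete Sobolev inequality $\norm{\psi}_{\ell^{2d/(d-2)}(\bZ^{d})}\le C\norm{\nabla\psi}_{\ztwo}$, the spectral bound $\norm{\nabla\psi}_{\ztwo}^{2}=\langle-\Delta\psi,\psi\rangle\le 4d\,\norm{\psi}_{\ztwo}^{2}$ (the spectrum of $-\Delta$ lies in $[0,4d]$ by \eqref{eq:evalues}), and either Hölder interpolation between $\ell^{2}$ and $\ell^{2d/(d-2)}$ (when $2+4/d\le p\le 2d/(d-2)$) or the bound $\norm{\psi}_{\zp}^{p}\le\norm{\psi}_{\infty}^{p-2d/(d-2)}\norm{\psi}_{\ell^{2d/(d-2)}}^{2d/(d-2)}$ together with $\norm{\psi}_{\infty}\le\norm{\psi}_{\ztwo}$ (when $p\ge 2d/(d-2)$); in each case the exponents are arranged so that surplus powers of $\norm{\nabla\psi}_{\ztwo}$ are absorbed via $\norm{\nabla\psi}_{\ztwo}^{2}\le 4d\norm{\psi}_{\ztwo}^{2}$, giving $J>0$. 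With $J$ in hand the scaling $\cH(\lambda\psi)=\lambda^{2}\norm{\nabla\psi}_{\ztwo}^{2}-\tfrac2p\lambda^{p}\norm{\psi}_{\zp}^{p}$ finishes the job: if $\norm{\psi}_{\ztwo}^{2}=a$ then $\cH(\psi)\ge\norm{\nabla\psi}_{\ztwo}^{2}(1-\tfrac{2}{pJ}a^{(p-2)/2})$, so $I(a)\ge 0$ once $a^{(p-2)/2}\le\tfrac{pJ}{2}$, while rescaling a near-optimizer of $J$ to $\ell^{2}$-mass $a$ shows $I(a)<0$ once $a^{(p-2)/2}>\tfrac{pJ}{2}$; as $I(a)\le 0$ always (spread-out test functions) and $I$ is continuous, this yields $\tfrac2p R_{p}^{(p-2)/2}=J$, i.e.\ (ii). Since $\cH(\sqrt t\,\psi)\le t\,\cH(\psi)$ for $t\ge 1$, the set $\{a:I(a)<0\}$ is an interval and hence equals $(R_{p},\infty)$; combined with the dichotomy for $J$ this gives (iii) ($R_{p}=0$, so $I<0$ on $(0,\infty)$) and (iv) ($R_{p}>0$, with $I\equiv 0$ on $[0,R_{p}]$ and $I<0$ on $(R_{p},\infty)$).

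\emph{Step for (i).} Fix $a>R_{p}$, so $I(a)<0$, and take a minimizing sequence $\psi_{n}$ with $\norm{\psi_{n}}_{\ztwo}^{2}=a$, $\cH(\psi_{n})\to I(a)$. Then $\norm{\psi_{n}}_{\zp}^{p}\le a^{p/2}$ bounds the discrete gradient, and $\cH(\psi_{n})\to I(a)<0$ forces $\norm{\psi_{n}}_{\zp}^{p}\ge c>0$, hence $\norm{\psi_{n}}_{\infty}\ge c'>0$ via $\norm{\psi_{n}}_{\zp}^{p}\le\norm{\psi_{n}}_{\infty}^{p-2}\norm{\psi_{n}}_{\ztwo}^{2}$. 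Translating so $\abs{\psi_{n}}>c'$ at the origin and passing (diagonally, using $\ell^{2}$-boundedness) to a pointwise limit $\psi\neq 0$, a discrete Brezis--Lieb splitting $\norm{\psi_{n}}_{\ztwo}^{2}=\norm{\psi}_{\ztwo}^{2}+\norm{\psi_{n}-\psi}_{\ztwo}^{2}+o(1)$, and likewise for $\norm{\cdot}_{\zp}^{p}$ and for $\norm{\nabla\cdot}_{\ztwo}^{2}$ (the latter using weak $\ell^{2}$-convergence and boundedness of $-\Delta$), gives $I(a)\ge\cH(\psi)+\lim\cH(\psi_{n}-\psi)\ge I(a_{1})+I(a-a_{1})$ with $a_{1}=\norm{\psi}_{\ztwo}^{2}\in(0,a]$. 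The strict subadditivity $I(a)<I(a_{1})+I(a-a_{1})$ for $0<a_{1}<a$ --- obtained by combining $I(ta)<t\,I(a)$ for $t>1$ (since $I(a)<0$), $I(sa)\ge s\,I(a)$ for $s\le 1$, and $I\equiv 0$ on $[0,R_{p}]$ --- then forces $a_{1}=a$, so $\psi_{n}\to\psi$ strongly in $\ell^{2}$, $\cH(\psi)=I(a)$, and $\psi$ is a minimizer; by a Lagrange multiplier it solves $-\Delta\psi+\omega\psi=\abs{\psi}^{p-2}\psi$ with $\omega>0$.

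I expect the main obstacle to be (i): making the concentration--compactness clean on the lattice, i.e.\ ruling out mass escaping to infinity. The crux is the strict subadditivity inequality, whose proof must juggle the nesting of the lattice $\ell^{p}$-spaces, the discrete Brezis--Lieb lemma, and the scaling monotonicity of $I$ at once; the threshold behaviour of $J$ at $p=2+4/d$ is the other substantive point, but it is essentially bookkeeping with discrete Sobolev embeddings.
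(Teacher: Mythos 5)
The paper does not prove this statement; it is cited verbatim to Weinstein~\cite{WEI99}, so there is no ``paper's proof'' to compare against. Your blind attempt is essentially a reconstruction of Weinstein's variational argument, and the outline is correct. Reducing items (ii)--(iv) to the dichotomy for the sharp GNS constant $J$, obtained by explicit spread-out test functions for $p<2+4/d$ and by discrete Sobolev embedding plus the spectral bound $\|\nabla\psi\|_2^2\le 4d\|\psi\|_2^2$ for $p\ge 2+4/d$, is exactly the right move; the exponent bookkeeping you sketch (absorbing the surplus power of $\|\nabla\psi\|_2$ back into $\|\psi\|_2$ via the spectral bound) does close. For (i), concentration--compactness with a lattice Brezis--Lieb splitting and strict subadditivity of $I$ is also the right mechanism, and your observation that lattice nesting $\|\psi\|_p\le\|\psi\|_2$ gives $\|\psi_n\|_\infty\ge c'>0$ is the key device that lets you centre the sequence and extract a nonzero pointwise limit without invoking the full Lions machinery.

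Two places would need more care when writing this out. First, the strict subadditivity step needs a short case analysis rather than a one-line invocation of scaling: with $a_1\ge a/2$, either $a_1\le R_p$ (forcing $a-a_1\le R_p$ as well, so $I(a_1)+I(a-a_1)=0>I(a)$) or $I(a_1)<0$, in which case the chain $I(a)=I\bigl((a/a_1)a_1\bigr)<(a/a_1)I(a_1)=I(a_1)+\tfrac{a-a_1}{a_1}I(a_1)\le I(a_1)+I(a-a_1)$ needs the companion inequality $I\bigl(\tfrac{a-a_1}{a_1}a_1\bigr)\ge\tfrac{a-a_1}{a_1}I(a_1)$, i.e.\ your ``$I(sa)\ge sI(a)$ for $s\le 1$''. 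It is worth recording that the strictness of $I(ta)<tI(a)$ for $t>1$ rests on the uniform lower bound $\|\psi\|_p^p\ge c>0$ on near-minimizers, which you have established earlier; without it the gap $\tfrac{2}{p}(t^{p/2}-t)\|\psi\|_p^p$ could vanish in the infimum. Second, the ``Brezis--Lieb for $\|\nabla\cdot\|_2^2$'' is more simply just the expansion of a Hilbert norm with the cross term $2\fR\langle(-\Delta)\psi,\psi_n-\psi\rangle\to 0$ by weak $\ell^2$-convergence; calling it Brezis--Lieb slightly misstates what is being used. With these caveats attended to, the proposal is a sound and essentially self-contained proof of the theorem the paper imports as a black box.
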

\begin{rem}
Combined with the next Lemma, we can in fact conclude that $I(R_{p})=0$.
\end{rem}

Equation~\eqref{eq:Rp} interprets $R_{p}$ as reciprocal to the best possible constant for a discrete Gagliardo-Nirenberg-Sobolev type inequality to hold~\cite{WEI99}. Basic analytic properties of $I$ were verified in \cite{DKK}. The only part required here is the following. 
\begin{lemma}[Dey, Kirkpatrick, K. Lemma 5.9 \cite{DKK}]\label{lem:Ider}
The function $I:[0,\infty)\to (-\infty, 0]$ is non increasing and differentiable. Further, $I'(a)\leq a^{-1}\cdot I(a)$ 
\end{lemma}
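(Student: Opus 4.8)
The plan is to deduce everything from the exact homogeneity of the energy $\cH(\psi)=\norm{\nabla\psi}_2^2-\tfrac2p\norm{\psi}_p^p$. First I would record the scaling identity: for any nonzero $\phi\in\ztwo$ and any $b>0$ the field $\sqrt b\,\phi/\norm{\phi}_2$ has squared $\ell^2$-mass exactly $b$, and since $\norm{\nabla\cdot}_2^2$ and $\norm{\cdot}_2^2$ are $2$-homogeneous while $\norm{\cdot}_p^p$ is $p$-homogeneous,
\[
\cH\!\Big(\tfrac{\sqrt b}{\norm{\phi}_2}\,\phi\Big)=b\,c_1(\phi)-b^{p/2}c_2(\phi),\qquad c_1(\phi):=\frac{\norm{\nabla\phi}_2^2}{\norm{\phi}_2^2}\ge0,\quad c_2(\phi):=\frac{2}{p}\,\frac{\norm{\phi}_p^p}{\norm{\phi}_2^p}\ge0 .
\]
Every $\psi$ with $\norm{\psi}_2^2=b$ arises this way, so $I(b)=\inf_{\phi\neq0}\bigl(b\,c_1(\phi)-b^{p/2}c_2(\phi)\bigr)$. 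Each map $b\mapsto b\,c_1(\phi)-b^{p/2}c_2(\phi)$ is concave on $(0,\infty)$ because $p/2>1$ and $c_2(\phi)\ge0$; hence $I$, being a pointwise infimum of concave functions, is concave on $(0,\infty)$, and therefore continuous there, with one-sided derivatives everywhere and differentiable off a countable set. (Finiteness is clear: $\cH(\psi)\ge-\tfrac2p\norm{\psi}_p^p\ge-\tfrac2p\norm{\psi}_2^p=-\tfrac2p b^{p/2}$ on the constraint surface.)

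Next I would establish that $I$ is non-increasing by a ``push the extra mass far away'' argument. Given $0<a\le a'$ and $\eps>0$, take $\psi$ with $\norm{\psi}_2^2=a$ and $\cH(\psi)<I(a)+\eps$; truncating to a large ball and rescaling (which perturbs $\norm{\psi}_2^2,\norm{\nabla\psi}_2^2,\norm{\psi}_p^p$ by $o(1)$, using $\psi\in\ztwo$ and $\norm{\psi}_p\le\norm{\psi}_2$) we may assume $\psi$ is finitely supported with $\norm{\psi}_2^2=a$ and $\cH(\psi)<I(a)+2\eps$. Glue on a flat bump $\chi\equiv c$ on a box $Q_L$ of side $L$ placed at graph distance $\ge2$ from $\mathrm{supp}\,\psi$, with $c^2L^d=a'-a$. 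Then $\norm{\psi+\chi}_2^2=a'$, no edge is shared, so $\cH(\psi+\chi)=\cH(\psi)+\cH(\chi)$ with $\cH(\chi)=\norm{\nabla\chi}_2^2-\tfrac2p\norm{\chi}_p^p=O\bigl((a'-a)/L\bigr)-O\bigl((a'-a)^{p/2}L^{-d(p-2)/2}\bigr)\to0$ as $L\to\infty$. Hence $I(a')\le I(a)+3\eps$ for every $\eps>0$, so $I(a')\le I(a)$.

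For the quantitative bound I would fix $a>0$ and, for each $h\in(0,1)$, choose $\psi_h$ with $\norm{\psi_h}_2^2=a$ and $\cH(\psi_h)<I(a)+h^2$; set $A_h=\norm{\nabla\psi_h}_2^2\ge0$ and $B_h=\tfrac2p\norm{\psi_h}_p^p\ge0$, so that $A_h-B_h<I(a)+h^2$. Testing $I(a+h)$ against $\sqrt{(a+h)/a}\,\psi_h$ and using Bernoulli's inequality $(1+t)^{p/2}\ge1+\tfrac p2 t$ for $t\ge0$,
\[
I(a+h)\le\Big(1+\tfrac ha\Big)A_h-\Big(1+\tfrac ha\Big)^{p/2}B_h\le\Big(1+\tfrac ha\Big)(A_h-B_h)-\frac{p-2}{2}\,\frac ha\,B_h\le\Big(1+\tfrac ha\Big)\bigl(I(a)+h^2\bigr),
\]
where the last step uses $B_h\ge0$ and $p>2$. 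Dividing by $h$ gives $\dfrac{I(a+h)-I(a)}{h}\le\dfrac{I(a)}{a}+h+\dfrac{h^2}{a}$, whose $h\downarrow0$ limit is $\dfrac{I(a)}{a}$. Thus the upper right-hand Dini derivative of $I$ at $a$ is at most $a^{-1}I(a)$; by concavity this Dini derivative equals the right derivative of $I$, and at points of differentiability it equals $I'(a)$, giving $I'(a)\le a^{-1}I(a)$.

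The one genuinely delicate point — the step I expect to be the main obstacle, and for which I would lean on \cite{DKK} — is promoting ``differentiable off a countable set'' (automatic from concavity) to differentiability at every point. On $[0,R_p)$ this is free since $I\equiv0$ by \Cref{thm:wei}, so the question is confined to $[R_p,\infty)$. There I would use the explicit minimizer $\psi^\star$ of \Cref{thm:wei}(i): the smooth function $b\mapsto b\,c_1(\psi^\star)-b^{p/2}c_2(\psi^\star)$ lies above $I$ and touches it at $a$, so its derivative at $a$ lies between $I'_+(a)$ and $I'_-(a)$; closing the remaining gap is a Danskin/envelope-type statement and ultimately reduces to a uniqueness/continuity property of the minimizing set, which is precisely what is carried out in \cite{DKK}. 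Granting differentiability, the Dini-derivative bound of the previous paragraph is exactly the asserted $I'(a)\le a^{-1}I(a)$.
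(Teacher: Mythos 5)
This lemma is not proved in the paper at all: it is imported from \cite{DKK} (their Lemma 5.9), so there is no in-text argument to compare yours against, and I can only judge your proposal on its own terms. Where it is complete, it is correct. The homogeneity representation $I(b)=\inf_{\phi\neq 0}\bigl(b\,c_1(\phi)-b^{p/2}c_2(\phi)\bigr)$ is right; each member of that family is concave in $b$ since $p/2>1$ and $c_2(\phi)\ge 0$, so $I$ is concave on $(0,\infty)$; the finiteness bound via $\norm{\psi}_p\le\norm{\psi}_2$ is fine; the distant-flat-bump construction does give monotonicity; and the Bernoulli computation correctly bounds the right derivative, $I'_+(a)\le a^{-1}I(a)$. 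In fact you can shortcut that last computation: since $I(0)=0$ and $I$ is concave, the chord inequality already gives $I'_+(a)\le (I(a)-I(0))/a=a^{-1}I(a)$, with no approximate minimizers needed.

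The genuine gap is the one you flag yourself: differentiability at every point. Concavity yields one-sided derivatives and differentiability off a countable set only, and your envelope/Danskin sketch as written cannot close the gap: a smooth member of the family touching the concave $I$ from above at $a$ merely has its slope sandwiched between $I'_-(a)$ and $I'_+(a)$, which is automatic and does not force those one-sided derivatives to coincide. Closing it requires uniqueness/continuity information about the minimizing set (\Cref{thm:wei} only gives existence of minimizers when $I(a)<0$), plus a separate treatment of $a=R_p$, where no minimizer is guaranteed and differentiability amounts to showing $I(a)=o(a-R_p)$ as $a\downarrow R_p$. Deferring exactly this point to \cite{DKK} is consistent with what the paper itself does (it cites the whole lemma without proof), but as a standalone proof of the statement as written, the differentiability claim remains unestablished in your proposal.
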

As discussed in Section \ref{sec:heuristics}, the free energy landscape describes the likelihood, on an exponential scale, of a certain fraction of the mass being concentrated. As such the minimizers of $F$ represent the most likely values of this fraction. The set of minimizers $\mathscr{M}(\theta, \nu)\subset [0,1)$ characterizes the phases. As $\theta$ and $\nu$ are varied, we have two distinct phases. 
\begin{thm}[Dey, Kirkptrick, K. Theorem 2.4 \cite{DKK}]\label{thm:phasecurve}
There exists a function $\nu_{c}:(0,\infty)\to (R_{p},\infty)$ which is continuous and strictly decreasing, such that 
\begin{enumerate}
    \item For $\nu<\nu_{c}$, ${\mathscr M}(\theta,\nu)$ is a closed interval containing $0$, and is $\{0\}$ when additionally $\theta\leq C_{d}$. This phase is referred to as \textbf{dispersive} or subcritical.
    \item For $\nu>\nu_{c}$, $\mathscr{M}(\theta, \nu)$ is bounded away from $0$. This phase is referred to as \textbf{solitonic} or supercritical.  
    \item As $\theta\to 0$, $\nu_{c}\to \infty$, and as $\theta\to \infty$, $\nu_{c}\to R_{p}$.
    \end{enumerate}
\end{thm}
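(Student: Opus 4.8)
The plan is to argue entirely at the level of the variational problem defining $\mathscr{M}(\theta,\nu)=\argmin_{a\in[0,1]}F(a)$, where $F(a)=F_{\nu,\theta}(a)=W(\theta(1-a))+\tfrac{\theta}{\nu}I(\nu a)$; this set is nonempty and compact by continuity of $W$ and $I$. Writing $J(a):=W(\theta(1-a))$, the recorded properties of $W$ say $J$ is convex and non-decreasing, equals the constant $W(C_d)$ on $[0,(1-C_d/\theta)_+]$, is strictly increasing beyond, and diverges as $a\uparrow 1$, while $a\mapsto\tfrac\theta\nu I(\nu a)$ is non-increasing, vanishes on $[0,R_p/\nu]$ and is negative for $a>R_p/\nu$. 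The one input that drives everything is a monotonicity: \Cref{lem:Ider} gives $aI'(a)\le I(a)$, so $a\mapsto I(a)/a$ is non-increasing; equivalently $g(b):=-I(b)/b$ is non-decreasing on $(0,\infty)$, with $g\equiv0$ on $(0,R_p]$ and $g(b)\to\infty$ as $b\to\infty$ — the last point because the single-site spike $\psi=\sqrt b\,\mathbf 1_{\{\mathbf 0\}}$ on $\bZ^d$ gives $I(b)\le 2db-\tfrac2p b^{p/2}$, and $p>4$ makes $\tfrac2p b^{p/2-1}-2d\to\infty$. Consequently $\Gamma_\nu(a):=-\tfrac1\nu I(\nu a)=a\,g(\nu a)$ is, for each fixed $a$, non-decreasing in $\nu$.

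Next I would locate the phase boundary. Rearranging $F(a)\ge F(0)=W(\theta)$ and using $W(\theta(1-a))\ge W(\theta)$ gives
\[
0\in\mathscr{M}(\theta,\nu)\iff \Gamma_\nu(a)\le \frac{W(\theta(1-a))-W(\theta)}{\theta}=\int_{1-a}^{1}\abs{W'(\theta t)}\,dt\quad\text{for all }a\in[0,1].
\]
The right side is independent of $\nu$, jointly continuous in $(\theta,a)$, and (as $\abs{W'}$ is non-increasing) non-increasing in $\theta$; the left side is non-decreasing in $\nu$ and jointly continuous. Hence $\{\nu>0:0\in\mathscr{M}(\theta,\nu)\}$ is an interval, closed by continuity of $I$, so of the form $(0,\nu_c(\theta)]$. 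It contains $(0,R_p]$ (there $\Gamma_\nu\equiv0$) and, fixing any $a_0\in(0,1)$, it omits every $\nu$ for which $a_0\,g(\nu a_0)$ exceeds the fixed number $\int_{1-a_0}^1\abs{W'(\theta t)}dt$, so $\nu_c(\theta)\in[R_p,\infty)$, and the display's monotonicity in $\theta$ makes $\nu_c$ non-increasing. Plugging $a=1-C_d/\theta$ into the display when $\theta>C_d$ forces $\Gamma_\nu(1-C_d/\theta)\le0$, hence $\nu(1-C_d/\theta)\le R_p$, so $\nu_c(\theta)\le \tfrac{R_p\theta}{\theta-C_d}\downarrow R_p$ as $\theta\to\infty$; and as $\theta\to0$, $\abs{W'(\theta)}\to\infty$, so $a\,g(\nu)\le a\abs{W'(\theta)}\le\int_{1-a}^1\abs{W'(\theta t)}dt$ for all $a$ once $g(\nu)\le\abs{W'(\theta)}$, giving $\nu_c(\theta)\to\infty$. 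Finally, for $\nu>\nu_c(\theta)$ we have $0\notin\mathscr{M}(\theta,\nu)$, and a compact set not containing $0$ is bounded away from $0$: this is the second assertion.

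For the first assertion, fix $\nu<\nu_c(\theta)$. On $[0,(1-C_d/\theta)_+]$ the term $J$ is constant, and one must have $R_p/\nu\ge(1-C_d/\theta)_+$ — otherwise $F<F(0)$ on the nonempty interval where $J$ is still flat but $I(\nu\,\cdot)<0$, contradicting $0\in\mathscr{M}$ — so $F\equiv F(0)$ there and $[0,(1-C_d/\theta)_+]\subseteq\mathscr{M}(\theta,\nu)$. For the reverse inclusion, suppose $a_1\in\mathscr{M}(\theta,\nu)$ with $a_1>(1-C_d/\theta)_+$. Then $J(a_1)>J(0)$ (strict decrease of $W$ below $C_d$), so $F(a_1)=F(0)$ forces $I(\nu a_1)<0$, i.e. $\nu a_1>R_p$, and makes the displayed constraint active at $a_1$: $\Gamma_\nu(a_1)=\int_{1-a_1}^1\abs{W'(\theta t)}dt$. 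Picking any $\nu'\in(\nu,\nu_c(\theta)]$ gives $0\in\mathscr{M}(\theta,\nu')$ hence $\Gamma_{\nu'}(a_1)\le\Gamma_\nu(a_1)$, while $\Gamma_{\nu'}(a_1)\ge\Gamma_\nu(a_1)$ by monotonicity of $g$; so $g$ is constant on $[\nu a_1,\nu_c(\theta)a_1]\subset(R_p,\infty)$, which is impossible since $-I(b)/b$ is nowhere locally constant on $(R_p,\infty)$ (a fact to be extracted from the structure of Weinstein's minimizers, \Cref{thm:wei}). Therefore $\mathscr{M}(\theta,\nu)=[0,(1-C_d/\theta)_+]$, a closed interval containing $0$, equal to $\{0\}$ precisely when $\theta\le C_d$.

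It remains to show $\nu_c$ is continuous and strictly decreasing, and this is where I expect the real work. Both follow from the same mechanism as above: an active constraint cannot persist under an increase of $\nu$. Concretely, $0\in\mathscr{M}(\theta,\nu)$ iff $\theta\le\theta_c(\nu):=\sup\{\theta:\Gamma_\nu(a)\le\int_{1-a}^1\abs{W'(\theta t)}dt\ \forall a\}$; a jump or a flat of $\nu_c$ would produce $\theta_1<\theta_2$ (or $\theta$ across a jump) and a $\nu$ for which the constraint is simultaneously satisfied at $(\theta_i,\nu)$ but active at some common $a_0>R_p/\nu$, and then bumping $\nu$ up slightly violates the constraint at $a_0$ (strict increase of $\nu\mapsto\Gamma_\nu(a_0)$, again via nowhere-local-constancy of $g$ past $R_p$), contradicting $0\in\mathscr{M}$. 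Making this airtight needs the degenerate cases where $a_0$ lies at the edge $R_p/\nu$ of the flat zone of $I$, or inside the flat zone $\theta(1-a)\ge C_d$ of $W$, to be ruled out using the bound $\nu_c(\theta)\le\tfrac{R_p\theta}{\theta-C_d}$ and strict convexity of $W$ on $(0,C_d)$. The main obstacle is therefore precisely this boundary bookkeeping at the edges of the two flat zones — $[C_d,\infty)$ for $W$ and $[0,R_p]$ for $I$ — together with the strictness inputs (nowhere-local-constancy of $-I(b)/b$ past $R_p$, strict convexity of $W$ below $C_d$), which are not literally among the cited lemmas and would need to be derived from Weinstein's explicit soliton construction and the Legendre-transform description of $W$ in \cite{DKK}.
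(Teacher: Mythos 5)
This is not a statement the paper proves: Theorem~\ref{thm:phasecurve} is imported verbatim from \cite{DKK}, and the paper uses it as a black box (it re-derives only the particular consequences Lemmas~\ref{lem:sub_theta_nu}--\ref{lem:super_to_sub}). So there is no internal proof to compare against; what I can do is assess your sketch on its merits.

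Your organizing idea is the right one and matches the structure of \cite{DKK}: reduce everything to a single comparison
\[
0\in\mathscr{M}(\theta,\nu)\iff a\,g(\nu a)\le \phi_\theta(a):=\frac{W(\theta(1-a))-W(\theta)}{\theta}\quad\text{for all }a\in[0,1],
\]
where $g(b)=-I(b)/b$ is non-decreasing by \Cref{lem:Ider}, $g\equiv 0$ on $(0,R_p]$, and $g(b)\to\infty$ (your spike bound is correct). Monotone comparison in $\nu$ then gives the threshold set $(0,\nu_c(\theta)]$, monotonicity of $\phi_\theta$ in $\theta$ gives that $\nu_c$ is non-increasing, and your bounds $R_p\le\nu_c(\theta)\le R_p\theta/(\theta-C_d)$ give the two asymptotics. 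All of this is sound, and in fact your identification $\mathscr{M}(\theta,\nu)=[0,(1-C_d/\theta)_+]$ for $\nu<\nu_c$ is exactly what the paper states in \Cref{lem:sub_theta_nu,lem:theta>cd}.

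You correctly flag where the argument is incomplete: you need (i) that $g=-I(b)/b$ is \emph{strictly} increasing on $(R_p,\infty)$ (nowhere locally constant) and (ii) that $W$ is \emph{strictly} decreasing on $(0,C_d)$. These are genuine gaps, not bookkeeping. Without (i) your ``active constraint cannot persist under a bump of $\nu$'' step collapses, and without (ii) the reverse inclusion $\mathscr{M}\subseteq[0,(1-C_d/\theta)_+]$ and the strict decrease of $\nu_c$ in $\theta$ both fail. Note that the paper's own proofs of \Cref{lem:sub_theta_nu,lem:theta>cd} silently invoke the same strictness (``$\nu^{-1}I(\nu a)$ is strictly decreasing in $\nu$ when $\nu a>R_p$''), citing only \Cref{thm:wei}, which by itself gives only $I<0$ past $R_p$ and not the strict version of \Cref{lem:Ider}.

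There is, however, a clean fix for (i) that you did not use and that would tighten your write-up considerably: $I$ is \emph{concave}. This is immediate from the definition, because
\[
I(a)=\inf_{\|\phi\|_2=1}\Bigl(a\,\|\nabla\phi\|_2^2-\tfrac{2}{p}a^{p/2}\|\phi\|_p^p\Bigr)
\]
is an infimum of concave functions of $a$ (for $p>2$, $a\mapsto -a^{p/2}$ is concave). Concavity of $I$, together with $I\equiv 0$ on $[0,R_p]$ and $I<0$ past $R_p$, first forces $I'(b)<0$ for $b>R_p$, and then gives the strict inequality
\[
I(b)-bI'(b)=\int_0^b\bigl(I'(s)-I'(b)\bigr)\,ds\ge \int_0^{R_p}\bigl(0-I'(b)\bigr)\,ds=-R_pI'(b)>0
\]
for $b>R_p$, i.e. $g$ is strictly increasing there. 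This replaces your appeal to ``Weinstein's explicit soliton construction'' with a one-line scaling argument. For (ii), the stated properties of $W$ (convex, $C^1$, non-increasing, constant on $[C_d,\infty)$, diverging as $b\to 0$) determine that $\{W'=0\}$ is a half-line $[b^*,\infty)$ but do not by themselves force $b^*=C_d$; you would indeed need the Legendre-transform identification of $W$ from \cite{DKK} to rule out a flat stretch below $C_d$. So the remaining gap in your proposal is exactly item (ii), and the honest conclusion is that this particular strictness must be cited from \cite{DKK} rather than derived from the lemmas reproduced in this paper.
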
    
%Any function $\phi^a$ with $\ell^2$ mass $a$ minimizing $\cH$  must satisfy the Euler--Lagrange equations:
%\begin{equation}
%    -\nabla (\cH(\psi)  -\omega\norm{\psi}_2^2)|_{\phi^a} =  \Delta \phi^a + \nu^{\frac{p-2}{2}}|\phi^a|^{p-2}\phi^a + \omega \phi^a = 0  \label{eq:infinite_EL}
%\end{equation}

The characterization of the dispersive phase required for this article is slightly different and a little more refined as compared to \cite{DKK}. The following two lemmas are to verify this characterization.  
\begin{lemma}\label{lem:sub_theta_nu}
    If $\nu<\nu_c$ and $\theta\leq C_d$, then $\mathscr{M}(\nu, \theta) = \{0\}$. 
\end{lemma}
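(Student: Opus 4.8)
The plan is to establish the strict inequality $F(a) > F(0)$ for every $a \in (0,1]$, which pins the unique minimizer of $F$ at $0$. Since $p>4>2+4/d$ for $d\ge 3$, part (iv) of \Cref{thm:wei} gives $R_p>0$ and $I\equiv 0$ on $[0,R_p)$; in particular $I(0)=0$, so $F(0)=W(\theta)$. Moreover \Cref{thm:phasecurve}(1) tells us that for $\nu<\nu_c$ the set $\mathscr{M}(\theta,\nu)$ is a closed subinterval of $[0,1]$ containing $0$. Hence it suffices to prove that $F(a)>F(0)$ for all sufficiently small $a>0$: this makes $0$ an isolated point of $\mathscr{M}(\theta,\nu)$, and an interval containing $0$ with $0$ isolated is $\{0\}$.

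For the local estimate, take $0<a\le R_p/\nu$ (note $R_p/\nu>0$). Then $\nu a\le R_p$, so $I(\nu a)=0$ (using part (iv) of \Cref{thm:wei} together with $I(R_p)=0$ from the remark following it), and therefore $F(a)=W(\theta(1-a))$. Because $a>0$ and $\theta\le C_d$ we have $0<\theta(1-a)<C_d$. If $W$ is \emph{strictly} decreasing on $(0,C_d)$ this gives $W(\theta(1-a))>W(\theta)=F(0)$ when $\theta<C_d$; and when $\theta=C_d$ the inequality persists because $W$ is non-increasing and continuous, so $W(\theta(1-a))>W(b)\ge W(C_d)$ for any $b\in(\theta(1-a),C_d)$. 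Thus $F>F(0)$ on $(0,R_p/\nu]$, and no point of this set lies in $\mathscr{M}(\theta,\nu)$, which finishes the argument. (If it happens that $R_p/\nu\ge 1$ this already covers all of $(0,1)$ outright, and $F$ diverges as $a\uparrow 1$ since $W(b)\to\infty$ as $b\to 0$, so the interval structure is not even needed.)

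The one ingredient used beyond the properties of $W$ recorded in \Cref{sec:landscape} (convex, $C^1$, non-increasing, constant on $[C_d,\infty)$, divergent at $0$) is that $W$ is \emph{strictly} decreasing on $(0,C_d)$, and I expect this to be the only point requiring a genuine argument. It follows from the description of $W$ in \cite{DKK}*{Section~4}, where $W$ is realized as the (normalized, limiting) free energy of the massive Gaussian free field conditioned to have $\ell^2$-mass $b$, via a Legendre transform whose dual variable is precisely the mass $m(b)\ge 0$ solving $\int_{[0,1]^d}\bigl(4\sum_i\sin^2(\pi\gk_i)+m\bigr)^{-1}\,d\gk=b$; for $b<C_d$ one has $m(b)>0$, and since $W'(b)$ equals $m(b)$ up to sign, $W'$ never vanishes on $(0,C_d)$, which together with $W$ being non-increasing yields strict monotonicity there.

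For orientation: if one reads the clause ``$\{0\}$ when additionally $\theta\le C_d$'' in \Cref{thm:phasecurve}(1) as already contained in the results imported from \cite{DKK}, then \Cref{lem:sub_theta_nu} is immediate; the two steps above give a self-contained proof of the $\theta\le C_d$ refinement that uses \cite{DKK} only through the interval structure of $\mathscr{M}(\theta,\nu)$ and the elementary behaviour of $W$ and $I$.
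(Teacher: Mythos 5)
Your argument is correct, and the first step (showing $F(a)>F(0)$ on $(0,R_p/\nu]$) coincides with the paper's: both reduce to the strict decrease of $W$ on $(0,C_d)$, which the paper asserts as $(W(\theta(1-a)))'>0$ without elaboration and you ground more explicitly in the Legendre-dual mass $m(b)>0$ from \cite{DKK}*{Section~4}. (That extra justification is genuinely needed, since convexity plus non-increasing plus constant on $[C_d,\infty)$ plus divergence at $0$ alone do not force strict decrease on all of $(0,C_d)$.) Where you diverge from the paper is the second step. You dispose of candidate minimizers $a>R_p/\nu$ by importing the ``closed interval containing $0$'' structure of $\mathscr{M}(\theta,\nu)$ from \Cref{thm:phasecurve}(1) and noting that an interval of that form avoiding a punctured neighbourhood of $0$ must be $\{0\}$. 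The paper instead runs a self-contained contradiction: if some $a_*>R_p/\nu$ satisfied $F_{\theta,\nu}(a_*)=F_{\theta,\nu}(0)$, then for any $\nu'>\nu$ one has $F_{\theta,\nu'}(0)=F_{\theta,\nu}(0)$ while $F_{\theta,\nu'}(a_*)<F_{\theta,\nu}(a_*)$ because $\nu^{-1}I(\nu a)$ is strictly decreasing in $\nu$ past $R_p$, so every $\nu'>\nu$ is supercritical and hence $\nu=\nu_c$, a contradiction. Your route is shorter and clean provided one is comfortable that the interval structure in \Cref{thm:phasecurve}(1) is truly inherited from \cite{DKK} rather than itself resting on the present lemma (the text ``the following two lemmas are to verify this characterization'' suggests caution here, and is presumably why the authors chose the self-contained perturbation argument); the paper's route avoids that dependence entirely at the cost of an extra step.
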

\begin{proof}
    Since $p>4$, \Cref{thm:wei}, we know that $I(\nu a)=0$ on the interval $[0,\nu^{-1}R_{p}]$. Further, since $\theta\leq C_{d}$, $W(\theta(1-a)),(W(\theta(1-a)))'>0$ for all $a\in (0,1)$. Thus, $F$ is strictly increasing in the interval $(0,\nu^{-1}R_{p})$ and admits no minima on this interval. Suppose there exists an $a_{*}>\nu^{-1}R_{p}$ such that $F_{\theta,\nu}(a_{*})=F_{\theta,\nu}(0)$. Observe that for any $\nu'>\nu$, $F_{\theta,\nu'}(0)=F_{\theta,\nu}(0)$, on the other hand $F_{\theta,\nu'}(a_{*})<F_{\theta,\nu'}(0)$ since $\nu^{-1}I(\nu a)$ is strictly decerasing in $\nu$ when $\nu a>R_{p}$ (see Theorem \ref{thm:wei}). Thus, for any $\nu'>\nu$, $(\theta,\nu')$ are supercritical, and thus $\nu=\nu_{c}$, which is a contradiction. 
\end{proof}
\begin{lemma}\label{lem:theta>cd}
Let $\theta>C_{d}$, $\nu<\nu_{c}(\theta)$. Then $\mathscr{M}(\theta,\nu)$ is the closed interval $[0,1-C_{d}/\theta]$. Moreover, $1-C_{d}/\theta<R_{p}/\nu$. 
\end{lemma}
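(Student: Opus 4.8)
\textbf{Proof proposal for \Cref{lem:theta>cd}.}

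The plan is to analyze the free energy functional $F_{\theta,\nu}(a) = W(\theta(1-a)) + \theta\nu^{-1}I(\nu a)$ on $[0,1)$ by exploiting the explicit behavior of $W$ and $I$ recorded above. First I would split the interval $[0,1)$ according to where $I(\nu a)$ is zero versus strictly negative, i.e.\ at $a = R_p/\nu$, and according to where $W$ is constant, i.e.\ at $a = 1 - C_d/\theta$ (this is where $\theta(1-a) = C_d$). Since $\theta > C_d$ we have $1 - C_d/\theta \in (0,1)$. The key structural observation is: on $(1-C_d/\theta, 1)$ the term $W(\theta(1-a))$ is \emph{strictly decreasing} (as $\theta(1-a) < C_d$ there and $W$ is strictly decreasing below $C_d$ by the divergence at $0$ and convexity), while $I(\nu a)$ is non-increasing; hence $F$ is strictly decreasing on $(1-C_d/\theta,1)$ and has no minimizer there. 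On $[0, 1-C_d/\theta]$, provided this whole interval lies inside $[0, R_p/\nu]$, we have $I(\nu a) = 0$ identically (by \Cref{thm:wei}, part iv, since $p > 4 > 2 + 4/d$), and $W(\theta(1-a))$ is constant equal to $W$'s floor value (since $\theta(1-a) \ge C_d$ there). Therefore $F$ is \emph{constant} on $[0,1-C_d/\theta]$ and strictly decreasing immediately to its right, which forces $\mathscr{M}(\theta,\nu) = [0, 1-C_d/\theta]$.

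So the real content reduces to proving the inequality $1 - C_d/\theta < R_p/\nu$, which is exactly what makes the above case analysis valid (it guarantees $[0,1-C_d/\theta] \subseteq [0,R_p/\nu]$, so that $I \equiv 0$ on the relevant range). I would prove this by contradiction using the monotonicity of the phase curve $\nu_c$ from \Cref{thm:phasecurve}. Suppose instead that $1 - C_d/\theta \ge R_p/\nu$. Then there is a point $a_\star \in [R_p/\nu, 1-C_d/\theta]$ with $I(\nu a_\star) = 0$ but at which, for any $\nu' > \nu$ slightly larger, $\nu' a_\star > R_p$ so $I(\nu' a_\star) < 0$ strictly, while $F_{\theta,\nu'}(0) = W(\theta) = F_{\theta,\nu}(0)$ is unchanged (the $a=0$ value of $F$ never depends on $\nu$ since $I(0)=0$). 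Moreover for $a_\star \le 1-C_d/\theta$ we have $W(\theta(1-a_\star))$ equal to the floor value $W(C_d^{-}) = W(\theta)$ as well. Hence $F_{\theta,\nu'}(a_\star) = W(\theta) + \theta(\nu')^{-1} I(\nu' a_\star) < W(\theta) = F_{\theta,\nu'}(0)$, so $(\theta,\nu')$ is in the solitonic phase for every $\nu' > \nu$. This means $\nu \ge \nu_c(\theta)$, contradicting the hypothesis $\nu < \nu_c(\theta)$. (This is essentially the same contradiction mechanism as in the proof of \Cref{lem:sub_theta_nu}, adapted to use the floor of $W$ rather than its strict monotonicity.)

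I expect the main obstacle to be handling the boundary/edge cases cleanly: verifying that $W$ is \emph{strictly} decreasing on all of $(0, C_d)$ — which I would extract from convexity plus $\lim_{b\to 0} W(b) = \infty$ plus $W$ being constant on $[C_d,\infty)$ (a convex function that is eventually constant and non-increasing cannot be flat on any subinterval of $(0,C_d)$ without being identically constant, contradicting divergence at $0$) — and being careful about whether the minimum set is closed and includes the endpoint $1-C_d/\theta$ (it does, by continuity of $F$, which follows from continuity of $W$ and $I$ as stated after \eqref{def:minset}). One should also double-check that $R_p/\nu < 1$ is not needed separately: if $R_p/\nu \ge 1$ then $I(\nu a) = 0$ on all of $[0,1)$, and the argument that $F$ is strictly decreasing on $(1-C_d/\theta,1)$ still gives $\mathscr{M}(\theta,\nu) = [0,1-C_d/\theta]$ directly, with the inequality $1-C_d/\theta < 1 \le R_p/\nu$ automatic; so the contradiction argument is only needed in the regime $R_p/\nu < 1$. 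Assembling these pieces gives both claims of the lemma.
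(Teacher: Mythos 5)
The second half of your proposal (ruling out $1-C_d/\theta \ge R_p/\nu$ by perturbing $\nu$ upward and deriving $\nu \ge \nu_c$) is correct and is essentially the same argument the paper uses. The first half has two problems.

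First, a sign error: since $W$ is non-increasing as a function of its argument, $W(\theta(1-a))$ is \emph{increasing} in $a$, and strictly so on $(1-C_d/\theta,1)$, not ``strictly decreasing'' as you wrote. Your own stated conclusion --- that $F$ is constant on $[0,1-C_d/\theta]$ and \emph{strictly decreasing} immediately to its right --- would in fact put the minimizer strictly to the right of $1-C_d/\theta$, directly contradicting the claim $\mathscr{M} = [0,1-C_d/\theta]$.

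Second, and more substantively, even after correcting the sign you cannot conclude $F$ is monotone on all of $(1-C_d/\theta,1)$ by combining the monotonicities of the two terms: there $W(\theta(1-a))$ is strictly increasing while $\theta\nu^{-1}I(\nu a)$ is non-increasing (and strictly decreasing once $a>R_p/\nu$), so the two terms push $F$ in \emph{opposite} directions and a priori either could win. The clean monotonicity argument only works on $(1-C_d/\theta, R_p/\nu)$, where $I\equiv 0$ and hence $F(a)=W(\theta(1-a))>W(\theta)$. For $a>R_p/\nu$ you need a different device --- the paper again uses the subcriticality perturbation: if $F_{\theta,\nu}(a)\le W(\theta)$ for some $a>R_p/\nu$, then any $\nu'>\nu$ makes $F_{\theta,\nu'}(a)<W(\theta)=F_{\theta,\nu'}(0)$ (since $\nu^{-1}I(\nu a)$ is strictly decreasing in $\nu$ once $\nu a>R_p$, using \Cref{lem:Ider}), so $(\theta,\nu')$ is supercritical for every $\nu'>\nu$, forcing $\nu\ge\nu_c(\theta)$, a contradiction. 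You already have exactly this mechanism in the second part of your proof; it just needs to be deployed again to exclude minimizers in $(R_p/\nu,1)$, replacing the incorrect monotonicity claim.
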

\begin{proof}
Since $\theta>C_{d}$, we know that $W(\theta(1-a))=W(\theta)$ for all $a\in [0,1-C_{d}/\theta]$. Next, suppose $R_{p}/\nu<1-C_{d}/\theta$, then there exists $a\in (R_{p}/\nu,1-C_{d}/\theta)$ such that $F(a)<W(\theta)$, which contradicts $\nu<\nu_{c}$. If $1-C_{d}/\theta=R_{p}/\nu$, then for any $\nu'>\nu$, $(\theta, \nu')$ is a supercritical pair, implying that $\nu=\nu_{c}$. Thus, we know that $1-C_{d}/\theta< R_{p}/\nu$. For all $a\in (1-C_{d}/\theta,R_{p}/\nu)$, $F(a)=W(\theta(1-a))>W(\theta)$. Finally, suppose there exists an $a>R_{p}/\nu$ such that $F(a)=W(\theta)$. Again, just as in Lemma \ref{lem:sub_theta_nu}, for any $\nu'>\nu$, $(\theta,\nu')$ is a supercritical pair, which contradicts $\nu<\nu_{c}$. This shows that $\mathscr{M}(\theta,\nu)=[0,1-C_{d}/\theta]$.     
\end{proof}
Proving that the local limit in the supercritical phase is massive, requires a brief bridging step, we verify this now.  
\begin{lemma}\label{lem:super_to_sub}
    Let $\nu > \nu_c(\theta)$ and $a \in \mathscr{M}(\nu, \theta)$. Then $\theta(1-a) <C_d$.
\end{lemma}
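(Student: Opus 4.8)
The plan is to dispose of the easy range of $\theta$ directly and then reduce the case $\theta>C_d$ to a first-order perturbation of the variational problem defining $\mathscr{M}(\theta,\nu)$. If $\theta<C_d$ the claim is vacuous, since $\theta(1-a)\le\theta<C_d$ for every $a\in[0,1]$. If $\theta=C_d$, then supercriticality of $(\theta,\nu)$ together with \Cref{thm:phasecurve} forces $\mathscr{M}(\theta,\nu)$ to be bounded away from $0$, so any $a\in\mathscr{M}(\theta,\nu)$ satisfies $a>0$, whence $\theta(1-a)<\theta=C_d$. From now on assume $\theta>C_d$, fix $a^\ast\in\mathscr{M}(\theta,\nu)$, and argue by contradiction: suppose $\theta(1-a^\ast)\ge C_d$.

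The first step is to extract the inequality $I(\nu a^\ast)<0$. By \Cref{thm:phasecurve} the set $\mathscr{M}(\theta,\nu)$ is bounded away from $0$ in the supercritical phase, so $a^\ast>0$ (hence also $a^\ast<1$, since $\theta(1-a^\ast)\ge C_d>0$) and $0\notin\mathscr{M}(\theta,\nu)$. Consequently
\[
F_{\min}=F(a^\ast)<F(0)=W(\theta)+\tfrac{\theta}{\nu}I(0)=W(\theta),
\]
using $I(0)=0$. Since $\theta>C_d$, the recalled properties of $W$ (\cite{DKK}) give $W(\theta)=W(C_d)$, and since $\theta(1-a^\ast)\ge C_d$ they also give $W(\theta(1-a^\ast))=W(C_d)$. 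Substituting into the definition of $F$ yields $W(C_d)+\tfrac{\theta}{\nu}I(\nu a^\ast)=F(a^\ast)<W(C_d)$, so $I(\nu a^\ast)<0$. Because $p>4>2+\tfrac4d$ for $d\ge3$, \Cref{thm:wei} then forces $\nu a^\ast>R_p$.

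The crux is to show that $F$ strictly decreases just to the right of $a^\ast$, contradicting $a^\ast\in\mathscr{M}(\theta,\nu)$. For small $\delta>0$ (so $a^\ast+\delta<1$) write
\[
F(a^\ast+\delta)-F(a^\ast)=\big[W(\theta(1-a^\ast-\delta))-W(\theta(1-a^\ast))\big]+\tfrac{\theta}{\nu}\big[I(\nu a^\ast+\nu\delta)-I(\nu a^\ast)\big].
\]
For the $I$-bracket: on $(R_p,\infty)$ one has $I<0$ and, by \Cref{lem:Ider}, $I'(b)\le b^{-1}I(b)<0$, so $I$ is strictly decreasing there; by the mean value theorem the bracket equals $\theta\delta\,I'(\eta_\delta)$ for some $\eta_\delta\in(\nu a^\ast,\nu a^\ast+\nu\delta)$, and for $\delta\le a^\ast$ we get $I'(\eta_\delta)\le\eta_\delta^{-1}I(\eta_\delta)\le(2\nu a^\ast)^{-1}I(\nu a^\ast)=-c_0/2$, where $c_0:=-I(\nu a^\ast)/(\nu a^\ast)>0$, so this bracket is $\le-\tfrac12\theta c_0\delta$. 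For the $W$-bracket: $W$ is convex, $C^1$ and non-increasing with $W'\equiv0$ on $[C_d,\infty)$; since $\theta(1-a^\ast)\ge C_d$, convexity gives $0\le W(\theta(1-a^\ast-\delta))-W(\theta(1-a^\ast))\le-\theta\delta\,W'(\theta(1-a^\ast)-\theta\delta)$, and continuity of $W'$ at $\theta(1-a^\ast)$, where it vanishes, makes this $o(\delta)$. Hence $F(a^\ast+\delta)-F(a^\ast)\le o(\delta)-\tfrac12\theta c_0\delta<0$ for $\delta$ small, the required contradiction.

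I expect the only delicate point to be the borderline case $\theta(1-a^\ast)=C_d$: there the $W$-bracket is not identically zero, and one genuinely needs the continuous differentiability of $W$ at $C_d$ — equivalently $W'(C_d^-)=0$ — to know it is negligible against the linear gain $-\tfrac12\theta c_0\delta$ obtained by pushing more mass into the region where $I$ is strictly decreasing. When $\theta(1-a^\ast)>C_d$ strictly, the $W$-bracket vanishes for all small $\delta$ and the contradiction is immediate. Everything else is routine, given \Cref{thm:wei}, \Cref{lem:Ider}, \Cref{thm:phasecurve}, and the listed properties of $W$.
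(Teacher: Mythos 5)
Your proof is correct. The paper's argument is shorter and more direct: it invokes first-order stationarity at the interior minimizer, $F'(a)=0$, which reads $-\theta W'(\theta(1-a))+\theta I'(\nu a)=0$; then, since $0\notin\mathscr{M}(\theta,\nu)$ forces $I(\nu a)<0$ and \Cref{lem:Ider} upgrades this to $I'(\nu a)<0$, one immediately gets $W'(\theta(1-a))<0$, hence $\theta(1-a)<C_d$ because $W$ is constant on $[C_d,\infty)$.

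What you do differently is run a contradiction: you suppose $\theta(1-a^\ast)\ge C_d$ and estimate the one-sided increment $F(a^\ast+\delta)-F(a^\ast)$ directly, splitting it into a $W$-bracket (which you show is $o(\delta)$ using convexity and $W'(\theta(1-a^\ast))=0$) and an $I$-bracket (which you show is $\le-\tfrac12\theta c_0\delta$ via the mean value theorem and \Cref{lem:Ider}). The underlying mechanism is identical — flatness of $W$ beyond $C_d$ against strict decrease of $I$ beyond $R_p$ — but you unpack the stationarity condition into a first-principles estimate rather than invoking it. This buys a bit of transparency about where the differentiability of $W$ at $C_d$ enters (your closing remark about the borderline case $\theta(1-a^\ast)=C_d$ is precisely the point at which $C^1$-ness of $W$ is genuinely needed, and it is also implicitly used when the paper equates $W'(\theta(1-a))$ with $I'(\nu a)$). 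The cost is length: the explicit case split on $\theta\lessgtr C_d$ and the two-bracket estimate are unnecessary in the paper's version, which treats all $\theta$ uniformly. One cosmetic remark: you derive $I(\nu a^\ast)<0$ under the contradiction hypothesis $\theta(1-a^\ast)\ge C_d$, but it actually holds unconditionally from $F(a^\ast)<F(0)=W(\theta)\le W(\theta(1-a^\ast))$, which is what the paper uses.
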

\begin{proof}
 The proof calls upon the differentiability of $W$ and $I$. Since $0\notin \mathscr{M}(\theta,\nu)$, we know that $\nu^{-1}I(\nu a)<0$ for all $a\in \mathscr{M}(\theta, \nu)$, since $(W(\theta(1-a)))'\geq 0$ for all $a\in [0,1)$. Further, since $\mathscr{M}(\theta,\nu)\subset (0,1)$, all $a\in \mathscr{M}(\theta,\nu)$ are critical points of $F(a)$, which then implies that  $(W(\theta(1-a)))'+I'(\nu a)=0$. By Lemma \ref{lem:Ider} $\nu a\cdot I'(\nu a)\leq I(\nu a)<0$, and thus $(W(\theta(1-a)))'>0$. Finally, this tells us that $\theta(1-a)<C_{d}$. 
\end{proof}
%     For the last part of the proposition, by definition of $U$, $\norm{\Psi|_{U^{c}}}_{\infty}<\sqrt{N\eps_{N}}$. However, if $\mathscr{M}(\theta,\nu)=\{0\}$, then $\norm{\Psi|_{U}}_{\infty}\geq \sqrt{\gd_{N}N}$ forces $\norm{\Psi|_{U}}_{2}^{2}\geq \gd_{N}N$ as well, since there is at least one lattice site with absolute value $\sqrt{\gd_{N}N}$. Thus, 
% \[
% \bP(\norm{\Psi|_{U}}_{\infty}\geq \sqrt{\gd_{N}N})\leq e^{-N\sqrt{\eps_{N}}/2}.
% \]
As the final result of this subsection, we  prove Corollary \ref{cor:doublephase} modulo the proof of Theorem \ref{thm:main_summary}. Namely, we show that there is a range of $\nu$ such that as $\theta$ is varied with $\nu$ fixed, we see a double phase transition. This requires only the results on the transition curve and the structure of $\mathscr{M}(\theta, \nu)$ assembled here. 
\begin{proof}
We point out first that by Theorem \ref{thm:phasecurve}, since the transition curve is strictly decreasing, $\nu_{c}(C_{d})>R_{p}$. Choose $\nu\in (R_{p},\nu_{c}(C_{d}))$. Let $\theta_{c}(\nu)$ denote the inverse function of $\nu_{c}$, which clearly exists and is continuous for the same reason. By our choice of $\nu$, $C_{d}<\theta_{c}(\nu)$. When $\theta<C_{d}$, the parameters are subcritical and item a. of Theorem \ref{thm:main_summary} applies. When $\theta\in (C_{d}, \theta_{c}(\nu))$, the parameters are still subcritical, but the limit is massless and item b. applies. Finally, when $\theta>\theta_{c}(\nu)$, the parameters are supercritical and item c. applies.  
\end{proof}

\section{Proof of \Cref{thm:main_summary}}\label{sec:Mainproof}
\subsection{Local limit of the subcritical  phase $\nu<\nu_c$}\label{sec:subcritical}
Recall our strategy from \Cref{sec:outline}. If $\nu<\nu_c$ we want to first argue that the $\ell^{\infty}$-norm of $\Psi_N^{\nu, \theta}$ is $o(\sqrt{N})$, and then we want to bootstrap and push this bound down to $N^\beta$ for some small $0<\beta<\frac{1}{2}$.
For $\theta \le C_d$, we already know from \Cref{prop:U,lem:sub_theta_nu} that the $\ell^\infty$ norm of $\Psi_N^{\nu, \theta}$ is $o(\sqrt{N})$ with high probability. The additional complication brought about in the regime $\theta>C_{d}$ is the fact that the set of minimizers is now an interval containing $0$ (\Cref{lem:theta>cd}) which prevents the immediate deduction of the effective $\ell^{\infty}$ bound. 

Key to the analysis here is the following inequality proved in \cite{WEI99}. Recall the definition of $R_p$ from \eqref{eq:Rp1}. Let $\psi\in \ztwo$, such that $\norm{\psi}^{2}_{2}=a< R_{p}$, whence $I(a)=0$.  Then
\begin{align}\label{eq:weibound}
\norm{\nabla \psi}^{2}_{2}-\frac{2\nu^{(p-2)/2}}{p}\norm{\psi}_{p}^{p}\geq \left(1-\left(\frac{1-\frac{C_{d}}{\theta}}{\nu^{-1}R_{p}}\right)^{(p-2)/2}\right)\norm{\nabla \psi}_{2}^{2}.
\end{align}
This inequality can be derived by using the fact that $R_p$ can also be characterized as the optimal constant for the discrete version of the GNS inequality. We refer the interested reader to \cite[Section 4]{WEI99} for more details.

{In terms of $\cH_{N}$, we may rescale to obtain the same inequality for all $\bZ^{d}-$embeddable $\psi$ whose mass is bounded above by $N(1-C_{d}/\theta)$, albeit with $\nu$ replaced with $\nu_{N}$. Moreover, we know that for $a<R_{p}/\nu$, $I(\nu a)=0$, which further tells us that $\norm{\nabla \psi}_{2}^{2}\geq 2p^{-1}\nu_{N}\norm{\psi}_{p}^{p}$. Combining,  
\begin{align}\label{eq:weibound2}
\cH_{N}(\psi)\geq \frac{2\nu^{(p-2)/2}}{N^{(p-2)/2}p}\left(1-\left(\frac{1-\frac{C_{d}}{\theta}}{\nu^{-1}R_{p}}\right)^{(p-2)/2}\right)\norm{\psi}_{p}^{p}.
\end{align}
}
\begin{lemma}\label{lem:linfty2}
Let $\eps_N$ be as in \Cref{prop:U}.
Let $\theta>C_{d}$ and $\nu<\nu_{c}(\theta)$. Then 
\[
\bP\left(\norm{\Psi}_{\infty}\geq N^{1/2}\eps_{N}^{\frac1{4p}}\right)\leq e^{-N\sqrt{\eps_{N}}/2}.
\]
\end{lemma}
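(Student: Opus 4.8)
The plan is to run the saddle-point argument from the proof of \Cref{prop:U}, feeding in the Weinstein-type inequality \eqref{eq:weibound2} to produce an extra gain of $e^{-\Theta(N\eps_N^{1/4})}$ once we condition on a large $\ell^\infty$ value. Write $b:=N^{1/2}\eps_N^{1/(4p)}$ and $U:=U(\Psi,\eps_N)$. First I would reduce to the separating set: since $b>\sqrt{\eps_N N}$, property ii. of \Cref{def:separating_set} gives $\{\norm{\Psi}_\infty\ge b\}\subseteq\{\norm{\Psi|_U}_\infty\ge b\}\subseteq\{\norm{\Psi|_U}_p^p\ge b^p\}$, so it suffices to bound $\bP(\mathcal E^c)$ and $\bP(\norm{\Psi|_U}_p^p\ge b^p,\ \mathcal E)$, where $\mathcal E:=\{\tfrac1N\norm{\Psi|_U}_2^2\in\mathscr M(\eps_N)\}$ and $\mathscr M(\eps_N)=\{x\in[0,1):F(x)-F_{\min}<\sqrt{\eps_N}\}$ as in the proof of \Cref{prop:U}. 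By \eqref{eq:l2_atypical}, $\bP(\mathcal E^c)\le e^{-N\sqrt{\eps_N}/2}$.

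Next I would locate $\mathscr M(\eps_N)$. By \Cref{lem:theta>cd}, $\mathscr M(\theta,\nu)=[0,1-C_d/\theta]$ and $1-C_d/\theta<R_p/\nu$; since $p>4$ and $d\ge 3$ we have $I(\nu x)=0$ on $[0,R_p/\nu]$ (\Cref{thm:wei}), so on $(1-C_d/\theta,R_p/\nu)$ the function $F$ coincides with $x\mapsto W(\theta(1-x))$, which is strictly increasing there because $\theta(1-x)$ decreases below $C_d$ and $W$ is strictly decreasing below $C_d$. As $\nu<\nu_c$, $F-F_{\min}$ is strictly positive on the compact set $[R_p/\nu,1]$, hence bounded below there by some $\eta_0>0$. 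Therefore, for $N$ large, $\mathscr M(\eps_N)\subseteq[0,\beta_N)$ with $\beta_N\downarrow 1-C_d/\theta<R_p/\nu$; in particular, on $\mathcal E$ we have both $\tfrac1N\norm{\Psi|_U}_2^2<R_p/\nu$ (so that \eqref{eq:weibound2} applies to $\Psi|_U$ with a constant $\kappa_N\to 1-(\nu(1-C_d/\theta)/R_p)^{(p-2)/2}>0$) and $F(\tfrac1N\norm{\Psi|_U}_2^2)<F_{\min}+\sqrt{\eps_N}$.

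Then I would expand $\bP(\norm{\Psi|_U}_p^p\ge b^p,\ \mathcal E)=\cZ_N^{-1}\sum_A\int_{\mathcal R_A}e^{-\theta\cH_N(\psi)}\,d\psi$, the sum over $A$ with $|A|\le\eps_N^{-d-2}$ and $\mathcal R_A$ the set of $\psi$ with $\norm\psi_2^2\le N$, $U(\psi,\eps_N)=A$, $\tfrac1N\norm{\psi|_A}_2^2\in\mathscr M(\eps_N)$, and $\norm{\psi|_A}_p^p\ge b^p$. Exactly as in the proof of \Cref{prop:U} one writes $\cH_N(\psi)\ge\cH_N(\psi|_A)+\cH_N(\psi|_{A^c})$ and slices the $\psi|_A$- and $\psi|_{A^c}$-integrals by their $\ell^2$-masses; the only new point is that on $\mathcal R_A$, \eqref{eq:weibound2} together with $\norm{\psi|_A}_p^p\ge b^p$ gives $\cH_N(\psi|_A)\ge\kappa_N\nu_N\norm{\psi|_A}_p^p\ge\kappa_N\nu_N b^p=\kappa_N\tfrac2p\nu^{(p-2)/2}N\eps_N^{1/4}\ge c_1N\eps_N^{1/4}$ for a fixed $c_1>0$ and $N$ large, which improves the bound on each $\psi|_A$-mass-slice by a factor $e^{-\theta c_1N\eps_N^{1/4}}$ on top of the trivial volume bound $(\pi N)^{|A|}=e^{o(N\eps_N^{1/4})}$. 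Carrying this factor through the estimates \eqref{eq:5.2UB1}--\eqref{eq:5.2UB2} of \cite{DKK}*{Section 6} and using $\cZ_N\ge e^{o(N\eps_N)}(\pi/\theta)^Ne^{-NF_{\min}}$ from \cite{DKK}*{Lemma 6.6}, together with $F(\tfrac1N\norm{\psi|_A}_2^2)\le F_{\min}+\sqrt{\eps_N}$ on $\mathcal E$ (which lets the $e^{-NF_{\min}}$ of the $\psi|_{A^c}$-integral cancel that of $\cZ_N$ at the cost of $e^{N\sqrt{\eps_N}}$), one arrives at
\[
\bP\bigl(\norm{\Psi|_U}_p^p\ge b^p,\ \mathcal E\bigr)\le e^{-\theta c_1N\eps_N^{1/4}}\cdot e^{O(N\eps_N)}\cdot N^{O(1)}\cdot e^{N\sqrt{\eps_N}}=e^{-(1+o(1))\theta c_1N\eps_N^{1/4}},
\]
since $\eps_N\to0$ makes $N\eps_N,\ N\sqrt{\eps_N},\ \log N$ all $o(N\eps_N^{1/4})$. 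As $\eps_N^{1/4}\gg\sqrt{\eps_N}$, this is $\le e^{-N\sqrt{\eps_N}/2}$ for $N$ large, and combining with Step~1 yields the claim.

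The main obstacle is not conceptual but one of bookkeeping: every error term produced by the machinery of \cite{DKK}*{Section 6} — the $o(N\eps_N)$ slack in the partition-function asymptotics, the $N\sqrt{\eps_N}$ from working on $\mathscr M(\eps_N)$ rather than $\mathscr M(\theta,\nu)$, and the discretization errors in $F$ along the mass slices — must be verified to be $o(N\eps_N^{1/4})$, so that it cannot absorb the gain $e^{-\theta c_1N\eps_N^{1/4}}$ delivered by the Weinstein inequality. This is precisely the quantitative precision already extracted in the proof of \Cref{prop:U}, so in practice the work consists in transcribing that argument with the additional constraint $\norm{\psi|_A}_p^p\ge b^p$ on the $\psi|_A$-integral; everything else (the reduction to $U$, the application of \Cref{lem:theta>cd} and \eqref{eq:weibound2}, the elementary volume bounds) is routine.
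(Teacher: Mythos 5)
Your proposal is correct and follows essentially the same route as the paper: condition on the separating set and the large-deviation event from \Cref{prop:U}, locate the near-minimizers below $R_p/\nu$ via \Cref{lem:theta>cd}, convert the large $\ell^\infty$ value into an energy cost through the Weinstein inequality \eqref{eq:weibound2}, and feed that gain through the slice decomposition and partition-function bounds of \cite{DKK}*{Section 6}. The only deviations are cosmetic: you use the single-site bound $\norm{\Psi|_U}_p^p\ge\norm{\Psi|_U}_\infty^p$ in place of \Cref{lem:lp_smallest}, and the claimed $e^{N\sqrt{\eps_N}}$ cost from $F\le F_{\min}+\sqrt{\eps_N}$ is unnecessary (indeed that inequality points the wrong way; one simply uses $W(\theta(1-a))\ge W(\theta)=F_{\min}$), but since you verify it is $o(N\eps_N^{1/4})$ it is harmless.
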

\begin{proof}
We know from \Cref{lem:theta>cd} that $\mathscr{M}(\theta,\nu)=[0,1-C_{d}/\theta]$ where $1-C_{d}/\theta<R_{p}/\nu$. Also recall the notion of the separating set $U=U(\Psi_{N},\eps_{N})$. By definition of the separating set, the  $\ell^{\infty}$ bound on $U^{c}$ is at most $\sqrt{\eps_N N}$, so we need to show is that $\norm{\Psi|_{U}}_{\infty}<\eps_{N}^{\frac1{4p}}\sqrt{N}$ holds with probability approaching $1$ as $N\to \infty$, appropriately fast. Also recall the set $\cG(\eps_{N})$ introduced in Proposition \ref{prop:U}, which we recall here for convenience
$$
    \cG(\eps_{N}):=\{\psi:\frac{1}{N}\norm{\psi|_{U(\psi,\eps_{N})}}_{2}^{2}\notin \mathscr{M}(\eps_{N}) \}. 
    $$
where $\mathscr{M}(\eps_N)  := \{x \in [0,1), F(x)-F_{\min} < \sqrt{\eps_N} $ and $F$ is the free energy functional defined in \eqref{eq:F}.
It was shown there in \Cref{prop:U} that there exists $\gd_{N}\to 0$ as $N\to \infty$ such that 
\[
\Psi\in \cG(\eps_{N})^{c}\Rightarrow |N^{-1}\norm{\Psi|_{U}}_{2}^{2}-a|<\gd_{N} \text{ for some }a\in \mathscr{M}(\theta,\nu). 
\]
for $N$ sufficiently large. We define
\begin{align*}
\cA&:=\left\{\norm{\psi|_{U}}_{2}^{2}\in \left(N\eps_{N}^{\frac{1}{2p}},(1-\frac{C_{d}}{\theta})N\right)\right\}\cap \{\norm{\psi |_{U}}_{\infty }\geq \sqrt{N}\eps_{N}^{\frac{1}{4p}}\} 
% \cA_{2}&:= \left\{\norm{\psi|_{U}}_{2}^{2}\leq  N\eps_{N}^{\frac{1}{4p}}\right\}\cap \{\norm{\psi |_{U}}_{\infty }\geq \sqrt{N}\eps_{N}^{\frac{1}{4p}}\}
\end{align*}
Note that $\cG(\eps_{N})^{c}\cap \{\norm{\Psi}_{\infty}\geq \sqrt{N}\eps_{N}^{\frac1{4p}}\} \subseteq \cA$ since obviously $\norm{\psi |_{U}}^2_{\infty } \le \norm{\psi|_{U}}_{2}^{2}$. Thus, 
\begin{align*}
\bP(\norm{\Psi_{N}}_{\infty}\geq \sqrt{N}\eps_{N}^{\frac1{4p}}) &\leq \bP(\Psi_{N}\in \cG(\eps_{N})) +\bP(\Psi_{N} \in \cA) \\ &\leq e^{-N\sqrt{\eps_{N}}/2}+\bP(\Psi_{N}\in \cA) ,
\end{align*}
It suffices now to verify the bound on $\bP\left(\Psi_{N}\in \cA\right)$.
We bring in the same decomposition introduced in Proposition \ref{prop:U}, where $\cC(A,\eps_{N})$ denotes the collection of $\psi$ such that $A=U(\psi,\eps_{N})$. We then have the following upper bound
\begin{align*}
\bP(\Psi\in \cA) &\leq \frac{1}{\cZ_{N}}\sum_{A:|A|\leq \eps_{N}^{-d-2}}\int_{\cC(A,\eps_{N})\cap \cG(\eps_{N})^{c}\cap \{\norm{\psi}_{\infty}\geq \sqrt{N}\eps_{N}^{\frac{1}{4p}}\}} e^{-\theta\cH_{N}(\psi)}\vd \psi \\
&\leq \frac{1}{\cZ_{N}} \sum_{A}\sum_{\stackrel{i+j\leq N^{1-\xi}}{ iN^{\xi-1} \in (N\eps_{N}^{\frac{1}{2p}},a^{*}N)}}\int_{\cB_{i,A}\cap\{\norm{\psi}_{\infty}\geq \sqrt{N}\eps_{N}^{\frac{1}{4p}}\}}e^{-\theta \cH_{N}(\psi|_{A})}\vd\psi|_{A}\int_{\tilde{\cB}_{j,A^{c}}} e^{-\theta \cH_{N}(\psi|_{A^{c}})}\vd {\psi|_{A^{c}}}
\end{align*}
where $\cB_{i,A}$ and $\tilde \cB_{j,A^c}$ are as in \Cref{prop:U} and $a^{*}=(1-C_d/\theta)$. Let $\norm{\psi|_{A}}_{\infty}\geq \sqrt{N} \eps_{N}^{3/4p}$, and $\norm{\psi}_{2}^{2}\in (N\eps_{N}^{1/2p},(1-C_{d}/\theta)N)$. Then we know by Lemma \ref{lem:lp_smallest} that the smallest possible $\ell^{p}$ norm of a function contained in this set is achieved by those having one site attain the $\ell^{\infty}$ norm, and have rest of the mass equally distributed among the remainder of the lattice sites. 
Note, $|U|\leq \eps_{N}^{-d-2}$. Thus, if for $\mvx\in A$ such that $|\psi_{\mvx}|=\sqrt{N}\eps^{1/p}$, the function values on the other lattice sites is given by \[(N\eps_{N}^{1/2p}-N\eps_{N}^{6/4p})^{1/2}\cdot \eps_{N}^{(d+2)/2}\geq \frac{1}{2}N\eps_{N}^{\frac{1}{2p}+\frac{d+2}{2}},\]
 where the inequality holds for sufficiently large $N$.  The lower bound for $\norm{\psi}_{p}^{p}$ is thus given by 
\[
N^{p/2}\cdot \eps_{N}^{3/4}+\eps^{-d-2}\cdot N^{p/2}\cdot \eps_{N}^{\frac{1}{4}+\frac{p}{2}\cdot \frac{d+2}{2}}\geq N^{p/2}\eps^{3/4}. 
\]
Thus, 
\[
N^{-(p-2)/2}\cdot \norm{\psi}_{p}^{p}\geq N \cdot \eps_{N}^{3/4}.  
\]
By \eqref{eq:weibound}, this then tells us for all such $\psi$, 
\[
\cH_{N}(\psi)\geq \frac{2\nu^{(p-2)/2}}{N^{(p-2)/2}p}\left(1-\left(\frac{1-\frac{C_{d}}{\theta}}{\nu^{-1}R_{p}}\right)^{(p-2)/2}\right)\norm{\psi}_{p}^{p}\geq C(\theta, \nu,p)N\eps_{N}^{3/4}.
\]
We may use this estimate to obtain the upper bound 
\[
\int_{\cB_{i,A}\cap \{\norm{\psi}_{\infty}\geq \eps_{N}^{\frac{1}{p}}\sqrt{N}\}}e^{-\theta\cH_{N}(\psi|_{A})}\vd \psi|_{A}\leq e^{-\theta C N\eps_{N}^{3/4}}\text{Vol}(\cB_{i,A})\leq e^{-\theta C N\eps_{N}^{3/4}+o(N\eps_{N})}. 
\]
Thus, 
\[
\bP(\Psi\in \cA)\leq \frac{1}{\cZ_{N}} \cdot\left( \frac{\pi}{\theta}\right)^{N} \cdot e^{-C\theta N \eps_{N}^{3/4}}.N^{1-\xi}e^{-NW(\theta)+o(\eps_{N}N)}
\]
The proof is completed by bringing in the partition function lower bound from \cite{DKK}
\[
\cZ_{N}\geq e^{O(N\eps_{N})}\cdot \left(\frac{\pi}{\theta}\right)\cdot e^{-NW(\theta)},
\]
which then implies that 
\[
\bP(\Psi_{N}\in \cA)\leq e^{-O(N\eps^{3/4}_{N})}.
\]
\end{proof}
Now we upgrade the $o(\sqrt{N})$ bound to $O(N^{\beta})$ for $\gb$ as per Corollary \ref{cor:betachoice}. 
\begin{lemma}\label{lem:linfty3}
Let $\nu<\nu_{c}(\theta)$ for any $\theta>0$, and let $\gb$ be as per Corollary \ref{cor:betachoice}. Then 
\[
\bP(\|{\Psi^{\theta,\nu}_{N}}\|_{\infty}\geq N^{\gb})=O(\log N\cdot e^{-N^{\gb}}).
\]
as $N\to \infty$. 
\end{lemma}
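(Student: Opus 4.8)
The plan is to exploit the identity \eqref{eq:NLS_sph_relation}, which expresses the NLS law as an $\ell^p$-tilt of the spherical law, together with the a priori bound $\norm{\Psi}_\infty = o(\sqrt N)$ already established and the Gaussian $\ell^\infty$-tail bounds for the spherical model. Write $\Psi = \Psi^{\theta,\nu}_N$ and fix an auxiliary sequence $\eps_N$ satisfying the hypotheses of \Cref{prop:U}. First I would produce a deterministic $b_N = \sqrt N\, r_N$ with $r_N \to 0$ such that $\bP(\norm{\Psi}_\infty > b_N) \le e^{-N\sqrt{\eps_N}/2}$: this follows from \Cref{prop:U} together with \Cref{lem:sub_theta_nu} (which gives $\mathscr M(\theta,\nu)=\{0\}$) when $\theta \le C_d$, and from \Cref{lem:linfty2} with $r_N = \eps_N^{1/(4p)}$ when $\theta > C_d$. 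Since $\eps_N \ge e^{-\sqrt{\log N}}$, we have $N\sqrt{\eps_N}/2 \ge \tfrac12 N e^{-\frac12\sqrt{\log N}} \ge N^\beta$ for $N$ large, so the bad event $\{\norm{\Psi}_\infty > b_N\}$ contributes at most $e^{-N^\beta}$, and it remains to estimate $\bP(N^\beta \le \norm{\Psi}_\infty \le b_N)$. For $N$ large this range is non-empty (because $\beta < 1/2$, which is part of \Cref{cor:betachoice}) and contains only $O(\log N)$ dyadic scales since $b_N \le \sqrt N$.

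Decompose dyadically: with $t_j := 2^j$ and $j$ running over the $O(\log N)$ integers with $\tfrac12 N^\beta \le t_j \le 2 b_N$, the event $\{N^\beta \le \norm{\Psi}_\infty \le b_N\}$ is covered by $\bigcup_j \{t_j \le \norm{\Psi}_\infty < 2t_j\}$. Applying \eqref{eq:NLS_sph_relation} with $\ff$ the indicator of the $j$-th shell,
\[
\bP(t_j \le \norm{\Psi}_\infty < 2t_j) = \frac{\cZ_{\text{sph},N}(\theta)}{\cZ_N(\theta,\nu)}\, \bE\!\left( e^{\theta \nu_N \norm{\Psisph^{\theta}}_p^p}\, \mathbf 1_{t_j \le \norm{\Psisph^{\theta}}_\infty < 2t_j}\right).
\]
Two ingredients control this. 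First, $\cZ_{\text{sph},N}(\theta) \le \cZ_N(\theta,\nu)$, because the tilt factor $e^{\theta\nu_N\norm{\psi}_p^p}$ is $\ge 1$. Second, on the $j$-th shell $\norm{\Psisph^\theta}_2^2 \le N$ and $\norm{\Psisph^\theta}_\infty < 2t_j$, so \Cref{lem:lp_largest} gives $\norm{\Psisph^\theta}_p^p \le (2t_j)^{p-2}N$; using the explicit form of $\nu_N$ from \eqref{eq:nuN}, this bounds the tilt exponent by $A\, t_j^{p-2}N^{-\frac{p-4}{2}}$ with $A := 2^{p-1}\theta\nu^{(p-2)/2}/p$. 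Combining these with the spherical tail bound $\bP(\norm{\Psisph^\theta}_\infty \ge t_j) \le N e^{-c t_j^2}$ --- which is \Cref{lem:sphericaltail2} when $\theta \ge C_d$ (applicable since $t_j \ge \tfrac12 N^\beta > \log N$) and follows from \Cref{lem:spherical_tail} applied with $U = \eset$ and a union bound over the $N$ vertices when $\theta < C_d$ --- yields
\[
\bP(t_j \le \norm{\Psi}_\infty < 2t_j) \le N \exp\!\left( A\, t_j^{p-2} N^{-\frac{p-4}{2}} - c\, t_j^2\right).
\]

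The crux is that the tilt exponent is negligible against the Gaussian one on every relevant scale. Since $p > 4$ and $t_j \le 2b_N = 2\sqrt N\, r_N$ with $r_N \to 0$, for $N$ large one has $t_j^{\,p-4} \le \tfrac{c}{2A} N^{\frac{p-4}{2}}$ uniformly in $j$, hence $A\, t_j^{p-2} N^{-\frac{p-4}{2}} \le \tfrac c2 t_j^2$, and so
\[
\bP(t_j \le \norm{\Psi}_\infty < 2t_j) \le N e^{-\frac c2 t_j^2} \le N e^{-\frac c8 N^{2\beta}} \le e^{-N^\beta}
\]
for $N$ large. Summing over the $O(\log N)$ scales and adding the $e^{-N^\beta}$ coming from $\{\norm{\Psi}_\infty > b_N\}$ gives $\bP(\norm{\Psi}_\infty \ge N^\beta) = O(\log N\, e^{-N^\beta})$.

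I expect the only real subtlety to be precisely this balancing, and it is what forces a scale-by-scale argument rather than a one-line deduction from \eqref{eq:NLS_sph_relation}: a crude uniform bound on the tilt over $\{\norm{\psi}_\infty \le b_N\}$ is only $e^{O(N r_N^{p-2})} = e^{o(N)}$, far too weak against a spherical tail of size $e^{-\Theta(N^{2\beta})}$. The dyadic splitting, together with the a priori cutoff $b_N = o(\sqrt N)$ --- which itself rests on the Weinstein-type inequality \eqref{eq:weibound2} underlying \Cref{lem:linfty2} and \Cref{prop:U} --- is exactly what makes $t_j^{p-2}N^{-(p-4)/2}$ subordinate to $t_j^2$ on every shell; the range $0 < \beta < \tfrac{p-4}{2(p-2)}$ from \Cref{cor:betachoice} is used here only through $\beta < \tfrac12$, to keep the dyadic range non-empty and of logarithmic length.
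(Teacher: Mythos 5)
Your proposal is correct and follows essentially the same route as the paper: the a priori $o(\sqrt N)$ cutoff from \Cref{prop:U}, \Cref{lem:sub_theta_nu}, and \Cref{lem:linfty2}, a dyadic decomposition of the $\ell^\infty$ scale, the spherical re-expression \eqref{eq:NLS_sph_relation} with $\cZ_{\text{sph},N}/\cZ_N \le 1$, the $\ell^p$ bound from \Cref{lem:lp_largest}, and the Gaussian spherical tails from \Cref{lem:spherical_tail,lem:sphericaltail2}, showing the tilt exponent is subordinate to the tail on every shell. The only cosmetic slip is quoting \Cref{lem:sphericaltail2} with an extra (harmless) factor of $N$; otherwise the argument, and your remark that only $\beta<1/2$ is actually used here, match the paper's reasoning.
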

\begin{proof}
We will refer to the proof technique here as the \textbf{dyadic argument}, it will make another prominent appearance in the proof of \Cref{thm:supercritical} which addresses the supercritcial case $\nu>\nu_c$. We know that there exists a $\gd_{N}$ such that $$\bP(\|{\Psi^{\theta,\nu}_{N}}\|_{\infty}\geq \sqrt{\gd_{N}N}) \leq e^{-N\sqrt{\eps_{N}}/2}$$ by \Cref{lem:sub_theta_nu,prop:U,lem:linfty2} which address the $\theta\leq C_{d}$ and $\theta> C_{d}$ cases respectively. Now we define the following sets
\[
\cP_{j}:=\{\psi\in \ttwo:2^{-j-1}\sqrt{\gd_{N}N}\leq \norm{\psi}_{\infty}\leq 2^{-j}\sqrt{\gd_{N}N} \}.
\]
Define $j_{\max}:=\lceil(\frac{1}{2}-\gb)\frac{\log {N} }{\log2 } \rceil$.
Note that
\begin{align}\label{eq:largeprobbound}
\bP(\|\Psi^{\theta, \nu}_N\|_\infty\geq N^{\gb})\leq \sum_{j=1}^{j_{\max}} \bP (\Psi_N^{\theta,\nu}\in \cP_{j})+\bP(\|\Psi_N^{\theta,\nu}\|_\infty\geq \sqrt{\gd_{N}N} )\leq \sum_{j=1}^{j_{\max}}\bP (\Psi_N^{\theta,\nu}\in \cP_{j})+e^{-N\sqrt{\eps_{N}}/2}.
\end{align}
Now, for a given $\psi\in \cP_{j}$, since $\norm{\psi}_{2}^{2}\leq N$, by Lemma \ref{lem:lp_largest}, we know 
\begin{align}
\norm{\psi}_{p}^{p}\leq 2^{-pj}\cdot N^{\frac{p}{2}}\gd_{N}^{\frac{p}{2}}.
\end{align}
We may rewrite the expectations in \eqref{eq:largeprobbound} in terms of the spherical law by using \eqref{eq:NLS_sph_relation}.
Namely,
\begin{align}\label{eq:largeprobbound2}
\bP(\Psi\geq N^{\gb})=\frac{\cZ_{\text{sph},N}(\theta)}{\cZ_{N}(\theta, \nu)}\sum_{j}\bE\left( \exp(\theta \nu_N\norm{\Psisph}_p^p) 1_{\Psisph\in \cP_{j}}\right) +e^{-N\sqrt{\eps_{N}}/2}. 
\end{align}
By Lemmas \ref{lem:spherical_tail} and \ref{lem:sphericaltail2}, we have a sub-Gaussian tail bound for the spherical model for the cases $\theta<C_{d}$ and $\theta\geq C_{d}$ respectively. The case of Lemma \ref{lem:spherical_tail} under consideration is the case with no boundary, that is $U=\emptyset$. For the application of \Cref{lem:sphericaltail2}, we only apply it with $b>N^\beta>\log N$ below, where $b$ is as in that lemma. Combining with \eqref{eq:largeprobbound2} and the definition of $\nu_N$ \eqref{eq:nuN},  
\begin{align*}
 \bE\left( \exp(\theta \nu_N\norm{\Psisph}_p^p) 1_{\Psisph\in \cP_{j}}\right) \\ &\leq \exp(\frac{2^{-pj+1}}{p}N\theta\nu^{\frac{p-2}{2}}\gd_{N}^{\frac{p}{2}}-2^{-2j}C\gd_{N}N) \\ &=\exp(2^{-2j}N\gd_{N}(\frac{2^{-(p-2)j+1}}{p}\theta(\nu\gd_{N})^{\frac{p-2}{2}}-C)).
\end{align*}
Regardless of what $\nu$ and $C$ are, since they are fixed constants the expression inside the exponential is negative for sufficiently large $N$ when $j=1$, and thus for all $j\leq j_{\max}$. We may bound the sum by 
\begin{align*}
\sum_{j=1}^{j}\exp(2^{-2j}N\gd_{N}(\frac{2^{-(p-2)j+1}}{p}\theta(\nu\gd_{N})^{\frac{p-2}{2}}-C)) &\leq j_{\max}\exp(N2^{-2j_{\max}}(\theta\nu^{\frac{(p-2)}{2}} \gd_{N}^{p/2}-C\gd_{N})) \\ &\leq j_{\max}\cdot e^{-CN^{2\gb}}.
\end{align*}
The proof is completed by noting that $\cZ_{\text{sph},N}(\theta)/\cZ_{N}(\theta,\nu)\leq 1$. Indeed, this can be easily seen by taking $\ff $ to be identically equal to 1 in \eqref{eq:NLS_sph_relation}.
\end{proof}
The following corollary is almost immediate:
\begin{cor}\label{cor:pfn1}
For any $\theta>0$ and $\nu<\nu_{c}(\theta)$,
\[
\frac{\cZ_{N}(\theta,\nu)}{\cZ_{\text{sph},N}(\theta)}\to 1
\]
as $N\to \infty$. 
\end{cor}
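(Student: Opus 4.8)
I would deduce the statement from the already-established $\ell^{\infty}$ bound on the NLS field, \Cref{lem:linfty3}, by expressing the ratio as an NLS-expectation of a \emph{bounded} functional. One inequality is immediate: since $\nu_N>0$ and $\|\psi\|_p^p\ge0$, the integrand defining $\cZ_N(\theta,\nu)$ dominates pointwise the one defining $\cZ_{\text{sph},N}(\theta)$, so $\cZ_N(\theta,\nu)\ge\cZ_{\text{sph},N}(\theta)$ for every $N$. For the matching upper bound I would set $\ff(\psi)=\exp(-\theta\nu_N\|\psi\|_p^p)$ in \eqref{eq:NLS_sph_relation}, which gives the identity
\[
\frac{\cZ_{\text{sph},N}(\theta)}{\cZ_N(\theta,\nu)}=\bE\!\left(\exp\!\bigl(-\theta\nu_N\|\Psi^{\nu,\theta}_N\|_p^p\bigr)\right)\le1 ,
\]
so it suffices to prove $\bE\bigl(1-\exp(-\theta\nu_N\|\Psi^{\nu,\theta}_N\|_p^p)\bigr)\to0$.

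Next I would fix $\beta$ as in \Cref{cor:betachoice} and split on the event $E_N:=\{\|\Psi^{\nu,\theta}_N\|_\infty<N^{\beta}\}$. On $E_N$, the almost sure bound $\|\Psi^{\nu,\theta}_N\|_2^2\le N$ from \eqref{eq:psi} together with \Cref{cor:betachoice} gives a \emph{deterministic} estimate $N^{-(p-2)/2}\|\Psi^{\nu,\theta}_N\|_p^p\le\eta_N$ with $\eta_N\to0$; since $\nu_N=\tfrac2p(\nu/N)^{(p-2)/2}$ by \eqref{eq:nuN}, this forces $\theta\nu_N\|\Psi^{\nu,\theta}_N\|_p^p\le\tfrac{2\theta}{p}\nu^{(p-2)/2}\eta_N=:\rho_N\to0$, hence $1-\exp(-\theta\nu_N\|\Psi^{\nu,\theta}_N\|_p^p)\le\rho_N$ on $E_N$; on $E_N^{c}$ one uses the trivial bound $1-\exp(-\,\cdot\,)\le1$. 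Combining,
\[
\bE\!\left(1-\exp\!\bigl(-\theta\nu_N\|\Psi^{\nu,\theta}_N\|_p^p\bigr)\right)\le\rho_N+\bP\!\left(\|\Psi^{\nu,\theta}_N\|_\infty\ge N^{\beta}\right)\le\rho_N+O\!\bigl(\log N\cdot e^{-N^{\beta}}\bigr),
\]
where the last step is exactly \Cref{lem:linfty3}. The right-hand side tends to $0$, which yields $\cZ_{\text{sph},N}(\theta)/\cZ_N(\theta,\nu)\to1$ and therefore the claim.

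I do not anticipate a real obstacle, since all of the substantive work sits inside \Cref{lem:linfty3}; the one point needing attention is that the tilt must be controlled \emph{deterministically} on $E_N$, which is precisely what \Cref{cor:betachoice} supplies. This is also the reason to run the argument through $\exp(-\theta\nu_N\|\cdot\|_p^p)$: that functional is bounded, so one never has to tame the (exponentially large) upper tail of $\exp(+\theta\nu_N\|\cdot\|_p^p)$ arising from atypical near single-site configurations of the field — a direct attack on $\bE\bigl(\exp(\theta\nu_N\|\Psisph^{\theta}\|_p^p)\bigr)$ would have to dispose of that range by hand.
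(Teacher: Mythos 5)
Your proof is correct, and it takes a genuinely tidier route than the paper's, although both hinge on the same two ingredients (\Cref{lem:linfty3} and \Cref{cor:betachoice}) applied through the bridge \eqref{eq:NLS_sph_relation}. The paper writes the ratio as the spherical expectation $\bE\exp(\theta\nu_N\|\Psisph^\theta\|_p^p)$, splits on $\{\|\Psisph^\theta\|_\infty\le N^\beta\}$, and on the bad event converts the (a priori unbounded) integrand back into an NLS probability via \eqref{eq:NLS_sph_relation}, ending up with a self-referential equation involving $\cZ_N/\cZ_{\text{sph},N}$ and $\bP(\|\Psi_N^{\nu,\theta}\|_\infty\ge N^\beta)$ that must be solved; this is workable but requires a small rearrangement step that the paper glides over (and there is a typo in the factor $\cZ_{\text{sph},N}/\cZ_N$ versus $\cZ_N/\cZ_{\text{sph},N}$ in the second line of their display). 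You instead go straight at the reciprocal $\cZ_{\text{sph},N}/\cZ_N=\bE\bigl(\exp(-\theta\nu_N\|\Psi_N^{\nu,\theta}\|_p^p)\bigr)$, a bounded functional of the NLS field, so the bad-event contribution is controlled by the trivial inequality $1-e^{-x}\le 1$ and no back-and-forth between measures is needed. On the good event you use $1-e^{-x}\le x$, the almost sure $\ell^2$-cap built into \eqref{eq:psi}, and \Cref{cor:betachoice} exactly as you say. The logic is tight, and your closing remark about why the $\exp(-\,\cdot\,)$ direction is the right one is accurate: it eliminates the only potential source of trouble in the paper's version.
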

\begin{proof}
We know that   
\[
\frac{\cZ_{N}(\theta,\nu)}{\cZ_{\text{sph},N}(\theta)}=\bE \exp(\theta \nu_{N}\norm{\Psisph}_{p}^{p}).
\]
Clearly $\cZ_{N}(\theta,\nu)/\cZ_{\text{sph},N}(\theta)\geq 1$, all we need to do is verify the upper bound.By Lemma \ref{lem:linfty3}
\begin{align*}
\frac{\cZ_{N}(\theta,\nu)}{\cZ_{\text{sph},N}(\theta)}&=\bE\exp(\theta \nu_{N}\norm{\Psisph}_{p}^{p})1_{\norm{\Psisph}_{\infty}\leq N^{\gb}}+\bE\exp(\theta \nu_{N}\norm{\Psisph}_{p}^{p})1_{\norm{\Psisph}_{\infty}>N^{\gb}} \\ 
&= \bE\left(\exp(\theta \nu_{N}\norm{\Psisph}_{p}^{p})1_{\|\Psisph\|_\infty\leq N^{\gb}}\right) +\frac{\cZ_{\text{sph},N}(\theta)}{\cZ_{N}(\theta,\nu)}\bP(\norm{\Psi_{N}}_{\infty}\geq N^{\gb}).
\end{align*}
By Lemma \ref{lem:lp_largest} and Corollary \ref{cor:betachoice} we have more bounds on the exponential which tell us that 
\begin{align*}
\frac{\cZ_{N}(\theta,\nu)}{\cZ_{\text{sph},N}(\theta)}&=(1+O(N^{(p-2)\gb+(1-\frac{p-2}{2})})) \bE (1_{\|{\Psisph}\|_{\infty}\leq N^{\gb}}) +O(e^{-CN^{2\gb}}).
\end{align*}
By choice of $\gb$, we know that $(p-2)\gb+(1-\frac{p-2}{2})<0$. Finally, the subgaussian tail bound through Lemmas \ref{lem:spherical_tail} and \ref{lem:sphericaltail2} allow us to conclude the proof, since 
\[
\bP(\norm{\Psisph}_{\infty}\leq N^{\gb})=1-O(e^{-N^{2\gb}}).
\]\end{proof}
We are now ready to characterize the local limit of $\Psi_{N}^{\theta,\nu}$ in the subcritical phase. In particular, we now show that the local limit is the same as that of the spherical model.  
\begin{proof}[Proof of Theorem \ref{thm:main_summary}, Cases a and b] 
Let $A\subset \bT^{d}_{n}$ be a fixed finite collection of lattice sites, and let $\ff:\bC^{A}\to \bR$ be a bounded continuous function. Now, note that 
\begin{align*}
\bE (\ff(\Psi^{\theta, \nu}_{N}))&=\bE(\ff(\Psi^{\theta,\nu}_{N})1_{\norm{\Psi_{N}}_{\infty}\leq N^{\gb}})+O(e^{-CN^{2\gb}}) \\ 
&=\frac{\cZ_{N}(\theta,\nu)}{\cZ_{\text{sph},N}(\theta)}\cdot \bE(\exp(\theta \nu_{N}\norm{\psi}_{p}^{p})\cdot \ff(\Psisph)1_{\|{\Psisph}\|_{\infty}\leq N^{\gb}})+O(e^{-CN^{2\gb}})
\end{align*}
By Corollary \ref{cor:betachoice}, we know that 
\[
\bE (\exp(\theta \nu_{N}\norm{\psi}_{p}^{p})\ff(\Psisph)1_{\norm{\Psisph}\leq N^{\gb}})=(1+O(N^{(p-2)\gb+(1-\frac{p-2}{2})}))\bE \ff(\Psisph)1_{\norm{\Psisph}\leq N^{\gb}}.
\]
Next by Lemmas \ref{lem:spherical_tail} and \ref{lem:sphericaltail2}
\begin{align*}
\bE\ff(\Psisph)=\bE \ff(\Psisph)1_{\|{\Psisph}\|_{\infty}<N^{\gb}}+O(e^{-CN^{2\gb}}).
\end{align*}
Combining bounds and applying Corollary \ref{cor:pfn1}, we find that 
\begin{align*}
\bE \ff(\Psi_{N})=(1+O(N^{(p-2)\gb+(1-\frac{p-2}{2})})) \bE \ff(\Psisph)+O(e^{-N^{2\gb}}). 
\end{align*}
Finally by Theorem \ref{thm:main_sph},
\[
\bE \ff(\Psisph)=\begin{cases}(1+o(1))\cdot \bE\ff(\theta^{-\frac{1}{2}}\cdot \Phi^{\bZ^{d}}_{m}) \text{ when }\theta<C_{d}, \\ (1+o(1))\cdot \bE \ff(\theta^{-\frac{1}{2}}\cdot \Phi^{\bZ^{d}}+U\mv1) \text{ when }\theta\geq C_{d},\end{cases}
\]
where $U$ is uniformly distributed on the disk of radius $\sqrt{1-\frac{C_{d}}{\theta}}$ in $\bC$, and  $m$ and $\theta$ are related as per \eqref{eq:mass}.
\end{proof}
\subsection{Local limit of the supercritical  phase $\nu > \nu_c$}\label{sec:supercritical}
In this section, we prove the following theorem, which is a detailed version of \Cref{thm:main_summary}, item c.
\begin{thm}\label{thm:supercritical}
Fix $p>4$ and $\nu>\nu_c$. The sequence of focusing NLS invariant fields $(\Psi^{\nu,\theta}_N)_{N \ge 1}$ admits subsequential limits in the local weak sense. Furthermore, each subsequential limit is a massive Gaussian free field with a random mass $\{\theta^{-\frac{1}{2}}\cdot \Phi^{\bZ^d}_{M(\Xi)}(\mvx)\}_{\mvx\in A}$ where $M=M(\Xi)$ solves 
\begin{equation}
\int_{[0,1]^{d}}\frac{d\mv\gk}{4\sum_{i=1}^{d}\sin^{2}(\pi \gk_{i})+M} = \theta(1-\Xi).\label{eq:mass_a}
\end{equation}
where  $\Xi \in \mathscr{M}(\theta, \nu)$ and $M(\Xi)>0$ almost surely. Furthermore if $\theta<C_d$, then $M(\Xi)<m$ almost surely where $m$ and $\theta$ are related as in \eqref{eq:mass}.
\end{thm}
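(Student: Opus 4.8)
The plan is to reduce to the subcritical analysis of \Cref{sec:subcritical} after surgically excising the region of concentration. Fix a finite set $A\subset\bT_n^d$ and a bounded continuous $\ff:\bC^A\to\bR$, and choose $\eps_N\to 0$ polylogarithmically slowly, so that $\eps_N\in(e^{-\sqrt{\log N}},1)$ and $\eps_N^{-d-2}$ is of polylogarithmic order (making \Cref{prop:U} available and keeping the separating set of polylogarithmic size). Let $U=U(\Psi_N^{\nu,\theta},\eps_N)$ and $\alpha_N:=\frac1N\|\Psi_N^{\nu,\theta}|_U\|_2^2$. Since $\nu>\nu_c$, \Cref{thm:phasecurve} gives that $\mathscr{M}(\theta,\nu)$ is a compact subset of $(0,1)$; hence by \Cref{prop:U}, off an event of probability $\le e^{-N\sqrt{\eps_N}/2}$, $\alpha_N$ lies within $\gd_N$ of $\mathscr{M}(\theta,\nu)$, in particular bounded away from $0$ and from $1$ for large $N$. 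By compactness of $[0,1]$, along a subsequence $\alpha_N$ converges in distribution to a random variable $\Xi$ supported on $\mathscr{M}(\theta,\nu)$. By translation invariance of $\mu_{N,\nu,\theta}$ and $|U|=O((\log N)^{d+2})$, with high probability $d(A,U)$ exceeds any fixed power-of-$N$ threshold needed below, so we may freely condition on the event $\cU(A_0,f,\eps_N)$ of \Cref{lem:conditional_density_separating_set} for sets $A_0$ far from $A$; on this event the conditional law of $\Psi_N^{\nu,\theta}$ on $A_0^c$ is the tilted spherical density \eqref{eq:cond_density_linfty1}, with mass cutoff $(1-\alpha_N)N$, $\ell^\infty$ cutoff $\sqrt{\eps_N N}$, and boundary data $f$ on $A_0$ satisfying $\|f|_{\partial A_0}\|_2^2\le\eps_N N/10$ by property iii of \Cref{def:separating_set}.

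The technical core, which I expect to be the main obstacle, is to enlarge $A_0$ to a set $\tilde U$ that is still of polylogarithmic size but on whose complement $\|\Psi_N^{\nu,\theta}\|_\infty=o(N^\gb)$ with $\gb$ as in \Cref{cor:betachoice}, while simultaneously $\frac1N\|\Psi_N^{\nu,\theta}|_{\tilde U}\|_2^2=\alpha_N+o(1)$ and $\|f|_{\partial\tilde U}\|_2^2=o(N)$; this is precisely \Cref{prop:tilde_U}. The difficulty is that under only the crude bound $\|\Psi\|_\infty\le\sqrt{\eps_N N}$ a polylogarithmically large set can still carry a macroscopic amount of $\ell^2$-mass, so the enlargement must be performed with a delicately tuned choice of the parameters of \Cref{prop:U} to control at once the improved $\ell^\infty$-bound off $\tilde U$, the near-equality of the mass fractions on $\tilde U$ and $U$, the polylogarithmic size of $\tilde U$, and the smallness of the boundary data on $\partial\tilde U$. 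The inputs are: \Cref{lem:super_to_sub}, which yields $\theta(1-\alpha_N)<C_d$, placing the reduced-mass spherical field in its massive phase; \Cref{lem:spherical_tail}, which supplies the Gaussian tail $\bP(|\Psisph^{f,\theta,U^c}(\mvy)-h^{m,N,f}(\mvy)|>b)\le e^{-cb^2}$ for that field, uniformly including near $\partial A_0$; and a dyadic argument in the spirit of \Cref{lem:linfty3} that decomposes according to the scale of $\|\Psi\|_\infty$ between $N^\gb$ and $\sqrt{\eps_N N}$, bounds the tilt on each dyadic shell via \Cref{lem:lp_largest} and the $\ell^2$-cutoff, and plays it against the Gaussian tail. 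Along the way the global $\ell^\infty$-cutoff on $A_0^c$ is removed at cost $e^{-c\eps_N N}$ using the same tail bound, which restores the full domain Markov structure on $\tilde U^c$.

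With $\tilde U$ in hand the argument closes as in the subcritical case. On $\tilde U^c$ the bound $\|\Psi_N^{\nu,\theta}\|_\infty=o(N^\gb)$ forces $\nu_N\|\Psi_N^{\nu,\theta}|_{\tilde U^c}\|_p^p=o(1)$ by \Cref{lem:lp_largest} and \Cref{cor:betachoice}, so the exponential tilt in \eqref{eq:cond_density_linfty1} contributes a factor $1+o(1)$ and the conditional law of $\Psi_N^{\nu,\theta}$ on $\tilde U^c$ is, up to this factor, the spherical law with boundary data $f|_{\partial\tilde U}$ and mass cutoff $(1-\alpha_N)N$. Rescaling the field by $(1-\alpha_N)^{1/2}$ converts the reduced mass cutoff into the inverse temperature $\theta(1-\alpha_N)<C_d$ as in \eqref{eq:scaling_spherical} (with the compensating rescaling of the boundary data, whose $\ell^2$-norm stays $o(N)$ since $1-\alpha_N$ is bounded away from $0$), so \Cref{lem:llimit1} applies — its hypotheses $d(\mv0,\tilde U)\ge N^{0.1}$, $|\tilde U|=O((\log N)^{10d})$, $\|f|_{\partial\tilde U}\|_2^2=o(N)$ holding by the construction of $\tilde U$, and the convergence being uniform over effective temperatures in the relevant compact subrange of $(0,C_d)$ — and identifies the conditional local weak limit on $A$ as $(1-\alpha_N)^{1/2}(\theta(1-\alpha_N))^{-1/2}\Phi^{\bZ^d}_{M_N}=\theta^{-1/2}\Phi^{\bZ^d}_{M_N}$, where $M_N$ solves the analogue of \eqref{eq:mass_a} with $\Xi$ replaced by $\alpha_N$. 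Passing to the limit along the subsequence, $M_N\to M(\Xi)$ by the continuous mapping theorem (and the convergence of the finite- to infinite-volume Green's function), and averaging the now-bounded continuous functional against the limiting law of $\Xi$ yields the asserted mixture. Finally, $\Xi\in\mathscr{M}(\theta,\nu)$ is bounded away from $1$ by compactness and from $0$ by \Cref{thm:phasecurve}, while $\theta(1-\Xi)<C_d$ by \Cref{lem:super_to_sub}; since $M\mapsto\int_{[0,1]^d}\frac{d\mv\gk}{4\sum_{i=1}^d\sin^2(\pi\gk_i)+M}$ is a continuous strictly decreasing bijection of $[0,\infty)$ onto $(0,C_d]$, the equation \eqref{eq:mass_a} has a unique positive solution and $M(\Xi)>0$ almost surely, and when $\theta<C_d$ the comparison between $M(\Xi)$ and $m(\theta)$ claimed in the statement is immediate from the strict monotonicity of the same map together with $\Xi>0$.
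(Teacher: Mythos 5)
Your proposal is correct and takes essentially the same route as the paper's proof: excise the concentration region via \Cref{prop:U}, invoke \Cref{prop:tilde_U} for the enlarged set carrying almost all the lost mass together with the $N^{\beta}$ bound on its complement, kill the tilt with \Cref{cor:betachoice}, remove the $\ell^{\infty}$ conditioning via \Cref{lem:spherical_tail}, apply \Cref{lem:llimit1} after the scaling relation (using \Cref{lem:super_to_sub} to keep the effective temperature below $C_d$), and conclude by tightness of the lost-mass fraction, passing to a subsequence and averaging, exactly as in \eqref{eq:reduce_to_phi} and the argument that follows it. The only differences are cosmetic: the paper tracks the mass fraction on $U_1$ rather than on $U$, and it treats the $N$-dependent effective temperature $\theta(1-\alpha_N)$ implicitly where you flag uniformity of \Cref{lem:llimit1} over a compact subrange of $(0,C_d)$.
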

As mentioned in \Cref{sec:outline}, the idea is to condition the field in the separating set and then prove a local limit theorem for the NLS field with boundary conditions. By \Cref{prop:U},  a positive proportion of the mass is concentrated on the separating set. The issue of course is that the separating set only gives us a boundary condition $o(\sqrt{N})$ on its boundary, which needs to be pushed down to $N^{\beta}$ for small $\beta>0$ in order to invoke \Cref{cor:betachoice}.  This will be done using a dyadic argument similar to the subcritical case, but the boundary conditions give rise to additional complications. We plan to consider a set $U_1 \supset U$, still not too big, which will ensure the $\ell^\infty$ drop off in mass via the dyadic argument. However, we need to be careful as extra mass might be lost if $U_1$ is too big.

Given the above strategy, it is no surprise that we now need the following version of the random NLS field $\Psi^{\nu, \theta, f, U}_N$ with boundary conditions.  Let $U \subset \bT_n^d$ and let $f:U \to \bC$ be a function. For any bounded, continuous function $\ff:\bC^{\bar U^c} \to \bR$ (recall $\bar U^c = U^c \cup \partial U$).

\begin{multline}
\bE(\ff(\Psi^{\nu, \theta,\gamma, f, U}_N))\\=\frac{1}{\ Z_{N}(\nu,\theta, \gamma,f, U)}\int_{\bC^{\bar U^c}} \ff(\psi)\exp\left[\theta \left(\nu_N\norm{\psi}_{p}^{p}-\norm{\nabla \psi}_{2}^{2} \right)\right]\mv1_{\norm{\psi}_{2}^{2}\leq \gamma N}\mv1_{\psi|_{\partial U} \equiv f|_{\partial U}  } \vd \psi.\label{eq:psi_boundary_cond}
\end{multline}
where the gradients are taken in the graph $G_{U^c}$, the subgraph induced by $\bar U^c$.

For some $A \subset \bT_n^d$ and $f:A \to \bC$, recall that $\cU = \cU(A,f, \eps_N)$ denotes the following event:
\begin{itemize}
    \item $U(\Psi_N^{\nu, \theta}, \eps_N)=A$
    \item $\Psi_N^{\nu, \theta}|_A \equiv f$.
\end{itemize}
Let $\alpha_N = \frac1N\|f|_A\|_2^2$. Recall that by the definition of separating set, we must have $\|f|_{\partial A}\|_2^2 \le \sqrt{\eps_NN}/10 $.
The conditional law of $\Psi^{\nu, \theta}_N$ in $A^c$  is given by $\Psi_N^{\nu, \theta, (1-\alpha_N), f,A^c}$ (NLS field with boundary condition $f$ on $A$, defined in \eqref{eq:psi_boundary_cond}) conditioned on the maximum absolute value of $\Psi_N^{\nu, \theta, (1-\alpha_N), f,A^c}$ in $A^c$ being at most $\sqrt{\eps_N N}$.  In other words, the conditional density is given by
\begin{equation}
    \frac1{\cZ_N'}\exp\left[\theta \left(\nu_N\norm{\psi}_{p}^{p}-\norm{\nabla \psi}_{2}^{2} \right)\right]\mv1_{\norm{\psi|_{A^c}}_{2}^{2}\leq (1-\alpha_N) N}\mv1_{\|\psi|_{A^c}\|_{\infty} \le \sqrt{\eps_N N}}\mv1_{\psi|_{A} \equiv f|_{A}  }, \qquad \psi \in \bC^{\bar A^c} \label{eq:cond_density_linfty}
\end{equation}
for an appropriate partition function $\cZ_N'= \cZ_N'(\nu,\theta, 1-\alpha_N,f,U, \eps_N)  $ (see \Cref{lem:conditional_density_separating_set}). Let $\Psi_{N, \eps_N}^{\theta, 1-\alpha_N}$ denote this conditional random field, where we dropped the $\nu, f,A^c$ from the superscript to lighten notation.

Similarly we introduce $\Psispheps^{\theta, 1-\alpha_N}$ to denote the spherical law  on $A^c$ with boundary condition $f$ and mass cutoff $1-\alpha_N$, and conditioned to take maximum absolute value $\sqrt{\eps_NN}$. That is, the density of $\Psispheps^{\theta, 1-\alpha_N}$ is given by
\begin{equation*}
    \frac1{\cZ'_{\text{sph}, N}(\theta, 1-\alpha_N, \eps_N)}\exp\left(-\theta \norm{\nabla \psi}_{2}^{2} \right)\mv1_{\norm{\psi|_{A^c}}_{2}^{2}\leq (1-\alpha_N) N}\mv1_{\|\psi|_{A^c}\|_{\infty} \le \sqrt{\eps_N N}}\mv1_{\psi|_{A} \equiv f|_{A}  }, \qquad \psi \in \bC^{\bar A^c}.
\end{equation*}
where $\cZ'_{\text{sph}, N} =\cZ'_{\text{sph}, N}  (\theta, 1-\alpha_N, \eps_N)$ is the appropriate partition function.
Also, for any bounded continuous function $\ff: \bC^
{\bar A^c} \to \bR$, 
\begin{equation}
    \bE(\ff(\Psi^{\theta, 1-\alpha_N}_{N,\eps_N})))  = \frac{\cZ'_{\text{sph}, N}}{\cZ'_N}\bE\left(\ff(\Psispheps^{\theta, 1-\alpha_N})\exp\left[\theta \left(\nu_N\norm{\Psispheps^{\theta, 1-\alpha_N}}_{p}^{p}\right)\right]\right)\label{eq:NLS_to_spherical}
\end{equation}
Note that 
\begin{equation}
  \frac{\cZ'_{\text{sph}, N}}{\cZ'_N} \le 1.\label{eq:ratio_NLS_sph_partition_upper_bd}  
\end{equation}
Now let us compare the law of $\Psispheps^{\theta, 1-\alpha_N}$ to the same law without the $\ell^{\infty}$ conditioning. The latter is already defined in \eqref{eq:density_spherical_boundary_general_mass} and let  $\Psisph^{\theta, 1-\alpha_N}$ denote the random field with this law. Let $\cZ_{\text{sph}, N}(f,\theta, 1-\alpha_N, A^c) = \cZ_{\text{sph}, N}(\theta, 1-\alpha_N)$ denote its partition function (simplifying the notation). Our next lemma states that these two partition functions are very close to each other if the mass of $f$ is chosen correctly.
 
 \begin{lemma}\label{lem:partition_condition_close}
    Suppose $\nu>\nu_c$ and $f,A, \alpha_N, \eps_N$ be as above and assume $\alpha_N=a+o_N(1)$ for some $a \in \mathscr{M}(\nu, \theta)$. We have
     $$
     \frac{\cZ'_{{\sf \text{sph}}, N}(\theta, 1-\alpha_N, \eps_N)}{\cZ_{\text{sph}, N}(\theta, 1-\alpha_N) }  = 1+o(1).
     $$
 \end{lemma}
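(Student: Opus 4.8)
The plan is to show that the $\ell^\infty$-conditioning event $\{\|\psi|_{A^c}\|_\infty \le \sqrt{\eps_N N}\}$ has probability $1+o(1)$ under the \emph{unconditioned} spherical law $\Psisph^{\theta,1-\alpha_N}$ with boundary data $f$ on $A$ and mass cutoff $(1-\alpha_N)N$; this ratio of probabilities is exactly $\cZ'_{\mathrm{sph},N}/\cZ_{\mathrm{sph},N}$. Equivalently, we must bound
$$
\bP\left(\|\Psisph^{\theta,1-\alpha_N}|_{A^c}\|_\infty > \sqrt{\eps_N N}\right) = o(1).
$$
First I would record the key numerical facts coming from the hypotheses: since $a \in \mathscr{M}(\nu,\theta)$ and $\nu>\nu_c$, by \Cref{lem:super_to_sub} we have $\theta(1-a) < C_d$, and hence $1-\alpha_N$ times $\theta$ stays bounded away from $C_d$ for large $N$; moreover $\|f|_{\partial A}\|_2^2 \le \sqrt{\eps_N N}/10 = o(N)$ by the defining property of the separating set. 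So the field $\Psisph^{\theta,1-\alpha_N}$ on $A^c$ with boundary condition $f$ is exactly in the regime covered by \Cref{lem:spherical_tail} (with $U=A$): there exist constants $c,C>0$ such that for every $\mvy \in A^c$ and every $b \ge 2|h(\mvy)|$,
$$
\bP\left(|\Psisph^{f,\theta,1-\alpha_N,A^c}(\mvy) - h^{m_N,f}(\mvy)| > b\right) \le e^{-cb^2},
$$
where $h$ is the massive harmonic extension of $f$ with the appropriate mass.

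Next I would use \Cref{lem:l_p_h} to control the harmonic correction: $|h(\mvy)| \le \|f|_{\partial A}\|_1 e^{-c\, d(\mvy,\partial A)}$, and $\|f|_{\partial A}\|_1 \le |\partial A|^{1/2}\|f|_{\partial A}\|_2 = o(\sqrt{N})$ since $|A| = O((\log N)^{100d})$ makes $|\partial A|^{1/2}$ polylogarithmic while $\|f|_{\partial A}\|_2 = o(\sqrt{N})$. Therefore $\|h\|_\infty = o(\sqrt{N})$, in particular $\|h\|_\infty \le \tfrac12 \sqrt{\eps_N N}$ once $N$ is large (this uses $\eps_N \ge e^{-\sqrt{\log N}}$ so $\sqrt{\eps_N N}$ dominates any polylog$(N)$ times the tiny $\|f|_{\partial A}\|_2$; more carefully one checks $o(\sqrt N)/\sqrt{\eps_N N} = o(1/\sqrt{\eps_N}) \cdot$ — I would instead argue directly that $\|h\|_\infty \le \|f|_{\partial A}\|_1 \le |\partial A|^{1/2} \cdot o(\sqrt N) = o(\sqrt N)$ which is $\ll \sqrt{\eps_N N}$ as long as the $o(\sqrt N)$ beats $e^{-\sqrt{\log N}/2}\sqrt N$; if $\|f\|_2^2$ is merely $o(N)$ this needs $\|f|_{\partial A}\|_1 = o(\sqrt{\eps_N N})$, which is where I would invoke the sharper separating-set bound $\|f|_{\partial A}\|_2^2 \le \sqrt{\eps_N N}/10$, giving $\|f|_{\partial A}\|_1 \le |\partial A|^{1/2}(\eps_N N)^{1/4} \ll \sqrt{\eps_N N}$). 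With $\|h\|_\infty \le \tfrac12\sqrt{\eps_N N}$ in hand, on the event $\|\Psisph|_{A^c}\|_\infty > \sqrt{\eps_N N}$ there must exist $\mvy \in A^c$ with $|\Psisph(\mvy) - h(\mvy)| > \tfrac12\sqrt{\eps_N N} \ge 2|h(\mvy)|$, so a union bound over the $N$ vertices of $A^c$ gives
$$
\bP\left(\|\Psisph^{\theta,1-\alpha_N}|_{A^c}\|_\infty > \sqrt{\eps_N N}\right) \le N \cdot e^{-c \eps_N N/4} \le N e^{-c N^{1-o(1)}/4} = o(1),
$$
using $\eps_N N \ge e^{-\sqrt{\log N}} N \to \infty$ much faster than $\log N$. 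Since $\cZ'_{\mathrm{sph},N}/\cZ_{\mathrm{sph},N} = 1 - \bP(\|\Psisph|_{A^c}\|_\infty > \sqrt{\eps_N N})$, this is $1+o(1)$, as claimed.

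The main obstacle I anticipate is bookkeeping the mass parameters in \Cref{lem:spherical_tail}: that lemma defines an auxiliary mass $m_N$ via \eqref{eq:mass_finite_def} and states the tail bound for deviations from $h^{m_N,f}$, which is the harmonic extension with that finite-volume mass rather than the infinite-volume $m$. I would need to check that $m_N$ is bounded (uniformly in $N$) away from $0$ and $\infty$ — this follows because $\theta(1-\alpha_N) < C_d$ bounded away from $C_d$, combined with \Cref{lem:torus_holed_fourth} and \Cref{mass_torus_holed} controlling $\sum \tfrac1{\lambda_i+m}$ as a decreasing continuous function of $m$ — and consequently that $h^{m_N,f}$ also satisfies the \Cref{lem:l_p_h} bound with constants uniform in $N$, so that $\|h^{m_N,f}\|_\infty = o(\sqrt N)$ too. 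Once the masses are pinned down, the union bound is routine. A secondary (purely technical) point is making the inequality $\|f|_{\partial A}\|_1 \ll \sqrt{\eps_N N}$ airtight using $|A| = O((\log N)^{100d})$ and the separating-set estimate; this is where the specific polylog exponent and the lower bound $\eps_N \ge e^{-\sqrt{\log N}}$ on the threshold are used.
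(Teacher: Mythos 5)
Your overall route is the same as the paper's: rewrite $\cZ'_{\mathrm{sph},N}/\cZ_{\mathrm{sph},N}$ as the probability of the event $\{\|\psi|_{A^c}\|_\infty\le\sqrt{\eps_N N}\}$ under the unconditioned spherical law with boundary data $f$ and mass cutoff $(1-\alpha_N)N$, use \Cref{lem:super_to_sub} (via the scaling relation \eqref{eq:scaling_spherical}) to see that the effective temperature $\theta(1-\alpha_N)$ stays below $C_d$ so that \Cref{lem:spherical_tail} applies with mass bounded away from $0$, and then take a union bound over the $O(N)$ vertices with $b=\sqrt{\eps_N N}$, giving $Ne^{-c\eps_N N}=o(1)$. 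That is exactly the paper's argument, and your attention to the uniformity of $m_N$ is appropriate.

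The one step that does not close as written is your bound on the harmonic extension. You control $\|h\|_\infty$ through \Cref{lem:l_p_h} by $\|f|_{\partial A}\|_1\le|\partial A|^{1/2}\|f|_{\partial A}\|_2$, and to make this $\ll\sqrt{\eps_N N}$ you invoke the bound $\|f|_{\partial A}\|_2^2\le\sqrt{\eps_N N}/10$. But the separating-set construction (property iii of \Cref{def:separating_set}) only gives $\ell^2$ mass at most $\eps_N N/10$ on the shell containing $\partial A$; with that bound, Cauchy--Schwarz yields $\|f|_{\partial A}\|_1\le|\partial A|^{1/2}\sqrt{\eps_N N/10}$, which carries a polylogarithmic factor $|\partial A|^{1/2}$ and is therefore \emph{not} $\le\tfrac12\sqrt{\eps_N N}$, so the hypothesis $b\ge 2|h(\mvy)|$ of \Cref{lem:spherical_tail} is not verified this way. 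The fix is simpler and is what the paper does: the shell bound gives the pointwise estimate $|f(\mvx)|\le\sqrt{\eps_N N/10}$ for $\mvx\in\partial A$, and since the massive harmonic extension is a subprobability-weighted average of boundary values (see \eqref{eq:massharmdef2}), $\|h\|_\infty\le\max_{\partial A}|f|\le\sqrt{\eps_N N/10}<\tfrac12\sqrt{\eps_N N}$, after which your union bound goes through verbatim. So the proposal is essentially correct in structure, but this harmonic-extension step should be replaced by the maximum-principle bound rather than the $\ell^1$ route.
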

 \begin{proof}
     Note that 
     \begin{equation*}
    \frac{\cZ_{\text{sph}, N}(\theta, 1-\alpha_N, \eps_N)}{\cZ_{\text{sph}, N}(\theta, 1-\alpha_N) }  = \bP(\|\Psisph^{\theta, 1-\alpha_N}|_{A^c}\|_\infty\le\sqrt{\eps_N N})  = \bP(\|\Psisph^{\theta(1-\alpha_N)}|_{A^c}\|_\infty \le \sqrt{\eps_N N} )
     \end{equation*}
     where we used the scaling relation \eqref{eq:scaling_spherical} for the second equality. To estimate this probability, we invoke the Gaussian tail bound in \Cref{lem:spherical_tail} and an union bound. Recall that by the definition of the separating set, the boundary values of $f$ on $\partial U$ is at most $\sqrt{\eps_N N}/10$. Hence the massive harmonic extension is also at most $\sqrt{\eps_N N}/10$. Furthermore, note $\theta(1-\alpha_N)\to \theta(1-a)$ for some $a \in \mathscr{M}(\theta, \nu)$, and \Cref{lem:super_to_sub} ensures that $\theta(1-a)<C_d$ (this is where we crucially use the fact that the mass lost in $U$ pushes the regime to the subcritical phase in the rest.) Thus $\theta(1-\alpha_N)<C_d$ for all large enough $N$. Thus \Cref{lem:spherical_tail} is applicable to all vertices in $A$, and hence 
     \begin{equation*}
         \bP(\|\Psisph^{\theta(1-\alpha_N)}|_{A^c}\|_\infty \le \sqrt{\eps_N N} ) = 1+o(1).
     \end{equation*}
     This completes the proof.
 \end{proof}
 The following is the key proposition of this section, which significantly strengthens \cref{prop:U}.  Given $U \subset \bT_n^d$, {let 
\begin{equation}
    U_1=U_1(C):= \{\mvx \in \bT_n^d: d(\mvx,U) \le C\log^2 N\}.\label{eq:tildeU}
\end{equation} 
}
\begin{proposition}\label{prop:tilde_U}
There exists a constant $,C>0$ such that for all $\nu>\nu_c$ and $\beta>0$ the following holds. Let $\eps_N=(\log N)^{-1}, \tilde \eps_N = (\log N)^{-10d}$ and let $\delta_N,\tilde \delta_N$ be as in \Cref{prop:U} for $\eps_N$ and $\tilde \eps_N$ respectively. Let $U(\Psi^{\nu, \theta}_N, \eps_N)=U$.
Then the following holds with probability at least $1-\exp(-c N^{2(\beta\wedge \frac12)})$:
     \begin{enumerate}[a.]
        \item $\frac1N\|\Psi^{\nu, \theta}_N|_{U_1}\|^2_2|\in (a - \tilde \delta_N, a+\tilde \delta_N)$  for some $a \in \mathscr{M}(\theta, \nu)$,
        \item $\|\Psi^{\nu, \theta}_N|_{U_1^c}\|_\infty < N^\beta$. 
        \item $\|\Psi^{\nu, \theta}_N|_{\partial U_1}\|_\infty < N^\beta/2$.
    \end{enumerate}
    where $U_1 = U_1(C)$ is as in \eqref{eq:tildeU} if $\beta<1/2$ and $U_1=U$ is $\beta \ge 1/2$.
\end{proposition}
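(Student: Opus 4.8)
The plan is to bootstrap the weak control from \Cref{prop:U} into the sharp $\ell^\infty$-bound of the statement by a dyadic argument on the NLS field conditioned on the separating set, run inside a polylogarithmic enlargement $U_1$ of $U$. Since the $\beta\ge \frac12$ case is just a restatement of \Cref{prop:U} with $U_1=U$ (using $U(\psi,\eps_N)$ satisfies $\|\psi|_{U^c}\|_\infty<\sqrt{\eps_N N}\le\sqrt{N}$ for large $N$), I will focus on $\beta<\frac12$.

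First I would condition on the event $\cU=\cU(A,f,\eps_N)$ of \Cref{lem:conditional_density_separating_set}, so that outside $A$ the field has the density \eqref{eq:cond_density_linfty1}: an NLS density restricted to $\{\|\psi|_{A^c}\|_2^2\le(1-\alpha_N)N\}\cap\{\|\psi|_{A^c}\|_\infty\le\sqrt{\eps_N N}\}$ with boundary value $f$ on $A$. By \Cref{prop:U} applied with $\eps_N=(\log N)^{-1}$, on an event of probability $\ge 1-e^{-N\sqrt{\eps_N}/2}$ we have $\alpha_N=\frac1N\|f|_A\|_2^2\in(a-\delta_N,a+\delta_N)$ for some $a\in\mathscr M(\theta,\nu)$, and by \Cref{lem:super_to_sub}, $\theta(1-a)<C_d$; also $\|f|_{\partial A}\|_2^2\le\sqrt{\eps_N N}/10=o(N)$. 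This is exactly the regime in which the spherical tail bound \Cref{lem:spherical_tail} is available. Using \eqref{eq:NLS_to_spherical} and \eqref{eq:ratio_NLS_sph_partition_upper_bd} to pass from the conditioned NLS field to the conditioned spherical field $\Psispheps^{\theta,1-\alpha_N}$, and \Cref{lem:partition_condition_close} to drop the $\ell^\infty$-conditioning from the spherical partition function (so that comparisons are against the clean object $\Psisph^{\theta,1-\alpha_N}=\Psisph^{f,\theta(1-\alpha_N),A^c}$), I can estimate $\bP$ of each dyadic shell. Concretely, for $1\le j\le j_{\max}:=\lceil(\tfrac12-\beta)\tfrac{\log N}{\log 2}\rceil$ set
\[
\cP_j:=\Big\{\psi:\ 2^{-j-1}\sqrt{\eps_N N}\le\|\psi|_{U_1^c}\|_\infty\le 2^{-j}\sqrt{\eps_N N}\Big\}.
\]
On $\cP_j\cap\{\|\psi|_{A^c}\|_2^2\le N\}$, \Cref{lem:lp_largest} gives $\|\psi|_{U_1^c}\|_p^p\le 2^{-pj}(\eps_N N)^{p/2-1}N=2^{-pj}\eps_N^{p/2-1}N^{p/2}$, so the tilt contributes at most $\exp\big(\tfrac{2}{p}2^{-pj}(\nu\eps_N)^{(p-2)/2}\theta N\big)$, which is $e^{o(N\eps_N)}$, in fact $e^{o(N^{2\beta})}$ for the relevant $j$; meanwhile the spherical tail \Cref{lem:spherical_tail} (applied vertex-by-vertex on $U_1^c$, with harmonic part $2|h(\mvy)|\le\sqrt{\eps_N N}/5$, which is dominated by the shell threshold once $j$ is small, i.e. once $2^{-j}\sqrt{\eps_N N}$ exceeds that, and otherwise the shell is empty) together with a union bound over $|U_1^c|\le N$ vertices gives $\bP(\Psisph^{\theta,1-\alpha_N}\in\cP_j)\le N\exp(-c\,2^{-2j}\eps_N N)$. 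Multiplying and summing over $j\le j_{\max}$, the worst term is $j=j_{\max}$, giving $2^{-2j_{\max}}\eps_N N\asymp \eps_N N^{2\beta}$; the entropy/tilt factor $e^{o(N^{2\beta})}$ cannot overcome $e^{-c\eps_N N^{2\beta}}$ once $\eps_N\to 0$ slowly enough — here is the one place I must be careful, since $\eps_N=(\log N)^{-1}\to0$ makes $\eps_N N^{2\beta}$ only slightly smaller than $N^{2\beta}$, so I will instead track the exponent as $-c'N^{2\beta}/\log N$ or, better, choose to state the bound as $1-\exp(-cN^{2(\beta\wedge\frac12)})$ after noting any polynomial loss is absorbed (possibly adjusting the shell definition to use $N^{\beta'}$ with $\beta<\beta'<\frac12$ as the floor). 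Part (b) follows by summing $\bP(\Psi^{\nu,\theta}_N|_{U_1^c}\in\cP_j)$ and adding the $e^{-N\sqrt{\eps_N}/2}$ from \Cref{prop:U}; part (c) is the same argument with $U_1^c$ replaced by $\partial U_1$ and threshold $N^\beta/2$, which only improves constants.

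For part (a): the point of passing from $U$ to $U_1$ is that the extra mass picked up cannot be more than $\tilde\delta_N-\delta_N$-ish. I would run \Cref{prop:U} a \emph{second} time with the finer scale $\tilde\eps_N=(\log N)^{-10d}$, obtaining a separating set $\tilde U=U(\Psi^{\nu,\theta}_N,\tilde\eps_N)$ with $\frac1N\|\Psi|_{\tilde U}\|_2^2\in(a'-\tilde\delta_N,a'+\tilde\delta_N)$ for some $a'\in\mathscr M(\theta,\nu)$, $|\tilde U|\le\tilde\eps_N^{-d-2}=(\log N)^{(10d)(d+2)}$, and $\|\Psi|_{\tilde U^c}\|_\infty<\sqrt{\tilde\eps_N N}$. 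Since $\tilde\eps_N\le\eps_N$ we have $U_0(\tilde\eps_N)\supseteq U_0(\eps_N)$ at the level of the seed sets, and more importantly the coarse separating set $U$ has $|\psi_{\mvx}|^2<\eps_N N$ off $U$, so $U\subseteq\tilde U\cup\{\mvx: \eps_N N>|\psi_{\mvx}|^2\}$; one checks by the construction (adding 2-step boundaries) that $\tilde U\subseteq U$ up to the polylog layers, and then that $U_1\supseteq\tilde U$ once $C\log^2 N$ exceeds the diameter bound $\text{diam}\le (2\lceil 10/\tilde\eps_N\rceil+1)\cdot\#(\text{seeds})$, which it does for suitable $C$ since the number of seed sites is $O(\tilde\eps_N^{-1})$ and each shell-layer is 2 vertices wide — this is where the specific scales $(\log N)^{-1}$ vs $(\log N)^{-10d}$ and the exponent $10d$ in $|U|=O((\log N)^{10d})$ from the hypotheses of \Cref{lem:spherical_tail}/\Cref{mass_torus_holed} are pinned down. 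Given $\tilde U\subseteq U_1\subseteq$ (a polylog neighbourhood), monotonicity of mass gives $\|\Psi|_{\tilde U}\|_2^2\le\|\Psi|_{U_1}\|_2^2\le\|\Psi|_{\tilde U}\|_2^2+\|\Psi|_{U_1\setminus\tilde U}\|_\infty^2|U_1\setminus\tilde U|$, and on the event of part (b) (in fact on the trivial a priori bound $\le\sqrt{\tilde\eps_N N}$ off $\tilde U$) the error term is $\le\tilde\eps_N N\cdot(\log N)^{O(d^2)}=o(N)$, so $\frac1N\|\Psi|_{U_1}\|_2^2=\frac1N\|\Psi|_{\tilde U}\|_2^2+o(1)\in(a-\tilde\delta_N',a+\tilde\delta_N')$ for a slightly enlarged $\tilde\delta_N'\to0$, which can be renamed $\tilde\delta_N$. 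Intersecting the three events and using $e^{-N\sqrt{\eps_N}/2}+e^{-N\sqrt{\tilde\eps_N}/2}\le\exp(-cN^{2(\beta\wedge\frac12)})$ for large $N$ completes the proof.

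The main obstacle I anticipate is the bookkeeping in the dyadic sum: making sure the tilt-entropy factor $\exp(C 2^{-pj}(\nu\eps_N)^{(p-2)/2}\theta N)$ is genuinely negligible against the Gaussian tail gain $\exp(-c 2^{-2j}\eps_N N)$ \emph{uniformly} over $1\le j\le j_{\max}$ — the ratio of exponents at shell $j$ is $\sim 2^{-(p-2)j}(\nu\eps_N)^{(p-2)/2}$, which since $p>4$ and $\eps_N\to0$ is $<1$ for all $j\ge1$ once $N$ is large, so the net exponent is $\le -c'2^{-2j}\eps_N N$ for every $j$; summing the geometric-type series in $j$ is then dominated by $j=j_{\max}$, yielding the claimed $\exp(-cN^{2(\beta\wedge\frac12)})$ modulo the $\eps_N=\log^{-1}N$ polynomial shave, which I will handle by stating the bound with the floor $\beta\wedge\frac12$ and noting the shave is harmless (or by replacing $N^\beta$ thresholds with $N^{\beta}(\log N)^{10}$ throughout, which does not affect later applications). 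The only genuinely delicate verification is the set-theoretic inclusion $\tilde U\subseteq U_1$ with the two specific polylog scales, but this is a deterministic geometric fact that follows directly from the construction recalled before \Cref{def:separating_set}.
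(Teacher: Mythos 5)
Your strategy for parts (a) and the $\beta\ge 1/2$ case is sound and matches the paper, but the core of the proposal — the ``flat'' dyadic argument for part (b) — has a genuine gap: your tilt estimate is incorrect because the tilt term $\exp(\theta\nu_N\|\psi\|_p^p)$ in the conditional density \eqref{eq:cond_density_linfty1} involves the $\ell^p$-norm over \emph{all} of $A^c$, not just over $U_1^c$, and your shell $\cP_j$ constrains only $\|\psi|_{U_1^c}\|_\infty$. On the annulus $U_1\cap A^c$ the only a priori control is $\|\psi\|_\infty\le\sqrt{\eps_N N}$ (from the separating-set conditioning at scale $\eps_N$). Applying \Cref{lem:lp_largest} there gives a contribution to the tilt of order $\exp\bigl(CN\eps_N^{(p-2)/2}\bigr)=\exp\bigl(CN(\log N)^{-(p-2)/2}\bigr)$, which for $\beta<1/2$ dwarfs the Gaussian gain $\exp(-c\,2^{-2j_{\max}}\eps_N N)\approx\exp(-cN^{2\beta}/\log N)$ from the spherical tail. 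So bounding $\bE\bigl(\mathbf 1_{\cP_j}\exp(\text{tilt})\bigr)$ by the product of your two estimates does not close: the tilt from the annulus alone kills the argument. (Also minor: your $\|\psi|_{U_1^c}\|_p^p\le 2^{-pj}\dots$ has an extra $2^{-2j}$ compared with \Cref{lem:lp_largest}, but that is a bookkeeping slip, not the real issue.)

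What the paper does instead is precisely designed to avoid this problem: it defines a nested sequence $A_0=A$, $A_j=\{\mvx:d(\mvx,A_{j-1})\le C\log N\}$, and argues \emph{inductively in $j$}, conditioning at step $j$ on the field values on $A_j$ (which on $\cC_j$ are $\le 2^{-j}\sqrt{\eps_N N}$). After conditioning, the tilt is only over $A_j^c$ where the $\ell^\infty$ bound at scale $j$ is available, so both the tilt and the spherical-tail gain scale as $\exp(\pm c\,2^{-2j}\eps_N N)$-type quantities and the comparison is uniform in $j$. In a single flat step this scale-matching is lost. The width $C\log N$ of each ring is chosen so that the massive harmonic extension of the boundary data decays below $N^{-10}$ before reaching $\partial A_{j+1}$, which is what lets \Cref{lem:spherical_tail} bite there. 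Your proposal also cites \Cref{lem:partition_condition_close} and a single application of \Cref{lem:spherical_tail} on $U_1^c$, both of which are correct ingredients, but without the iterative conditioning they do not see the annulus, and the estimate does not go through.

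Two small remarks on part (a): your assertion $\tilde U\subseteq U_1$ is not correct as stated (seed sites of $U_2=U(\Psi,\tilde\eps_N)$ lie in $U_1$ on the event $\|\Psi|_{U_1^c}\|_\infty\le N^\beta$ since $N^\beta<\sqrt{\tilde\eps_N N}$, but the layers added in the construction of $U_2$ can extend beyond $U_1$ by $O(\tilde\eps_N^{-1})$ steps); however, on that same event the mass in $U_2\setminus U_1\subset U_1^c$ is at most $N^{2\beta}|U_2|=o(N)$, so the conclusion still holds — the paper implicitly uses this and you should make it explicit. The rest of part (a) (bounding mass in $U_1\setminus U_2$ by $\tilde\eps_N N\,|U_1\setminus U_2|$) is exactly the paper's argument.
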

\begin{proof}
If $\beta \ge 1/2$ we are done by using \Cref{prop:U}. So
assume $\beta<1/2$.
Let $$\cB = \cB(\eps_N, \delta_N):= \{\frac1N\|\Psi^{\nu,\theta}_N |_U \|^2_2 \in (a - \delta_N, a+\delta_N) \text{ for some }a \in \mathscr{M}(\theta, \nu)\}.$$ Note $\eps_N  = (\log N)^{-1}>e^{-\sqrt{\log N}}$ is a valid choice for \Cref{prop:U} to be applicable. Applying \Cref{prop:U} for our choice of $\delta_N$, we conclude
\begin{equation}
  \bP(\cB) \ge 1-\exp(-N\sqrt{\eps_N}/2) . \label{eq:cB_bound}
\end{equation}

Condition on the event $\cU(A,f, \eps_N)$ defined just before \eqref{eq:cond_density_linfty}  and assume $\frac1N\|f\|_2^2 = \alpha_N=a+o(1)$ for some $a \in \mathscr{M}(\nu, \theta)$ (i.e., $\cB$ occurs). Recall the notation $\Psi^{\theta, 1-\alpha_N}_{N, \eps_N}$ which denotes the conditional law of the NLS field on $A^c$ conditioned on $\cU(A,f,\eps_N)$.
Let $m_N$ be related to $\theta(1-\alpha_N)$ as in \eqref{eq:mass}. Since $\alpha_N$ is close to some $a \in \mathscr{M}(\nu, \theta)$, by \Cref{lem:super_to_sub}, $\theta(1-\alpha_N)$ is bounded above and away from $C_d$,  and hence we conclude $m_N\ge c_0>0$ for all  large enough $N$.

The idea now is to apply the dyadic argument as in the the subcritical case, but since the field can be actually large near $U$, we need to inductively do it at each dyadic scale to counter the tilt. We do so by looking at neighbourhoods of logarithmic radius around $U$ and  prove that for each such neighbourhood, the field goes down by a dyadic scale with high probability.

This leads to the following definitions. Let $$A_{j}:=\{\mvx \in \bT^{d}_{n}:d(\mvx,A_{j-1})\leq C\log N\}, \text{ for $j\geq 1$},$$ where $A_{0}=A$.  The constant $C$ will be specified at the appropriate juncture in the argument.  We define the following events:
\[
\cA_{j}:=\{\psi:\norm{\psi|_{\partial A_{j}}}_{\infty}\leq 2^{-j-1}{\sqrt{\eps_{N} N}\text{ and }\|{\psi|_{A_{j}^{c}}}\|_{\infty}\leq 2^{-j}\sqrt{\eps_{N}N}}\}.
\]
Note that $\bP(\Psi_{N,\eps_N}^{\nu, \theta} \in \cA_0)=1$ (see \Cref{def:separating_set}).
Let $j_{\max}$ denote the smallest integer $j$ such that $2^{-j}\sqrt{\eps_{N}N}\leq N^{\gb}$. Note that $A_{j_{\max}} = U_1(\tilde C)$ where $C j_{\max}/\log N = \tilde C$. Note $j_{\max} = O(\log N)$ hence $\tilde C = O(1)$. 

The goal for now is to prove that 
\[
\bP(\Psi^{\theta, 1-\alpha_N}_{N, \eps_N} \in \cA_{j_{\max}})  \ge 1-e^{-CN^{2\gb}}\label{eq:l_infty_bound_tildeU} 
\]
To this end, define
\begin{align}
\cC_{k}=\bigcap_{j=0}^{k}\cA_{j}\text{  and }\cD_{k}:=\cA_{k}^{c}\cap \cC_{k-1}. 
\end{align}
and note that $$\bP(\Psi^{\theta, 1-\alpha_N}_{N, \eps_N} \in \cA_{j_{\max}}) \ge \cP(\Psi^{\theta, 1-\alpha_N}_{N, \eps_N} \in \cC_{j_{\max}}) = 1-\sum_{j=1}^{j_{\max}} \bP(\Psi^{\theta, 1-\alpha_N}_{N, \eps_N} \in \cA_{j_{\max}} \in \cD_j).$$
We now concentrate on upper bounding $\bP(\Psi^{\theta, 1-\alpha_N}_{N, \eps_N} \in \cD_j)$.
Pick function in $f_j \in \cA_j$ which is an extension of $f$ on $A_0$. Condition on the event that $\Psi^{\theta, 1-\alpha_N}_{N, \eps_N} \equiv f_j$ on $A_j$ satisfying $\cC_{j}$ and let $\Psi^j_N$ be the conditional law of $\Psi^{\theta, 1-\alpha_N}_{N, \eps_N}$ on $A_j^c$ (dropping the subscripts and superscripts for clarity). By the same logic as in \Cref{lem:conditional_density_separating_set}, this conditional law is given by an NLS field in $A_j^c$ with boundary values on $\partial A_j$ at most $2^{-j-1}\sqrt{\eps_N N}$ in absolute value, the maximum absolute value on $A_j^c$ being at most $2^{-j}\sqrt{\eps_N N}$ and mass cutoff $1-\alpha_{N,j}$ where $\alpha_{N,j} = \frac1N \|f_j\|_2^2$. We now write
\begin{equation}
    \bP(\Psi^{\theta, 1-\alpha_N}_{N, \eps_N} \in \cD_{j+1}) = \bE(\bP(\Psi_N^j \in \cA^c_{j+1}))\label{eq:cDj}
\end{equation}
where the expectation is over all possible $f_j$ satisfying the constraints  above. Let $\Psi_{\textsf{sph},N}^j$ be the spherical law on $A_j^c$ with boundary condition equalling $f_j$ on $\partial A_j$, $\ell^\infty$-norm bounded by $2^{-j}\sqrt{\eps_N N}$ on $A_j^c$ and mass cutoff $1-\alpha_{N,j}$. Using analogues of \Cref{eq:NLS_to_spherical} and \eqref{eq:ratio_NLS_sph_partition_upper_bd} for our setup, we have 

\begin{equation}
    \bP(\Psi^j_{N}\in \cA_{j+1}^{c})\leq \bE \left(\mv1_{\Psi_{\textsf{sph},N}^{j} \in \cA_{j+1}^{c}}\cdot \exp(\frac2p\theta \nu_{N}\norm{\Psisph^{j}}_{p}^{p})\right)
\end{equation}
% We will accomplish this by showing that $\bP(\Psi_{N}\in \cA^{c}_{j+1} |\Psi_{N}\in \cC_{j})$ is appropriately bounded above. Now, let $\Psi_{N}^{j}$ denote the conditional law of $\Psi_{N}$ on $A_{j}^{c}$ given a boundary condition $f_{j}$ on $A_{j}$ which satisfies the conditions specified in $\cC_{j}$. We know that $\norm{f_{j}}_{\infty}\leq 2^{-j}\sqrt{\eps_{N}N}$. Moreover, almost surely $|\Psi_{N}(\mvx)|\leq 2^{-j}\sqrt{\eps_{N}N}$ for all $\mvx\in A^{c}_{j}$. From \eqref{eq:NLS_to_spherical}, the conditional density of $\Psi_{N}^{j}$ can be expressed in terms of a conditioned spherical law with boundary conditions \note{I don't understand this step...}.
% Thus, 
% \[
% \bP(\Psi_{N}\in \cA_{j+1}^{c}|\Psi_{N}\in \cC_{j})\leq \E (1_{\cA_{j+1}^{c}}(\Psi_{N}^{j})\cdot \exp(\theta \nu_{N}\norm{\Psisph^{j}}_{p}^{p})), 
% \]
% where $\Psisph^{j}$ denotes the spherical law with the boundary condition $f_{j}$, and total mass $N-\norm{f_{j}}_{2}^{2}$. 
Applying Lemma \ref{lem:lp_largest} utilizing the $\ell^\infty$ upper bound on $\Psi_{\textsf{sph},N}^{j}$, we find that 
\[
\bE (\mv1_{\Psi_{\textsf{sph},N}^{j} \in \cA_{j+1}^{c}}\cdot \exp(\theta \nu_{N}\norm{\Psisph^{j}}_{p}^{p}))\leq \exp\left(\frac2pN\theta \nu^{\frac{p-2}{2}}\eps_{N}^{\frac{p-2}{2}}2^{-j(p-2)/2} \right)\cdot \bP(\Psisph^{j}\in \cA_{j+1}^c).
\]
Recall that $\Psisph^{\theta, 1-\alpha_{N,j}}$ denotes be the spherical law on $A_j^c$ with boundary condition $f_j$ on $A_j$ and mass cutoff $1-\alpha_{N,j}$ (i.e. $\Psi_{\textsf{sph},N}^{j}$ without any $\ell^\infty$-conditioning on $A_j^c$). Thus we can write
\begin{equation}
    \bP(\Psisph^{j}\in \cA_{j+1}^c) \le \frac{\bP(\Psisph^{\theta, 1-\alpha_{N,j}} \in \cA_{j+1}^c)}{\bP(\|\Psisph^{\theta, 1-\alpha_{N,j}}\|_\infty < 2^{-j}\sqrt{\eps_N N})}.\label{eq:spherical_conditional_ratio}
\end{equation}
Since $\alpha_{N,j} \ge \alpha_N$, we have $\theta(1-\alpha_{N,j})<C_d$. Hence $m_{N,j} \ge c_0$ where $m_{N,j}$ is the mass parameter related to $\alpha_{N,j}$ through \eqref{eq:mass}. Thus we can now apply \Cref{lem:spherical_tail} to each term in the ratio \eqref{eq:spherical_conditional_ratio}. First note that because of the boundary condition, the harmonic extension (with mass given by $m_N$) is at most $2^{-j-1}\sqrt{\eps_N N}$. Therefore, an application of the spherical tail and union bound yields
$$
\bP(\|\Psisph^{\theta, 1-\alpha_{N,j}}\|_\infty < 2^{-j}\sqrt{\eps_N N}) = 1+o(1).
$$
This is the reason why the boundary condition was chosen to be slightly lower than the $\ell^\infty$-norm condition. Furthermore, 
we can apply \Cref{lem:spherical_tail} for the tail bound of $\Psisph^{\theta, 1-\alpha_{N,j}}$ away from the boundary. It is at this juncture that we make our choice of $C$ that define the $A_{j}$'s. Let $h$ be the massive harmonic extension of $f_{j}$ onto $A_{j}^c$ with mass $m_N$.
Since $m_N\ge c_0>0$, we can choose $C$ large enough such that  for every $\mvy \in A_{j}^c$  which is at distance at least $C\log N$, $|h(\mvy)|<N^{-10}$. In particular, for $\mvy\in A_{j+1}^{c}$, we know that \[
\bP(|\Psisph^{\theta, 1-\alpha_{N,j}}(\mvy)|\geq 2^{-j-1}\sqrt{\eps_{N}N})\leq \exp(-C N\eps_{N} 2^{-2j-2}) .
\]
Overall,  by the application of a union bound over all $\mvy \in A_{j+1}^{c}$, 
$$
\bP(\Psisph^{j}\in \cA_{j+1}^c) \le \exp(-C N\eps_{N} 2^{-2j-2}) .
$$
for potentially a different constant $C$. Thus

\begin{align*}
\bP(\Psisph^{j}\in \cA_{j+1}^c)&\leq N\cdot \exp\left(\frac2pN\theta \nu^{\frac{p-2}{2}}\eps_{N}^{\frac{p-2}{2}}2^{-j(p-2)/2} -C N\eps_{N} 2^{-2j-2}\right) \\ &\leq N \exp \left(-N2^{-2j}\eps_{N}(\frac{C}{2}-2^{-\frac{p-4}{2}}\theta \frac{\nu^{\frac{p-2}{2}}}{p}\eps_{N}^{\frac{p-4}{2}})\right )\\& \leq N \exp(-C'2^{-2j_{\max}}N\eps_{N}) \\
&\leq N \exp(-C'N^{2\gb})
\end{align*}
where the second last inequality holds for $N$ sufficiently large as $\eps_N \to 0$ (this is the reason why we needed the initial estimate of \Cref{prop:U} to bring the $\ell^\infty$-norm down to $o(\sqrt{N})$). Note that $j_{\max}$ is $O(\log N)$, and thus plugging this estimate back into \eqref{eq:cDj}, 
\[
\bP(\Psi^{\theta, 1-\alpha_N}_{N, \eps_N} \in \cA_{j_{\max}})\geq 1-\sum_{j=0}^{j_{\max}}\bP(\Psi^{\theta, 1-\alpha_N}_{N, \eps_N} \in  \cD_{j})\geq 1-O(N\log N)e^{-CN^{\gb}}\geq 1-e^{-CN^{2\gb}},
\]
where again the last inequality in the chain holds for $N$ sufficiently large. Thus we have proved \eqref{eq:l_infty_bound_tildeU}. Recall that  since $\cA_{j_{\max}} =U_1(\tilde C)$, combining \eqref{eq:cB_bound} with \eqref{eq:l_infty_bound_tildeU}, we obtain 
\begin{multline}
\bP(\|\Psi^{\nu, \theta}_{N} |_{U_1^c}\|_\infty < N^\beta)  \le  \bP(\|\Psi^{\nu, \theta}_{N} |_{U_1^c}\|_\infty < N^\beta, \cB) + \exp(-N\sqrt{\eps_N}/2) \le \\\exp(-CN^{2\beta}) + \exp(-N\sqrt{\eps_N}/2)
\end{multline}

We are almost done, except there is a gap between $U(\Psi_N, \eps_N)$ and $U_1$: we only know that the $\ell^{2}$-mass of $\Psi_N^{\nu, \theta}$ on $U(\Psi_N^{\nu, \theta}, \eps_N)$ satisfies part $a.$ while part $b.$ is only satisfied outside $U_1$ with high probability. We get around this simply by making a smaller choice of $\eps_N$ as follows.
First note that by our choice of $\eps_N$,
the volume of $U_1 \setminus U(\Psi_N^{\nu, \theta}, \eps_N)$ is at most 
$$|U(\Psi_N^{\nu, \theta}, \eps_N)|O(\log^{2} N)^{d} =O((\log N)^{4d+1}).$$
where the constant above depends on the choice of $\tilde C$ and $d$.
Now choose $\tilde \eps_N = (\log N)^{-10d}$ and choose $\tilde \delta_N$ as in \Cref{prop:U}. Observe that $U_2:=U(\Psi_N^{\nu, \theta}, \tilde \eps_N) \supseteq U(\Psi_N^{\nu, \theta}, \eps_N)$. On the event 
\begin{equation}
    \cB(\tilde \eps_N, \tilde \delta_N) \cap \{\|\Psi_N^{\nu, \theta}|_{U_1^c}\|_\infty  \le N^\beta\}\label{event}
\end{equation}
we see that the $\|\Psi_N^{\nu, \theta}|_{U_1}\|_2^2$ satisfies item b. with 
$$\delta_N= \tilde \delta_N +\frac{O((\log N)^{4d+1})}{(\log N)^{10d}} .$$
Indeed, on this event, the mass of $\Psi_N^{\nu, \theta}$ inside $U_2$ is in $((a-\tilde \delta_N)N, (a+\tilde \delta_N)N)$ for some $a \in \mathscr{M}(\theta, \nu)$ and the mass in $U_1 \setminus U_2 $ is at most $$\tilde \eps_N N|U_1 \setminus U_2| \le \tilde \eps_N N|U_1 \setminus U| \le  \frac{O(\log N)^{4d+1}}{(\log N)^{10d}}N .$$
This completes the proof since the probability of the event $\cB(\tilde \eps_N, \tilde \delta_N)$ is at least $1-\exp(-c_2\sqrt{\tilde \eps_N }N/2)$ which is bigger than $1-\exp(-c N^{2\beta})$ since $\tilde \eps_N$ decays polylogarithmically and $\beta<1/2$.
\end{proof}

We now finish with the proof of \Cref{thm:supercritical}, which of course supersedes \Cref{thm:main_summary} part c.
\begin{proof}[Proof of \Cref{thm:supercritical}]
Choose $1/100>\beta>0$ as in \Cref{cor:betachoice}.
Let $\eps_N, \tilde \eps_N,\delta,\tilde \delta_N,c,C,U_1$ be as in \Cref{prop:tilde_U}. Let $\cB' = \cB'(C,\eps_N, \tilde \delta_N, \beta)$ be the event in \Cref{prop:tilde_U}, and hence 
$$
\bP(\cB' ) > 1-\exp(-cN^{\beta}).
$$
 Let $$\cB'(f,A):=\cB' \cap \{U_1(C)=A,\Psi_N^{\nu, \theta}|_{A} \equiv f\},$$
 and we only consider those $f,A$ which makes the above event non-trivial, that is we restrict to the set $$\cJ:=\{(f,A): \cB'(f,A) \neq \emptyset \}.$$
Fix such $(f,A) \in \cJ$ and let $\alpha_N = \frac1N \|f\|_2^2$ and note that since $\cB'$ occurs, $\alpha_N = a+o(1)$ for some $a \in \mathscr{M}(\nu, \theta)$. As before, the conditional law of $\Psi_N^{\nu, \theta}$ given $\cB'(f,A)$ is  $\Psi_{N,N^{\beta-1}}^{ \theta, (1-\alpha_N)} $. Let $\Psi_{\mathsf{sph},N,N^{\beta-1}}^{\theta, (1-\alpha_N)}$  be the spherical law on $A^c$ with boundary condition $f$ on $A$, mass cutoff $1-\alpha_N$ and $\ell^{\infty}$-norm bounded by $N^\beta$.

Now let $\ff$ be a function on a bounded set of vertices containing $\mv0$. Assume further that $d(A, \mvo) >N^{0.1}$ as required in \Cref{lem:llimit1}.  Then first note,
\begin{align}
    \bE(\ff(\Psi_{N,N^{\beta-1}}^{ \theta, (1-\alpha_N)}) & = \frac{\bE\left(\Psi_{\mathsf{sph},N,N^{\beta-1}}^{\theta, (1-\alpha_N)}\exp(\theta \nu_N\|\Psi_{\mathsf{sph},N,N^{\beta-1}}^{\theta, (1-\alpha_N)}\|_p^p)\right)}{\bE\left(\exp(\theta \nu_N\|\Psi_{\mathsf{sph},N,N^{\beta-1}}^{\theta, (1-\alpha_N)}\|_p^p)\right)}\nonumber\\
    & = \bE\left(\ff(\Psi_{\mathsf{sph},N,N^{\beta-1}}^{\theta (1-\alpha_N)}\right)(1+o(1))\nonumber\\
    & = \frac{\bE\left(\ff(\Psi_{\mathsf{sph},N}^{\theta (1-\alpha_N)})1_{\|\Psi_{\mathsf{sph},N}^{\theta (1-\alpha_N)}|_{A^c}\|_\infty<N^{\beta}}\right)}{\bP(\|\Psi_{\mathsf{sph},N}^{\theta (1-\alpha_N)}|_{A^c}\|_\infty<N^{\beta})}(1+o(1))\nonumber\\
    &=\bE(\ff(\Psi_{\mathsf{sph},N}^{\theta (1-\alpha_N)}))+o(1)\label{eq:reduce_to_phi}
\end{align}
Here, the second equality follows from \Cref{cor:betachoice} (which kills the tilt because of the choice of $\beta$) and the scaling relation for spherical law \eqref{eq:scaling_spherical}. For the third equality, note that the absolute boundary values of the field on $\partial A$ is at most $N^{\beta}/2$ (by item c. of \Cref{prop:tilde_U}), and $\theta(1-\alpha_N)$ is upper bounded away from $C_d$ by \Cref{lem:super_to_sub}. Letting $m_N$ be the mass related to $\theta(1-\alpha_N)$ via \eqref{eq:mass}, we conclude $m_N $ is lower bounded away from 0 for all large enough $N$. Therefore the harmonic extension of the boundary condition with mass $m_N$ has $o(N)$ $\ell^{2}$-norm by \Cref{lem:l_p_h}. Consequently, we can readily apply 
apply \Cref{lem:spherical_tail} and an union bound to conclude 
\begin{equation*}
    \bP(\|\Psi_{\mathsf{sph},N}^{\theta (1-\alpha_N)}|_{A^c}\|_\infty > N^\beta) <\exp(-cN^{2\beta}).
\end{equation*}
This, along with the fact that $\ff$ is bounded justifies the fourth equality. 

Now we average over $\cB'(f,A)$ First of all let $\cB''$ be the union over $\cB'(f,A)$ such that $d(\mv0, A) > N^{0.1}$. By translation invariance of the set $U_\tau$, $\bP(\cB'') = \bP(\cB')+o(1)$ as $U_1$ is of polylog order (deterministically). Therefore, we can write
\begin{align*}
    \bE(\ff(\Psi_N^{\nu, \theta})) &= \bE(\ff(\Psi_N^{\nu, \theta})1_{\cB''}) + o(1)\\
    &=\bE(\bE(\ff(\Psi_N^{\nu, \theta})| (U_\tau, \Psi_N^{\nu, \theta}|_{U_1}))1_{\cB''})+o(1)
    \end{align*}
    Note $(\Xi_N)_{N\ge 1}:=(\frac1{N}\|\Psi_N^{\nu, \theta}|_{U_1}\|_2^2)_{N \ge 1} $ is a tight sequence of random variables, and has a subsequential limit. Take one such subsequence $(N_\ell)_{\ell} $ and call the limit along the sequence $\Xi$. Note that since $\bP(\cB'')=1+o(1)$, $\Xi \in \mathscr{M}(\nu, \theta)$ almost surely by the first item in \Cref{prop:tilde_U}.
Finally, on $\cB''$, our choice of $A$ is at least $N^{0.1}$ away from $\mv0$ as required by \Cref{lem:llimit1}. Hence, by \eqref{eq:reduce_to_phi} and \Cref{lem:llimit1},
\begin{equation*}
    \bE(\bE(\ff(\Psi_N^{\nu, \theta})| (U_1, \Psi_N^{\nu, \theta}|_{U_1}))1_{\cB''}) = \bE(\ff(\theta^{-1/2}\Phi_{M(\Xi_N)}^{\bZ^d}))+o(1)
\end{equation*}
where $M(\Xi_N)$ solves \eqref{eq:mass_a} with $\theta(1-\Xi_N)$ on its right hand side.
Since as $a_n \to a$, $\Phi^{\bZ^d}_{a_n} \to \Phi_a^{\bZ^d}$ in law,
by dominated convergence theroem, we conclude that along the subsequence $(N_\ell)$, $$\Psi_{N_\ell}^{\nu, \theta}\xrightarrow[\ell \to \infty]{(d)}\theta^{-1/2}\Phi_{\Xi}^{\bZ^d} $$
where the above convergence is in law in the local weak sense.  Further properties of $\Xi$ as asserted in the theorem are clear from \Cref{lem:super_to_sub} and \eqref{eq:mass}.
\end{proof}

\section{Proof of \Cref{thm:main_sph} and results in  \Cref{sec:spherical_details}.}\label{sec:spherical_proofs}
In the following two subsections, we prove \Cref{thm:main_sph}, so it is useful to recall the definition of the spherical law  $\Psisph^\theta$ whose density is given by \eqref{eq:spherical}. Also recall $C_d$ is the Green's function in $\bZ^d$.

\subsection{The non massive case $\theta \ge C_d$.}\label{sec:masslessll}
In this section, we prove the second item of \Cref{thm:main_sph}. The proof proceeds through Fourier analysis. 
Let $(\phi_{\mvk})_{\mvk \in \bT_n^d}$ and $(\lambda_{\mvk})_{\mvk \in \bT_n^d}$ be the eigenvectors and eigenvalues of $-\Delta$ as in \eqref{eq:evectors} and \eqref{eq:evalues}. Recall that $\lambda_{\mv0} = 0$. Now we do the following change of variables (Fourier transform):
$$
Y_{\mvk} = \la \Psisph^\theta, \phi_{\mvk} \ra \text{ for }\mvk \in \bT_n^d.
$$
Since this is a linear transformation and $(\phi_{\mvk})_{\mvk \in \bT_n^d}$ are orthonormal, the Jacobian of this transformation is 1. Let $\fg$ be a complex valued function. Let $\hat \fg_{\mvk} := \la \fg, \phi_{\mvk}\ra$ for each $\mvk \in \bT_n^d$ be its Fourier transforms. Therefore
$$
\la \fg,\Psisph^\theta \ra = \la \hat \fg, Y\ra
$$
Now we wish to use the method of moments and prove that the moments of $\la \fg,\Psisph^\theta \ra$ converges to the moments of the desired Gaussian free field. However, we need to remember that our functions are complex valued, so we look at a more involved (but still elementary) expression involving the joint moments of the real and the imaginary part.
$$
(\fR(\la \hat{\mathfrak{g}},Y\ra))^ q (\fI(\la \hat{\mathfrak{g}},Y\ra))^{q'}  = \left(\frac{\la \hat \fg,Y \ra + \overline{\la \hat \fg,Y \ra}}{2}\right)^q \left(\frac{\la \hat \fg,Y \ra - \overline{\la \hat \fg,Y \ra}}{2i}\right)^{q'}
$$
for some $q,q' \in \bN$. The above product can be written as a sum over terms of the form
\begin{equation}
    \alpha( Y,\iota):= \frac1{2^q (2i)^{q'}}a_{i_1}a_{i_2}\dots a_{i_q}b_{j_1}\dots b_{j_{q'}}\label{eq:alphaYi}
\end{equation}

where each $a_{\ell}$ is of the form  $\hat \fg_\ell Y_{\ell}$ or $\overline{\hat \fg_\ell Y_{\ell}}$  and each $b_{\ell}$ is of the form $\hat \fg_\ell Y_{\ell}$ or $-\overline{\hat \fg_\ell Y_{\ell}}$. Each $\alpha$ comes with a collection of indices $(i_1,\dots,i_q)$ and $(j_1,\dots, j_q)$, all in $\bT_n^d$,  which we denote by $\iota$. Let $\Xi$ be the set of all possible such indices, and we emphasize that repetitions are allowed.

% Then break up the $q$-moment term as follows:
% \begin{multline}
%       (\fR(\la \hat{\mathfrak{g}},Y\ra))^ q (\fI(\la \hat{\mathfrak{g}},Y\ra))^{q'}   \\= \sum_{i_1,i_2,\ldots i_q, i'_1,i'_2,\ldots i_{q'}} \fR(\hat{\fg}_{i_1}Y_{i_1})\fR(\hat{\fg}_{i_2}Y_{i_2})\ldots \fR(\hat{\fg}_{i_q}Y_{i_q})  \fI(\hat{\fg}_{i'_1}Y_{i'_1})\fI(\hat{\fg}_{i'_2}Y_{i'_2})\ldots \fI(\hat{\fg}_{i'_q}Y_{i'_q})\label{eq:big_moment0}
% \end{multline}
% where the sum is over all possible integers $i_1,i_2,\ldots i_q$ in $\bT_n^d$ and over $i'_1, \ldots i'_{q'}$ in $\bT_n^d$. We fix a collection of indices and introduce the notation $\alpha(\hat \fg,Y)$ to denote the right hand side of \eqref{eq:big_moment0}.

Overall, we obtain 
\begin{align}
    \bE( (\fR(\la \hat{\mathfrak{g}},Y\ra))^ q (\fI(\la \hat{\mathfrak{g}},Y\ra))^{q'})
    &=\frac{\sum_{\iota \in \Xi}\int_{\bC^N} \alpha(y, \iota) \exp(-\theta \sum_{\mvk \in \bT_n^d \setminus \{\mv0\}} \lambda_{\mvk }|y_{\mvk}|^2 )\mv1_{|y_{\mv0}|^2+|y'|_2^2 \le N}  d\mvy }{\int_{\bC^N}  \exp(-\theta \sum_{\mvk \in \bT_n^d \setminus \{\mv0\}} \lambda_{\mvk }|y_{\mvk} |^2) \mv1_{|y_{\mv0}|^2+|y'|_2^2 \le N}d\mvy} \label{eq:ratio_super}
    \end{align}
where $y'$ is the vector $y$ but excluding the coordinate $y_0$. Now  we concentrate on one fixed $\alpha$ in the sum. First observe that if there is a term of the form $\hat \fg_\ell y_\ell$ which is not 
paired with its conjugate, $\ga$ integrates to 0 because of the integral over the angular part of $y_\ell$. Therefore, we only need to consider the indices in $\Xi' \subset \Xi$ where each term in the product comes with its conjugate pair (therefore, $q+q'$ must necessarily be even for the integral to be non-zero).  Thus we only need to consider product of terms of the form $|\hat \fg_\ell y_\ell|^{2}$ where there are $\fs:=(q+q')/2$ many terms in the product (with repititions allowed).
%{\color{gray}We can now ignore the index repetition coming from conjugate pairs, thus there are $\fs:=(q+q')/2$ many indices in $\Xi'$, each a term of the form $|\hat \fg_\ell y_\ell|^{2r}$ for some integer $r>0$.}

Now we write \eqref{eq:ratio_super} in terms of expectations of random variables. Let $Z':=(Z_{\mvk} )_{\mvk \in \bT_n^d \setminus \{\mv0\}}$ be independent complex Gaussians, each with mean 0 and $Z_{\mvk}$ having variance $\frac1{\theta \lambda_{\mvk}}$ and let $Z_{\mv0} $ be distributed uniformly on the unit disc, independent of everything else. Note that the $y_{\mv0}$ term is missing in the exponent in \eqref{eq:ratio_super} (since $\lambda_{\mv0} = 0$), from which it is easy to see that \eqref{eq:ratio_super} can be written as 
\begin{equation}
    \frac{\sum_{\iota \in \Xi'} \bE(\alpha((\sqrt{N}Z_{\mv0}, Z'), \iota)  1_{N|Z_0|^2+\|Z'\|_2^2 \le N} ) }{\bE(1_{N|Z_0|^2+\|Z'\|_2^2 \le N})}\label{eq:alpha_indices}
\end{equation}
We take a moment to point out that with $Z'$ so defined,  
\begin{align}
\sum_{\mvk\neq 0}Z_{\mvk}\phi_{\mvk}(\mvx)\equald \theta^{-\frac{1}{2}}\Phizero(\mvx). 
\end{align}
where recall that $\Phizero$ is the zero average GFF defined in \Cref{sec:zero_avg_gff}. The following immediate consequence will be very useful later. 
\begin{align}\label{eq:ZGFFrel}
\sum_{(\mv0\notin \iota )\in \Xi'} \E \ga(Z',\iota)=\E (\fR(\la \hat{\mathfrak{g}},\theta^{-\frac{1}{2}}\Phizero\ra))^ q (\fI(\la \hat{\mathfrak{g}},\theta^{-\frac{1}{2}}\Phizero\ra))^{q'}
\end{align}
We now need a few estimates on the Fourier transforms. First of all it is easy to see from the form the eigenvectors \eqref{eq:evectors} and the fact that $A$ is a bounded set that
\begin{equation}
   |\hat \fg_\ell| \leq \frac {C}{\sqrt{N}}\label{eq:hatg_bound}
\end{equation}

for a constant $C$ depending only on $\fg $ and the size of $A$. Let us consider a typical $\alpha((\sqrt{N}Z_{\mv0}, Z'), \Xi')$. It is of the form 
$$
|\hat g_{\mv0}\sqrt{N}Z_{\mv0}|^{2m_{\mv0}}\prod_{\mvk \in \bT_n^d\setminus \{\mv0\}}|\hat g_{\mvk}Z_{\mvk}|^{2m_{\mvk}}
$$
where $\sum_{\mvk}2m_{\mvk}  = q+q'$. Note that we separated out the $Z_{\mv0}$ as it is special: it has uniform distribution scaled by $\sqrt{N}$ while the rest are complex Gaussians. Observe from \eqref{eq:hatg_bound} that 
\begin{equation}
    \bE(|\hat g_{\mv0}\sqrt{N}Z_{\mv0}|^{2m_{\mv0}}) \le C^{2m_{\mv0}}\bE(|Z_{\mv0}|^{2m_{\mv0}}) = C_1(m_{\mv0})\label{eq:zero_moment_bound}
\end{equation}
Also for any $\mvk \neq \mv0$, a standard calculation if moments of complex Normals and \eqref{eq:hatg_bound} shows that
\begin{equation}
    \bE(|\hat g_{\mvk}Z_{\mvk}|^{2m_{\mvk}}) \le \frac{C}{(\theta \lambda_{\mvk})^{m_{\mvk}}N^{m_{\mvk}}}. \label{eq:generic_moment_bound}
\end{equation}
Since $(\lambda_{\mvk})^{-1}$ diverges near the corners of the torus, we need to separate them out, and bank on the fact that the number of terms in the corners is small. This motivates the following definitions. 
Now fix a small $\eta>0$. Define an $\eta$ corner to be the set of all $\mvk = (k_1,\dots, k_d) \in \bT_n^d$ such that  $k_i \in (\eta n) \cup ((1-\eta)n)$ for all $1\le i \le d$. Let $\Xi_{\text{corn}}$ be the set of elements in $\Xi'$ with at least one non-zero index in the $\eta$-corner. 
% {\color{gray}Fix a nonempty subset $S$ of indices in the $\eta$-corner. Let $\Xi'_S$ be the set of indices with in $\Xi'$ where the only indices in the $\eta$-corner with non-zero power is exactly $S$. Note that we have the partition
% $$
% \Xi=\bigsqcup_{\emptyset \neq S \subset \text{$\eta$-corner}} \Xi_S
% $$
% }
\begin{lemma}\label{lem:corner_estimate}
There exists a constant $C$ depending only on $q,q'$ and the dimension $d$ such that  
% $$
% \sum_{\iota \in \Xi_{\text{corn}}} \bE(\alpha((\sqrt{N}Z_{\mv0}, Z'), \iota)  1_{N|Z_0|^2+\|Z'\|_2^2 \le N} ) \le C\eta^{d-2}
% $$  
$$
\sum_{\iota \in \Xi_{\text{corn}}} \bE(\alpha((\sqrt{N}Z_{\mv0}, Z'), \iota) ) \le \begin{cases} C\eta \text{ when }d\geq 4 \\ C\eta|\log \eta| \text{ when }d=3.\end{cases}
$$ 

\end{lemma}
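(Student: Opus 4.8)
The plan is to bound the corner contribution by a union bound over which index sits in the $\eta$-corner, using the moment estimates \eqref{eq:zero_moment_bound} and \eqref{eq:generic_moment_bound} already at hand. First I would fix one representative term $\alpha((\sqrt N Z_{\mv0},Z'),\iota)$; as noted above the non-vanishing ones are products $|\hat g_{\mv0}\sqrt N Z_{\mv0}|^{2m_{\mv0}}\prod_{\mvk\ne\mv0}|\hat g_{\mvk}Z_{\mvk}|^{2m_{\mvk}}$ with $\sum_\mvk 2m_{\mvk}=q+q'$. By independence of the $Z_\mvk$ and \eqref{eq:zero_moment_bound}, \eqref{eq:generic_moment_bound},
\[
\bE(\alpha((\sqrt N Z_{\mv0},Z'),\iota))\le C_1(m_{\mv0})\prod_{\mvk\ne\mv0}\frac{C}{(\theta\lambda_{\mvk})^{m_{\mvk}}N^{m_{\mvk}}}\le \frac{C'}{N^{\fs-m_{\mv0}}}\prod_{\mvk\ne\mv0}\frac1{\lambda_{\mvk}^{m_{\mvk}}},
\]
where $\fs=(q+q')/2$ and $C'$ depends only on $q,q',\theta,\fg$. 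So the task reduces to showing that summing the right-hand side over all index-tuples $\iota$ having at least one non-zero index in the $\eta$-corner produces $O(\eta)$ (resp. $O(\eta|\log\eta|)$ in $d=3$).

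Next I would reorganize the sum by the multiset of distinct non-zero frequencies $\mvk$ appearing, with multiplicities $m_{\mvk}$ summing to $\fs-m_{\mv0}=:\fs'$. The number of ways to distribute the $q+q'$ slots into this pattern is bounded by a constant depending only on $q,q'$, so up to such a constant the sum is
\[
\sum_{m_{\mv0}=0}^{\fs} N^{-(\fs-m_{\mv0})}\,\sum_{\substack{\mvk_1,\dots,\mvk_{\fs'}\in\bT_n^d\setminus\{\mv0\}\\ \text{some }\mvk_i\text{ in }\eta\text{-corner}}} \prod_{i=1}^{\fs'}\frac1{\lambda_{\mvk_i}}.
\]
By inclusion of the constraint into a single factor and a union bound over which $\mvk_i$ is forced into the corner, this is at most $\fs'$ times
\[
N^{-\fs'}\Big(\sum_{\mvk\in\eta\text{-corner}}\frac1{\lambda_{\mvk}}\Big)\Big(\sum_{\mvk\ne\mv0}\frac1{\lambda_{\mvk}}\Big)^{\fs'-1}.
\]
The second factor is $\Theta(N)$ per sum by the Riemann-sum computation behind \eqref{lem:mass_torus} (with $m=0$, $d\ge3$, the integral converges), giving $N^{\fs'-1}$. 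The first factor I would estimate separately: using $\lambda_{\mvk}=4\sum_i\sin^2(\pi k_i/n)\asymp n^{-2}\sum_i \min(k_i,n-k_i)^2$ and passing to a Riemann sum, $\frac1N\sum_{\mvk\in\eta\text{-corner}}\lambda_{\mvk}^{-1}\le \int_{([0,\eta]\cup[1-\eta,1])^d}\frac{d\mv\kappa}{4\sum_i\sin^2(\pi\kappa_i)}+o(1)$. On the corner region the integrand is $\asymp |\mv\kappa'|^{-2}$ where $\mv\kappa'$ is the distance to the nearest corner of $[0,1]^d$, so the integral over a ball of radius $\sim\eta$ around that corner is $\asymp\int_0^\eta r^{d-3}\,dr$, which is $O(\eta^{d-2})=O(\eta)$ for $d\ge4$ and $O(|\log\eta|\cdot\eta^0)$… here I must be careful: for $d=3$ this is $\int_0^\eta r^0\,dr=O(\eta)$, while the claimed bound is $C\eta|\log\eta|$, so the $\log$ must come from a more delicate place — namely the cross-terms where the corner-frequency is paired with a near-corner frequency in the remaining product; I would handle $d=3$ by not crudely bounding the remaining $\fs'-1$ factors by the full sum but isolating the one adjacent small-denominator factor, producing $\sum_{\mvk\text{ corner}}\lambda_{\mvk}^{-1}\sum_{\mvk'}\lambda_{\mvk'}^{-1}\mathbf 1_{\mvk'\text{ near }\mvk}$-type terms whose Riemann sum is $\asymp\int_0^\eta r^{d-3}(\text{local }\log)\,dr$.

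Assembling, $N^{-\fs'}\cdot O(\eta N)\cdot N^{\fs'-1}=O(\eta)$ for $d\ge4$, and the analogous bookkeeping gives $O(\eta|\log\eta|)$ for $d=3$; summing the finitely many values of $m_{\mv0}$ and the $O(1)$-many slot-distribution patterns only changes the constant, and the constant depends only on $q,q',d$ (the $\theta$- and $\fg$-dependence can be absorbed since $\fg,A$ are fixed once and for all). The main obstacle I anticipate is precisely the sharpness in $d=3$: the naive "one corner factor, rest bounded by the total" estimate gives only $O(\eta)$ for the corner factor but loses the logarithm that the statement advertises — so either the statement's $\eta|\log\eta|$ is a (harmless) overestimate and $O(\eta)$ suffices, or the genuine $\log$ arises from tuples with two frequencies simultaneously small and nearly equal, and one must keep that pair together in the estimate rather than splitting it. I would first check whether the cruder $O(\eta)$ bound is in fact correct in $d=3$ (it would still prove the lemma as stated), and only if a genuine logarithmic divergence appears would I carry out the finer two-frequency analysis; everything else is routine Riemann-sum asymptotics for $\sum 1/\lambda_{\mvk}$ already used elsewhere in the paper.
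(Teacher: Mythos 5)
Your approach is essentially the same as the paper's: bound $\bE(\alpha)$ term by term using \eqref{eq:zero_moment_bound} and \eqref{eq:generic_moment_bound}, reduce to bounding $\frac1N\sum_{\mvk\in\eta\text{-corner}}\lambda_{\mvk}^{-1}$ (the paper does this via the exact factorization identity $(1+a)^{\fs}-(1+b)^{\fs}$; your union bound over which slot carries the corner index gives the same thing up to the combinatorial constant $\fs$), and then replace the sum by a Riemann integral. Your bookkeeping with $m_{\mv0}$ and $\fs'$ is correct, and the identification $\sum_{\mvk\ne\mv0}\lambda_{\mvk}^{-1}=\Theta(N)$ via \eqref{lem:mass_torus} is exactly what is needed.

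Your lingering doubt about the $d=3$ logarithm can be put to rest: you do \emph{not} need it. With the corner as stated in the paper's definition (all coordinates $k_i\in[0,\eta n]\cup[(1-\eta)n,n]$, i.e.\ a union of $2^d$ small cubes), your computation $\int_{[0,\eta]^d}|\mv\kappa|^{-2}d\mv\kappa\asymp\int_0^{\eta}r^{d-3}dr=O(\eta^{d-2})$ is correct for all $d\ge3$, hence $O(\eta)$, which is \emph{stronger} than the lemma's stated $C\eta|\log\eta|$. There is no hidden pair-correlation effect to chase down: your moment estimate factors over frequencies, so the union bound is a genuine upper bound regardless of how many indices sit near the corner simultaneously. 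The source of the logarithm in the lemma is that the paper's own proof computes $\int_{[0,1]^d}-\int_{[\eta,1-\eta]^d}$, i.e.\ the integral over the boundary \emph{slab} (at least one coordinate near $0$ or $1$), not the cube. Over the slab, $\int_0^\eta\int_{[0,1]^{d-1}}(4\sum\sin^2\pi x_i)^{-1}$ is integrable in the remaining $d-1$ variables for $d\ge4$ and gives $O(\eta)$, while for $d=3$ the inner integral blows up logarithmically in $x_1$ and integrating gives $\Theta(\eta|\log\eta|)$. Since the slab contains the cube, the slab integral over-estimates the cube sum, so the paper's bound is valid but not tight at $d=3$; your cube computation is the sharp one. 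Either way the lemma holds, and you can simply commit to the $O(\eta)$ conclusion without any further case analysis.
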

\begin{proof}
    From \eqref{eq:generic_moment_bound} and \eqref{eq:zero_moment_bound}, we see that for every $\iota \in \Xi'$,
    \begin{equation}
        \bE(\alpha((\sqrt{N}Z_{\mv0}, Z'), \iota) \le C^{\fs}\prod_{\mvk \in \iota, \mvk \neq 0}\frac{1}{\theta\lambda_kN}
    \end{equation}
   where recall $\fs = (q+q')/2$. Thus we can write
   \begin{align}
       \sum_{\iota \in \Xi_{\text{corn}}}\bE(\alpha((\sqrt{N}Z_{\mv0}, Z'), \iota) &\le C^{\fs}\sum_{\iota \in \Xi_{\text{corn}}} 
       \prod_{\mvk \in \iota, \mvk \neq 0}\frac{1}{\theta\lambda_kN}\\ \nonumber
       &=C^{\fs} \sum_{\iota \in \Xi'}
       \prod_{\mvk \in \iota, \mvk \neq 0}\frac{1}{\theta\lambda_kN} - C^{\fs} \sum_{\iota \in \Xi'\setminus \Xi_{\text{corn}}}
       \prod_{\mvk \in \iota, \mvk \neq 0}\frac{1}{\theta\lambda_kN}\\ \nonumber
       & = C^{\fs} (1+\sum_{\mvk \in \bT_n^d \setminus \{\mv0\}}\frac1{\theta\lambda_{\mvk}N})^{\fs} - C^{\fs}(1+\sum_{\stackrel{\mvk \in \bT_n^d \setminus \{\mv0\}}{ \mvk  \not \in \eta\text{-corner}}}\frac1{\theta\lambda_{\mvk}N})^{\fs}\\ \nonumber      
   \end{align}
   By the Riemann sum formula and Lemma \ref{lem:gfconv}, the last term above is
   \begin{equation}
   C^{\fs}\left[(1+\int_{[0,1]^d}\frac1{4\sum_{i=1}^{d}\sin^2{\pi x_{i}}}\vd\mvx)^{\fs} - (1+\int_{[\eta,1-\eta]^d}\frac1{4\sum_{i=1}^{d}\sin^2{\pi x_{i}}}\vd\mvx)^{\fs}+O(N^{\frac{2-d}{d}}) \right] 
   \end{equation}
  The smallest $\eta$ can ever be is $N^{-\frac{1}{d}}$, thus the $O(N^{\frac{2-d}{d}})$ term will never corresponding to the leading order asymptotics. The order of this difference is the same as the difference in the integrals
  \begin{equation*}
      \int_{[0,1]^d}\frac1{4\sum_{i=1}^{d}\sin^2{\pi x_{i}}}\vd\mvx - \int_{[\eta,1-\eta]^d}\frac1{4\sum_{i=1}^{d}\sin^2{\pi x_{i}}}\vd\mvx.
  \end{equation*}
  For $d \ge 4$, the asymptotics now simply follows by observing that 
  $$
  \int_{0}^\eta \int_{[0,1]^{d-1}} \frac1{4\sum_{i=1}^{d}\sin^2{\pi x_{i}}}\vd\mvx \le \int_0^\eta \int_{[0,1]^{d-1}} \frac1{\sum_{i=2}^{d}\sin^2{\pi x_{i}}}\vd\mvx = O(\eta) 
  $$
  as the integrand over $(x_2,\ldots, x_d)$ is integrable for $d \ge 4$. For $d=3$, we can calculate this by hand:
  \begin{align*}
  \int_{0}^{\eta}\int_{[0,1]^{2}}\frac{1}{4\sum_{i=1}^{3}\sin^{2}(\pi x_{i})}dx_{2}dx_{3}dx_{1}&=4\int_{0}^{\eta}\int_{[0,\frac{1}{2}]^{2}}\frac{1}{4\sum_{i=1}^{3}\sin^{2}(\pi x_{i})}dx_{2}dx_{3}dx_{1}\\ &\leq \frac{\pi^{2}}{4}\int_{0}^{\eta}\int_{[0,\frac{1}{2}]^{2}}\frac{1}{x_{1}^{2}+x_{2}^{2}+x_{3}^{2}}dx_{2}dx_{3}dx_{1}.
  \end{align*}
  The last inequality follows from the fact that $\sin(x)\geq 2x/\pi$ for all $x\in [0,\frac{\pi}{2}]$ by convexity. Swapping the order of integration, and integrating over $x_{1}$ first, we obtain 
  \[
  \frac{\pi^{2}}{4}\int_{[0,\frac{1}{2}]^{2}}\frac{1}{\sqrt{x_{2}^{2}+x_{3}^{2}}}\arctan(\frac{\eta}{\sqrt{x_{2}^{2}+x_{3}^{2}}})dx_{2}dx_{3}\leq \frac{\pi^{3}}{2}\int_{0}^{\sqrt{2}}\arctan(\frac{\eta}{r})dr.
  \]
  Integrating by parts, we find that 
  \begin{align*}
  \int_{0}^{\sqrt{2}}\arctan(\frac{\eta}{r})dr&=\sqrt{2}\arctan(\frac{\eta}{\sqrt{2}})+\int_{0}^{\sqrt{2}}\frac{r}{r^{2}+\eta^{2}}dr
  \\ &=\sqrt{2}\arctan(\frac{\eta}{2})+\frac{\eta}{2}(\log(2+\eta^{2}) -\log(\eta^{2}))
  \\ &=O(\eta |\log \eta|),
  \end{align*}
  which completes the proof.
\end{proof}

Take $\beta \in \Xi' \setminus \Xi_{\text{corn}}$. Let  $Z^\beta$ be the 
vector $(Z_{\mvk})_{\mvk \in  \beta \setminus \{\mv0\} }$ and
let $Z^{\sbeta}$ denote the vector $(Z_{\mvk})_{\mvk \in \bT_n^d \setminus ( \beta \cup \{0\})  }$. 
In this notation, the constraint is of the form
$$
\|Z^{\sbeta}\|_2^2 + \|Z^\beta\|_2^2+N|Z_{\mv0}|^2 \le N.
$$
Now we claim
\begin{claim}\label{claim:Z_conv_prob}
For every sequence of $q+q'$ index choices $\beta = \beta(N) \in \Xi'$,
   $\frac1N\|Z^{\sbeta}\|_2^2 \to C_d/\theta$ and $\frac1N\|Z^\beta\|_2^2 \to 0$ in probability.
\end{claim}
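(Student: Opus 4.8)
The plan is to run a routine second-moment argument, leveraging the fact that the coordinates $|Z_{\mvk}|^2$, $\mvk \ne \mv0$, are independent exponential random variables with mean $(\theta\lambda_{\mvk})^{-1}$ (recall that $|X|^2$ is exponential with mean $\sigma^2$ when $X$ is a complex Gaussian of variance $\sigma^2$). The only structural input I need about the spectrum is a uniform lower bound: since $\sin t \ge 2t/\pi$ on $[0,\pi/2]$ and every $\mvk \ne \mv0$ has some nonzero coordinate $k_i$,
\[
\lambda_{\mvk} = 4\sum_{i=1}^d \sin^2\!\Big(\frac{\pi k_i}{n}\Big) \ge 4\sin^2\!\Big(\frac{\pi}{n}\Big) \ge \frac{16}{n^2} = 16\,N^{-2/d}.
\]
I would handle the two assertions separately and combine them through $\|Z'\|_2^2 = \|Z^\beta\|_2^2 + \|Z^{\sbeta}\|_2^2$.

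First I would dispose of $\tfrac1N\|Z^\beta\|_2^2$. As $\beta=\beta(N)$ consists of at most $q+q'$ indices, the spectral gap bound gives, uniformly in $N$ and in the choice of $\beta$,
\[
\bE\Big(\frac1N\|Z^\beta\|_2^2\Big) = \frac{1}{\theta N}\sum_{\mvk \in \beta\setminus\{\mv0\}} \frac1{\lambda_{\mvk}} \le \frac{q+q'}{16\,\theta}\,N^{2/d-1} \longrightarrow 0,
\]
since $d \ge 3$; Markov's inequality then yields $\tfrac1N\|Z^\beta\|_2^2 \to 0$ in probability. This is the easy half and is uniform over all admissible $\beta$.

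It then remains to show $\tfrac1N\|Z'\|_2^2 \to C_d/\theta$ in probability, which no longer involves $\beta$; the convergence of $\tfrac1N\|Z^{\sbeta}\|_2^2$ follows by subtraction. For the first moment, using $|\phi_{\mvk}(\mv0)|^2 = 1/N$ and the definition of the zero-average Green's function (\Cref{def:zero_avg_gff}),
\[
\bE\Big(\frac1N\|Z'\|_2^2\Big) = \frac{1}{\theta N}\sum_{\mvk\ne\mv0}\frac1{\lambda_{\mvk}} = \frac1\theta\,\Gzero^{N}(\mv0,\mv0),
\]
and by \Cref{lem:gfconv} (applied at $\mvx=\mv0$) this converges to $\theta^{-1}{\sf G}^{\bZ^d}(\mv0,\mv0) = C_d/\theta$. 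For the second moment, independence gives
\[
\var\Big(\frac1N\|Z'\|_2^2\Big) = \frac{1}{\theta^2 N^2}\sum_{\mvk\ne\mv0}\frac1{\lambda_{\mvk}^2},
\]
so I need $\sum_{\mvk\ne\mv0}\lambda_{\mvk}^{-2} = o(N^2)$. I would get this either by a dyadic decomposition of the sum near the corner $\mv0$ (in the spirit of the estimate in \Cref{lem:corner_estimate}), or more slickly via the identity $\sum_{\mvk\ne\mv0}\lambda_{\mvk}^{-2} = \sum_{\mvx,\mvy}|\Gzero^{N}(\mvx,\mvy)|^2 = N\sum_{\mvy}|\Gzero^{N}(\mv0,\mvy)|^2$ combined with \Cref{lem:gfconv} and the standard polynomial decay ${\sf G}^{\bZ^d}(\mv0,\mvy) = O((1+|\mvy|)^{2-d})$; either way one finds $\sum_{\mvk\ne\mv0}\lambda_{\mvk}^{-2}$ is $O(N^{4/3})$ for $d=3$, $O(N\log N)$ for $d=4$, and $O(N)$ for $d\ge 5$. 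In all dimensions $d\ge3$ this is $o(N^2)$, so the variance vanishes and $\tfrac1N\|Z'\|_2^2 \to C_d/\theta$ in $L^2$, a fortiori in probability. Putting the two pieces together proves the claim.

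The step I expect to require the most care is the variance bound in low dimensions: when $d=3$ the sum $\sum_{\mvk\ne\mv0}\lambda_{\mvk}^{-2}$ already grows like $N^{4/3}$ rather than staying bounded, so one must verify honestly (via the bookkeeping near the origin) that it remains $o(N^2)$. This is essentially the only place where $d\ge3$ is used beyond what is needed merely for $C_d<\infty$ and for $N^{2/d-1}\to0$.
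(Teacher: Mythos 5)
Your argument is correct and follows essentially the same route as the paper: a Chebyshev/second-moment bound for $\tfrac1N\|Z'\|_2^2$ using the first-moment identification via \Cref{lem:gfconv} and the asymptotics $\sum_{\mvk\neq\mv0}\lambda_{\mvk}^{-2}=O(N^{4/3}),O(N\log N),O(N)$ in $d=3,4,\ge5$, together with a Markov bound on the at most $q+q'$ coordinates of $Z^{\beta}$ using the spectral gap $\lambda_{\mvk}\gtrsim N^{-2/d}$. The only cosmetic difference is that the paper cites known trace asymptotics (Lemma 2.1 of \cite{PDae}) where you propose to re-derive them via a Riemann-sum/Green's-function identity, which is fine.
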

\begin{proof}
The convergence in probability of $\frac{1}{N}\norm{Z'}_{2}^{2}\to C_{d}/\theta$ can be verified by showing that the variance of $N^{-1}\norm{Z'}^{2}_{2}$ converges to $0$ as $N\to \infty$. We already know the limit by Lemma \ref{lem:gfconv}. Note, 
\begin{align*}
\var (N^{-1}\norm{Z'}_{2}^{2})&=\frac{1}{N^{2}}\sum_{\mvk\neq 0}\var(|Z_{\mvk}|^{2})=\frac{1}{N}\cdot \frac{1}{N}\sum_{\mvk\neq 0}\frac{1}{\gl_{\mvk}^{2}}\\ 
&=\frac{1}{N}\biggl( O(N^{1/3})1_{d=3}+O(\log N)1_{d=4}+O(1)1_{d\geq 5}\biggr).
\end{align*}
The last equality follows from known asymptotics concerning traces of negative powers of the Laplacian, and can be verified by examining the corresponding Riemann sum. This is addressed in more detail in the proof of Lemma \ref{lem:critplb}, and we also refer the reader to Lemma 2.1 of \cite{PDae}. Since $\norm{Z'}_{2}^{2}=\norm{Z^{\sbeta}}_{2}^{2}+\norm{Z^{\gb}}_{2}^{2}$, it suffices to now prove that for any $\gb$, $N^{-1}{\norm{Z^{\gb}}_{2}^{2}}$ converges to $0$ in probability as $N\to \infty$. Note that $\norm{Z^{\gb}}_{2}^{2}$ is the sum at most $q+q'$ many exponential random variables with mean bounded above by $N^{1/d}$. In particular,
\[
\E N^{-1}\norm{Z^{\gb}}_{2}^{2}\leq N^{-1}\cdot O(N^{1/d})=O(N^{\frac{-(d-1)}{d}}). 
\]
Since $\norm{Z^{\gb}}_{2}^{2}$ is positive, this completes the proof. 
\end{proof}
The above convergence can be made almost sure by using less crude bounds. However, we do not require this. 
\begin{proof}[Proof of Theorem \ref{thm:main_sph}, item b., $\theta>C_d$ case] 
First assume $C_d<\theta$. Then we have using \Cref{claim:Z_conv_prob}, for any $\beta \in \Xi' \setminus \Xi_{\text{corn}}$
\begin{equation}
    \bP(\frac1N\|Z^{\sbeta}\|_2^2 + \frac1N\|Z^\beta\|_2^2+|Z_{\mv0}|^2\le 1| Z_{\mv0}, Z^{\beta}) \to 1_{|Z_{\mv0}|^2 <1-\frac{C_d}{\theta}}\text{ a.s.}\label{eq:a.s.conditional}
\end{equation}
To see this, observe that by \Cref{claim:Z_conv_prob}, for any $\delta>0$, the above sequence of conditional probabilities converge to $1$ if $|Z_0|^2<1-C_d/\theta-\delta$ and to $0$ if $|Z_0|^2>1-C_d/\theta+\delta$. Since $\delta>0$ is arbitrary, the above convergence follows.

Using \eqref{eq:a.s.conditional}, we immediately conclude that the denominator in \eqref{eq:alpha_indices} is $$\bP(|Z_0|^2 <1-\frac{C_d}{\theta})+o(1) = (1-\frac{C_d}{\theta})+o(1),$$ 
since $|Z_{\mv0}|^2 \sim $Unif$(0,1)$. Furthermore, employing the worst case of \Cref{lem:corner_estimate} (twice), the numerator in \eqref{eq:alpha_indices} is 
\begin{align*}
    \sum_{\iota \in \Xi'\setminus \Xi_{\text{corn}}} \bE(\alpha((\sqrt{N}Z_{\mv0}, Z'), \iota)  1_{|Z_0|^2<1-C_d/\theta} ) &+O(\eta|\log \eta|) \\&=  \sum_{\iota \in \Xi'} \bE(\alpha((\sqrt{N}Z_{\mv0}, Z'), \iota)  1_{|Z_0|^2<1-C_d/\theta} ) +O(\eta|\log \eta|). 
\end{align*}
We have absorbed the $o(1)$ term coming from \Cref{eq:a.s.conditional} into the $O(\eta|\log \eta|)$ term. Overall, the ratio \eqref{eq:alpha_indices} is given by 
\begin{align*}
\frac{\sum_{\iota \in \Xi'} \bE(\alpha((\sqrt{N}Z_{\mv0}, Z'), \iota)  1_{|Z_0|^2<1-C_d/\theta} ) +O(\eta|\log \eta|)}{(1-\frac{C_d}{\theta})+o(1)}\\ = \bE((\fR(\la {\mathfrak{g}},\theta^{-\frac{1}{2}}\cdot\Phizero+U\mv1\ra))^ q (\fI(\la {\mathfrak{g}},& \theta^{-\frac{1}{2}}\cdot\Phizero+U\mv1\ra))^{q'}) +O(\eta|\log \eta|),
\end{align*}
where $U$ is uniform over the disk of radius $\sqrt{1-\frac{C_{d}}{\theta}}$ in the complex plane and independent of $\Phizero$. Note, we have used \eqref{eq:ZGFFrel} here.  
Since $\eta$ is arbitrary the proof is complete by using Lemma \ref{lem:gfconv}, the local convergence of $\Phizero$ to $\Phi^{\bZ^{d}}$.
\end{proof}

We now turn to the $\theta = C_d$ case, but this needs some preparation. First, we separate the indices, let $\beta$ denote the collection of indices that are present in the monomial $\ga(Z',\iota)$. The ratio \eqref{eq:alpha_indices} takes the following form:

\begin{equation}\label{eq:thetacrit0}
\frac{\sum_{\iota \in \Xi'} \bE(\alpha((\sqrt{N}Z_{\mv0}, Z'), \iota)1_{\|Z^{\sbeta}\|_2^2 + \|Z^\beta\|_2^2+N|Z_{\mv0}|^2\le N}}{\bP(\|Z'\|_2^2+N|Z_{\mv0}|^2\le N)}
\end{equation}
The problem now is that the denominator tends to zero, so we need to show that this divergence is canceled out by the numerator. A fundamental step here is to  actually control the rate of convergence of the denominator to $0$, which involves two ingredients. The first is an estimate of the rate of convergence of the diagonal of the zero average Green's function to its $\bZ^{d}$ counterpart, which was established in Lemma \ref{lem:gfconv}. For the second ingredient, we need separate the cases of $d\geq 4$ and $d=3$ as they require different tools. For $d\geq 4$, we will essentially prove a  Central Limit Theorem for $\norm{Z'}_{2}^{2}$, by establishing the Lyapunov criterion. 
\begin{lemma}\label{lem:lyapunov}
For $d\ge 4$,
$$
\frac{\|Z'\|_2^2 - \bE(\|Z'\|^2)}{\sqrt{\var(\|Z'\|_2^2)}} \xrightarrow[N \to \infty]{(d)} N(0,1)
$$   
\end{lemma}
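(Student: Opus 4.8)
The plan is to verify the Lyapunov condition for the triangular array $(|Z_{\mvk}|^2 - \bE|Z_{\mvk}|^2)_{\mvk \in \bT_n^d \setminus \{\mv0\}}$, which are independent (but not identically distributed) centered random variables. Recall $Z_{\mvk}$ is a complex Gaussian of variance $\sigma_{\mvk}^2 := \frac1{\theta \lambda_{\mvk}}$, so $|Z_{\mvk}|^2$ is an exponential random variable with mean $\sigma_{\mvk}^2$; hence $\var(|Z_{\mvk}|^2) = \sigma_{\mvk}^4$ and the third absolute central moment is $c\,\sigma_{\mvk}^6$ for an absolute constant $c$. Writing $s_N^2 := \var(\|Z'\|_2^2) = \sum_{\mvk \neq \mv0} \sigma_{\mvk}^4 = \theta^{-2} \sum_{\mvk \neq \mv0} \lambda_{\mvk}^{-2}$, the Lyapunov ratio (at exponent $3$) is
\[
\frac{\sum_{\mvk \neq \mv0} \bE\big||Z_{\mvk}|^2 - \sigma_{\mvk}^2\big|^3}{s_N^3} \;\le\; \frac{c\,\theta^{-3}\sum_{\mvk \neq \mv0}\lambda_{\mvk}^{-3}}{\big(\theta^{-2}\sum_{\mvk \neq \mv0}\lambda_{\mvk}^{-2}\big)^{3/2}}.
\]
So the whole statement reduces to the claim that this ratio tends to $0$ as $N \to \infty$ for $d \ge 4$.

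The key asymptotic inputs are the behaviours of $\sum_{\mvk \neq \mv0}\lambda_{\mvk}^{-2}$ and $\sum_{\mvk \neq \mv0}\lambda_{\mvk}^{-3}$, obtained by comparing with Riemann sums of $\big(4\sum_{i}\sin^2(\pi x_i)\big)^{-2}$ and $\big(4\sum_i \sin^2(\pi x_i)\big)^{-3}$ over $[0,1]^d$, and analysing the singularity at the origin where the integrand behaves like $|x|^{-4}$ and $|x|^{-6}$ respectively. A short computation in polar coordinates near $0$ (as is already done for the $d=3$ corner estimate in Lemma \ref{lem:corner_estimate}, and is the content of Lemma 2.1 of \cite{PDae}) gives: for $\sum \lambda_{\mvk}^{-2}$, the integral $\int |x|^{-4} |x|^{d-1} d|x|$ converges near $0$ iff $d \ge 5$, so $\sum_{\mvk\neq\mv0}\lambda_{\mvk}^{-2} = \Omega(N)$ when $d \ge 5$, $= \Omega(N \log N)$ when $d = 4$, matching the variance asymptotics already quoted in the proof of \Cref{claim:Z_conv_prob}. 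For $\sum\lambda_{\mvk}^{-3}$, the integral $\int|x|^{-6}|x|^{d-1}d|x|$ converges near $0$ iff $d \ge 7$; the smallest non-zero eigenvalue is of order $n^{-2} = N^{-2/d}$, so $\lambda_{\mvk}^{-3}$ is at most of order $N^{6/d}$, and summing the blow-up near the origin gives $\sum_{\mvk \neq \mv0}\lambda_{\mvk}^{-3} = O(N)$ for $d \ge 7$, $O(N \log N)$ for $d = 6$, and $O(N^{6/d})$ for $d \in \{4,5\}$ (more precisely $O(N^{6/d})$ is dominated by the contribution of the corner $|\mvk| \asymp 1$ of the torus, i.e. the smallest few eigenvalues). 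Plugging these into the Lyapunov ratio: for $d \ge 6$ it is $O\big(N^{1 + o(1)} / N^{3/2}\big) = O(N^{-1/2+o(1)}) \to 0$; for $d = 5$ it is $O\big(N^{6/5} / N^{3/2}\big) = O(N^{-3/10}) \to 0$; for $d = 4$ it is $O\big(N^{3/2} / (N\log N)^{3/2}\big) = O((\log N)^{-3/2}) \to 0$. In every case $d \ge 4$ the ratio vanishes, so the Lyapunov condition holds and the Lindeberg--Feller CLT gives the claimed convergence in distribution.

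The one place requiring care — and the main obstacle — is making the eigenvalue-sum asymptotics precise, in particular the $d \in \{4,5\}$ range where $\sum \lambda_{\mvk}^{-3}$ is not of order $N$ but of the larger order $N^{6/d}$, driven by a bounded number of the smallest eigenvalues $\lambda_{\mvk} \asymp N^{-2/d}$. One must check that even this inflated numerator is still $o(s_N^3)$, which it is since $s_N^2 \ge \Omega(N \log N)$ dominates $N^{2/d}$ for $d \ge 4$ (as $6/d \le 3/2$ exactly when $d \ge 4$); this is the inequality that pins down the threshold $d \ge 4$ and explains why $d = 3$ is genuinely excluded and handled separately. The cleanest way to organise this is to cite the standard Riemann-sum / lattice-Green-function estimates (Lemma 2.1 of \cite{PDae}, and the estimates already invoked for \Cref{claim:Z_conv_prob}) for the precise orders of $\sum_{\mvk \neq \mv0}\lambda_{\mvk}^{-r}$, $r = 2, 3$, rather than reprove them, and then simply assemble the ratio. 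The convergence of the normalised sum to a standard normal then follows from the classical Lyapunov form of the central limit theorem for independent triangular arrays.
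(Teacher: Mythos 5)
Your proof is correct and follows essentially the same route as the paper: both verify Lyapunov's condition for the independent exponential random variables $|Z_{\mvk}|^2$ by reducing to asymptotics of $\sum_{\mvk \neq \mv0}\lambda_{\mvk}^{-q}$ from Lemma 2.1 of \cite{PDae}. The only cosmetic difference is that you check the Lyapunov condition with exponent $2+\delta = 3$ (third absolute central moment, requiring $\sum\lambda_{\mvk}^{-3} = o\bigl((\sum\lambda_{\mvk}^{-2})^{3/2}\bigr)$), while the paper uses $\delta = 2$ (fourth central moment, requiring $\sum\lambda_{\mvk}^{-4} = o\bigl((\sum\lambda_{\mvk}^{-2})^{2}\bigr)$); both computations succeed for exactly $d\ge 4$ and fail at $d=3$, so the choice of exponent is immaterial.
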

\begin{proof}
    We check the Lyapunov condition. Note that $(Z_{\mvk} )_{\mvk \neq \mv0}$ is equal in distribution to $\frac{X_{\mvk
    }}{\theta\lambda_{\mvk}}$ where $(X_{\mvk })_{\mvk \neq \mv0} $ are i.i.d.\ Exponential$(1)$. Thus to estimate the moments, we are left with computing the asymptotics of sums of the type $\sum_{\mvk \neq \mv0}\lambda_{\mvk}^{-q}$ for integers $q$.  One can check Lemma 2.1 of \cite{PDae}
    \begin{equation}
        \sum_{\mvk \neq \mv0}\frac1{\lambda_{\mvk}^q} \asymp
        \begin{cases}
            N^{\frac{2q}{d}} \text{ if }2q > d\\
            N\log N \text{ if }2q=d\\
            N \text{ if }2q<d
        \end{cases}
\end{equation}
  Using these estimates, it is easy to verify the Lyapunov criterion for $d\ge 4$:
  \begin{equation}
      \lim_{N \to \infty} \frac{\sum_{\mvk \neq \mv0} \bE((Z_{\mvk} - \bE(Z_{\mvk}))^4)
      }{(\var(\|Z'\|_2^2)^2} \to 0
  \end{equation}
   This immediately gives us the required Central limit theorem.
\end{proof}

\begin{lemma}\label{lem:critplb}
Let $d\geq 4$ and $\theta=C_{d}$. There exists $C>0$ such that 
\[\bP(\norm{Z'}_{2}^{2}+N|Z_{\mv0}|^{2}\leq N)\geq \begin{cases} C\frac{\sqrt{\log N}}{N} \text{ when }d=4\\ \frac{C}{\sqrt{N}}\text{ when }d\geq 5.\end{cases}\]

\end{lemma}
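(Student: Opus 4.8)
\textbf{Proof plan for \Cref{lem:critplb}.}

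The plan is to write $\|Z'\|_2^2 = \|Z'\|_2^2$ and $N|Z_{\mv0}|^2$ as independent contributions and reduce the lower bound on $\bP(\|Z'\|_2^2 + N|Z_{\mv0}|^2 \le N)$ to a lower bound on the probability that $\|Z'\|_2^2$ lands slightly below $N$. Since $|Z_{\mv0}|^2 \sim \mathrm{Unif}(0,1)$ independently of $Z'$, we have
\begin{equation*}
\bP(\|Z'\|_2^2 + N|Z_{\mv0}|^2 \le N) = \bE\left(\left(1 - \tfrac1N\|Z'\|_2^2\right)_+\right) \ge \bE\left(\left(1-\tfrac1N\|Z'\|_2^2\right)\mv1_{\|Z'\|_2^2 \in [N - t\sqrt{N}, N]}\right)
\end{equation*}
for a suitable scale $t = t(N)$ to be chosen, which in turn is at least $\tfrac{t}{\sqrt N}\cdot \bP(\|Z'\|_2^2 \in [N-t\sqrt N, N])$ if we shrink the window slightly. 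So it suffices to show $\bP(\|Z'\|_2^2 \in [N - t\sqrt N, N])$ is bounded below by a constant for an appropriate $t$, and then optimize the resulting bound $\tfrac{t}{\sqrt N}\cdot (\text{const})$ over $t$.

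The key input is a quantitative description of where $\|Z'\|_2^2$ concentrates. By \Cref{lem:gfconv} and the Riemann sum formula, $\tfrac1N\bE(\|Z'\|_2^2) = \tfrac1{\theta N}\sum_{\mvk \neq \mv0}\lambda_{\mvk}^{-1} = \tfrac{1}{\theta}C_d + O(N^{2/d - 1})$, and since $\theta = C_d$ this gives $\bE(\|Z'\|_2^2) = N + O(N^{2/d})$; thus the mean sits within $O(N^{2/d})$ of the target $N$. The standard deviation, from the moment asymptotics $\sum_{\mvk \neq \mv0}\lambda_{\mvk}^{-2} \asymp N\log N$ for $d=4$ and $\asymp N$ for $d \ge 5$ (Lemma 2.1 of \cite{PDae}, as recalled in the proof of \Cref{lem:lyapunov}), is of order $\sqrt{N\log N}$ for $d=4$ and $\sqrt N$ for $d \ge 5$. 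Crucially, for $d \ge 4$ the fluctuation scale dominates the displacement of the mean: $N^{2/d} = o(\sqrt N)$ for $d \ge 5$, and $N^{1/2} = o(\sqrt{N\log N})$ for $d=4$. By \Cref{lem:lyapunov}, $(\|Z'\|_2^2 - \bE\|Z'\|_2^2)/\sqrt{\var(\|Z'\|_2^2)}$ converges to a standard Gaussian, so taking $t\sqrt N$ to be a fixed multiple of the standard deviation — i.e. $t \asymp \sqrt{\log N}$ for $d=4$ and $t \asymp 1$ for $d \ge 5$ — the event $\{\|Z'\|_2^2 \in [N - t\sqrt N, N]\}$ contains (up to the negligible mean shift) a fixed-width window around the mean in units of standard deviations, hence has probability bounded below by a positive constant as $N\to\infty$. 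Plugging $t \asymp \sqrt{\log N}$ (resp. $t\asymp 1$) into $\tfrac{t}{\sqrt N}\cdot(\text{const})$ yields the claimed lower bounds $C\sqrt{\log N}/N$ for $d=4$ and $C/\sqrt N$ for $d \ge 5$.

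The main obstacle is making the ``fixed-width window around the mean has positive probability'' step rigorous, since \Cref{lem:lyapunov} is only a distributional convergence statement and a priori says nothing about the probability of a window whose width is a \emph{fixed} number of standard deviations — one needs the window to be an interval of the form $[\bE\|Z'\|_2^2 - a\,\sigma_N, \bE\|Z'\|_2^2]$ with $a$ fixed, and then the CLT gives convergence of this probability to $\Phi(0) - \Phi(-a) > 0$, provided the endpoint $N$ really does sit at least (mean $- a\sigma_N$) and at most (mean $+ o(\sigma_N)$), which is exactly what the mean-displacement estimate $\bE\|Z'\|_2^2 = N + O(N^{2/d})$ guarantees in the regime $d \ge 4$. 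One must be slightly careful that $[N - t\sqrt N, N]$ genuinely contains a window of the form $[\bE\|Z'\|_2^2 - a\sigma_N, \bE\|Z'\|_2^2 - b\sigma_N]$ for fixed $0 < b < a$; this follows by choosing the constant in $t \asymp \sigma_N/\sqrt N$ large enough to absorb the $O(N^{2/d})$ shift. I would also remark that the case $d=3$ is deliberately excluded here because there $\sum\lambda_{\mvk}^{-2}\asymp N^{4/3}$ so $\sigma_N \asymp N^{2/3} \gg \sqrt N$ and the fluctuations are on a much larger scale than the window $[N-t\sqrt N,N]$ could capture via a CLT of this type — that case is handled separately by the anti-concentration argument alluded to in \Cref{sec:outline}.
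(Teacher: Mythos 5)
Your proposal is correct and takes essentially the same route as the paper's proof: both reduce to showing that $\|Z'\|_2^2$ lands in a window of about one standard deviation below $N$, using independence and uniformity of $|Z_{\mv0}|^2$, the Green's function estimate \Cref{lem:gfconv} for the $O(N^{2/d})$ displacement of $\bE\|Z'\|_2^2$ from $N$, and the CLT of \Cref{lem:lyapunov}, with the crucial observation that for $d\ge 4$ the fluctuation scale $\sigma_N$ dominates that displacement. One small arithmetic remark: $t/\sqrt N$ with $t\asymp\sqrt{\log N}$ gives $\sqrt{\log N}/\sqrt N$ (not $\sqrt{\log N}/N$) when $d=4$; this is in fact the bound the paper's own proof produces and later invokes, so the displayed exponent in the lemma statement appears to be a typo that both your derivation and the paper's silently improve upon.
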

\begin{proof}
By the Green's function estimate of Lemma \ref{lem:gfconv},
we know that 
\[
|\E \norm{Z'}_{2}^{2} -N|\leq CN^{\frac{2}{d}}. 
\]
Thus, the fluctuations of $\norm{Z'}_{2}^{2}$, as characterized in Lemma \ref{lem:lyapunov} overwhelm the deterministic deviation of the expectation from $N$ for all $d\geq 4$. In particular, this allows us to conclude that for $s>0$ and $d=4$, by Lemma \ref{lem:lyapunov}
\begin{align*}
\bP(|Z_{\mv0}|^{2}\leq \frac{s\sqrt{\log N}}{\sqrt{N}},\norm{Z'}_{2}^{2}\leq N-s\sqrt{N \log N}) &= \frac{s\sqrt{\log N}}{\sqrt{N}}\cdot\bP(\norm{Z'}^{2}_{2} \leq N-s\sqrt{N \log N})\\ 
&\geq \frac{s\sqrt{\log N}}{\sqrt{N}}\cdot (F_{W}(-s)+o(1)),
\end{align*}
and for $d\geq 5$, 
\begin{align*}
\bP(|Z_{\mv0}|^{2}\leq \frac{s}{\sqrt{N}},\norm{Z'}_{2}^{2}\leq N-s\sqrt{N}) &= \frac{s}{\sqrt{N}} \bP(\norm{Z'}_{2}^{2}\leq N-s\sqrt{N})  \\ &\geq \frac{s}{\sqrt{N}}\cdot (F_{W}(-s)+o(1)).
\end{align*}
Where $F_{W}$ denotes the CDF of a standard real valued Gaussian random variable. Of course, both of these are bounded above by $\bP(\norm{Z'}_{2}^{2}+N|Z_{\mv0}|^{2}\leq N )$.
\end{proof}
The CLT method fails in $d=3$, because the mass contributed by the low frequency modes is significant. We will use a more hands on approach here. 
\begin{lemma}\label{lem:critplb3}
Let $d=3$ and $\theta =C_{d}$. Then there exists $C>0$ such that 
\[
\bP(\norm{Z'}^{2}_{2}+N|Z_{\mv0}|^{2}\leq N)\geq CN^{-\frac{1}{3}}(\log \log N)^{\frac{1}{3}}(\log N)^{-1}.
\]

\end{lemma}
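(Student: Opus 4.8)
\textbf{Proof strategy for Lemma \ref{lem:critplb3}.} The plan is to obtain a lower bound on $\bP(\norm{Z'}_2^2 + N|Z_{\mv0}|^2 \le N)$ by a careful decomposition of $\norm{Z'}_2^2$ into the contribution of the ``low-frequency'' modes (near the corner $\mv0$ of the torus, where $\lambda_{\mvk}$ is small and the exponential random variables $|Z_{\mvk}|^2$ have large mean) and the ``bulk'' modes. The bulk behaves like a sum of many small-mean independent random variables and satisfies a CLT by the same Lyapunov-type argument as in \Cref{lem:lyapunov}, with fluctuations of order $\sqrt N$ (indeed for $d=3$ one checks $\sum_{\mvk \neq \mv0}\lambda_{\mvk}^{-2} \asymp N^{4/3}$, so the bulk fluctuation is $O(N^{2/3})$, still dominating the $O(N^{2/3})$ deviation of the mean from $N$ — this is where the delicacy enters and must be handled with the sharpened Green's function estimate of \Cref{lem:gfconv}). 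The low-frequency part is a finite (growing slowly) collection of exponentials whose means are of order $N^{1/3}$ (since $\lambda_{\mvk} \asymp |\mvk|^2/n^2$ and the smallest nonzero $|\mvk|$ gives $\lambda \asymp n^{-2} = N^{-2/3}$); the probability that their sum is small, say $O(N^{1/3}(\log\log N)^{1/3})$, can be lower bounded by an explicit computation using independence and the density of the exponential near $0$.

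More concretely, I would fix a cutoff radius $\rho_N$ (to be optimised; morally $\rho_N$ a small power of $\log\log N$ or a constant) and split $Z' = (Z^{\mathrm{lo}}, Z^{\mathrm{bulk}})$ where $Z^{\mathrm{lo}}$ collects modes $\mvk$ with $|\mvk|_\infty \le \rho_N$ (excluding $\mv0$). Then
\begin{align*}
\bP(\norm{Z'}_2^2 + N|Z_{\mv0}|^2 \le N) &\ge \bP\Big(\norm{Z^{\mathrm{bulk}}}_2^2 \le N - N^{2/3+o(1)}\Big)\cdot \bP\Big(\norm{Z^{\mathrm{lo}}}_2^2 \le \tfrac12 N^{2/3+o(1)}\Big)\\ &\quad\times \bP\Big(N|Z_{\mv0}|^2 \le \tfrac12 N^{2/3+o(1)}\Big).
\end{align*}
For the first factor, combining the $d=3$ moment asymptotics $\sum \lambda_{\mvk}^{-2} \asymp N^{4/3}$ with the Lyapunov CLT gives that $\norm{Z^{\mathrm{bulk}}}_2^2$ has mean within $O(N^{2/3})$ of $N$ and Gaussian fluctuations of order $N^{2/3}$, so this probability is bounded below by a constant once the cutoff threshold is chosen with the right constant in front of $N^{2/3}$; here one uses \Cref{lem:gfconv} to control the mean precisely enough. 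For the third factor, since $|Z_{\mv0}|^2$ is uniform on $(0,1)$, $\bP(N|Z_{\mv0}|^2 \le t) = t/N$, contributing the $N^{-1/3}$-type factor. The crux is the middle factor: $\norm{Z^{\mathrm{lo}}}_2^2 = \sum_{\mvk} \frac{1}{C_d \lambda_{\mvk}} X_{\mvk}$ with $X_{\mvk}$ i.i.d.\ Exponential$(1)$ and $\lambda_{\mvk}^{-1}$ ranging up to order $N^{2/3}$; there are $O(\rho_N^3)$ such modes. One shows, using that each $X_{\mvk}$ has a density bounded below near $0$, that $\bP(\sum_{\mvk} \frac{X_{\mvk}}{C_d\lambda_{\mvk}} \le c N^{2/3})$ is at least of order $\prod_{\mvk}(c N^{2/3}\lambda_{\mvk}/|\text{(number of modes)}|) $ up to combinatorial factors, which after optimising over $\rho_N$ and accounting for the product over modes at scale $j$ (there are $O(j^2)$ modes with $\lambda_{\mvk}^{-1} \asymp N^{2/3}/j^2$) produces exactly the loss $(\log\log N)^{1/3}(\log N)^{-1}$ relative to the naive $N^{-1/3}$. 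The factor $(\log N)^{-1}$ arises from $\sum_j j^2 \log j$ type sums in the exponent of the product, and the $(\log\log N)^{1/3}$ from balancing the optimal number of low modes.

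The main obstacle I expect is getting the middle factor sharp: one needs a genuine lower bound (not just an upper bound) on the small-deviation probability of a weighted sum of exponentials whose weights decay only polynomially, and the weights span a wide range ($1$ up to $N^{2/3}$), so naive tensorisation loses too much. The right tool is to condition the handful of largest-weight coordinates (those near $\mv0$) to be each of size $\Theta(N^{2/3}/\rho_N^2)$ — each such event has probability $\Theta(N^{-2/3}\rho_N^2 \cdot \text{const})$ — and then show the remaining low-frequency contribution concentrates below the allotted budget; summing the $\log$ of these probabilities over modes $\mvk$ with $|\mvk|_\infty \le \rho_N$, grouped by dyadic shells, gives an exponent $\asymp -\sum_{1 \le j \le \rho_N} j^2 \log(N^{1/3}/j)$, and optimising the choice of $\rho_N$ (which trades the number of modes against the per-mode cost) yields the stated bound. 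A secondary technical point is that \Cref{lem:gfconv} only controls the diagonal Green's function to precision $N^{2/3-1}=N^{-1/3}$, i.e.\ the mean $\E\norm{Z'}_2^2$ is within $O(N^{2/3})$ of $N$; since this matches the fluctuation scale one must be careful that the CLT for the bulk still leaves a constant-probability window below $N$, which it does provided the implied constants are tracked — this is the reason the lemma is stated with $d=3$ separately and uses the full strength of the rate in \Cref{lem:gfconv}.
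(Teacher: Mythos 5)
Your high-level strategy is the same as the paper's: split $Z'$ into a low-frequency block (the $\eta$-corner near $\mv0$) and a bulk, accept that the low-frequency block costs you a polylogarithmic factor in probability, handle the bulk by concentration, and pick up the $N^{-1/3}(\log\log N)^{1/3}$ from $Z_{\mv0}$. The split and the allocation of the budget $\Theta(N^{2/3}(\log\log N)^{1/3})$ to $Z_{\mv0}$ are exactly right, and Chebyshev for the bulk (rather than the CLT you propose, which needs an additional truncated Lyapunov check in $d=3$) would already suffice.

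However, the way you propose to handle the low-frequency block has a fatal error. You want each low mode $\mvk$ in a shell of radius $j$ (whose mean is $\asymp N^{2/3}/j^2$) to be of size $\Theta(N^{2/3}/\rho_N^2)$, a fixed absolute scale. For modes at shell $j$, the per-mode probability of that event is $\Theta((j/\rho_N)^2)$, not $\Theta(N^{-2/3}\rho_N^2)$ as you wrote — already an arithmetic slip. More importantly, summing $\log$ of these over all modes grouped in dyadic shells, the dominant contribution is $\sum_{j\le \rho_N} j^2\log(\rho_N/j)\asymp \rho_N^3$, and your asserted exponent $\sum_j j^2\log(N^{1/3}/j)$ actually picks up a spurious $\tfrac13\log N$ per mode, giving a cost $\asymp e^{-\rho_N^3\log N}$, which for $\rho_N^3\asymp \log\log N$ is $e^{-\log N\log\log N}$ — superpolynomially small and nowhere near $(\log N)^{-1}$. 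The issue is that constraining inner modes (tiny $j$, huge mean) all the way down to the outermost scale $N^{2/3}/\rho_N^2$ is far more expensive than necessary. What you actually need is much weaker: simply require each low-mode exponential $X_{\mvk}$ to be at most a fixed constant (say $1/2$), which has constant probability per mode and forces the low block to be at most half of its expected mass $\asymp N^{2/3}(\log\log N)^{1/3}$, which is a big enough saving. With $\Theta(\log\log N)$ modes in the corner, this constant-per-mode event then has probability $(\mathrm{const})^{\log\log N}=(\log N)^{-c'}$ for a tunable $c'$, and the constant in $\eta=(c\log\log N)^{1/3}/n$ is chosen so that $c'=1$. This is the step your sketch is missing, and without it the bound you derive is much too small.
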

\begin{proof}
Recall that $n$ denotes the side length of our discrete torus $\bT^{3}_{n}$, and $N=n^{3}$. Also recall the notion of $\eta$-corner introduced in \Cref{sec:masslessll}. We will denote by $\cS_{N}$ the $\eta-$corner, with the specific choice of $\eta=\frac{(c\log\log N)^{1/3}}{n}$ for some $c>0$. Now, we may separate 
\[
\norm{Z'}^{2}_{2}=\sum_{\mvk\in \cS_{N}}|Z_{\mvk}|^{2}+\sum_{\mvk\notin \cS_{N}}|Z_{\mvk}|^{2}. 
\]
We also introduce the notiation
\[
\norm{Z^{\cS_{N}}}_{2}^{2}:=\sum_{\mvk\in \cS_{N}}|Z_{\mvk}|^{2} \text{ and } \norm{Z^{\cS_{N}^{c}}}_{2}^{2}:=\sum_{\mvk\in \cS_{N}^{c} \setminus \{\mv0\}}|Z_{\mvk}|^{2}.
\]
Every $|Z_{\mvk}|^{2}$ is equal in distribution to $(\theta \gl_{\mvk})^{-1}X_{\mvk}$ where $\{X_{\mvk}\}$ is an i.i.d. family of exponential random variables with mean $1$. Consider the event
\[
\cV_{N}:=\{X_{\mvk}\leq \frac{1}{2} \text{ for all }\mvk \in \cS_{N}\}.
\]
There are $2^{3}\cdot c\log \log N$ vertices in $\cS_{N}$, and thus the probability of this event is $(1-e^{-1/2})^{c2^{3}\log \log N}$, which decays polylogaritmically as $N\to \infty$ We choose $c$ such that this probability is $(\log N)^{-1}$. On $\cV_{N}$,  
\[ \bE\norm{Z^{\cS_{N}}}_{2}^{2}-\norm{Z^{\cS_{N}}}_{2}^{2}1_{\cV_N} \geq \frac{1}{2}\E \norm{Z^{\cS_{N}}}_{2}^{2}.\] Using the explicit expression of eigenvalues $\gl_{\mvk}$, we find that 
\begin{align*}
\E \norm{Z^{\cS_{N}}}_{2}^{2}=\sum_{\mvk\in \cS_{N}}\frac{1}{\theta\gl_{\mvk}}&=\frac{N}{\theta}\cdot \frac{1}{N}\sum_{\mvk\in \cS_{N}}\frac{1}{4\sum_{i=1}^{3}\sin^{2} (\frac{\pi k_{i}}{n})} \\ &\geq \frac{N}{\theta }\cdot \frac{1}{n^{3}}\sum_{\mvk \in \cS_{N}}\frac{1}{4\pi^{2}\norm{\mvk/n}^{2}}.
\end{align*}
The last inequality follows from the upper bound $\sin(x)\leq x$ for all $x\geq 0$. We may directly compare this resulting sum to the integral of $\norm{x}^{-2}$ where $x\in [0,\frac{(c\log \log N)^{\frac{1}{3}}}{n}]^{3}$, excluding the cube with corner $0$ and side length $n^{-1}$. Clearly, the above sum is the Riemann sum approximation to this integral. In particular, there exists a constant $C>0$ such that 
\begin{align*}
\frac{1}{N}\sum_{\mvk\in \cS_{N}}\frac{1}{\norm{\mvk/n}^{2}} &\geq C\int_{\frac{1}{n}\leq \norm{x}\leq \frac{(c\log \log N)^{1/3}}{n} }\norm{x}^{-2}
dx\geq C (\frac{{c\log \log N}}{N} )^{\frac{1}{3}}, 
\end{align*}
where we use the fact that $\norm{x}^{-2}$ is integrable in a bounded neighborhood of $0$ when $d=3$.  Using the bounds $\sin(x)\geq \frac{2}{\pi}x$ for $x\in [0,\frac{\pi}{2}]$ and $\sin(x)\geq \frac{2}{\pi}(\pi-x)$ for $x\in[\frac{\pi}{2},\pi]$ (both bounds follow from concavity) yields a corresponding upper bound as well, and tells us that in fact
\begin{align}\label{eq:eigasymp}
\frac{1}{N}\sum_{\mvk \in \cS_{N}}\frac{1}{\theta \gl_{\mvk}}\asymp (\frac{\log \log N}{N})^{\frac{1}{3}}. 
\end{align}
for some strictly positive $C>0$. Next, we consider the contribution of $\cS_{N}^{c}$. Here, we need to bound two things. First, the gap between $\E \norm{Z^{\cS_{N}^{c}}}_{2}^{2}$ and $N$. The Green's function estimate in \Cref{lem:gfconv} tells us that $$|\bE \norm{Z'}_{2}^{2}-N|\leq N^{2/3}.$$ Since $\E \norm{Z'}=\E \norm{Z^{\cS_{N}}}_{2}^{2}+\E\norm{Z^{\cS_{N}^{c}}}$, we know that 
\[
\E\norm{Z^{\cS_{N}^{c}}}=N+O(N^{2/3})-\E\norm{Z^{\cS_{N}}}_{2}^{2},
\]
and by \eqref{eq:eigasymp},
\[
\E \norm{Z^{\cS_{N}}}_{2}^{2}-O(N^{2/3})=\E \norm{Z^{\cS_{N}}}_{2}^{2}(1+o(1)).
\]
Next, the variance of $\norm{Z^{\cS_{N}^{c}}}_{2}^{2}$. By the same bounds on the $\sin$ function, 
\[
\var \norm{Z^{\cS_{N}^{c}}}=\sum_{\mvk \notin \cS_{N}^{c}}\frac{\var(X_{\mvk})}{\theta^{2}\gl_{\mvk}^{2}}\leq \frac{N}{\theta^{2}}\cdot \frac{1}{N}\sum_{\mvk\notin \cS_{N}} \frac{1}{\norm{\mvk/n}^{4} }.
\]
Again, using the comparison to an integral, we find that 
\[
\var \norm{Z^{\cS_{N}^{c}}}_{2}^{2}\leq CN\int_{n^{-1}(c\log \log N)^{1/3}}^{1}r^{-2}dr\leq CN(\frac{N}{{c\log\log N}})^{1/3}. 
\]
Note that $\sqrt{\var\norm{Z^{\cS_{N}^{c}}}_{2}^{2}}$ is of smaller order than $N^{2/3}$, and in particular is of smaller order than $\E \norm{Z^{\cS_{N}}}_{2}^{2}$. By Chebyshev's inequality, 
\[
\bP(\bigl|\norm{Z^{\cS_{N}^{c}}}_{2}^{2}-\E\norm{Z^{\cS_{N}^{c}}}_{2}^{2}\bigr|\geq CN^{2/3})\leq \frac{C}{(\log \log N)^{1/3}}.
\]
Now, consider the event
\[
\tilde{\cV}_{N}:=\{|Z_{\mv0}|^{2}\leq \frac{1}{3N}\E\norm{Z^{\cS_{N}}}_{2}^{2}\}\cap \cV_{N}\cap \{\bigl|\norm{Z^{\cS_{N}^{c}}}_{2}^{2}-\E\norm{Z^{\cS_{N}^{c}}}_{2}^{2}\bigr|\leq CN^{\frac{2}{3}}\}.
\]
On $\tilde{\cV}_{N}$, by construction 
\[
\biggl(\norm{Z'}_{2}^{2}+N|Z_{\mv0}|^{2}\biggr)1_{\tilde{\cV}_{N}}\leq N-\frac{1}{6}\E\norm{Z^{\cS_{N}}}_{2}^{2}(1+o(1))\leq N,
\]
where the last inequality is clearly true for $N$ large enough. For large enough $N$, $\tilde{\cV}_{N}\subset \{\norm{Z'}_{2}^{2}+N|Z_{\mv0}|^{2}\leq N\}$. Using independence and the fact that $|Z_{\mv0}|^{2}$ is uniform on $[0,1]$, we may calculate the probability of $\tilde{\cV}_{N}$ to obtain 
\[
\frac{1}{3N}\E\norm{Z^{\cS_{N}}}_{2}^{2}\cdot (\log N)^{-1}\cdot \bigl(1-\frac{C}{(\log \log N)^{1/3}} \bigr)\leq \bP(N|Z_{\mv0}|^{2}+\norm{Z'}_{2}^{2}\leq N).
\]
Applying \eqref{eq:eigasymp} completes the proof. 
\end{proof}

Now, we finally verify the convergence of the relevant moments.
\begin{proof}[Proof of \Cref{thm:main_sph}, item b. ($\theta=C_{d}$ case)] We will need to consider two separate subcases, the first corresponding to indices $\iota$ that exclude an $\eta-$corner, and the second the indices that are in the corner. The exact dependence of $\eta$ on $N$ will be specified at the appropriate juncture, and will be $d-$dependent. The goal is to show that the relevant moments converge to the corresponding GFF moments. For the corner, the goal is to show that the moments  do not contribute in the limit. This includes $\mv0$, and there is no global shift like the $\theta >C_d$ case. We begin with the first case, and focus on a single index $\iota$, so in this case $\alpha((\sqrt{N}Z_{\mv0}, Z'), \iota) = \alpha(Z', \iota)$. Let $\gb$ denote the collection of $\mvk$ that appear in the monomial $\ga(Z',\iota)$. The term in question is 
\begin{align}\label{eq:thetacrit1}
\frac{\bE\left(\ga(Z',\iota)1_{\norm{Z^{\sbeta}}_{2}^{2}+\norm{Z^{\gb}}_{2}^{2}+N|Z_{\mv0}|^{2}\leq N}\right)}{\bP(\norm{Z'}^{2}+N|Z_{\mv0}|^{2}\leq N)}.
\end{align}
%Define
%\[
%\cC_{M,N}:=\{u\in(0,1):\bP(\frac{1}{N}\norm{Z'}_{2}^{2}\leq 1-u)\geq M\cdot N^{-1}\}.
%\]
Now, recall that 
\[
\norm{Z^{\gb}}_{2}^{2}=\sum_{\mvk\in \gb}\frac{X_{\mvk}}{\gl_{\mvk}},
\]
where $X_{\mvk}$ are i.i.d. exponentials with mean $1$.
Define  \[\cA_{N,\gb}:=\{\norm{X^{\gb}}_{\infty}\leq K\log N\}.\] We choose $K\log N$ such that the corresponding probability of the complement vanishes polynomially, and the explicit choice of $K$ will be made at the appropriate juncture.  Note $\gb$ contains at most $q+q'$  indices not from the $\eta-$corner. 
Now, we may rewrite \eqref{eq:thetacrit1} as 
\begin{align}\label{eq:nonzeromoment}
\frac{\bE\left(\ga(Z',\iota)1_{\cA_{N,\gb}}\cdot 1_{\norm{Z^{\sbeta}}_{2}^{2}+\norm{Z^{\gb}}_{2}^{2}+N|Z_{\mv0}|^{2}\leq N}\right)}{\bP(\norm{Z'}^{2}+N|Z_{\mv0}|^{2}\leq N)}+\frac{\bE\left(\ga(Z',\iota)1_{\cA_{N,\gb}^{c}}1_{{\norm{Z'}_{2}^{2}}+N|Z_{\mv0}|^{2} \le N}\right)}{\bP({\norm{Z'}_{2}^{2}}+N|Z_{\mv0}|^{2}\leq N)}
\end{align}
The second term is the error and an upper bound suffices. Let $m$ be a positive integer.  Applying the H\"older inequality, 
\begin{align*}
\frac{\bE(\ga(Z',\iota)1_{\cA_{N,\gb}^{c}}1_{{\norm{Z'}_{2}^{2}}+N|Z_{\mv0}|^{2}\leq N})}{\bP({\norm{Z'}_{2}^{2}}+N|Z_{\mv0}|^{2}\leq N)} &\leq \frac{\bP( \cA^{c}_{N,\gb})^{1-\frac{1}{2m}}}{\bP(\norm{Z'}^{2}_{2}+N|Z_{\mv0}|^{2}\leq N)^{\frac{1}{2m}}}({\bE \ga(Z',\iota)^{2m}})^{\frac{1}{2m}} 
\end{align*}

Now we consider the first term. Since we are excluding points in the $\eta-$corner, the best bound we can get for $\|Z^{\gb}\|_2^2$ on $\cA_{N,\gb}$ is $O(\eta^{-2}\log N)$. We can rewrite the expectation by conditioning with respect to $Z^{\gb}$. However, before we do this, observe that for any any $z>0$
\begin{align*}
&\bP({\norm{Z^{\sbeta}}^{2}}+N|Z_{\mv0}|^{2}\leq N) -\bP({\norm{Z^{\sbeta}}^{2}}+N|Z_{\mv0}|^{2}\leq N-Nz)\\ =&\bP(1-z\leq\frac{\norm{Z^{\sbeta}}_{2}^{2}}{N}+|Z_{\mv0}|^{2} \leq 1) =z.
\end{align*}
where the last equality follows from the fact that $Z^{\sbeta}$ and $Z_{\mv0}$ are independent, and $|Z_0|^2\sim $Unif$[0,1]$.
% For any $w>0$, since $|Z_{\mv0}|^{2}$ is uniformly distributed on $[0,1]$, the above quan 
% \[
% \bP(w-O(\frac{\eta^{-2}\log N}{N})\leq |Z_{\mv0}|^{2}\leq w)= O(\frac{\eta^{-2}\log N}{N}).
% \]
% Consequently, 
% \[
% \bP(1-\frac{\norm{Z^{\sbeta}}}{N}-O(\frac{\eta^{-2}\log N}{N})\leq |Z_{\mv0}|^{2}\leq 1-\frac{\norm{Z^{\sbeta}}}{N})=O(\frac{\eta^{-2}\log N}{N}),
% \]
% which then tells us that \note{I would either write a lower bound or put the $O$ on the other side..}
% \[
% \bP({\norm{Z^{\sbeta}}_{2}^{2}}+N|Z_{\mv0}|^{2}\leq N-O({\eta^{-2}\log N}))=\bP({\norm{Z^{\sbeta}}_{2}^{2}}+N|Z_{\mv0}|^{2} \le N)+O(\frac{\eta^{-2}\log N}{N}).
% \]
Recall that $\ga(Z',\iota)$ depends only on the indices corresponding to $\gb$ by definition. We express the numerator in terms of conditioning with respect to $\norm{Z^{\gb}}^{2}_{2}$. Let $p_{\gb}(w)$ denote the density of $Z^{\gb}$. Applying the above estimate, we obtain 
\begin{align*}
\E (\ga(Z',\iota)1_{\cA_{N,\gb}}&\E(1_{\norm{Z^{\gb}}+\norm{Z^{\sbeta}}+N|Z_{\mv0}|^{2}\leq N}|Z^{\gb}) )\\&=\int_{\cA_{N,\gb}} \ga(w)\cdot \bP(\norm{Z^{\sbeta}}^{2}_{2}+N|Z_{\mv0}|^{2}+|w|^{2}\leq N)\cdot p_{\gb}(w)dw\\ &=\bigl(\bP(\norm{Z^{\sbeta}}+N|Z_{\mv0}|^{2}\leq N)+O(\frac{\eta^{-2}\log N}{N})\bigr)\E \ga(Z',\iota)1_{\cA_{N,\gb}}.
\end{align*}
Thus, the numerator is 
\[
\bigl(\bP(\norm{Z^{\sbeta}}+|Z_{\mv0}|^{2}\leq N)+O(\frac{\eta^{-2}\log N}{N})\bigr)\E\ga (Z',\iota)1_{\cA_{N,\gb}}
\]
For the denominator, the same conditioning argument as the numerator tells us that 
\begin{align*}
\bP(\frac{\norm{Z'}_{2}^{2}}{N}+|Z_{\mv0}|^{2}\leq 1)&=\bP(\{\frac{\norm{Z'}_{2}^{2}}{N}+|Z_{\mv0}|^{2}\leq 1\}\cap \cA_{N,\gb})+\bP(\{\frac{\norm{Z'}_{2}^{2}}{N}+|Z_{\mv0}|^{2}\leq 1\}\cap\cA_{N,\gb}^{c}) \\
&= \bP({\norm{Z^{\sbeta}}_{2}^{2}}+N|Z_{\mv0}|^{2}\leq N) +O(\eta^{-2}\frac{\log N}{N}) +\gc_{N}, 
\end{align*}
where 
\[
0\leq \gc_{N}\leq e^{-K\log N}\bP(\norm{Z^{\sbeta}}_{2}^{2}+N|Z_{\mv0}|^{2}\leq N).
\]
Clearly, $\gc_{N}$ goes to zero strictly faster than $\bP(\norm{Z^{\sbeta}}_{2}^{2}+N|Z_{\mv0}|^{2}\leq N)$.
Thus, \eqref{eq:thetacrit1} may be recast as  
\begin{align*}
&\frac{\bigl(\bP(\norm{Z^{\sbeta}}_{2}^{2}+N|Z_{\mv0}|^{2}\leq N)+O(\frac{\eta^{-2}\log N}{N})\bigr)\E\ga (Z',\iota)}{\bP({\norm{Z^{\sbeta}}_{2}^{2}}+N|Z_{\mv0}|^{2}\leq N) +O(\frac{\eta^{-2}\log N}{N}) +\gc_{N}} \\  =&\frac{(1+\frac{O(\frac{\eta^{-2}\log N}{N})}{\bP(\norm{Z^{\sbeta}}_{2}^{2}+N|Z_{\mv0}|^{2}\leq N)})}{(1+\frac{O(\frac{\eta^{-2}\log N}{N})+\gc_{N}}{\bP(\norm{Z^{\sbeta}}_{2}^{2}+N|Z_{\mv0}|^{2}\leq N)})}\E \ga(Z',\iota)1_{\cA_{N,\gb}} .   
\end{align*}
It is at this juncture that we introduce the bounds on $\bP(\norm{Z'}_{2}^{2}+N|Z_{\mv0}|^{2}\leq N)$ calculated in Lemma \ref{lem:critplb}. Recall, 
\[
\bP(\norm{Z^{\sbeta}}_{2}^{2}+N|Z_{\mv0}|^{2}\leq N)\geq \begin{cases} C\frac{(\log \log N)^{1/3}}{N^{1/3}\log N} \text{ when }d=3\\C\frac{\sqrt{\log N}}{\sqrt{N}}\text{ when }d=4 \\ \frac{C}{\sqrt{N}}\text{ when }d\geq 5.\end{cases}
\]
When $d\geq 5$, notice that with the smallest possible choice for $\eta$, namely $\eta=N^{-\frac{1}{d}}$, we have
\[
\frac{O(\frac{N^{\frac{2}{d}}\log N}{N})}{\bP(\norm{Z^{\sbeta}}_{2}^{2}+N|Z_{\mv0}|^{2}\leq N)}=O(\frac{\log N}{N^{\frac{d-4}{2d}}}),
\]
which of course decays to $0$. When $d=4$, the worst choice of $\eta$ is no longer adequate. Let $\eta=(\log N)^{2}\cdot N^{-\frac{1}{d}}$. Note that in this case, we have that 
\[
\frac{O(\frac{N^{\frac{2}{d}}\log N}{N\log ^{4}N})}{\bP(\norm{Z^{\sbeta}}_{2}^{2}+N|Z_{\mv0}|^{2}\leq N)}=O(\frac{1}{(\log N)^{\frac{5}{2}}}). 
\]
Finally, for $d=3$, we choose $\eta=(\log N)^{2}\cdot N^{-1/d}$, and  
\[
\frac{O(\frac{N^{\frac{2}{d}}\log N}{N (\log N)^{4}})}{\bP(\norm{Z^{\sbeta}}_{2}^{2}+N|Z_{\mv0}|^{2}\leq N)}=O(\frac{(\log \log N)^{\frac{1}{3}}}{(\log N)^{2}}).
\]
In all three cases, 
\[
\frac{(1+\frac{O(\frac{\eta^{-2}\log N}{N})}{\bP(\norm{Z^{\sbeta}}_{2}^{2}+N|Z_{\mv0}|^{2}\leq N)})}{(1+\frac{O(\frac{\eta^{-2}\log N}{N})+\gc_{N}}{\bP(\norm{Z^{\sbeta}}_{2}^{2}+N|Z_{\mv0}|^{2}\leq N)})}\E \ga(Z',\iota)1_{\cA_{N,\gb}}=(1+o(1))\E \ga(Z',\iota)1_{\cA_{N,\gb}}
\]
An application of the Cauchy Schwarz inequality allows us to discard the indicator on $\cA_{N,\gb}$, since
\[
\E \ga(Z',\iota)1_{\cA_{N,\gb}}=\E \ga(Z',\iota)-\E \ga(Z',\iota)1_{\cA_{N, \gb}^{c}},
\]
and 
\[
\E \ga(Z',\iota)1_{\cA_{N,\gb}^{c}}\leq \sqrt{\E \ga(Z',\iota)^{2}}\cdot e^{-K\log N}. 
\]
Combining all our bounds, we see that for indices excluding $0$, 
\begin{align*}
 \frac{ \bE(\alpha((\sqrt{N}Z_{\mv0}, Z'), \iota)  1_{N|Z_0|^2+\|Z'\|_2^2 \le N} ) }{\bE(1_{N|Z_0|^2+\|Z'\|_2^2 \le N})}=(1+o(1))&\bE\ga (Z',\iota)  +e^{-\gO(\log N)}\sqrt{\bE \ga(Z',\iota)^{2}} \\ &+\frac{e^{-\gO(\log N)}(\E\ga(Z',\iota)^{2m} )^{\frac{1}{2m}}}{\bP(\norm{Z'}^{2}_{2}+N|Z_{\mv0}|^{2}\leq N)^{\frac{1}{2m}}}
 \end{align*}

We would like to point out to the reader now that since $\ga$ is a product of even powers of independent Gaussians, where each of the powers is bounded above by $q+q'$, there exists a constant $C(q,m)$ such that 
 \begin{align}\label{eq:mombound}
 (\bE {\ga(Z',\iota)^{2m}})^{\frac{1}{2m}}\leq C(q,m) \bE\ga (Z',\iota). 
 \end{align}
We utilize this to further obtain 
\begin{align*}
\frac{ \bE(\alpha((\sqrt{N}Z_{\mv0}, Z'), \iota)  1_{N|Z_0|^2+\|Z'\|_2^2 \le N} ) }{\bE(1_{N|Z_0|^2+\|Z'\|_2^2 \le N})}&=(1+o(1))\E \ga(Z' ,\iota )\\&+\left(e^{-\gO(\log N)}+\frac{e^{-\gO(\log N)}}{\bP(\norm{Z'}_{2}^{2}+N|Z_{\mv0}|^{2}\leq N)^{\frac{1}{2m}}}\right)\E \ga(Z',\iota).
\end{align*}
Finally, we may choose $m$ sufficiently large, so that 
\[
\frac{e^{-\gO(\log N)}}{\bP(\norm{Z'}_{2}^{2}+N|Z_{\mv0}|^{2}\leq N)^{\frac{1}{2m}}}=o(1). 
\]
Also remember that by construction, 
\[
0<\sum_{\iota \in \Xi'}\ga(Z',\iota)+\tilde{\ga}(Z',\iota)=O(1).
\]
We now need to deal with the $\eta-$corner. We will repeat the trick with H\"older's inequality. Note that for an integer $m\geq 0$ to be chosen,
\[
\frac{\sum_{\iota \in \Xi'} \bE(\alpha((\sqrt{N}Z_{\mv0}, Z'), \iota)  1_{N|Z_0|^2+\|Z'\|_2^2 \le N} ) }{\bE(1_{N|Z_0|^2+\|Z'\|_2^2 \le N})}\leq (\E \ga(\sqrt{N}Z_{\mv0},Z',\iota)^{2m})^{\frac{1}{2m}}\cdot \bP(\norm{Z'}_{2}^{2}+N|Z_{\mv0}|^{2}\leq N)^{-\frac{1}{2m}}. 
\]
Using \eqref{eq:mombound}, this is bounded above by 
\[
C(m,q)\E \ga(\sqrt{N}Z_{\mv0},Z',\iota)\cdot \bP(\norm{Z'}_{2}^{2}+N|Z_{\mv0}|^{2}\leq N)^{-\frac{1}{2m}} 
\]
We note that \eqref{eq:mombound} still holds despite the presence of $Z_{\mv0}$, since $\sqrt{N}\hat{\fg}_{\mvo}|Z_{\mv0}|^{2}$ is bounded above and positive. We now sum over all $\iota $ in the corner and apply the worst case of Lemma \ref{lem:corner_estimate} to obtain the upper bound 
\[
\frac{\sum_{\iota \in \Xi_{corn}} \bE(\alpha((\sqrt{N}Z_{\mv0}, Z'), \iota)  1_{N|Z_0|^2+\|Z'\|_2^2 \le N} ) }{\bE(1_{N|Z_0|^2+\|Z'\|_2^2 \le N})}\leq  O(\eta|\log \eta|)\cdot \bP(\norm{Z'}_{2}^{2}+N|Z_{\mv0}|^{2}\leq N)^{-\frac{1}{2m}}.
\]
We can choose $m$ large enough such that for all three cases of $\eta$ and lower bounds of $\bP(\norm{Z'}_{2}^{2}+N|Z_{\mv0}|^{2}\leq N)$, the above expression is $o(1)$. Finally, summing over all $\iota \in \Xi'$ and using \eqref{eq:ZGFFrel}, 
 \begin{align*}=(1+o(1))\bE (\fR \la \theta^{-\frac{1}{2}}\Phizero^{N},\fg \ra)^{q}(\fI \la\theta^{-\frac{1}{2}}\Phizero^{N},\fg \ra )^{q'})+o(1).
 \end{align*}
 This completes the proof.  
 \end{proof}
 Recall that an important ingredient in the proof of Theorem \ref{thm:main_summary} in the subcritical case was the tail bound in the spherical law. For the case $\theta\geq C_{d}$ this was Lemma \ref{lem:sphericaltail2}. We are now in a position to prove this.  
\begin{proof}[Proof of \Cref{lem:sphericaltail2}]
Fix a lattice site $\mvx \in \bT^{d}_{n}$, and note using \eqref{eq:ZGFFrel} that 
\begin{align*}
\bP(|\Psi_{{\sf sph},N}^{\theta}(\mvx)|\geq b) &= \frac{\bE \left(1_{|\theta^{-\frac{1}{2}}\Phizero^{N}(\mvx)+Z_{\mv0}|>b}\cdot  1_{\norm{\theta^{-\frac{1}{2}}\Phizero^{N}}_{2}^{2}+N|Z_{\mv0}|^{2}\leq N}\right)}{\bP(\norm{\theta^{-\frac{1}{2}}\Phizero^{N}}_{2}^{2}+N|Z_{\mv0}|^{2}\leq N)} \\ 
&\leq e^{O(\log N)} \cdot \bP(|\theta^{-\frac{1}{2}}\Phizero^{N}(\mvx)+Z_{\mv0}|>b)\\
&\leq e^{O(\log N)}\cdot \bP(|\theta^{-\frac{1}{2}}\Phizero^{N}(\mvx)|>b-1)\\
&\leq e^{O(\log N)}\cdot e^{-\frac{(b-1)^{2}}{2}}\\
&\leq e^{-(b-1)^{2}/2 +O(\log N)}\leq e^{-b^{2}/4}, 
\end{align*}
where the first inequality follows from the bound in Lemma \ref{lem:critplb}, and the last inequality in the chain holds for all $N$ large enough since $b$ was chosen to be larger than $\log N$.
\end{proof}
 \subsection{The massive case $\theta<C_d$}
We know turn to the characterization of the local limit in the massive regime $(\theta<C_{d})$. We will also need to characterize the local limit in this regime with boundary conditions, which is Theorem \ref{lem:llimit1}.  Note that this is a substantial generalization from part a of \Cref{thm:main_sph}, which is for the case $U=\emptyset$. The proof of \Cref{lem:llimit1}, much like that of the massless case, relies on representing the relevant moments of the spherical law in terms of expectations involving an independent family of random variables with nice densities.  Let $|U^{c}| = k$ and let $\phi_1, \ldots, \phi_k$ be an orthonormal basis of $\bC^{U^{c}}$ consisting of eigenvectors of $-\Delta_{U^{c}}$ (recall $-\Delta_{U^{c}}$ is the restriction of the negative Laplacian to $U^{c}$, defined precisely at the beginning of \Cref{sec:gff}). Let $\lambda_1, \ldots, \lambda_k$ be the respective eigenvalues, which are all non-negative. We will still refer to the operation of changing basis to $\phi_{1},\phi_{2},\ldots\phi_{k}$ as the Fourier transform. 

\begin{lemma}\label{lem:fourier_law}
Let $U \subset \bT_n^d$, $| U^{c}| = k$ and fix any $m>0$. Write 
$$
\Psisph^{f, \theta, U^{c}} = \Psi_1+ h
$$
where $h$ is the massive harmonic extension of $f$ with mass $m$. Let $Y = (Y_i)_{1 \le i \le k}$ where  $Y_i = \la \Psi_1, \phi_i\ra  =  (\hat \Psi_1)_i$ be the Fourier transform of $\Psi_1$. Let $\hat h_i = \la h, \phi_i\ra$. Then for any bounded continuous function $\ff:\bC^{k} \to \bR$
\begin{equation*}
    \bE(\ff(Y)) = \frac{\bE\left(\ff(Z)\exp(\theta m \|Z+\hat h\|_2^2)1_{\|Z+\hat h\|_2^2 \le  N }\right)}{\bE\left(\exp(\theta m \|Z+\hat h\|_2^2)1_{\|Z+\hat h\|_2^2 \le  N }\right)}
\end{equation*}
where $Z=(Z_1,\ldots, Z_n)$ are independent and $Z_i $ is distributed as a complex Gaussian with variance $(2\theta(\lambda_i+m))^{-1}$. 
\end{lemma}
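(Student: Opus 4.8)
The plan is to compute the density of $\Psisph^{f,\theta,U^c}$ in the Fourier coordinates directly from the definition \eqref{eq:density_spherical_boundary_general_mass} and recognize the Gaussian tilt that appears after completing the square with respect to the harmonic extension. First I would recall the key algebraic identity underlying summation by parts: if $h$ is massive harmonic on $U^c$ with mass $m$ and boundary value $f$ on $\partial U$, and $\psi|_{\partial U}\equiv f$, then writing $\psi = \psi_1 + h$ with $\psi_1|_{\partial U}\equiv 0$ (so $\psi_1 \in \bC^{U^c}$ genuinely), we have
\[
\norm{\nabla \psi}_2^2 + m\norm{\psi_1}_2^2 \;=\; \la \psi_1, (-\Delta_{U^c}+m)\psi_1\ra + \la h, (-\Delta_{U^c}+m)h\ra_{U^c} + (\text{cross terms}),
\]
where the cross terms vanish because $(-\Delta_{U^c}+m)h \equiv 0$ on $U^c$ and $\psi_1 \equiv 0$ on $\partial U$. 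More precisely the clean statement is $\norm{\nabla \psi}_2^2 = \la \psi_1, -\Delta_{U^c}\psi_1\ra + \norm{\nabla h}_2^2 + 2\Re\la \psi_1, -\Delta_{U^c} h\ra$ with the last term absorbed correctly; the point is that the Gaussian weight $\exp(-\theta\norm{\nabla\psi}_2^2)$ factors, up to an additive constant depending only on $f$, as $\exp(-\theta\la\psi_1,-\Delta_{U^c}\psi_1\ra)$ times a cross term. This is exactly the computation recorded just after the definition of $\Phi_m^{B^c,f}$ in \Cref{sec:mgff}.

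Next I would handle the constraint. The constraint $\norm{\psi|_{U^c}}_2^2 \le \gamma N$ (here $\gamma=1$) becomes, in terms of $\psi_1$, the constraint $\norm{\psi_1 + h|_{U^c}}_2^2 \le N$. Passing to Fourier coordinates $Y_i = \la \psi_1, \phi_i\ra$ — which is a unitary change of variables, hence has Jacobian $1$, and which diagonalizes $-\Delta_{U^c}$ so that $\la\psi_1, -\Delta_{U^c}\psi_1\ra = \sum_i \lambda_i |Y_i|^2$ and $\norm{\psi_1}_2^2 = \sum_i |Y_i|^2$ by Parseval — the density of $Y$ on $\bC^k$ becomes proportional to
\[
\exp\Big(-\theta\sum_i \lambda_i|Y_i|^2\Big)\,\exp\Big(-\theta m\sum_i|Y_i|^2 + \theta m\sum_i|Y_i|^2\Big)\,\mv1_{\norm{Y+\hat h}_2^2 \le N},
\]
where I have inserted and removed the artificial factor $\exp(\pm\theta m\norm{\psi_1}_2^2)$. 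The term $\exp(-\theta\sum_i(\lambda_i+m)|Y_i|^2)$ is, up to normalization, precisely the density of the independent complex Gaussian vector $Z$ with $\var(Z_i) = (2\theta(\lambda_i+m))^{-1}$ (recall a complex Gaussian with variance $\sigma^2$ has density $\propto \exp(-|z|^2/\sigma^2)$). Therefore against any bounded continuous $\ff$,
\[
\bE(\ff(Y)) \;=\; \frac{\int \ff(y)\,\exp\big(-\theta\sum_i(\lambda_i+m)|y_i|^2\big)\,\exp\big(\theta m\sum_i|y_i|^2\big)\,\mv1_{\norm{y+\hat h}_2^2\le N}\,\vd y}{\int \exp\big(-\theta\sum_i(\lambda_i+m)|y_i|^2\big)\,\exp\big(\theta m\sum_i|y_i|^2\big)\,\mv1_{\norm{y+\hat h}_2^2\le N}\,\vd y},
\]
and the constant-in-$f$ factors $\exp(-\theta\norm{\nabla h}_2^2)$ and the Gaussian normalizations cancel between numerator and denominator.

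The last step is purely cosmetic: I must re-express $\exp(\theta m \sum_i|y_i|^2)$ inside the ratio. Here one uses that $\norm{\psi_1}_2^2 = \norm{\psi|_{U^c}}_2^2 - \norm{h|_{U^c}}_2^2 - 2\Re\la\psi_1,h\ra_{U^c}$... actually cleaner: since the tilt weight $\exp(\theta m\norm{\psi|_{U^c}}_2^2)$ differs from $\exp(\theta m\norm{\psi_1}_2^2)$ only by $\exp(\theta m(\norm{h|_{U^c}}_2^2 + 2\Re\la\psi_1,h\ra))$, and the $2\Re\la\psi_1,h\ra$ piece was already produced by the summation-by-parts cross term, one finds after bookkeeping that the correct weight is $\exp(\theta m\norm{Y+\hat h}_2^2)$ with $\hat h_i = \la h,\phi_i\ra$ (using $\norm{h|_{U^c}}_2^2 = \sum_i|\hat h_i|^2$ by Parseval on $U^c$, noting $h|_{U^c}$ means $h$ restricted to $U^c$ excluding $\partial U$, which is what the Fourier basis $\phi_i$ of $\bC^{U^c}$ sees). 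Substituting $Z$ for $Y$ in the Gaussian integrals gives exactly the claimed formula. The main obstacle — really the only delicate point — is getting this bookkeeping of the cross terms and the precise meaning of $h|_{U^c}$ versus $h$ on $\partial U$ exactly right, so that the completed square genuinely reads $\theta m\norm{Z+\hat h}_2^2$ and not $\theta m\norm{Z}_2^2$ plus leftover linear-in-$Z$ terms; I would do this carefully by expanding $\norm{\nabla(\psi_1+h)}_2^2 + m\norm{\psi_1+h}_2^2$ via summation by parts on the graph $G_{U^c}$ and invoking $(-\Delta_{U^c}+m)h\equiv 0$ to kill the unwanted cross terms, exactly as in the density computation for $\Phi_m^{B^c,f}$ already established.
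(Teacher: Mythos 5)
Your plan is the paper's plan: decompose $\psi = \psi_1 + h$, insert and remove $\pm\theta m$ times an $\ell^2$ norm, kill the cross terms because $h$ is massive harmonic, and Fourier transform to identify the complex Gaussian. So the \emph{idea} is correct. However, the displayed formula in your proposal is wrong as written, and the error is precisely of the kind you flag yourself as ``the only delicate point.''

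You claim the density of $Y$ is proportional to $\exp(-\theta\sum_i\lambda_i|Y_i|^2)\mv1_{\|Y+\hat h\|_2^2\le N}$. This is false: the density is proportional to $\exp(-\theta\|\nabla(\psi_1+h)\|_2^2)\mv1_{\cdots}$, and $\|\nabla(\psi_1+h)\|_2^2 = \la\psi_1,-\Delta_{U^c}\psi_1\ra + \la h,-\Delta_{U^c}h\ra + 2\Re\la\psi_1,-\Delta_{U^c}h\ra$; since $(-\Delta_{U^c}+m)h = 0$ on $U^c$ the cross term is $-2m\Re\la\psi_1,h\ra$, which is a nonzero linear-in-$\psi_1$ contribution. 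So after Fourier transforming the true density is $\propto\exp(-\theta\sum_i\lambda_i|Y_i|^2 + 2\theta m\Re\la Y,\hat h\ra)\mv1_{\|Y+\hat h\|_2^2\le N}$, with an extra linear piece your display silently drops. The reason your bookkeeping gets muddled (``one finds after bookkeeping that the correct weight is...'') is that you inserted $\pm\theta m\|\psi_1\|_2^2$ rather than $\pm\theta m\|\psi\|_2^2 = \pm\theta m\|\psi_1+h\|_2^2$. With the latter choice, the completed square is clean in one step:
\begin{equation*}
\|\nabla\psi\|_2^2 + m\|\psi\|_2^2 = \la\psi_1,(-\Delta_{U^c}+m)\psi_1\ra + \la h,(-\Delta_{U^c}+m)h\ra,
\end{equation*}
because the two cross terms $\la\psi_1,(-\Delta_{U^c}+m)h\ra$ and $\la h,(-\Delta_{U^c}+m)\psi_1\ra$ both vanish (the first because $(-\Delta_{U^c}+m)h\equiv 0$ on $U^c$, the second by self-adjointness). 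This gives the tilt $\exp(\theta m\|\psi_1+h\|_2^2)$ directly, with the second summand a $\psi_1$-independent constant that cancels in the ratio, and no residual linear term to chase. You should rewrite the proof inserting $\pm\theta m\|\psi\|_2^2$ from the start; then your erroneous display disappears and the argument is exactly the paper's.
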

\begin{proof}
First of all, we are going to write the density of $\Psisph^{f, \theta, U^{c}}$, first expressed \Cref{eq:density_spherical_boundary_general_mass} for $\gamma=1$, as below

\begin{equation*}
   \exp\left(\theta m \|\psi\|_2^2-\theta \norm{\nabla \psi}_{2}^{2} - \theta m \|\psi\|_2^2\right)\mv1_{\norm{\psi|_{U^c}}_{2}^{2}\leq  N}1_{\psi| \partial U\equiv f |\partial U},  \qquad \psi \in \bC^{\bar U^c}
\end{equation*}
which will amount to tilting the law of the fourier transforms as we shall see.
Now we decompose any $\psi$ with boundary values $f$ on $\partial U$  as $$\psi = \psi_1 + h$$
where $h$ is the  massive harmonic extension of $f$ with mass $m$ onto $U^{c}$. Note that $\psi_{1}$ is identically 0 on $\partial U$. In the graph $G_{U^{c}}$ (recall $G_{U^{c}}$ is the subgraph induced by $U^{c}$)
$$
\norm{\nabla \psi}_{2}^{2} +  m\|\psi\|_2^2 = \la (\psi_1 + h, (-\Delta_{U^c}+m) (\psi_1+h) \ra  = \la \psi_1 , (-\Delta_{U^{c}}+m) \psi_1 \ra + \la h, (-\Delta_{U^c}+m) h\ra 
$$
Indeed, $$\la \psi_1, (-\Delta_{U^c} +m) h\ra = \la h, (-\Delta_{U^c}+m)\psi_1 \ra = 0 $$
since $\psi_1$ is 0 on $U$ and $h$ is massive harmonic with mass $m$ on $U^{c}$.  Since $h$ does not depend on $\psi_1$,  we can write for any bounded continuous function ${\f f}:\bC^{U^{c}} \to \bR$,
\begin{equation*}
    \bE({\ff}(\Psisph^{f, \theta, U^{c}})) = \frac{1}{\mathcal Z}\int_{\bC^{U}}  {\ff}(\psi_1+h)\exp\left(\theta m\|\psi_1+h\|_2^2+\theta\la \psi_1 , (\Delta_{U^{c}}-m) \psi_1 \ra\right)\mv1_{\norm{\psi_1 + h_U}_{2}^{2}\leq  N}\vd \psi_1.
\end{equation*}

where 
$$
\cZ = \int_{\bC^{U^{c}}}  \exp\left(\theta m\|\psi_1+h\|_2^2+\theta\la \psi_1 , (\Delta_{U^{c}}-m) \psi_1 \ra\right)\mv1_{\norm{\psi_1 + h}_{2}^{2}\leq  N}\mv\vd \psi_1.
$$
 Define $y = (y_i)_{1 \le i \le k}$ and $\hat h = (\hat h_i)_{1 \le i \le k}$ to be the Fourier transforms of $\psi_1$ and $h$, i.e., let 
 \begin{equation}
   y_i = \la \psi_1, \phi_i\ra=:{(\hat\psi_1)_i}; \qquad \hat h_i = \la h,\phi_i\ra = \la h, \phi_i\ra.   \label{eq:yhath}
 \end{equation}
  We can now write
\begin{equation*}
 \|\psi_1+h\|_2^2 = \|y+\hat h\|_2^2; \qquad   \la \psi_1 , (-\Delta_{U^{c}}+m) \psi_1 \ra  = \sum_{j=1}^k (\lambda_i+M)|y_i|^2.
\end{equation*}
Since the transformation $\psi_1 \mapsto y  $ is orthogonal, the Jacobian of this transformation has absolute value $1$. Now we define the random vector $Y$ as the Fourier transform of $\Psi_1$ where $\Psi_1 := (\Psisph^{f, \theta, U^{c}}-h)$. For any $\ff:\bC^k \to \bR$, we can write $Y$
\begin{equation}
    \bE(\ff(Y)) = \frac1{\cZ}\int_{\bC^k} \ff(y)\exp\left(\theta m\|y+\hat h\|_2^2-\theta\sum_{i=1}^k (\lambda_i+m)|y_i|^2 \right)\mv1_{ \| y + \hat h\|^2\leq  N}\vd y
\end{equation}
where 
$$
\cZ = \int_{\bC^k} \exp\left(\theta m\|y+\hat h\|_2^2-\theta\sum_{i=1}^k (\lambda_i+m)|y_i|^2 \right)\mv1_{ \| y + \hat h\|^2\leq  N}\vd y
$$
Since 
$$
\frac{\prod_{i=1}^k (\theta(\lambda_i+m))}{\pi^{k}} \exp(-\theta\sum_{i=1}^k (\lambda_i+m)|y_i|^2)
$$
is the density of $(Z_1,\ldots,Z_k)$ as described in the lemma, we are done.

\end{proof}
At this juncture, just as in Section \ref{sec:masslessll}, we would like to point out for the family of random variables $(Z_{i})_{i=1}^{k}$ defined in the proof of Lemma \ref{lem:fourier_law}, 
\begin{align}\label{eq:ZMGFFrel}
\sum_{i=1}^{k}Z_{i}\phi_{i}(\mvx)\equald \theta^{-\frac{1}{2}}\cdot \Phi_{m}^{U^{c}}(\mvx). 
\end{align}
This allows us to relate the moments of $Z$ and $\Phi^{U^c}_m$. 
\begin{proof}[Proof of \Cref{lem:llimit1}]
Since $h|_A \to 0$ because of exponential decay (first item of \Cref{lem:l_p_h}), it is enough to prove convergence of $\Psi_1$ to the correct massive GFF limit using the method of moments. Let $\mathfrak g$ be a complex valued function supported on $A$. Note for any $\psi_1:\bC^U \to \bR$
\begin{equation*}
    \la \mathfrak g, \psi_1\ra =  \la y,\hat \fg\ra
\end{equation*}
where $\hat \fg = (\hat \fg_i)_{1 \le i \le k}$ are the Fourier transforms of $\fg$ and $y$ is as in \eqref{eq:yhath}.
Using \Cref{lem:fourier_law} and the same decomposition as in \eqref{eq:alpha_indices}, for any integer $q \ge 1, q' \ge 1$, and any $m>0$, 
\begin{equation}
    \bE( (\fR(\la \mathfrak{g}, \Psi_1\ra))^ q\fI(\la \mathfrak{g}, \Psi_1\ra))^ {q'} ) =\sum_{\iota \in \Xi'} \bE(\alpha(Y, \iota)) =\frac{\sum_{\iota \in \Xi'}\bE\left(\alpha(Z, \iota)\exp(\theta m \|Z+\hat h\|_2^21_{\|Z+\hat h\|_2^2 \le  N }\right)}{\bE\left(\exp(\theta m \|Z+\hat h\|_2^21_{\|Z+\hat h\|_2^2 \le  N }\right)} \label{eq:moment_expression} 
\end{equation}
where $\alpha(Z,\iota)$ is as in \eqref{eq:alphaYi} and $\Xi'$ is the set of indices where every term is paired with its conjugate in \eqref{eq:alphaYi} (as defined after \eqref{eq:ratio_super}).
% \begin{multline}
%     \bE( (\fR(\la \mathfrak{g}, \Psi_1\ra))^ q\fI(\la \mathfrak{g}, \Psi_1\ra))^ {q'} ) = \bE( (\fR(\la \hat{\mathfrak{g}},Y\ra))^ q (\fI(\la \hat{\mathfrak{g}},Y\ra))^{q'} ) \\= \frac{\bE\left((\fR(\la \hat{\mathfrak{g}},Z\ra))^q(\fI(\la \hat{\mathfrak{g}},Z\ra))^{q'}\exp(\theta m \|Z+\hat h\|_2^21_{\|Z+\hat h\|_2^2 \le  N }\right)}{\bE\left(\exp(\theta m \|Z+\hat h\|_2^21_{\|Z+\hat h\|_2^2 \le  N }\right)}\label{eq:moment_expression}
% \end{multline}
% where $Z = (Z_1,\ldots, Z_k)$ are independent complex Gaussian with variance $(2\theta(\lambda_i+m))^{-1}$.
%  Then break up the $q$-moment term as follows:
% \begin{multline}
%       (\fR(\la \hat{\mathfrak{g}},Y\ra))^ q (\fI(\la \hat{\mathfrak{g}},Y\ra))^{q'}   \\= \sum_{i_1,i_2,\ldots i_q, i'_1,i'_2,\ldots i_{q'}} \fR(\hat{\fg}_{i_1}Y_{i_1})\fR(\hat{\fg}_{i_2}Y_{i_2})\ldots \fR(\hat{\fg}_{i_q}Y_{i_q})  \fI(\hat{\fg}_{i'_1}Y_{i'_1})\fI(\hat{\fg}_{i'_2}Y_{i'_2})\ldots \fI(\hat{\fg}_{i'_q}Y_{i'_q})\label{eq:big_moment}
% \end{multline}
% where the sum is over all possible integers $i_1,i_2,\ldots i_q$ between $1$ and $k$ and over $i'_1, \ldots i'_{q'}$ between $1 $ and $k$. We fix a collection of indices and introduce the notation $\alpha(\fg,Y)$ to denote the right hand side of \eqref{eq:big_moment}. 
It is now enough to prove the following lemma.
\begin{lemma}\label{lem:enough}
Suppose $Z, \fg, \hat \fg, \hat h$ be as above. Then 
\begin{align}
    \frac{\bE\left(\alpha( Z, \iota)\exp(\theta m \|Z+\hat h\|_2^21_{\|Z+\hat h\|_2^2 \le  N })\right)}{\bE\left(\exp(\theta m \|Z+\hat h\|_2^21_{\|Z+\hat h\|_2^2 \le  N }\right)}\nonumber= \bE(\alpha(Z,\iota))(1+o(1)).\label{eq:enough}
\end{align}
\end{lemma}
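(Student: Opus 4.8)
The plan is to prove that the exponential tilt by $\theta m\|Z+\hat h\|_2^2$, together with the indicator $1_{\|Z+\hat h\|_2^2\le N}$, neither changes the limiting moments of $\alpha(Z,\iota)$ nor introduces a degeneracy in the normalizing constant, provided the mass $m$ is chosen as in \eqref{eq:mass_finite_def} so that the constraint becomes typical. First I would fix the specific choice of $m=m_N$ dictated by \eqref{eq:mass_finite_def}, namely $\sum_{i}(\lambda_i+m_N)^{-1}+\theta\|h|_{U^c}\|_2^2=\theta N$; this is exactly the statement that $\bE(\|Z+\hat h\|_2^2)=N$ since $\bE(|Z_i|^2)=(2\theta(\lambda_i+m_N))^{-1}$ up to the normalization convention and $\|\hat h\|_2^2=\|h|_{U^c}\|_2^2$ by Parseval. (One should check that such an $m_N$ exists and is bounded below away from $0$: as $m$ ranges over $(0,\infty)$ the left side of \eqref{eq:mass_finite_def} decreases continuously from $+\infty$ to $\theta\|h|_{U^c}\|_2^2<\theta N$, using $\|h|_{U^c}\|_2^2=o(N)$ from \Cref{lem:l_p_h}; moreover the limiting equation \eqref{eq:mass} has the solution $m>0$ since $\theta<C_d$, and the finite-volume $m_N$ converges to it by \Cref{mass_torus_holed}.)

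The heart of the argument is a concentration statement: $\frac1N\|Z+\hat h\|_2^2 \to 1$ in probability. Writing $\|Z+\hat h\|_2^2 = \|Z\|_2^2 + 2\Re\la Z,\hat h\ra + \|\hat h\|_2^2$, the cross term has mean $0$ and variance $\sum_i |\hat h_i|^2 (2\theta(\lambda_i+m_N))^{-1} \le C\|\hat h\|_2^2 = o(N)$, hence is $o_{\bP}(N)$; the term $\|\hat h\|_2^2=o(N)$ deterministically; and $\|Z\|_2^2$ has mean $N-\|\hat h\|_2^2$ and variance $\sum_i (2\theta(\lambda_i+m_N))^{-2}$, which by the trace asymptotics $\sum_{\mvk}\lambda_{\mvk}^{-2}\asymp N^{4/d}1_{d=3}+N\log N\, 1_{d=4}+N\,1_{d\ge5}$ (as quoted from \cite{PDae} and used in the proof of \Cref{thm:main_sph}) is $o(N^2)$ for all $d\ge 3$ — here one uses that perturbing to $U^c$ with $|U|$ polylogarithmic and adding the mass $m_N\ge c_0>0$ only helps. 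So by Chebyshev, $N^{-1}\|Z+\hat h\|_2^2\to 1$ in probability. The next step is to deduce from this that
\begin{equation*}
\bE\bigl(\exp(\theta m_N\|Z+\hat h\|_2^2)1_{\|Z+\hat h\|_2^2\le N}\bigr)
\end{equation*}
is comparable to $e^{\theta m_N N}$ up to a factor that is neither $0$ nor $\infty$ in an uncontrolled way, and similarly for the numerator with $\alpha(Z,\iota)$ inserted; dividing, the $e^{\theta m_N N}$ cancels.

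To make that precise I would argue as follows. Split the expectation over the events $\{\|Z+\hat h\|_2^2 \in ((1-\delta)N, N)\}$ and $\{\|Z+\hat h\|_2^2 \le (1-\delta)N\}$. On the first event, $\exp(\theta m_N\|Z+\hat h\|_2^2) = e^{\theta m_N N}e^{O(\delta m_N N)}$, so that piece of the numerator is $e^{\theta m_N N}(1+O(\delta))\bE(\alpha(Z,\iota)1_{\|Z+\hat h\|_2^2\in((1-\delta)N,N)})$, and since $\alpha(Z,\iota)$ is a fixed-degree monomial in finitely many $|Z_i|^2$ (hence uniformly integrable and with moments that stabilize when finitely many coordinates are removed, exactly as in \Cref{claim:Z_conv_prob} and \eqref{eq:mombound}), the indicator can be removed up to $o(1)$ using the concentration $N^{-1}\|Z+\hat h\|_2^2\to1$; the same computation with $\alpha\equiv1$ handles the denominator. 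On the second event, one needs the tail bound $\bP(\|Z+\hat h\|_2^2\le(1-\delta)N)=o(e^{-c\delta^2 N}/(m_N N))$ or similar — this follows from a Chernoff/exponential-moment estimate on $\sum_i|Z_i|^2$ (sum of independent exponentials, with means that are summable enough by the trace bounds) — so that $e^{\theta m_N N}$ times this tail still beats the contribution, after using $(\bE\alpha^{2m})^{1/2m}\le C\bE\alpha$ from \eqref{eq:mombound} and Cauchy–Schwarz to control $\bE(\alpha(Z,\iota)1_{\text{small}})$. Taking $\delta=\delta_N\to0$ slowly then gives \eqref{eq:enough}. Finally, \eqref{eq:ZMGFFrel} identifies $\bE(\alpha(Z,\iota))$ summed over $\iota\in\Xi'$ with the corresponding mixed moment of $\theta^{-1/2}\Phi^{U^c}_{m_N}$, and \Cref{mass_torus_holed} (convergence of the holed Green's function) together with the local convergence of the massive GFF on $U^c$ to $\Phi^{\bZ^d}_m$ finishes the proof of \Cref{lem:llimit1} via the method of moments.

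The main obstacle I anticipate is controlling the lower tail $\bP(\|Z+\hat h\|_2^2\le(1-\delta)N)$ sharply enough in $d=3$, where the low-frequency Fourier modes contribute a macroscopic and heavy-tailed share of the mass (the same phenomenon that forced the delicate analysis in \Cref{lem:critplb3}); a crude second-moment bound may not give a tail small enough to survive multiplication by $e^{\theta m_N N}$, so I would likely need either an exponential-moment (Chernoff) computation exploiting that the exponential random variables $X_{\mvk}$ have bounded exponential moments on a neighborhood of $0$, or a truncation of the top few modes à la the $\cA_{N,\gb}$ device, combined with the fact that $m_N$ is bounded below so $(\lambda_i+m_N)^{-1}$ never actually blows up. A secondary technical point is verifying that the monomial moments $\bE(\alpha(Z,\iota))$ genuinely converge (not merely stay bounded) to the infinite-volume values uniformly in the index choices excluding corners, which is where the $\eta$-corner estimate \Cref{lem:corner_estimate} and the Green's function rate \Cref{lem:gfconv}, here in its holed form \Cref{mass_torus_holed}, get reused.
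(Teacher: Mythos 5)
Your near/far split cannot close the argument. On the window $\|Z+\hat h\|_2^2\in((1-\delta)N,N)$ the tilt $\exp(\theta m_N\|Z+\hat h\|_2^2)$ varies over a factor of $e^{\theta m_N\delta N}$, which is exponentially large in $N$ unless $\delta=O(1/N)$; writing $e^{O(\delta m_N N)}=(1+O(\delta))$ is simply false. And if you shrink $\delta$ to $O(1/N)$, Chebyshev concentration of $N^{-1}\|Z+\hat h\|_2^2$ tells you nothing about a window of that width, since the natural fluctuation scale of $\|Z+\hat h\|_2^2$ is $B_N=\Theta(\sqrt N)$. The deeper conceptual issue: the tilt $e^{\theta m_N u}$ varies by $e^{\Theta(\sqrt N)}$ across one standard deviation of $u=\|Z+\hat h\|_2^2$, so the tilted law of $u$ is in no sense a mild perturbation of the untilted one — it localizes $u$ in a boundary layer of width $O(1/(\theta m_N))=O(1)$ just below $N$, well inside the bulk of the untilted density. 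Since $\alpha(Z,\iota)$ is correlated with $\|Z+\hat h\|_2^2$ through the coordinates in $\beta$, you cannot just factor $\bE(\alpha)$ out of the tilted expectation without controlling precisely how that $O(1)$ boundary layer redistributes mass in $Z^\beta$.

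The paper's proof instead conditions on $Z^\beta$, writes the numerator as $\bE\bigl(\alpha(Z,\iota)\,e^{\theta m_N\zeta(Z^\beta)}g(\zeta(Z^\beta))\bigr)$ with $g(\zeta)=\bE\bigl(e^{\theta m_N\|Z^\sbeta+\hat h^\sbeta\|_2^2}\mv1_{\|Z^\sbeta+\hat h^\sbeta\|_2^2\le N-\zeta}\bigr)$, and evaluates $g$ by a Laplace-type boundary computation (\Cref{claim:int_approx}) whose engine is the \emph{local} CLT of \Cref{lem:lclt} — uniform convergence of the rescaled density $B_{N,\sbeta}\,p_N(B_{N,\sbeta}w+A_{N,\sbeta})$ to the standard Gaussian density, not merely weak convergence or a second-moment bound. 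This yields $g(\zeta)=\frac{1}{B_{N,\sbeta}\theta m_N}\exp(\theta m_N(A_N-\zeta))(1+o(1))$, so that $e^{\theta m_N\zeta}g(\zeta)$ is, crucially, independent of $\zeta$ up to $1+o(1)$: the extra tilt gained from a larger $\zeta(Z^\beta)$ is exactly cancelled by the loss from shrinking the admissible window for $Z^\sbeta$. That cancellation is what makes the ratio collapse to $\bE(\alpha(Z,\iota))$, and your Chebyshev-plus-truncation scheme has no mechanism to produce it; you would need the pointwise density control, which is precisely what you tried to avoid.
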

Indeed, given \Cref{lem:enough}, we see that summing over all possible $\iota \in \Xi'$
\begin{multline*}
    \sum_{\iota \in \Xi'} \bE(\alpha(Y, \iota)) =  \sum \bE(\alpha(Z,\iota))(1+o(1)) \\= \bE(\fR(\la \hat \fg, Z\ra)^q\fI(\la \hat \fg, Z\ra)^{q'}) (1+o(1)) = \bE(\fR(\la \fg, \theta^{-\frac{1}{2}}\Phi_m\ra)^q\fI(\la \fg, \theta^{-\frac{1}{2}}\Phi_m\ra)^{q'}) (1+o(1)).
\end{multline*}
Since $\la\fg, \theta^{-\frac{1}{2}}\Phi_m\ra$ is complex Gaussian, its distribution is determined by the joint  moments of its real and imaginary parts. Hence by method of moments,  $\la  \fg, \Psi_1\ra \to \la  \fg, \theta^{-\frac{1}{2}}\Phi_M\ra$ in law.  Consequently $\Psi_1 $ converges locally to $\Phi_m$ as desired. The proof of the theorem is now complete, modulo the proof of  \Cref{lem:enough} which we prove below.
\end{proof}
% where $Z_{i_j}$ are independent complex Gaussian with variance $(2(\lambda_{i_j} + m))^{-1}$ and $o(1) \to 0$ as $N \to \infty$

\begin{proof}[Proof of \Cref{lem:enough}]
Take $\beta  \in \Xi'$.
For a vector $y$, let $y^{\sbeta}$ denote the portion of the vector $y$ without the indices in $\beta$ and $y^\beta$ denote the portion of $y$ with indices in $\beta$.  We similarly define the corresponding quantities for $\hat h,Z$:  $\hat h^\beta$ and $\hat h^{\sbeta} $ and $Z^\beta$ and $Z^{\sbeta}$. Then we break up \eqref{eq:moment_expression} as follows. Write $\bE_{\beta}$ to be the conditional expectation $\bE(\cdot | Z^\beta)$. Then we can write the left hand side as
\begin{equation}
   \frac{ \bE\left(\alpha( Z, \iota)\exp(\theta M\zeta(Z^\beta))\bE_{\beta}\left(\exp(\theta m \|Z^\sbeta+\hat h^\sbeta\|_2^2)1_{\|Z^\sbeta+\hat h^\sbeta\|_2^2 \le  N -\zeta(Z^\beta)}\right)\right)}{\bE\left(\exp(\theta m \|Z+\hat h\|_2^2)1_{\|Z+\hat h\|_2^2 \le  N }\right)}\label{eq:ratio_conditional}
\end{equation}
% \begin{equation*}
%     \varphi(\tilde Z)\exp\left(\theta M(\|y'+\hat h'\|_2^2 -\sum_{i \in S} \theta(\lambda_i+M)|y_i|^2 \right)\mv1_{ \| y'+\hat h'\|_2^2\le (\gamma N - \zeta(\tilde y))}\vd y
% \end{equation*}
 where 
 $$
 \zeta(Z^\beta) = \|Z^\beta + h^\beta\|_2^2.  
 $$
% Let $(Z_i)_{i \in U}$ be independent complex Gaussian with mean 0 and variance $(2(\lambda_i+M))^{-1}$. Define the subvectors $Z'$ and $\tilde Z$ in exactly the same way as we defined $y'$ and $\tilde y$. Then 
% \begin{multline*}
%    \int_{\bC^{S}} \exp\left(\theta M(\|y'+\hat h'\|_2^2 -\sum_{i \in S} \theta(\lambda_i+M)|y_i|^2 \right)\mv1_{ \| y'+\hat h'\|_2^2\le (\gamma N - \zeta(\tilde y))}\vd y \\= \prod_{i \in S} \frac{2\pi}{(\lambda_i+M)} \bE\left(\exp(\theta M \|Z'+\hat h'\|_2^21_{\|Z'+\hat h'\|_2^2 \le \gamma N - \zeta(\tilde y))}\right).
% \end{multline*}
% Similarly, \note{Issue with partition function notation..}
% \begin{equation*}
%     Z' = \prod_{i \in U} \frac{2\pi}{(\lambda_i+M)} \bE\left(\exp(\theta M \|Z+\hat h\|_2^21_{\|Z+\hat h\|_2^2 \le \gamma N )}\right)
% \end{equation*}
% and we need to estimate the ratio of the expectations. 

Let us concentrate on the conditional expectation of the numerator, the denominator can be estimated by similar logic. We will apply 
a sharpened Lindeberg-Feller theorem (\cref{lem:lclt}) to the the independent random variables $(|Z_i  + \hat h_i|^2)_{i \in \beta}$, we postpone the exact statement of that lemma for now. The expectation and variance are easily computed to be 
\begin{align*}
    \bE(|Z_i  + \hat h_i|^2) = \frac1{\theta(\lambda_i+m)} + |\hat h_i|^2\\
    \text{Var}(|Z_i  + \hat h_i|^2) = \frac1{\theta^2(\lambda_i+m)^2} + \frac{2|\hat h_i|^2}{\theta(\lambda_i+m)}
\end{align*}

Now define
\begin{align*}
    A_{N,\sbeta} &:= \sum_{i \not \in \beta} \bE(|Z_i  + \hat h_i|^2); \qquad A_{N} := \sum_{i =1}^k \bE(|Z_i  + \hat h_i|^2) \\
    B^2_{N,\sbeta} &:= \sum_{i \not \in \beta}\text{Var}(|Z_i  + \hat h_i|^2)\qquad B^2_{N} := \sum_{i =1}^k\text{Var}(|Z_i  + \hat h_i|^2).
\end{align*}
Let $p_N(u) $ denote the density of $\|Z^\sbeta+\hat h^\sbeta\|_2^2$. Conditioned on $Z^\beta = y^\beta$ (for some fixed vector $y^\beta$ independent of $N$), the conditional expectation can be written as
\begin{equation}
    \bE_{\beta}\left(\exp(\theta m \|Z^\sbeta+\hat h^\sbeta\|_2^21_{\|Z^\sbeta+\hat h^\sbeta\|_2^2 \le  N - \zeta(y^\beta))}\right) = \int_0^{ N-\zeta(y^\beta)} \exp(\theta mu)p_N(u)du\label{eq:num_local}
\end{equation}
 At this stage, the choice of mass parameter $m_{N}$ becomes important. We choose $m_{N}$ such that 
\begin{equation}
    A_{N}= \sum_{i=1}^k \frac1{\theta(\lambda_i+m_{N})} + \|\hat h\|_2^2 =  N.\label{eq:mass_finite_def1}
\end{equation}
Note that 
$$
\|\hat h\|_2^2 \le   \| h\|_2^2 =o(N)
$$
by second part of \Cref{lem:l_p_h} and  our hypothesis on the boundary condition.
Combining this fact with \eqref{lem:mass_torus} and \Cref{mass_torus_holed} yields $m(N) \to m(\theta)>0$ as $N \to \infty$. In particular, $m_{N}$ is bounded away from  0 and $\infty$ uniformly in $N$.
Furthermore for this choice of $m_N$, and also from the fact that $\norm{h}_{2}^{2}$ is $o(N)$,
\begin{equation}
   B^2_{N} := \sum_{i=1}^k\frac1{(\lambda_i+m_{N})^2} + \sum_{i=1}^k\frac{2|\hat h_i|^2}{\lambda_i+m_{N}} {= \sum_{i=1}^k \frac1{(\lambda_i+m_{N})^2}(1+o(1))}. \label{eq:var_bound}
\end{equation}
Indeed \eqref{eq:var_bound} follows from \Cref{lem:torus_holed_fourth} (which tells us that $\sum_{i=1}^k(\lambda_i+m_{N})^{-2}$ is $\Omega(N)$) and 
\[
(4d+m_{N})^{-1}\cdot \norm{h|_U}_{2}^{2}\leq \sum_{i=1}^{k}\frac{|\hat{h}_{i}|^{2}}{\gl_{i}+m_{N}}\leq m_{N}^{-1}\norm{h|_U}_{2}^{2} \label{eq:BN_bound}
\]
% In particular, \eqref{eq:var_bound} and \Cref{lem:torus_holed_fourth} also tells us that there are some constants $c,C $ such that for all $N$
% \begin{equation}
%     cN \le B_N^2 \le CN.
% \end{equation}
% for some $c,C >0$ independent of $N$.
Perform a change variables $w := B_{N,\sbeta}^{-1} (u-A_{N,\sbeta})$, so that the integral \eqref{eq:num_local} becomes
\begin{equation*}
    \int_{-B_{N,\sbeta}^{-1} A_{N,\sbeta}}^{B_{N,\sbeta}^{-1}(A_N - A_{N,\sbeta}-\zeta(y^\beta))} \exp(\theta m_{N} (B_{N,\sbeta}w+A_{N,\sbeta}))p_N(B_{N,\sbeta}w+A_{N,\sbeta})B_{N,\sbeta}dw
\end{equation*}
Using \Cref{eq:var_bound} and \Cref{lem:torus_holed_fourth}, it is easy to conclude that 
\begin{equation}
   B_{N,\sbeta}^{-1}(A_N - A_{N,\sbeta} -\zeta(y^\beta)) = o(1); \qquad  \frac{B_{N,\sbeta}}{B_N} = 1+o(1)  \label{eq:BNS_estimate}
\end{equation}

We now claim 
\begin{claim}\label{claim:int_approx}
\begin{multline*}
      \int_{-B_{N,\sbeta}^{-1} A_{N,\sbeta}}^{B_{N,\sbeta}^{-1}(A_N - A_{N,\sbeta}-\zeta(y^\beta))} \exp(\theta m_{N} (B_{N,\sbeta}w+A_{N,\sbeta}))p_N(B_{N,\sbeta}w+A_{N,\sbeta})B_{N,\sbeta}dw \\= \frac1{B_{N,\sbeta}\theta m_N}\exp(\theta m_N (A_N - \zeta(y^\beta))) (1+o(1)) 
     \end{multline*}
\end{claim}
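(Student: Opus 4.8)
The plan is to read the integral in \Cref{claim:int_approx} as $\bE\big(e^{\theta m_N S_N}\mv1_{S_N \le A_N - \zeta(y^\beta)}\big)$, where $S_N := \|Z^{\sbeta} + \hat h^{\sbeta}\|_2^2 = \sum_{i \notin \beta}|Z_i + \hat h_i|^2$ is a sum of independent random variables with density $p_N$, mean $A_{N,\sbeta}$ and variance $B_{N,\sbeta}^2$, and then to run a Laplace-type argument: the tilt $e^{\theta m_N S_N}$ drives all of the mass to the largest admissible value $S_N \approx A_N - \zeta(y^\beta)$, which by \eqref{eq:BNS_estimate} sits within $o(B_{N,\sbeta})$ of the mean $A_{N,\sbeta}$, i.e. essentially at the centre of the (approximately Gaussian) law of $S_N$.

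First I would invoke the sharpened Lindeberg--Feller / uniform local CLT of \Cref{lem:lclt} for $S_N$, after checking its hypotheses. Since $m_N \asymp 1$ is bounded away from $0$ and $\infty$ (as recorded after \eqref{eq:mass_finite_def1}), one has $\lambda_i + m_N \ge m_N$, so $B_{N,\sbeta}^2 \asymp N$ by \eqref{eq:var_bound} and \Cref{lem:torus_holed_fourth}, while the third and fourth central moments of the summands sum to $O(N)$; this makes the Lyapunov ratio $O(1/N)$, and the summands have uniformly regular (non-central $\chi^2$-type) densities so the smoothness requirement also holds. The output I want is $B_{N,\sbeta}\,p_N(B_{N,\sbeta}w + A_{N,\sbeta}) = \tfrac{1}{\sqrt{2\pi}}e^{-w^2/2} + \mathrm{err}_N(w)$ with $\mathrm{err}_N(w)\to 0$ uniformly on compact $w$-sets, together with the crude global bound $\|p_N\|_\infty = O(B_{N,\sbeta}^{-1})$.

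Writing $\mu_N := \theta m_N B_{N,\sbeta} = \Omega(\sqrt N)$, $w_{\min} := -A_{N,\sbeta}/B_{N,\sbeta}$ and $w_* := B_{N,\sbeta}^{-1}(A_N - A_{N,\sbeta} - \zeta(y^\beta)) = o(1)$, the integral equals $e^{\theta m_N A_{N,\sbeta}}\int_{w_{\min}}^{w_*} e^{\mu_N w}\,p_N(B_{N,\sbeta}w+A_{N,\sbeta})B_{N,\sbeta}\,dw$. The next step is to split at $w_* - \delta$ for a fixed small $\delta>0$: on $\{w \le w_*-\delta\}$ the gain $e^{\mu_N w}\le e^{\mu_N w_*}e^{-\mu_N\delta}$ beats the crude bound $\|p_N\|_\infty B_{N,\sbeta}=O(1)$ and the $O(\sqrt N)$ length of the interval, so that part contributes $e^{\mu_N w_*}\cdot o(\mu_N^{-1})$; on the fixed compact window $[w_*-\delta,w_*]$ the refined local CLT lets us replace $p_N$ by the Gaussian density, and extending the resulting Gaussian integral down to $w_{\min}$ adds only another exponentially negligible $e^{\mu_N w_*}e^{-\mu_N\delta}$. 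This reduces everything to the elementary tail estimate
\[
\int_{w_{\min}}^{w_*} e^{\mu_N w}\,\tfrac{1}{\sqrt{2\pi}}e^{-w^2/2}\,dw = e^{\mu_N^2/2}\,\Phi(w_* - \mu_N)\,(1+o(1)) = \tfrac{1}{\sqrt{2\pi}\,\mu_N}\,e^{\mu_N w_*}\,(1+o(1)),
\]
using $\Phi(-x)\sim \tfrac{1}{x\sqrt{2\pi}}e^{-x^2/2}$ as $x\to\infty$, $\mu_N - w_* = \mu_N(1+o(1))$, and $w_*^2 = o(1)$; the tilt-amplified error $\int e^{\mu_N w}\mathrm{err}_N(w)\,dw$ is controlled the same way by $\le \big(\sup|\mathrm{err}_N|\big)\mu_N^{-1}e^{\mu_N w_*}$ and is thus $o(1)$ relative to the main term. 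Since $\theta m_N A_{N,\sbeta} + \mu_N w_* = \theta m_N(A_N - \zeta(y^\beta))$ and $\mu_N = \theta m_N B_{N,\sbeta}$, the integral equals $\tfrac{1}{\sqrt{2\pi}\,B_{N,\sbeta}\theta m_N}\,e^{\theta m_N(A_N - \zeta(y^\beta))}(1+o(1))$, which is the asserted expression up to the universal constant $(2\pi)^{-1/2}$; that constant is harmless, since running the identical computation on the denominator of \eqref{eq:ratio_conditional} (which is of the same shape with $\sbeta$ replaced by the full index set) produces exactly the same factor, so it cancels in \Cref{lem:enough}.

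The part I expect to be the main obstacle is precisely the interplay between the enormous tilt $e^{\mu_N w}$, of size $e^{\Omega(\sqrt N)}$, and the accuracy of the local CLT: a merely pointwise local limit theorem is useless here, and one needs either error control uniform over the whole line, or — as above — to isolate the bulk contribution in a bounded window around $w_*$ using the exponential gain $e^{-\mu_N\delta}$ against only the crude uniform density bound, applying the sharp local CLT only on that window. Getting the two regimes to match up cleanly, and in particular checking that the amplified error $e^{\mu_N w}\mathrm{err}_N(w)$ integrates to something genuinely $o(1)$ relative to the main term, is the delicate bookkeeping; everything else is the elementary Gaussian tail asymptotics together with the already-established facts $m_N\asymp 1$, $B_{N,\sbeta}\asymp B_N\asymp\sqrt N$, and $w_*=o(1)$.
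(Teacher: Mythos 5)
Your proposal is correct and follows essentially the same route as the paper: invoke the sharpened local CLT of \Cref{lem:lclt}, carry out the Gaussian approximation only on a compact $w$-window where it is quantitative, and control the remainder by the uniform density bound $\|p_N\|_\infty B_{N,\sbeta} = O(1)$ together with the exponential suppression from the tilt $e^{\theta m_N B_{N,\sbeta} w}$. The only cosmetic difference is the choice of window: you take the short interval $[w_*-\delta, w_*]$ with $\delta$ small and run an explicit Gaussian tail computation, whereas the paper takes $[-T, w_*]$ with $T$ large fixed, changes variables $w_1 = B_{N,\sbeta}w$, and concludes by dominated convergence; both yield the far-tail bound $e^{\theta m_N B_{N,\sbeta} w_*}\cdot(\text{exponentially small})$. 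You are also right that, as literally displayed, the constant in the claim is off by a factor of $(2\pi)^{-1/2}$ coming from $p(0) = 1/\sqrt{2\pi}$ in \Cref{lem:lclt} --- the paper's own computation carries the same dropped factor --- and, as you observe, this is harmless because the identical factor arises in the denominator of \eqref{eq:ratio_conditional} (the case $\beta = \eset$) and cancels in the proof of \Cref{lem:enough}.
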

The main ingredient of the proof of \Cref{claim:int_approx}, which we postpone for clarity is a local version of Lindeberg CLT for triangular arrays. Assuming \Cref{claim:int_approx} for now note that exactly same analysis can be done for the denominator in \Cref{eq:ratio_conditional} by assuming $\beta = \emptyset$. Thus $A_{N,\sbeta}$ in \eqref{claim:int_approx} can be replaced by $A_N$, there is no $\zeta(y^\beta)$ term, and $B_{N,\sbeta}$ can be replaced by $B_N$. This yields the asymptotics of the denominator as 
\begin{equation}
     \frac1{B_{N}\theta m_{N}} \exp(\theta m_{N} A_{N})\left(1+o(1)\right)\label{eq:denominator}
\end{equation}
Plugging these estimates back in \eqref{eq:ratio_conditional}, we get
\begin{multline*}
   \frac{ \bE\left(\alpha( Z, \iota)\exp(\theta M\zeta(Z^\beta))\bE_{\beta}\left(\exp(\theta m \|Z^\sbeta+\hat h^\sbeta\|_2^2)1_{\|Z^\sbeta+\hat h^\sbeta\|_2^2 \le  N -\zeta(Z^\beta)}\right)\right)}{\bE\left(\exp(\theta m \|Z+\hat h\|_2^2)1_{\|Z+\hat h\|_2^2 \le  N }\right)}\\
   =  \bE(\alpha( Z, \iota)\exp(\theta m_{N}\zeta(Z^\beta))\exp(-\theta m_{N} \zeta (Z^\beta))(1+o(1))=\bE(\alpha(Z, \iota))(1+o(1))
\end{multline*}
In the last line, we used $\bE(\alpha(Z, \iota))$ is bounded since $\fg$ is bounded and $Z$ is complex Gaussian. This completes the proof of \Cref{lem:enough} modulo \Cref{claim:int_approx}.
\end{proof}
To prove \Cref{claim:int_approx}, we need the following refined version of the Lindeberg-Feller Theorem which in addition yields uniform convergence of densities. We refer the reader to Appendix A4 of \cite{ARU} for a proof, although the result is attributed to Kolmogorov and Gnedenko \cite{KG}.

\begin{thm}\label{lem:lclt}
Let $\{Z_{i,N}\}_{i=1}^{N}$ be a triangular array of centered, independent $\bR^n$ valued random variables with density given by $\{q_{i,N}(x)\}$. Assume that the following conditions hold.
\begin{enumerate}
    \item For all $i$, $q_{i,N}$ are in $L^{r}(\bR^{n})$ where $r\in (1,2]$ is independent of $i,N$. Moreover, $\sup_{i,N}\norm{q_{i,N}}_{L^{r}(\bR)}\leq M$ for some $M>0$. 
    \item For all $i\leq N$, $\gs_{i,N}^{2}=\var Z_{i,N}$ are uniformly bounded away from $0$ for all $i,N$.  
    \item Let $B^{2}_{N}:=\sum_{i\leq N}\gs_{i,N}^{2}$.  Lindeberg's condition holds, namely for all $\eps>0$
    \[
    \frac{\sum_{i\leq N}\bE(Z_{i,N})^{2}1_{|Z_{i,N}|\geq \eps B_{N}}}{B_{N}^{2}}\to 0
    \]

\end{enumerate}
Then, if $p_{N}$ denotes the distribution of the sum $\sum_{i\leq N} Z_{i,N}$, we have that 
\[
B_{N}\cdot p_{N}(B_{N}x+A_{N})=\frac{1}{\sqrt{2\pi}}\exp\left(-\frac{x^{2}}{2}\right)+v_{N},
\]
Where $v_{N}$ decays to $0$ as $N\to \infty$ uniformly over $x\in \bR$.
\end{thm}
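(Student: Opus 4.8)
The statement is the classical local Lindeberg--Feller theorem of Gnedenko and Kolmogorov, and the plan is to reproduce the standard characteristic-function proof (as in \cite{KG}, or Appendix A4 of \cite{ARU}); we describe it for $n=1$, the general case being identical with obvious modifications. First I would reduce to the centered case: replacing $Z_{i,N}$ by $Z_{i,N}-\bE Z_{i,N}$ shifts the density of $S_N:=\sum_{i=1}^N Z_{i,N}$ by $A_N:=\sum_{i=1}^N\bE Z_{i,N}$, so that $B_N p_N(B_N x+A_N)$ is the density at $x$ of the normalized centered sum $\tilde S_N$, whose characteristic function is $\phi_N(t)=e^{-itA_N/B_N}\prod_{i=1}^N \hat q_{i,N}(t/B_N)$, with $|\phi_N(t)|=\prod_{i=1}^N|\hat q_{i,N}(t/B_N)|$. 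By Hausdorff--Young, condition (1) gives $\hat q_{i,N}\in L^{r'}(\bR)$ with $\|\hat q_{i,N}\|_{L^{r'}}\le M$, where $r'=r/(r-1)$; since $|\hat q_{i,N}|\le 1$, a product of $k:=\lceil r'\rceil$ of these factors lies in $L^1(\bR)$ with a bound uniform in $i,N$, so for $N\ge k$ the function $\phi_N$ is integrable and Fourier inversion gives $B_N p_N(B_N x+A_N)=\frac1{2\pi}\int_\bR e^{-itx}\phi_N(t)\,dt$. Hence
\[
\sup_{x\in\bR}\Big|B_N p_N(B_N x+A_N)-\tfrac1{\sqrt{2\pi}}e^{-x^2/2}\Big|\le \frac1{2\pi}\int_\bR\big|\phi_N(t)-e^{-t^2/2}\big|\,dt,
\]
so it suffices to prove $\phi_N\to e^{-t^2/2}$ in $L^1(\bR)$, which then gives the result with $v_N:=B_N p_N(B_N x+A_N)-\frac1{\sqrt{2\pi}}e^{-x^2/2}$ (uniform in $x$).

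To control that $L^1$ norm I would split $\bR$ into three frequency ranges. On a fixed compact $\{|t|\le T\}$, conditions (2)--(3) are exactly the hypotheses of the ordinary Lindeberg--Feller CLT, and the usual Taylor expansion of $\hat q_{i,N}(t/B_N)$ together with Lindeberg's condition gives $\phi_N\to e^{-t^2/2}$ uniformly on $\{|t|\le T\}$, so this part of the integral tends to $0$ for each fixed $T$. On the intermediate range $\{T<|t|\le \delta_0 B_N\}$, the same Taylor/Lindeberg analysis produces a \emph{uniform} $\delta_0>0$ with $|\phi_N(t)|\le e^{-t^2/4}$ for all $|t|\le\delta_0 B_N$ and all large $N$; hence this piece is at most $\int_{|t|>T}e^{-t^2/4}\,dt$, which is small once $T$ is large, uniformly in $N$. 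The remaining range $\{|t|>\delta_0 B_N\}$ is, after the substitution $s=t/B_N$, equal to $B_N\int_{|s|>\delta_0}\prod_{i=1}^N|\hat q_{i,N}(s)|\,ds$, and since $B_N=O(\sqrt N)$ this must be shown to be $o(1)$.

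For this last range I would group the indices: write $\prod_{i=1}^N|\hat q_{i,N}(s)|$ as the product of one block of $k=\lceil r'\rceil$ factors, whose integral over $\bR$ is bounded uniformly by the Hausdorff--Young and H\"older estimate above, times the remaining $N-k$ factors. On $\{|s|>\delta_0\}$ the hypotheses (1)--(2) rule out the array degenerating, i.e.\ they yield a uniform bound $\sup_{i,N}\sup_{\delta_0\le|s|\le R}|\hat q_{i,N}(s)|\le\rho_R<1$ on every fixed annulus (an $L^r$ density with $\|q_{i,N}\|_r\le M$ cannot have its mass concentrated near a lattice of bounded spacing), while for $|s|>R$ one uses decay of $\hat q_{i,N}$ coming again from the uniform $L^{r'}$ bound; multiplying $\asymp N$ such factors produces a geometric-in-$N$ gain that beats the $B_N=O(\sqrt N)$ prefactor and gives the desired $o(1)$. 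Assembling the three ranges shows $\|\phi_N-e^{-t^2/2}\|_{L^1}\to0$, completing the proof. The step I expect to be the main obstacle is precisely this large-frequency estimate: it is the only place where hypothesis (1) is used in an essential way, and the crux is making the ``no concentration near a lattice'' bound quantitative and uniform over the whole triangular array.
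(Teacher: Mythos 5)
First, for context: the paper does not actually supply a proof of \cref{lem:lclt}; just before the statement it says ``We refer the reader to Appendix A4 of \cite{ARU} for a proof, although the result is attributed to Kolmogorov and Gnedenko \cite{KG}.'' So there is no in-paper argument to compare against. Your outline (center, use Hausdorff--Young to make $\phi_N\in L^1$, Fourier-invert, then bound $\|\phi_N-e^{-t^2/2}\|_{L^1}$ by splitting the frequency axis into a compact piece, an intermediate piece via Taylor expansion, and a tail $\{|t|>\delta_0 B_N\}$) is the standard route and matches the cited references in spirit.

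However, the step you flag as ``the main obstacle'' is a genuine gap, and in fact it exposes that the theorem as stated is missing a hypothesis. The bound $\sup_{i,N}\sup_{\delta_0\le|s|\le R}|\hat q_{i,N}(s)|\le\rho_R<1$ does \emph{not} follow from (1)--(2); your parenthetical justification (``an $L^r$ density with $\|q_{i,N}\|_r\le M$ cannot have its mass concentrated near a lattice'') fails once the density is allowed to spread over many lattice cells. Concretely, let $Z^{(L)}=2\pi J+U$ with $J\sim\mathrm{Unif}\{0,\dots,L-1\}$ and $U\sim\mathrm{Unif}[0,2\pi/L]$ independent, and take the i.i.d.\ array $q_{i,N}\equiv q_{L(N)}$ with $q_L$ the (centered) density of $Z^{(L)}$. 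Then $q_L\equiv\tfrac1{2\pi}$ on a set of measure $2\pi$, so $\|q_L\|_r^r=(2\pi)^{1-r}$ for every $r$ and (1) holds with a fixed $M$; the variance is of order $L^2$, so (2) holds; and since $|Z^{(L)}-\bE Z^{(L)}|\le 2\pi L$ a.s.\ while $B_N\asymp\sqrt{N}\,L$, Lindeberg (3) is automatic for large $N$. Yet $|\hat q_L(1)|=\tfrac{\sin(\pi/L)}{\pi/L}\to1$, so the tail integral does not vanish; moreover, taking e.g.\ $L(N)=N^2$, the sum $S_N$ lives on a union of intervals around $2\pi\bZ$ whose total length inside $[A_N-cB_N,A_N+cB_N]$ is $o(B_N)$, so $B_Np_N(B_N x+A_N)=0$ on a full-measure set of $x\in[-c,c]$ and the conclusion fails outright. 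Thus beyond (1)--(3) one needs either an upper bound on the $\gs_{i,N}^2$ or a Cram\'er-type condition $\sup_{i,N}\sup_{|s|\ge\delta}|\hat q_{i,N}(s)|<1$ for each $\delta>0$; presumably the version in \cite{ARU} carries such a hypothesis, and it is harmless in the paper's application (there $Z_{i,N}=|Z_i+\hat h_i|^2$ are scaled noncentral $\chi^2_2$'s with $|\hat q_{i,N}(s)|\le(1+c\,s^2)^{-1/2}$ for a uniform $c>0$). Your assertion $B_N=O(\sqrt N)$ has the same issue: under (2) alone one only gets $B_N=\Omega(\sqrt N)$.
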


\begin{proof}[Proof of \Cref{claim:int_approx}]
We will apply \Cref{lem:lclt} to $\|Z^\sbeta+\hat h^\sbeta\|_2^2$ whose density is $p_{N}$. Since $Z_i$s are exponential with variance $(\lambda_i+m)^{-1} \le m^{-1}$ and $m>0$, it is straightforward to apply check that all the conditions in \Cref{lem:lclt} is verified.
Pick a large number $T$, and we break up the integral into an integral from $(-T,B_{N,\sbeta}^{-1}(A_N - A_{N,\sbeta} - \zeta(y^\beta))$ and the rest. Let $p$ denote the density of standard (real) Normal random variable.  Using \Cref{lem:lclt}, the former can be written as 
\begin{equation*}
    \exp(\theta m_N A_{N,\sbeta})\int_{-T}^{B_{N,\sbeta}^{-1}(A_N - A_{N,\sbeta}-\zeta(y^\beta))} \exp(\theta m_N B_{N,\sbeta}w)p(w)(1+o(1))dw
\end{equation*}
Observe the replacement of $B_{N}p_{N}(B_{N}x+A_{N})$ by $p(w)(1+o(1))$, does not follow from Lemma \ref{lem:lclt} for all $w$. However, note that for all $|x|\leq T$, $p(x)\geq p(|T|)>0$. Thus, on this interval we know that $B_{N}p_{N}(B_{N}x+A_{N})=p(x)(1+\frac{v_{N}}{p(|T|)})$, which justifies the above approximation.

Next, by a change of variable,
\begin{equation*}
    \int_{-T}^{B_{N,\sbeta}^{-1}(A_N - A_{N,\sbeta}-\zeta(y^\beta))} \exp(\theta m_N B_{N,\sbeta}w)p(w)dw = \int_{-TB_{N,\sbeta}}^{A_N-A_{N,\sbeta} - \zeta(y^\beta)}\exp(\theta m_N w_1) p(\frac{w_1}{B_{N,\sbeta}})\frac1{B_{N,\sbeta}}dw_1
\end{equation*}
 By dominated convergence theorem, 
\begin{equation*}
    \int_{-TB_{N,\sbeta}}^{A_N-A_{N,\sbeta} - \zeta(y^\beta)}\exp(\theta m_N w_1) p(\frac{w_1}{B_{N,\sbeta}})dw_1 = \frac1{\theta m_N}\exp(\theta m_N (A_N - A_{N,\sbeta} - \zeta(y^\beta)))(1+o(1)).
\end{equation*}
Finally, for the integral from $-B_{N,\sbeta}^{-1} A_{N,\sbeta}$ to $-T$ we need the fact 
$$
p_N(B_{N,\sbeta}w+A_{N,\sbeta})B_{N,\sbeta} \le C
$$
for some universal constant $C$ independent of everything else, which holds due to the uniform convergence of $B_{N, \sbeta} p_{N}(B_{N, \sbeta}x+A_{N,\sbeta})$. Then, since $B_{N,\sbeta}^{-1}A_{N,\sbeta} \to \infty$ (as $A_{N, \sbeta} = \Omega(N)$ and $B_{N,\sbeta} = \Omega(\sqrt{N})$) 
\begin{multline*}
   \int_{-B_{N,\sbeta}^{-1} A_{N,\sbeta}}^{-T} \exp(\theta m_N (B_{N,\sbeta}w+A_{N,\sbeta}))p_N(B_{N,\sbeta}w+A_{N,\sbeta})B_{N,\sbeta}dw \le C\frac{\exp(\theta m_N A_{N,\sbeta}-\theta m_N T B_{N,\sbeta}) }{\theta m B_{N,\sbeta}}.
\end{multline*}
So we get an upper bound
\begin{equation*}
    C\exp(\theta m A_{N,\sbeta})\frac1{\theta m_NB_{N,\sbeta}} \exp(-\theta m_N T B_{N,\sbeta})
\end{equation*}
Combining with the first estimate of \eqref{eq:BNS_estimate}, we see that the integral \eqref{eq:num_local} is
\begin{multline}
   \frac1{B_{N,\sbeta}\theta m_N} \exp(\theta m_N A_{N,\sbeta})\Big[\left(\exp(\theta m (A_N - A_{N,\sbeta} - \zeta(y^\beta)))\right)(1+o(1))\Big] \\=\frac1{B_{N,\sbeta}\theta m_N}\exp(\theta m_N (A_N - \zeta(y^\beta))) (1+o(1)) \label{eq:num_local2}
\end{multline}
as desired.
\end{proof}

Much as in the massless case, we conclude this section with the proof of the tail bound, which for $\theta<C_{d}$ is \Cref{lem:spherical_tail}. While the lemma we needed was extremely straightforward for $\theta\geq C_{d}$, the situation is more complicated here. The presence of boundary conditions and  that we require the tail bound away from the harmonic extension near the boundary requires more care. 
\begin{proof}[Proof of \Cref{lem:spherical_tail}]
For this proof, we recall the notation $\cZ_{\text{sph},N} (f, \theta,\gamma, U^{c})$ for the partition function of the spherical law with general boundary condition $f$ and general mass cutoff $\gamma N$, defined in \Cref{eq:density_spherical_boundary_general_mass}, which we recall here for convenience:
\begin{equation}
\bE(\ff(\Psisph^{f, \theta,\gamma, U^{c}})):=\frac{1}{\cZ_{\text{sph},N} (f, \theta,\gamma, U^{c})}\int_{\bC^{U^{c}}} \ff(\psi) \exp\left(-\theta \norm{\nabla \psi|_{U^{c}}}_{2}^{2}\right)\mv1_{\norm{\psi|_{U^{c}}}_{2}^{2}\leq \gamma N}1_{\psi|_{ \partial U}\equiv f_{|\partial U}}\vd \psi.\label{eq:density_spherical_boundary_general_mass2}
\end{equation}
Let $f_w$ be a function defined on $U \cup \{\mvy\}$ which equals $f$ on $U$ and equals $w \in \bC$ on $\mvy\in U^{c}$. Then we can write
$$
\bP(|\Psisph^{f, \theta, U^{c}} (\mvy) - h(\mvy)| > b) = \int_{|z| > b} \frac{\cZ_{\text{sph},N} (f_{h(\mvy)+z},\theta, 1-\frac{|h(\mvy)+z|^2}{N}, U^{c} \setminus \{\mvy\})}{\cZ_{\text{sph},N} (f, \theta, U^{c})}dz
$$
where the numerator of the integrand is taken to be 0 in case $|h(\mvy)+z|^2/N \ge 1$.  We now compute the asymptotics of the  integrand, and let us consider the numerator first. We will do an analysis similar to the one done in  \Cref{lem:llimit1}, but we need to be careful with the cancellations of the harmonic function terms as they are different in the numerator and the denominator. Choose $m_N=m$ as in \eqref{eq:mass_finite_def1} (the dependence on $N$ is suppressed here for clarity). All the Laplacians are computed in the graph induced by $U^c \cup \partial U$. Breaking up any $\psi:   U ^{c}\setminus \{\mvy\} \to \bC$ with boundary condition given by $f_w$ into $\psi_1 + h_w$ where $h_w$ is the harmonic extension of $f_w$ with mass $m$ onto $U^{c}\setminus \{\mvy\}$, we can rewrite the numerator, with $w = z+h(\mvy)$ as
\begin{multline}
    C(h_w)\int_{\bC^{U^{c} \setminus \{\mvy\}}}  \exp\left(\theta m\|\psi_1+h_w\|_2^2+\theta\la \psi_1 , (\Delta_{U^{c}\setminus \{\mvy\}}-m) \psi_1 \ra\right)\mv1_{\norm{\psi_1 + h_w}_{2}^{2}\leq  N - |w|^2}\vd \psi_1.\label{eq:tail_num1}
\end{multline}
where
\begin{equation}
    C(h_w):= \exp(\theta \sum_{\mvx \in \partial U \cup \{\mvy\}} h_w(\mvx)\overline{\Delta h_w(\mvx)})\label{eq:c1}
\end{equation}
In the above, the expression for $h_w$ simplified because of the cancellation
\begin{equation}
    \theta \la h_w, (\Delta-m)h_w\ra +  \theta m \sum_{\mvx \in \partial U \cup \{\mvy\}} |h_w(\mvx)|^2 = \theta \sum_{\mvx \in \partial U \cup \{\mvy\}} h_w(\mvx)\overline{\Delta h_w(\mvx)}.\label{eq:harmonic_cancellation}
\end{equation}
\Cref{eq:harmonic_cancellation} is easily justified since $h_w$ is massive harmonic in $U^{c} \setminus \{\mvy\}$ with mass $m$ and the fact that the integral in \eqref{eq:c1} is over $U^c \setminus \{\mvy\}$ which justifies the second term in the left hand side in \eqref{eq:harmonic_cancellation}. Let $|U^{c}|=k$ and let $(\lambda_i)_{i=1}^k,(\lambda_i')_{i=1}^{k-1}$ be the eigenvalues of the negative Dirichlet Laplacian on $U^{c}$ and $U^{c} \setminus \{\mv y\}$ respectively. 
In the same vein as \Cref{lem:fourier_law}, we can write
\eqref{eq:tail_num1} as 
\begin{equation*}
    C_1(h_w) \bE(\exp(\theta m \|Z'+\hat h_w\|_2^2 1_{\|Z'+\hat h_w\|_2^2 \le  N - |w|^2})).
\end{equation*}
where $(Z'_1,\ldots, Z'_{k-1})$ are independent and $Z'_i$ is a complex Gaussian with variance $(2\theta(\lambda'_i+m))^{-1}$ and 
$$
C_1(h_w) = \prod_{i=1}^{k-1}\frac{2\pi}{\theta(\lambda_i'+m)} \exp(\theta\sum_{\mvx \in \partial U \cup \{\mvy\}} h_w(\mvx)\overline{\Delta h_w(\mvx)})
$$
Similarly the denominator can also be written in terms of complex Gaussians, and in fact we have already analyzed its asymptotics in \eqref{eq:denominator}, which are given by 
\begin{equation}
    \cZ_{\text{sph},N} (f, \theta, U) = C_1(h)\frac1{B_{N}\theta m} \exp(\theta m  N)\left(1+o(1)\right) 
\end{equation}
where $B_N$ is as in the proof of \Cref{lem:llimit1} and 
$$
C_1(h)=\prod_{i=1}^{k}\frac{2\pi}{\theta(\lambda_i+m)} \exp(\theta\sum_{\mvx \in \partial U } h(\mvx)\overline{\Delta h(\mvx)}).
$$
Let us now compute the asymptotics of the numerator and compare. Let
$$
A_N' = \bE(\|Z'+\hat h_w\|_2^2);\qquad (B_N')^2 = \text{Var}(\|Z'+\hat h_w\|_2^2)
$$
Then, letting $p_N'$ to be the density of  $\|Z'+\hat h_w\|_2^2$,
\begin{align}
     &\bE(\exp(\theta m \|Z'+\hat h_w\|_2^2 1_{\|Z'+\hat h_w\|_2^2 \le  N - |w|^2}))\\
     = &\int_0^{ N-|w|^2}e^{\theta m u}p'_N(u)du\nonumber\\
     =&\int_{-A_N'/B_{N}'}^{(B_N')^{-1}( N-|w|^2-A_N')}\exp(\theta m (B_N'w+A_N'))p'_N(B_N'w+A_N')B_N'dw\nonumber\\
     \le & C\frac{\exp(\theta m( N - |w|^2))}{B_N'\theta m}\label{eq:uniform_upper}
\end{align}
where we used again the uniform bound on the density $p_N$.
Now we need to take care of the harmonic function terms.  We can break up $h_w  = h + g$ where $g$ is the massive harmonic extension of the function taking value $w - h(\mvy)=z$ in $\mvy$ and 0 on $\partial U$. Then it is straightforward computation to see
\begin{equation}
    \sum_{\mvx \in \partial U \cup \{\mvy\}} h_w(\mvx)\overline{\Delta h_w(\mvx)} -\sum_{\mvx \in \partial U } h(\mvx)\overline{\Delta h(\mvx)}  = \sum_{\mvx \in \partial U} f(\mvx)\overline{\Delta g(\mvx)} + h_w(\mvy)\overline{\Delta h_w(\mvy)}.\nonumber
\end{equation}
We deal with the two terms above separately. Let $(X^{\mvu}_t)_{t \ge 0}$ be a simple random walk on $G_U$ started at $\mvu\in U^{c}$ which is killed at a vertex $\mvu$ with probability $m/(\deg(\mvu)+m)$ and   $\tau$ is the first time when $(X^{\mvx'}_t)_{t \ge 0}$ either gets killed or hits $\partial U \cup \{\mvy\}$. Using the killed random walk description of the massive harmonic function,
Now we can write
\begin{equation}
   \sum_{\mvx \in \partial U} f(\mvx)\overline{\Delta g(\mvx)} = \sum_{\mvx \in \partial U} f(\mvx) \sum_{\mvx' \sim \mvx}\overline{g(\mvx')} = \bar z\sum_{\mvx \in \partial U} f(\mvx) \sum_{\mvx' \sim \mvx}\bP(X_{\tau}^{\mvx'} = \mvy)
\end{equation}
By the reversibility of the killed walk, we see that 
    $$
    \sum_{\mvx' \sim \mvx}\bP(X_{\tau}^{\mvx'} = \mvy) =  \sum_{\mvy' \sim \mvy}\bP(X^{\mvy'}_{\tau} = \mvx) .
$$
Therefore
\begin{equation*}
     \sum_{\mvx \in \partial U} f(\mvx)\overline{\Delta g(\mvx)}=\bar z\sum_{\mvx \in \partial U} f(\mvx) \sum_{\mvy' \sim \mvy}\bP(X^{\mvy'}_{\tau} = \mvx) 
\end{equation*}
Since $h_w$ is massive harmonic in $U^c$, 
\begin{equation*}
    \sum_{\mvx \in \partial U} f(\mvx) \bP(X^{\mvy'}_{\tau} = \mvx) = h_{w}(\mvy') - w\bP(X^{\mvy'}_\tau=w).
\end{equation*}
Therefore, we simplify to
\begin{align*}
    \sum_{x \in \partial U} f(\mvx)\overline{\Delta g(x)}  &= \bar z\sum_{\mvy' \sim \mvy}( h_{w}(\mvy') - w\bP(X^{\mvy'}_\tau=\mvy))\\
    & = \bar z\left[ \Delta h_w(\mvy)+\deg(\mvy)h_w(\mvy) -  (\Delta g(\mvy) + \deg(\mvy)g(\mvy)) - h(\mvy)\sum_{\mvy' \sim \mvy}\bP(X^{\mvy'}_\tau=w) \right]\\
    &=\bar z \left[ m h(\mvy)+\deg(\mvy)h(\mvy)  - h(\mvy)\sum_{\mvy' \sim \mvy}\bP(X^{\mvy'}_\tau=w)\right].\\
    &=m\bar zh(\mvy)-h(\mvy)\overline{\Delta g(\mvy)}
\end{align*}
Now for the other term,
\begin{align*}
    h_w(\mvy)\overline{\Delta h_w(\mvy)}&=w(\overline{\Delta h(\mvy)}+\overline{\Delta g(\mvy)})\\
    &=w(m\overline{h(\mvy)} + \overline{\Delta g(\mvy)})\\
    &=mz\overline{h(\mvy)}  + m|h(\mvy)|^2 + w\overline{\Delta g(\mvy)}
\end{align*}
Combining, we get
\begin{align}
    &\sum_{\mvx \in \partial U \cup \{\mvy\}} h_w(\mvx)\overline{\Delta h_w(\mvx)} -\sum_{\mvx \in \partial U } h(\mvx)\overline{\Delta h(\mvx)} \\
    &=m\bar zh(\mvy)-h(\mvy)\overline{\Delta g(\mvy)} + mz\overline{h(\mvy)}  + m|h(\mvy)|^2 + w\overline{\Delta g(\mvy)}\\
    &=2m\fR(\bar z h(\mvy)) + m|h(\mvy)|^2+z\overline{\Delta g(\mvy)}
\end{align}
To combine this with \eqref{eq:uniform_upper}, we need to subtract $m|w|^2$ from the last expression above. This simplifies to
\begin{align*}
   2m\fR(\bar z h(\mvy)) + m|h(\mvy)|^2+z\overline{\Delta g(\mvy)}-m|w|^2
   =&z\overline{\Delta g(\mvy)} - m|z|^2\\
   =&|z|^2 (\sum_{\mvy' \sim \mvy}\bP(X^{\mvy'}_\tau = \mvy)-\deg(\mvy)-m)\\
   =&-|z|^2(\deg(\mvy)+m) (1-\bP(X^{\mvy}_{\tau_+})) \\
   =&-|z|^2(\deg(\mvy)+m)({\sf G}^{m, U}(\mvy, \mvy))^{-1}\\
   :=&-c_1|z|^2
\end{align*}
where ${\sf G}^{m, U^{c}}(\mvy, \mvy)$ is the Green's function for the killed random walk.
Overall, we get 
\begin{align*}
  &{\cZ_{\text{sph},N} \biggl(f_{h(\mvy)+z},\theta, 1-\frac{|h(\mvy)+z|^2}{N}, U^{c} \setminus \{\mvy\}\biggr)}/{\cZ_{\text{sph},N} (f, \theta, U^{c})} \\ &\le \frac{C \theta}{2\pi }\frac{B_N}{B_N'}\frac{ C\exp(\theta m N }{ \exp(\theta m  N)\left(1+o(1)\right)}\exp(-c_1|z|^2)  \frac{\prod_{i=1}^{k}(\lambda_i+m)}{\prod_{i=1}^{k-1}(\lambda'_i+m)}
\end{align*}
We finally claim 
$$
\frac{B_N'}{B_N} = O(1) \text{ and }\frac{\prod_{i=1}^{k}(\lambda_i+m)}{\prod_{i=1}^{k-1}(\lambda'_i+m)} = O(1).
$$
For this, observe that $\gD_{U^{c}\setminus \{\mvy\}}$ is a principal sub matrix of $\gD_{U^{c}}$ of dimension $|U^{c}|-1$, and thus we have the following interlacing relation for the eigenvalues $\forall i\in \{1,2,\ldots |U|-1\} $:
\[\gl_{i}\leq \gl_{i}'\leq\gl_{i+1}.\]
For any bounded monotonic function $\fr:[0,4d]\to \bR$, note that 
\begin{align*}
|\sum_{i=1}\fr(\gl_{i})-\sum_{i=1}\fr(\gl_{i}')|&=|\fr(\gl_{|U|})+\sum_{i}\fr(\gl_{i})-\fr(\gl'_{i})|\\
&\leq |\fr(\gl_{|U|})|+|\sum_{i}(\fr(\gl_{i}')-\fr(\gl_{i}))|\leq 2\|{\fr}\|_{\infty}.
\end{align*}
We apply this to the functions $\log(x+m)$ and $(x+m)^{-2}$. Thus, we have that 
\begin{align}
|\sum_{i}\log (\gl_{i}+m)-\sum_{i}\log (\gl_{i}+m)|\leq 2\log (4d+m),
\end{align}
and 
\begin{align}
|\sum_{k}\frac{1}{(\gl_{i}+m)^{2}}-\sum_{k}\frac{1}{(\gl_{i}'+m)^{2}}|\leq \frac{2}{m^{2}}. 
\end{align}
What this yields is that 
\[
|\log \frac{\prod_{i=1}^{k}(\lambda_i+m)}{\prod_{i=1}^{k-1}(\lambda'_i+m)}| \leq 2 \log(m+4d), 
\]
and in particular, 
\[
(m+4d)^{-2}\leq \frac{\prod_{i=1}^{k}(\lambda_i+m)}{\prod_{i=1}^{k-1}(\lambda'_i+m)}\leq (m+4d)^{2}. 
\]
As for the ratio $B_{N}/B_{N}'$, note that 
\[
\frac{B_{N}}{B_{N}'}=\frac{\sum\frac{1}{(\gl_{k}+m)^{2}} +\sum\frac{|\hat{h}|^{2}}{(\gl_{k}+m)}}{\sum\frac{1}{(\gl_{k}'+m)^{2}} +\sum\frac{|\hat{h}'|^{2}}{(\gl_{k}'+m)}} \leq \frac{\sum_{k}(\gl_{k}+m)^{-2}+m^{-1}\norm{h}_{2}^{2}}{\sum_{k}(\gl_{k}+m)^{-2}-2m^{-2}+(4d+m)^{-1}\norm{h^{w}}_{2}^{2}}\leq 2.
\]

The last inequality holds for $N$ sufficiently large. Combined, these yield the Gaussian upper bound (independent of $h$)
$$
 \frac{\cZ_{\text{sph},N} (f_{h(\mvy)+z},\theta, 1-\frac{|h(\mvy)+z|^2}{N}, U^{c} \setminus \{\mvy\})}{\cZ_{\text{sph},N} (f, \theta, U^{c})}  \le Ce^{-c_1|z|^2}
$$
for some $C,c>0$ independent of everything else. Integrating from $b$ to $\infty$, we get the required tail bound.
\end{proof}
\section{Future work and open questions}\label{sec:future}
Over the course of working on this article, we came across several interesting questions which are yet to be answered. 
\begin{enumerate}
\item \textbf{Non mixture in supercritical:} If $\nu > \nu_c$, we anticipate that when $\mathscr{M}(\theta,\nu)=\{a_{*}\}$ for some $a_{*}>0$, and that in particular, $\Xi=a_{*}$ almost surely. The local limit should not be a non trivial mixture of Gaussians in the super critical regime. This requires more information on the function $I$, in particular properties of higher derivatives.

\item \textbf{Mixture in Critical:}  For $\theta\leq C_{d}$, there is discontinuity in the phase transition in the following sense. \begin{lemma}
Let $\theta\leq C_{d}$, and $\nu=\nu_{c}$. Then $\mathscr{M}(\theta, \nu)=\{0\}\sqcup \mathscr{M}'$ where $\mathscr{M}'$ is non empty and bounded away from zero.  
\end{lemma}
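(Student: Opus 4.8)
The plan is to analyze the free energy functional $F_{\theta,\nu_c}(a) = W(\theta(1-a)) + \theta\nu_c^{-1}I(\nu_c a)$ on $[0,1]$ at the critical parameter $\nu = \nu_c(\theta)$, using the structural facts about $W$ and $I$ assembled in \Cref{sec:landscape}. Since $\theta \le C_d$, \Cref{lem:sub_theta_nu} does not apply directly (that was for $\nu < \nu_c$), but the same computation shows that on the interval $[0,\nu_c^{-1}R_p]$ we have $I(\nu_c a) = 0$ by \Cref{thm:wei} (as $p > 4$), while $W(\theta(1-a))$ is strictly increasing there because $\theta \le C_d$ forces $(W(\theta(1-\cdot)))' > 0$ on $(0,1)$. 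Hence $F_{\theta,\nu_c}$ is strictly increasing on $[0,\nu_c^{-1}R_p]$, so $0$ is an isolated local minimizer and the only minimizer in a neighborhood of $0$; in particular $0 \in \mathscr{M}(\theta,\nu_c)$.

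Next I would show $\mathscr{M}(\theta,\nu_c)$ contains a point bounded away from $0$. This is where criticality is used essentially. By definition of $\nu_c$ as the boundary of the solitonic phase (\Cref{thm:phasecurve}), and by the continuity and strict monotonicity of $\nu_c$, I would argue by a limiting/continuity argument: for $\nu$ slightly larger than $\nu_c$ the set $\mathscr{M}(\theta,\nu)$ is bounded away from $0$ (supercritical phase, \Cref{thm:phasecurve} item 2), so pick $a_\nu \in \mathscr{M}(\theta,\nu)$ with $a_\nu \ge c_0 > 0$ uniformly as $\nu \downarrow \nu_c$ — here one needs that the lower bound $c_0$ can be taken uniform, which should follow from the continuity of $W, I$ in both arguments and compactness. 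Passing to a subsequential limit $a_* = \lim a_{\nu_k}$ with $\nu_k \downarrow \nu_c$, continuity of $F$ jointly in $(a,\nu)$ gives $F_{\theta,\nu_c}(a_*) = \lim F_{\theta,\nu_k}(a_{\nu_k}) = \lim F_{\min}(\theta,\nu_k) = F_{\min}(\theta,\nu_c)$, the last equality by continuity of the minimum value (again compactness of $[0,1]$ plus continuity). Thus $a_* \in \mathscr{M}(\theta,\nu_c)$ and $a_* \ge c_0 > 0$. Setting $\mathscr{M}' := \mathscr{M}(\theta,\nu_c) \cap \{a \ge \nu_c^{-1}R_p\}$ — or more simply $\mathscr{M}' := \mathscr{M}(\theta,\nu_c) \setminus \{0\}$, which is then nonempty — and recalling from the first paragraph that $\mathscr{M}(\theta,\nu_c) \cap (0, \nu_c^{-1}R_p) = \emptyset$, we get that $\mathscr{M}'$ is bounded away from $0$ by $\nu_c^{-1}R_p > 0$, and $\mathscr{M}(\theta,\nu) = \{0\} \sqcup \mathscr{M}'$ is the desired disjoint decomposition.

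The main obstacle I anticipate is establishing the \emph{uniform} lower bound $c_0$ on the minimizers $a_\nu$ as $\nu \downarrow \nu_c$, i.e. ruling out the scenario where the supercritical minimizer drifts continuously down to $0$ as $\nu \to \nu_c^+$ (which would produce a continuous rather than discontinuous transition and contradict the claimed decomposition). One clean way to handle this: observe that any supercritical minimizer $a_\nu$ satisfies $\nu a_\nu > R_p$ (since $I(\nu a_\nu) < 0$ is needed, as $(W(\theta(1-\cdot)))' \ge 0$ and a minimizer in $(0,1)$ must have $I(\nu a)$ contributing a negative slope), hence $a_\nu > R_p/\nu$. As $\nu \downarrow \nu_c$, this gives $a_\nu > R_p/\nu \to R_p/\nu_c > 0$, so $c_0 := R_p/(2\nu_c)$ works for $\nu$ close enough to $\nu_c$. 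This simultaneously furnishes the uniform bound and identifies the gap, so the argument closes. The remaining routine points — joint continuity of $F$ and of $F_{\min}$ in $(\theta,\nu)$, which follow from \Cref{lem:Ider}, the cited continuity of $W$ and $I$, and compactness of $[0,1]$ — I would dispatch briefly.
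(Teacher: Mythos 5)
Your overall strategy — exploiting joint continuity of $F$ in $(a,\nu)$ together with the subcritical/supercritical characterizations of $\mathscr{M}$ — is in the same spirit as the paper's proof, which also proceeds by perturbing $\nu$ up and down from $\nu_c$. The differences in the second half are cosmetic: for $\mathscr{M}' \neq \emptyset$ you take a direct subsequential limit of supercritical minimizers $a_\nu$, while the paper argues by contradiction using the auxiliary quantity $F_{**}(\nu) := \min_{x \ge R_p/(2\nu_c)} F_{\theta,\nu}(x)$; both hinge on the same continuity facts. Your observation that any supercritical minimizer must satisfy $\nu a_\nu > R_p$ (because $I(\nu a_\nu)$ must be strictly negative at an interior minimizer, by the same reasoning as in \Cref{lem:super_to_sub}), hence $a_\nu > R_p/\nu \to R_p/\nu_c > 0$, is a clean way to get the uniform lower bound and closes that part of the argument.

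There is, however, a genuine gap in your first paragraph. You show $F_{\theta,\nu_c}$ is strictly increasing on $[0,\nu_c^{-1}R_p]$, conclude $0$ is an isolated local minimizer, and then write ``in particular $0 \in \mathscr{M}(\theta,\nu_c)$.'' This does not follow: strict increase near $0$ only makes $0$ a \emph{local} minimizer, while $\mathscr{M}$ is the argmin over all of $[0,1]$. Nothing in your paragraph rules out $F_{\theta,\nu_c}(a_*) < F_{\theta,\nu_c}(0)$ for the point $a_*$ you construct later, in which case $0 \notin \mathscr{M}(\theta,\nu_c)$ and the claimed decomposition fails. You need a separate argument, and the natural one is the subcritical analogue of the continuity argument you already use for $\mathscr{M}'$: for $\nu<\nu_c$, \Cref{lem:sub_theta_nu} gives $\mathscr{M}(\theta,\nu)=\{0\}$, so $F_{\min}(\theta,\nu)=F_{\theta,\nu}(0)=W(\theta)$, independent of $\nu$; letting $\nu\uparrow\nu_c$ and using continuity of $F_{\min}$ yields $F_{\min}(\theta,\nu_c)=W(\theta)=F_{\theta,\nu_c}(0)$, i.e. $0\in\mathscr{M}(\theta,\nu_c)$. (Equivalently, as in the paper: if $0\notin\mathscr{M}(\theta,\nu_c)$ then $F_{\theta,\nu_c}(x)<F_{\theta,\nu_c}(0)$ for some $x$, and by continuity in $\nu$ the same strict inequality would persist for some $\nu'<\nu_c$, contradicting \Cref{lem:sub_theta_nu}.) With this inserted, your proof is complete.
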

\begin{proof}
A key point to the argument here is the fact that the function $I(\cdot )$ is identically $0$ on the interval $[0,R_{p}]$, this will be used repeatedly. If $0\notin \mathscr{M}'$, then there exists $x\in [0,a_{M}]$ such that $F_{\theta,\nu}(x)<F_{\theta,\nu}(0)$. Since $F(x)=W(\theta(1-x))+\theta\nu^{-1}I(\nu x)$, by continuity, there exists $\nu'<\nu$ such that $F_{\theta,\nu'}(x)>F_{\theta,\nu}(x)=F_{\theta,\nu'}(x)$. Thus, this tells us that $\nu>\nu_{c}$, a contradiction. Now, suppose $\mathscr{M}(\theta,\nu)=\{0\}$. Then let $F_{**}(\nu)=\min_{x\geq R_{p}/2\nu_{c}} F(x)$. $F_{**}(\nu)$ is continuous in $\nu$ by definition. Note, $F_{**}(\nu)>F(0)$. In particular, there exists a sufficiently small perturbation $\nu''>\nu$ such that $F_{**}>F(0)$. Since changing $\nu$ does not alter the behavior of $F$ on the interval $[0,R_{p}/2\nu_{c}]$, $0$ remains the sole minimizer of $F_{\theta,\nu''}$, implying that $\nu<\nu_{c}$, a contradiction. Finally, $0$ is isolated since $W(\theta(1-x))$ is strictly increasing for all $x>0$.
\end{proof}

We expect $\mathscr{M}'$ to be a singleton, however we currently lack certain properties of the function $I$ required to prove this. In addition, when $\theta<C_{d}$, we have $F'(0)>0$ while $F'(x)=0$ for all other minimizers. This has an important consequence, and the intuition provided by the saddle point method suggest that the local limit should be the same as that of the super critical case where mass is lost. It seems that a non trivial mixture can be observed when $\theta=C_{d}$, however this would require $\mathscr{M}'$  to be finite, and for the second derivatives of $F$ at all $x\in \mathscr{M}'$ to be positive. We currently lack this. In addition, we would require a refinement of the rate of error in the convergence of the free energy, the current rate is inadequate for the analysis by Laplace asymptotics. It is unclear whether the same discontinuity of phase transition holds for $\theta\geq C_{d}$.

\item \textbf{Fluctuations near $U$ for $\nu>\nu_{c}$: }In this article, we have proved that the fluctuations of $\Psi_{N}^{\theta,\nu}$ are $O(1)$ and asymptotically Gaussian far away from $U$. However, we anticipate that these properties should hold in the vicinity of $U$ as well. We reserve this as a direction of future research. 

\item \textbf{Same result for $p\leq 4$:} When $p\leq 4$, notice that the dyadic argument breaks down because there is no threshold beyond which the non linear part can be ignored. We can still prove the required $\ell^{\infty}$ estimate to start the argument, but cannot bootstrap it down enough to drown out the non linearity. New additions to our techniques are likely required. 
\end{enumerate}
\subsection{Acknowledgments:} We would like to thank Juhan Aru, Partha Dey, Kay Kirkpatrick and Aleksandra Korzhenkova for useful discussions. KK would like to thank Marek Biksup for an enlightening discussion on the relation of the model under consideration to the self attracting random walk, and the Salmon Coast Field Station for hosting him while a portion of this work was completed. 

\bibliographystyle{abbrv}
 \bibliography{nls}
\end{document}